\newtheorem{thm}{Theorem}[section]
\newtheorem{lem}{Lemma}[section]
\newtheorem{prop}{Proposition}[section]
\newtheorem{defi}{Definition}[section]
\newcounter{myalgctr}
\newenvironment{rem}{
   \vskip1mm\indent
   \refstepcounter{myalgctr}
   \textbf{Remark \themyalgctr}
   }{\hfill$\diamond$\par}  
\numberwithin{myalgctr}{section}
\numberwithin{equation}{section}
\providecommand{\norm}[1]{\left\lVert#1\right\rVert}
\DeclareMathOperator*{\argmin}{arg\,min}
\newcommand{\RIP}{\mathrm{RIP}}
\def\tcm{\textcolor{magenta}}
\def\namedlabel#1#2{\begingroup
    #2%
    \def\@currentlabel{#2}%
    \phantomsection\label{#1}\endgroup
}
\newcommand{\vertiii}[1]{{\vert\kern-0.25ex\vert\kern-0.25ex\vert #1
    \vert\kern-0.25ex\vert\kern-0.25ex\vert}}
\def\tcm{\textcolor{magenta}}
\def\cred{\color{red}}
\def\cyan{\color{cyan}}  
\def\tcm{\textcolor{black}}
\def\cred{\color{black}}
\def\cred{}
\def\cyan{}  
\begin{document}

\title{\bf Moving Beyond Sub-Gaussianity in High Dimensional Statistics: Applications in Covariance Estimation and Linear Regression\thanks{
To appear in {\it Information and Inference: A Journal of the IMA}.} 
}

\author{
    Arun Kumar Kuchibhotla$^1$\thanks{Department of Statistics and Data Science, Carnegie Mellon University. (Email: {\tt
            arunku@cmu.edu})}
    \and
    Abhishek Chakrabortty$^1$\thanks{Department of Statistics, Texas A\&M University. (Email: {\tt abhishek@stat.tamu.edu}; Corresponding author) \smallskip$^1$The authors contributed equally to this work.}
}


\maketitle
\begin{abstract}
Concentration inequalities form an essential toolkit in the study of high dimensional statistical methods. Most of the relevant statistics literature in this regard is, however, based on the assumptions of sub-Gaussian or sub-exponential random variables/vectors. In this paper, we first bring together, through a unified exposition, various probabilistic
 inequalities for sums of independent random variables under much {\cred more general} 
 exponential type (namely sub-Weibull) tail assumptions. These results extract a part sub-Gaussian tail behavior of the sum in finite samples, matching the asymptotics governed by the central limit theorem, and are compactly represented in terms of a new Orlicz quasi-norm -- the Generalized Bernstein-Orlicz norm -- that typifies such kind of tail behaviors.

We illustrate the usefulness of these inequalities through the analysis of four fundamental problems in high dimensional statistics. In the first two problems, we study the rate of convergence of the sample covariance matrix in terms of the maximum elementwise norm and the maximum $k$-sub-matrix operator norm which are key quantities of interest in bootstrap procedures and high dimensional structured covariance matrix estimation, as well as in high dimensional and post-selection inference. The third example concerns the restricted eigenvalue condition, required in high dimensional linear regression, which we verify for all sub-Weibull random vectors through a unified analysis, and also prove a more general result related to restricted strong convexity in the process. 
In the final example, we consider the Lasso estimator for linear regression and establish its rate of convergence to be generally $\sqrt{k\log p/n}$, for $k$-sparse signals, under much weaker than usual tail assumptions (on the errors as well as the covariates), 
while also allowing for misspecified models and both fixed and random design. To our knowledge, these are the first such results for Lasso obtained in this generality. The common feature in all our results over all the examples is that the convergence rates under most exponential tails match the usual (optimal) ones obtained under sub-Gaussian assumptions.
Finally, we also establish some complementary results on analogous tail bounds for the suprema of empirical processes indexed by sub-Weibull variables. All our results are finite sample.
\end{abstract}

\par\medskip
{\bf Keywords:} Concentration Inequalities, Orlicz Norms, Sub-Weibull Random Variables, Structured Covariance Matrix Estimation, Restricted Eigenvalue Condition, High Dimensional Linear Regression and Lasso, 
Empirical Processes.



\section{Introduction and Motivation}


In the current era of big data, with an abundance of information often available for a large number of variables, there has been a burst of statistical methods dealing with high dimensional data. In particular, estimation and inference methods are being developed for settings with a huge number of variables often larger than the number of observations available. In these settings, classical statistical methods such as the least squares or the maximum likelihood principle usually do not lead to meaningful estimators, and regularization methods have been widely used as an alternative; see, e.g., \citet{Wainwright_Book_2019} for an overview.
These methods typically penalize the original loss function, e.g. squared error loss or the negative log-likelihood function, with a penalty on the parameter vector that reduces the ``effective" number of parameters being estimated. The theoretical analyses of most of these methods, despite all their diversities, generally obey a \emph{common unifying theme} wherein a key quantity to control is the maximum of a (high dimensional) vector of averages of mean zero random variables. Since the dimension is potentially larger than the sample size, it is important to analyze the behavior of the maximum in a non-asymptotic way. Concentration inequalities and probabilistic tail bounds form a major part of the toolkit required for such analyses.

Some of the most commonly used probabilistic tail bounds are of the exponential type, including, in particular, Hoeffding's and Bernstein's inequalities; see Section 3.1 of \cite{GINE16} for a review. In the classical versions of these inequalities, the random variables are assumed to be bounded, but this assumption can be relaxed to sub-Gaussian and sub-exponential random variables, respectively; see Sections 2.6 and 2.8 of \cite{Vershynin18}, and also \citet{Wainwright_Book_2019}. A random variable is called sub-Gaussian if its survival function is bounded by that of a Gaussian distribution. A sub-exponential random variable is defined similarly (see Section~\ref{sec:Definition}). Note that in both these cases, the moment generating function (MGF) exists in a neighborhood around zero. Most of the high dimensional statistics literature is based on the assumption of sub-Gaussian or sub-exponential random variables/vectors. But in many applications, these assumptions may not be appropriate. For instance, consider the following two simple examples that exemplify the main issues.
\begin{enumerate}[--]
\item Suppose $(X_1, Y_1), \ldots, (X_n, Y_n)$ are independent and identically distributed (i.i.d.) observations of a random vector  $(X,Y) \in \mathbb{R}^2$ and let $\hat{\beta} = \sum X_iY_i/$ $\sum X_i^2$ denote the linear regression slope estimator for regressing $Y$ on $X$. Under a possibly misspecified linear model, the estimation of the asymptotic variance of $\hat{\beta}$ involves $\sum X_i^2(Y_i - X_i\beta)^2$, where $\beta$ is the limit of $\hat{\beta}$; see \cite{Buja14} for details. It is clear that if the initial random variables $X$ and $Y$ are 
    sub-exponential, then the random variables $X_i^2(Y_i - X_i\beta)^2$ do not have a finite MGF. The same holds even when the ingredient random variables $X$ and $Y$ are further assumed to be sub-Gaussian.\par\smallskip
\item Let $Y$ be a response variable, and $X_1, X_2$  be two covariates, all having a finite MGF in a neighborhood of zero. In many applications, it is important to consider regression models with interaction effects among the covariates, and more generally, second (or higher) order effects such as $X_1^2, X_1X_2$ etc. The presence of such second order effects 
    clearly implies that the summands involved in the analyses of these linear regression estimators may not necessarily have a finite MGF anymore.
\end{enumerate}
These examples are not high dimensional in nature, but are mainly presented here as some basic examples where the core problem becomes apparent. The requirement of controlling averages defined by higher order or product-type terms, as in the second example, also arises inevitably in the case of high dimensional regression and covariance estimation; see also the recent work of \citet{yu2019reluctant} on problems of a similar flavor. The first example, apart from its relevance in inference for linear regression estimators, also appears in the problem of testing for the existence of active predictors in linear regression. This problem can be reduced to a simultaneous significance testing problem based on all the marginal regressions, as shown in \cite{McKeague15}. For this type of marginal testing problems, uniform consistency of the estimators of the variance of all the marginal regression coefficient estimators is required, thus creating the need for a non-asymptotic analysis. 

\subsection{Our Contributions}


Tail bounds for sums of independent random variables play an important role in probability and statistics. In statistics, especially in the high dimensional statistics literature, most of the applications are studied only under strong (or light) exponential tail assumptions such as sub-Gaussian or sub-exponential. Although tail bounds do exist for sums of independent random variables with ``heavy'' exponential tails (scattered mostly in the probability literature), the impact of moving from sub-Gaussian/sub-exponential (i.e. light-tailed) variables to those with heavy exponential tails on the rates of convergence and the dependence on the dimension does not seem to be well-studied in the statistics literature. These heavy exponential tailed random variables are what we call \emph{sub-Weibull} variables (see Definition~\ref{def:subWeibull}).

The first goal of our article is to provide a clear (and unified) exposition of concentration inequalities related to sub-Weibull random variables, which constitutes the \emph{first part} of our paper. It provides in one place a user-friendly off-the-shelf toolset that can be readily used in the analysis of a variety of modern statistical problems (and yet under much weaker tail conditions than those typically assumed).

In the \emph{second part}, we provide applications of these concentration inequalities for four such fundamental problems in high dimensional statistics. A detailed account of our contributions along both these lines is provided next.

\paragraph{Exposition of Tail Bounds.} The outline of our probability exposition is as follows. We first propose a new Orlicz quasi-norm called the \emph{Generalized Bernstein-Orlicz (GBO) norm} that allows for a compact representation of the results regarding sub-Weibull random variables. \cite{Geer13} introduced its predecessor, the Bernstein-Orlicz norm, that provides a formal understanding of the nature of the tail bound given by Bernstein's Inequality (see Section~\ref{sec:Definition} for details). The recent paper \cite{Well17} extends the results of \cite{Geer13} to capture the tail behavior given by Bennett's inequality. Although it was not stressed
in \cite{Geer13}, one of the main features of Bernstein's inequality is that even for sub-exponentials, it provides a part sub-Gaussian tail behavior for the sum. This, in turn, plays a key role in proving the rate of convergence of a maximum of several such sums to be the same as that in the case of sub-Gaussian variables. The GBO norm is constructed with the aim of capturing a similar tail  behavior for the general case of sub-Weibulls. The results on unbounded empirical processes from \cite{Adam08}, along with a maximal inequality (Theorem 5.2) of \cite{Chern14} and the results of \cite{LAT97}, will be exploited to provide a sequence of ready-to-use results (Theorems~\ref{prop:SumNewOrliczVex}--\ref{thm:MaximalTailBound}) about sub-Weibull random variables. This is essentially the probability contribution of the current article.
The results of \cite{Adam08} are derived based on Chapter 6 of \cite{LED91}, and those of \cite{Chern14} are based on the maximal inequality of \cite{vdV11}. All of our results are derived under the assumption of independence only and allow for \emph{non-identically distributed} ingredient variables. The extensions for the supremum of empirical processes with sub-Weibull envelope functions are further discussed in Appendix~\ref{sec:EmpProcess}. 


Lastly, we would also like to point out that we mainly focus on exponential-type tails in this paper, 
since in all our high dimensional applications, a logarithmic dependence on the dimension is desired (our proof techniques, however, also apply equally to polynomial-type tails). The initial version of the current paper was available in ArXiv since 2018~\citep{kuchibhotla2018moving}. Recently, \cite{bakhshizadeh2020sharp} further explored some refinements of our tail bound results there. But a unified exposition of concentration results, coupled with a thorough demonstration of their usefulness in various important statistical applications as discussed below, is still lacking in the literature to the best of our knowledge.

\paragraph{High Dimensional Statistical Applications.} Following the exposition of concentration inequalities, we apply these probabilistic tools to four fundamental problems in high dimensional statistics. In all these examples, we establish precise tail bounds and rates of convergence, under the assumption of sub-Weibull random variables/vectors only. The results, apart from being seamlessly unified and general in terms of the underlying tail assumptions, also exhibit several interesting features and provide some key insights into the behavior of these problems. In particular, \emph{a common outcome of all our analyses is that the rates of convergence generally match those obtained under the sub-Gaussian assumption}.

Furthermore, most results in high dimensional statistics that involve \emph{random vectors} are only derived under tail assumptions on the \emph{joint} distribution of the random vector (for example, a random vector $X$ is sub-Gaussian if $\theta^{\top}X$ is uniformly sub-Gaussian over all $\theta$ of unit Euclidean norm). Although commonly adopted in the literature, such a condition 
imposes (often implicitly) certain strong restrictions on the joint distribution, as discussed at the beginning of Section \ref{sec:Applications}. Throughout this paper, we make a formal \emph{distinction} between such a \emph{`joint' assumption} on the tail behavior of a random vector versus a much weaker \emph{`marginal' assumption} on the tail behaviors of its coordinates only; see Definitions \ref{def:JointWeibull} and \ref{def:MarginalWeibull}. 
All of our applications are also studied under such an assumption only on the marginal distributions, and often with nearly (if not exactly) similar results and convergence rates.

The description and the main implications of our results for each of the four high dimensional statistical applications we consider in this paper are enlisted below. (In all examples, $p$ denotes the ambient dimension of the random vectors and $n$ denotes the sample size.)

\begin{enumerate}[1.]
\item \emph{Covariance Estimation (Maximum Elementwise Norm).} A central part of high dimensional inference hinges on an application of the central limit theorem through a bootstrap procedure. The consistency of the bootstrap in this case requires consistent estimation of the covariance matrix in terms of the maximum elementwise norm. This norm also appears in the coupling inequality for maxima of sums of random vectors; see Theorem 4.1 of \cite{Chern14}. In Section~\ref{sec:ElementWiseMax}, we prove a finite sample tail bound (via Theorems \ref{cor:Covariance} and \ref{thm:CovarianceMaxNorm}) for the error of the sample covariance matrix in terms of this norm under the assumption of (marginally) sub-Weibull $(\alpha)$ ingredient random vectors. The rate of convergence is shown to be $\sqrt{\log p/n}$ if $\log p = o(n^{\alpha/(4-\alpha)})$; see Remark~\ref{rem:ExpectationBound}. This rate of convergence can be easily shown to be optimal in case the random vectors are standard multivariate Gaussian. Furthermore, the tail bounds presented in this section also play a central role in sparse covariance matrix estimation, as shown in \cite{Bickel08} and \cite{Cai11}. Both these papers deal with jointly sub-Gaussian random vectors, while the second paper additionally deals with fixed polynomial moments. Using our results in Section~\ref{sec:ElementWiseMax}, the problem of sparse covariance matrix estimation can be analyzed under weaker assumptions with logarithmic dependence on the dimension. Finally, the results in this section also establish the consistency of bootstrap procedures when applied to (high dimensional) marginally sub-Weibull random vectors.\par\smallskip
\item \emph{Covariance Estimation (Maximum $k$-Sub-Matrix Operator Norm).} Covariance matrices play an important role in statistical analyses through principal component analysis, factor analysis and so on.
    Clearly, for most of these methods, consistency of the covariance matrix estimator in terms of the operator norm is important. In high dimensions, however, the sample covariance matrix is known to be not consistent in the operator norm. Under such settings, in practice, one often selects a (random) subset of variables and focuses on the spectral properties of the corresponding covariance (sub)-matrix. 
    In Section~\ref{subsec:SubMatrix}, we study the consistency of the sample covariance matrix of (marginal or joint) sub-Weibull $(\alpha)$ ingredient random vectors, in terms of the maximum sub-matrix operator norm with sub-matrix size $k \leq p$. 
We show through  Theorem \ref{cor:RIPBoundUnified} 
that the rate of convergence is $\sqrt{k\log(ep/k)/n}$ for most values of~$\alpha > 0$. This rate was previously obtained for the joint sub-Gaussian case by \cite{Loh12}; see Lemma 15 therein. This norm was possibly first studied by \cite{Rudelson08} for bounded random variables. The convergence rate of this norm plays a key role in studying post-Lasso least squares linear regression estimators and in structured covariance matrix estimation. The post-Lasso linear regression estimator was studied in \cite{Belloni13}, and more generally, in \cite{Kuch18} for post-selection inference. Lastly, for adaptive estimation of so-called bandable covariance matrices, a thresholding mechanism was introduced by \cite{Cai12}, where a result about maximum sub-matrix operator norm is also required. \cite{Cai12} deal with Gaussian random vectors, and using our results this method can be thus extended to sub-Weibull random vectors.
\par\smallskip
\item \emph{Restricted Eigenvalues.} \cite{Bickel09} introduced the restricted eigenvalue (RE) condition to 
analyze the Lasso and the Dantzig selector. The RE condition concerns the minimum eigenvalue of the sample covariance matrix when the directions are restricted to lie in a specific cone (see Section~\ref{sec:RECondition} for a precise definition), and its verification forms a key step in high dimensional linear regression. A well known result in this regard is that of \cite{Rudelson13} who verified the RE condition for the covariance matrices of jointly sub-Gaussian random vectors. 
    Some extensions under weaker tail assumptions (e.g. sub-exponentials) have also been considered by \citet{LecueMendelson14}, among others; see Section~\ref{sec:RECondition} for further details. Based on our results in Section~\ref{subsec:SubMatrix}, we prove in Section \ref{sec:RECondition} that covariance matrices of both jointly and marginally sub-Weibull random vectors satisfy the RE condition with probability tending to one.
    In fact, we prove a more general result (in Theorem \ref{cor:REBoundUnified}) related to \emph{restricted strong convexity} from which the RE condition's verification follows as a consequence. 
    To our knowledge, such unified results regarding the RE condition are not so easily accessible in the core statistics literature.
    \par\smallskip
\item \emph{Linear Regression via Lasso.} One of the most popular and possibly the first high dimensional linear regression technique is the Lasso introduced by \cite{Tibs96}. The general results of \cite{Neg12} provide an easy recipe for studying the rate of convergence of the Lasso estimator. Based on this general recipe and equipped with the verification of the RE condition, we prove in Section~\ref{sec:HDLinReg} (via Theorems \ref{thm:LassoRate} and \ref{thm:LassoPoly}) the rate of convergence of the Lasso estimator to be $\sqrt{k\log p/n}$ (the near minimax optimal rate) under sub-Weibull covariates and sub-Weibull/polynomial-tailed errors when the ``true'' regression parameter is assumed to be $k$-sparse. We also \emph{allow} for both fixed and random designs, as well as for misspecified models. 
    Apart from admitting several other extensions (see Remark \ref{rem:LassoExtensions}), our results \emph{only} assume a marginal sub-Weibull property of the covariates, thus making them stronger than most existing results for Lasso which usually provide the rates under jointly sub-Gaussian/sub-exponential covariate vectors.
    To our knowledge, these are the first such results for the Lasso obtained in this generality.
\end{enumerate}


\subsection{Organization}
The rest of this paper is organized as follows. In Section~\ref{sec:Definition}, we define the class of sub-Weibull random variables and introduce the Generalized Bernstein-Orlicz norm. A detailed discussion of several useful and basic properties of the GBO norm is deferred to Appendix~\ref{AppSec:PropGBO}. Section~\ref{sec:Indep} provides several ready-to-use bounds for sums of independent mean zero sub-Weibull random variables. Using the results of Section~\ref{sec:Indep}, the fundamental statistical applications discussed above are studied in Section~\ref{sec:Applications} (via Sections \ref{sec:ElementWiseMax}-\ref{sec:HDLinReg} dedicated respectively to these four problems). We conclude with a summary and directions for future research in Section~\ref{sec:Conclusions}.

In the \hyperref[supp_mat]{Supplementary Material} (Appendices~\ref{sec:EmpProcess}--\ref{AppSec:EmpProcess}), we include additional results and technical materials that could not be accommodated in the main article. In Appendix~\ref{sec:EmpProcess}, we 
provide some supplementary results on tail bounds for suprema of empirical processes with sub-Weibull envelopes, and maximal inequalities based on uniform and bracketing entropy. 
Proofs of all the results in Section~\ref{sec:Definition} (along with those in Appendix~\ref{AppSec:PropGBO}) and Section~\ref{sec:Indep} are presented in Appendices~\ref{AppSec:Definition} and \ref{AppSec:Indep}, respectively.  The results of Section~\ref{sec:Applications}, as well as Appendix~\ref{sec:EmpProcess}, are proved in Appendices~\ref{AppSec:Applications} and \ref{AppSec:EmpProcess}, respectively.

\section{The Generalized Bernstein-Orlicz (GBO) Norm}\label{sec:Definition}

We first recall the general definition of an Orlicz norm for random variables. For a historical account of Orlicz norms, and sub-Guassian, sub-exponential (and sub-Weibull) variables, we refer to Section 1 of \cite{Well17} and the references therein.
\begin{defi}[Orlicz Norms]\label{def:IncOrliczNorm}
Let $g:\,[0, \infty) \to [0, \infty)$ be a non-decreasing function with $g(0) = 0$. The ``$g$-Orlicz norm'' of a real-valued random variable $X$ is given by
\begin{equation}\label{eq:AlphaOrliczNorm}
\norm{X}_{g} := \inf\{\eta > 0:\,\mathbb{E}\left[g(|X|/\eta)\right] \le 1\}.
\end{equation}
\end{defi}
The function $\norm{\cdot}_g$ on the space of real-valued random variables is not a norm unless $g$ is additionally a convex function. We define the $g$-Orlicz norm here under the only assumption of monotonicity of $g$, since in the following, convexity is not satisfied and is also not required.
It readily follows from \eqref{eq:AlphaOrliczNorm} that
\begin{equation}\label{eq:TailgOrlicz}
\mathbb{P}\left(|X| \ge \eta g^{-1}(t)\right) \le \frac{1}{t}\quad\mbox{for all}\quad t\ge 0.
\end{equation}

Two very important special cases of $g$ are given by $\psi_2(x) := \exp(x^2) - 1$ and $\psi_1(x) := \exp(x) - 1$, which correspond to \emph{sub-Gaussian} and \emph{sub-exponential} random variables, respectively. 
As a generalization, we now define \emph{sub-Weibull} random variables as follows.
\begin{defi}[Sub-Weibull Variables]\label{def:subWeibull}
A random variable $X$ is said to be sub-Weibull of order $\alpha > 0$, denoted as sub-Weibull $(\alpha)$, if 
\[
\norm{X}_{\psi_{\alpha}} < \infty, \quad \mbox{where} \;\; \psi_{\alpha}(x) \; := \; \exp\left(x^{\alpha}\right) - 1\quad\mbox{for}\;\; x \ge 0.
\]
\end{defi}
Based on this definition, it follows that if $X$ is sub-Weibull $(\alpha)$, then 
\begin{equation}\label{eq:TailSubWeibull}
\mathbb{P}\left(|X| \ge t\right) \le 2\exp\left(-\frac{t^{\alpha}}{\norm{X}_{\psi_{\alpha}}^{\alpha}}\right),\mbox{ for all }t\ge 0.
\end{equation}
The right hand side here resembles the survival function of a Weibull random variable of order $\alpha > 0$, and hence the name sub-Weibull random variable. It is also clear from inequality~\eqref{eq:TailSubWeibull} that the smaller the $\alpha$ is, the more heavy-tailed the random variable is.

A simple calculation implies that a converse of the tail bound result in \eqref{eq:TailSubWeibull} also holds. It can further be shown that $X$ is sub-Weibull of order $\alpha$, if and only if, its moments satisfy
\begin{equation}\label{eq:MomentSubWeibull}
\sup_{r\ge 1}\,r^{-1/\alpha}\norm{X}_r < \infty,
\end{equation}
where $\norm{X}_r := \left(\mathbb{E}\left[|X|^r\right]\right)^{1/r}$; see Propositions 2.5.2 and 2.7.1 of \cite{Vershynin18} for similar results. Clearly, sub-exponential and sub-Gaussian random variables are sub-Weibull of orders $1$ and $2$ respectively, while bounded variables are sub-Weibulls of order $\infty$.
Also, $X$ is sub-exponential if and only if $|X|^{1/\alpha}$ is sub-Weibull of order $\alpha$;
this follows readily from Definition~\ref{def:subWeibull}.

Next, to define the Generalized Bernstein-Orlicz norm, we first recall the classical Bernstein inequality for sub-exponential random variables. Suppose $X_1$, $\ldots$, $X_n$ are independent mean zero sub-exponential random variables, then
\begin{equation}\label{eq:SeparateBernstein}
\mathbb{P}\left(\left|\sum_{i=1}^n X_i\right| \ge t\right) \le \; 2 \times \begin{cases}\exp(-t^2/(4\sigma^2_n)),&\mbox{if }t < \sigma^2_n/C_n,\\
\exp(-t/(4C_n)), &\mbox{otherwise,}\end{cases}
\end{equation}
where $\sigma^2_n := 2\sum_{i=1}^n \norm{X_i}_{\psi_1}^2$ and $C_n := \max\{\norm{X_i}_{\psi_1}:\,1\le i\le n\}$; see Proposition 3.1.8 of \cite{GINE16}. Clearly, the tail of the sum behaves like a Gaussian for smaller values of~$t$ and behaves like an exponential for larger~$t$.

An equivalent way of writing inequality~\eqref{eq:SeparateBernstein} that leads to the Bernstein-Orlicz norm is
\[
\mathbb{P}\left(\left|\sum_{i=1}^n X_i\right| \ge \eta_1\sqrt{\sigma^2_n\log(1 + t)} + \eta_2C_n\log(1 + t)\right) \le \frac{1}{t},
\]
for some constants $\eta_1, \eta_2 > 0$. Comparing this inequality with~\eqref{eq:TailgOrlicz}, one can define an Orlicz norm through a function $g_{\eta}(\cdot)$ whose inverse is given by:
\[
g^{-1}_{\eta}(t) := \sqrt{\log(1 + t)} + \eta\log(1 + t),
\]
parametrized by $\eta > 0$. The corresponding Orlicz norm $\norm{\cdot}_{g_{\eta}}$ is exactly the Bernstein-Orlicz norm introduced by \cite{Geer13}. The Generalized Bernstein-Orlicz (GBO) norm is now defined analogously as follows.
\begin{defi}[Generalized Bernstein-Orlicz Norm]\label{def:GBOnorm}
Fix $\alpha > 0$ and $L \ge 0$. Define the function $\Psi_{\alpha, L}(\cdot)$ based on the inverse function
\begin{equation}\label{eq:InversePsi}
\Psi_{\alpha, L}^{-1}(t) := \sqrt{\log(1 + t)} + L\left(\log(1 + t)\right)^{1/\alpha}\quad\mbox{for all}\quad t\ge 0.
\end{equation}
The Generalized Bernstein-Orlicz (GBO) norm of a random variable $X$ is then given by $\norm{X}_{\Psi_{\alpha, L}}$ as in Definition~\ref{def:IncOrliczNorm}.
\end{defi}
\begin{rem}
\hspace{0.03in}It is easy to verify from~\eqref{eq:InversePsi} that $\Psi_{\alpha, L}(\cdot)$ is monotone and $\Psi_{\alpha, L}(0) = 0$ and so, Definition~\ref{def:IncOrliczNorm} is applicable. The function $\Psi_{\alpha, L}(\cdot)$ does not have a closed form expression in general, and is not convex for $\alpha < 1$. But $\norm{\cdot}_{\Psi_{\alpha, L}}$ is a quasi-norm; see Proposition \ref{prop:QuasiNorm} in Appendix \ref{AppSec:PropGBO}.  
The properties proved for the Bernstein-Orlicz norm in~\cite{Geer13} also hold for the GBO norm $\norm{\cdot}_{\Psi_{\alpha, L}}$ even though the function $\Psi_{\alpha, L}(\cdot)$ is not convex for $\alpha < 1$. Several basic properties of the GBO norm, along with equivalent tail and moment bound properties and some maximal inequalities, are presented in Appendix~\ref{AppSec:PropGBO}. 
\end{rem}
\par\smallskip
The ready-to-use concentration inequality results in Section~\ref{sec:Indep} are presented in terms of the~$\norm{\cdot}_{\Psi_{\alpha, L}}$ norm and for this reason, we briefly mention here the precise nature of the \emph{tail behavior captured by the GBO norm.} If $\norm{X}_{\Psi_{\alpha, L}} < \infty$, then
\[
\mathbb{P}\left(|X| \ge \norm{X}_{\Psi_{\alpha, L}}\left\{\sqrt{t} + Lt^{1/\alpha}\right\}\right) \le 2\exp(-t)\quad\mbox{for all}\quad t \ge 0.
\]
So, for $t$ small enough, the survival function of~$X$ behaves like a Gaussian, and for~$t$ larger, the survival function behaves like a Weibull of order $\alpha$. Hence, the results from Section~\ref{sec:Indep} will imply that the tail of a sum of independent sub-Weibull random variables behaves like a \emph{combination} of a Gaussian tail and a Weibull tail.
\subsection{Sub-Weibull Random Vectors}\label{sec:randomvect}
For our applications, we consider the following two definitions of sub-Weibull random vectors.
For any vector $x\in\mathbb{R}^q$, let $x(j)$ represent the $j$-th coordinate of $x$ for all $1\le j\le q$, and let $\norm{x}_r := \left(\sum_{j=1}^q |x(j)|^r\right)^{1/r}$ denote the vector $L_r$-norm of $x$ for any $r \geq 1$. (For $r=2$, we sometimes also refer to the vector $L_2$-norm simply as the Euclidean norm.) 
\begin{defi}[Joint Sub-Weibull Vectors]\label{def:JointWeibull}
A random vector $X\in\mathbb{R}^q$ is said to be {\cred \it jointly} sub-Weibull of order $\alpha > 0$ if for every $\theta\in\mathbb{R}^q$ of unit Euclidean norm, $X^{\top}\theta$ is sub-Weibull of order $\alpha$, and the {\cred \it joint sub-Weibull ($\alpha$) norm} of $X${\cred,  $\norm{X}_{J,\psi_{\alpha}}$ (where the subscript {\it ``$J$''} stands for {\it ``joint''}),} is given by
\[
\norm{X}_{J,\psi_{\alpha}} ~:=~ \sup_{\theta\in\mathbb{R}^q,\, \norm{\theta}_2 = 1}\,\norm{X^{\top}\theta}_{\psi_{\alpha}}.
\]
\end{defi}
\noindent This is one of the most commonly adopted type of tail assumptions on random vectors (especially with $\alpha = 2$); see Section 3.4 of \cite{Vershynin18}. As with random variables, the cases $\alpha = 1, 2$ correspond to sub-exponential and sub-Gaussian random vectors, respectively. 
\begin{defi}[Marginal Sub-Weibull Vectors]\label{def:MarginalWeibull}
A random vector $X\in\mathbb{R}^q$ is said to be {\cred \it marginally} sub-Weibull of order $\alpha > 0$ if for every $1\le j\le q$, $X(j)$ is sub-Weibull of order $\alpha$, and the {\cred \it marginal sub-Weibull ($\alpha$) norm} of $X${\cred, $\norm{X}_{M,\psi_{\alpha}}$ (where the subscript {\it ``$M$''} stands for {\it ``marginal''}),} is given by
\[
\norm{X}_{M,\psi_{\alpha}} ~:=~ \sup_{1\le j\le q}\,\norm{X(j)}_{\psi_{\alpha}}.
\]
\end{defi}
\noindent Clearly, $\norm{X}_{M,\psi_{\alpha}} \le \norm{X}_{J,\psi_{\alpha}}$ for any random vector $X$, and hence, a marginal sub-Weibull property is (much) {\cred \it weaker} than a joint sub-Weibull property. A detailed comparison of the marginal and joint sub-Weibull properties is deferred to the beginning of Section~\ref{sec:Applications}.

\section{Norms of Sums of Independent Random Variables}\label{sec:Indep}
The following sequence of results show the use of the $\Psi_{\alpha,L}$-norm in representing the part sub-Gaussian tail behavior in finite samples for sums of independent random variables when the ingredient random variables are sub-Weibull $(\alpha)$. 
All results in this section are stated for independent random variables that are possibly non-identically distributed. Extensions to the case of dependent random variables also exist in the literature; see \cite{Merv11} and Appendix B of \cite{Kuch18}. The proofs of all the results in this section are given in Appendix~\ref{AppSec:Indep}.

\par\smallskip
The following result can be derived from Theorem 2 of \cite{LAT97}. (Note that the constants here are explicit, but they are not optimized and could possibly be improved.)
\begin{thm}\label{prop:SumNewOrliczVex}
If $X_1, \ldots, X_n$ are independent mean zero random variables 
with $\norm{X_i}_{\psi_{\alpha}} < \infty$ for all $1\le i\le n$ and some $\alpha > 0$, then for any vector $a = (a_1, \ldots, a_n)\in\mathbb{R}^n$, the following bounds hold true:
\[
\norm{\sum_{i=1}^n a_iX_i}_{\Psi_{\alpha, L_n(\alpha)}} \le 2eC(\alpha)\norm{b}_2,
\]
and
\begin{equation}\mathbb{P}\left(\left|\sum_{i=1}^n a_iX_i\right| \ge 2eC(\alpha)\|b\|_2\sqrt{t} + 2eL_n^*(\alpha)t^{1/\alpha}\|b\|_{\beta(\alpha)}\right) \le 2e^{-t}\quad\mbox{for all}\quad t\ge 0, \label{eq:tail-bound-Latala}
\end{equation}
where $b = (a_1\norm{X_1}_{\psi_{\alpha}}, \ldots, a_n\norm{X_i}_{\psi_{\alpha}})\in\mathbb{R}^n$,
\begin{equation*}
C(\alpha) ~:=~ \max\{ \sqrt{2}, 2^{1/\alpha}\} \times \begin{cases}\sqrt{8}e^3(2\pi)^{1/4}e^{1/24}(e^{2/e}/\alpha)^{1/\alpha},&\mbox{if }\alpha < 1,\\
4e + 2(\log 2)^{1/\alpha},&\mbox{if }\alpha \ge 1,
\end{cases}
\end{equation*}
and for $\beta(\alpha) = \infty$ when $\alpha \le 1$ and $\beta(\alpha) = \alpha/(\alpha - 1)$ when $\alpha > 1$,
\begin{equation*}
L_n(\alpha) := \frac{4^{1/\alpha}}{\sqrt{2}\norm{b}_2}\times\begin{cases}\norm{b}_{\beta(\alpha)},&\mbox{if }\alpha < 1,\\
{4e\norm{b}_{\beta(\alpha)}}/{C(\alpha)},&\mbox{if }\alpha \ge 1,
\end{cases}
\end{equation*}
and for~\eqref{eq:tail-bound-Latala}, the quantity $L_n^*(\alpha) = L_n(\alpha)C(\alpha)\|b\|_2/\|b\|_{\beta(\alpha)}$.
\end{thm}
\begin{rem}\label{rem:NormBoundSharpness}\;\;{\cred (Sharpness of Theorem \ref{prop:SumNewOrliczVex}).}\hspace{0.01in}
Theorem~\ref{prop:SumNewOrliczVex} provides a useful generalization of Theorem 2.8.1 of~\cite{Vershynin18} for $\alpha \neq 1$. The transition in our result at $\alpha = 1$ is due to the fact that Weibull random variables are log-convex for $\alpha \le 1$ and log-concave for $\alpha \ge 1.$ It is worth noting that the conclusion of Theorem~\ref{prop:SumNewOrliczVex} cannot be improved in terms of dependence on $a = (a_1, \ldots, a_n)$ and are optimal in the sense that there exists distributions for $X_i$ satisfying $\norm{X_i}_{{\psi}_{\alpha}} \le 1$ for which there is a lower bound matching the upper bound;
see Theorem 2 and Examples 3.2 and 3.3 of \cite{LAT97}.
{\cred In particular, Examples 3.2 and 3.3 of~\cite{LAT97} {\cyan show} 
that the moment bounds implied by Theorem~\ref{prop:SumNewOrliczVex} (via Proposition~\ref{prop:EquivalenceTailMoment}) have matching lower bounds when the random variables $X_1, \ldots, X_n$ satisfy $\mathbb{P}(|X_i| \ge t) = \exp(-t^{\alpha})$ for all $t\ge0$.}
It should also be noted that these optimality results were also derived earlier by \cite{Gluskin95} and \cite{Hitczenko97}. 
{\cred In particular, for $\alpha \ge 1$, the corollary on page 307 of~\cite{Gluskin95} shows that the probability tail bound implied by Theorem~\ref{prop:SumNewOrliczVex} (via Proposition~\ref{prop:EquivalenceTailMoment}) is optimal in that there is a lower bound on the tail probability that only differs from the upper bound by a universal constant. We are not aware of a similar result for $\alpha < 1$. It is worth stressing here that the lower bounds mentioned in this remark should be understood in a minimax sense: there exists a distribution setting for independent random variables $X_1, \ldots, X_n$ for which the bound implied by Theorem~\ref{prop:SumNewOrliczVex} is sharp (i.e., Theorem~\ref{prop:SumNewOrliczVex} cannot be improved without further assumptions).}
\end{rem}


\paragraph{Tail Bounds Scaling with Variance.}
The bound provided by Theorem~\ref{prop:SumNewOrliczVex} is solely in terms of $\norm{X_i}_{\psi_{\alpha}}$. It is clear, however, from the classical central limit theorem (CLT) that asymptotically the distribution of the sum (properly scaled) is determined by the variance of the sum. Although it is impossible to prove an exponential tail bound solely in terms of the variance, we expect at least the Gaussian part of the tail to depend on the variance only. This is the content of the next three results - Theorems \ref{prop:SimilarBernstein}--\ref{prop:BernsteinLargerThan1} (on norm bounds) and Theorem \ref{thm:MaximalTailBound} (on tail bounds). The proofs are based on the techniques of \cite{Adam08}. 

\begin{thm}[{\cred Bounds Scaling with Variance -- the Case $\alpha \le 1$}]\label{prop:SimilarBernstein}
If $X_1, \ldots, X_n$ are independent  mean zero random variables 
with $\norm{X_i}_{\psi_{\alpha}} < \infty$ for all $1\le i\le n$ and some $0 < \alpha \le 1$, then
\[
\norm{\sum_{i=1}^n X_i}_{\Psi_{\alpha, L_n(\alpha)}} \le 2e\sqrt{6}\left(\sum_{i=1}^n \mathbb{E}\left[X_i^2\right]\right)^{1/2},
\]
with
\[
L_n(\alpha) = \frac{4^{1/\alpha}K_{\alpha}C_{\alpha}}{2\sqrt{6}}(\log(n + 1))^{1/\alpha}\left(\sum_{i=1}^n \mathbb{E}\left[X_i^2\right]\right)^{-1/2}\max_{1\le i\le n}\norm{X_i}_{\psi_{\alpha}},
\]
for some constants $C_{\alpha}, K_{\alpha} > 0$ depending only on $\alpha$.
\end{thm}
%
The following result is the analogue of Theorem \ref{prop:SimilarBernstein} for the case $\alpha \ge 1$.
\begin{thm}[{\cred Bounds Scaling with Variance -- the Case $\alpha \ge 1$}]\label{prop:BernsteinLargerThan1}
If $X_1, \ldots, X_n$ are independent  mean zero random variables 
with $\norm{X_i}_{\psi_{\alpha}} < \infty$ for all $1\le i\le n$ and some $\alpha \ge 1$, then
\[
\norm{\sum_{i=1}^n X_i}_{\Psi_{1, L_n(\alpha)}} \le 2e\sqrt{6}\left(\sum_{i=1}^n \mathbb{E}\left[X_i^2\right]\right)^{1/2},
\]
with
\[
L_{n}(\alpha) := \frac{4^{1/\alpha}C_{\alpha}}{2\sqrt{6}}(\log(n + 1))^{1/\alpha}\left(\sum_{i=1}^n \mathbb{E}\left[X_i^2\right]\right)^{-1/2} \max_{1\le i\le n}\norm{X_i}_{\psi_{\alpha}},
\]
for some constant $C_{\alpha} > 0$ depending only on $\alpha$.
\end{thm}
\paragraph{\cred Optimality of Theorems~\ref{prop:SimilarBernstein} and~\ref{prop:BernsteinLargerThan1}.}\label{para:OptimalityVariancebounds}
Theorem \ref{prop:BernsteinLargerThan1} proves a bound on the $\Psi_{1, L_n(\alpha)}$-norm irrespective of how light-tailed the initial random variables are (or in other words, how large $\alpha > 1$ is). Observe that this result reduces to the usual Bernstein's inequality for bounded random variables by taking $\alpha = \infty$. {\cred Bennet's inequality, which is a slight improvement of Bernstein's inequality~\citep{Well17}, is known to be optimal for bounded random variables, as shown in~\citet[Example 2.4]{major2005tail}.} In light of this, it seems not possible to prove Theorem \ref{prop:BernsteinLargerThan1} for a $\Psi_{\alpha, L}$-norm with $\alpha > 1$ {\cred as long as the bound is needed in terms of the variance}. Note further that even though the result uses the $\Psi_{1, L}$-norm, the parameter $L$ behaves as $(\log n)^{1/\alpha}/\sqrt{n}$ with the exponent of $\log n$ being $1/\alpha$ instead of $1$. So, this result cannot be obtained by simply applying Theorem~\ref{prop:SimilarBernstein} with~$\alpha = 1$.


Section 2.2 of \cite{Adam08} provides a counterexample proving that it is not possible to replace the factor $(\log (n + 1))^{1/\alpha}$ by anything of smaller order with only the hypothesis of $\norm{X_i}_{\psi_{\alpha}} < \infty$ if the norm bound is desired to be in terms of the variance itself. {\cred Formally, if we assume a bound of the form
\[
\mathbb{P}\left(\left|\sum_{i=1}^n X_i\right| \ge C\sqrt{t\sum_{i=1}^n \mathbb{E}[X_i^2]} + Ct^{1/\alpha_*}(\log n)^{u}\right) \le 3e^{-t}\quad\mbox{for all }t\ge0,
\]
holds true with some $u\ge 0$ for all independent mean zero random variables $X_1, \ldots, X_n$ satisfying $\|X_i\|_{\psi_{\alpha}} \le 1$, then $u \ge 1/\alpha$. This, again, should be understood in the minimax sense: for the result to hold for all distributions of $X_1, \ldots, X_n$, then $u$ must be at least $1/\alpha$. This follows from Section 2.2 of~\cite{Adam08} by considering (as $r\to\infty$) i.i.d. random variables $X_1 = \varepsilon_1Y_1, \ldots, X_n = \varepsilon_nY_n$ with $\mathbb{P}(Y_i = r^{1/\alpha}) = e^{-r} = 1 - \mathbb{P}(Y_i = 0)$ and $\varepsilon_1, \ldots, \varepsilon_n$ are independent Rademacher random varaibles; we refer the reader to~\cite{Adam08} for more details. Furthermore, Theorems~\ref{prop:SimilarBernstein} and~\ref{prop:BernsteinLargerThan1} can be considered optimal in light of {\cyan the} large deviation results from~\citet[Section III]{bakhshizadeh2020sharp}.}

The main advantage of Theorems~\ref{prop:SimilarBernstein} and~\ref{prop:BernsteinLargerThan1} over Theorem~\ref{prop:SumNewOrliczVex} is the appearance of the variance in the bound, as opposed to the $\norm{\cdot}_{\psi_{\alpha}}$ norm, at the cost of the $\log$ factor in $L_n(\alpha)$ (which also explains the gain in the logarithmic factor mentioned after Theorem 8 of \cite{Geer13}). This distinction \emph{can} impact the convergence rate if $\mathbb{E}(X_i^2)$ is of much smaller order than $\norm{X_i}_{\psi_{\alpha}}^2$; see Remark~\ref{rem:LinearKernel} for an example involving kernel smoothing estimators where this is indeed the case. {\cred Once again, we stress here that Theorem~\ref{prop:SumNewOrliczVex} can lead to a better tail bound if one does not care about dependence of the tail probability of the sum on the variance or if $\sqrt{\mbox{var}(X_i)}$ and $\|X_i\|_{\psi_{\alpha}}$ are of the same order.}

\par\smallskip
Most of our examples in Section \ref{sec:Applications} involve the maximum of many averages. For this reason, we present a generally useful tail bound result for such maximums explicitly as a theorem below, although it is in fact a simple corollary of Theorems \ref{prop:SimilarBernstein} and \ref{prop:BernsteinLargerThan1} (depending on whether $\alpha \leq 1$ or $\alpha \geq 1$). 
For a vector $v\in\mathbb{R}^q$, let $\norm{v}_{\infty}$ denote $\max_{1\le j\le q}|v(j)|$.
\begin{thm}[{\cred Tail Bounds for Maximums using Theorems~\ref{prop:SimilarBernstein}--\ref{prop:BernsteinLargerThan1}}]\label{thm:MaximalTailBound}
Suppose $X_1, \ldots, X_n$ are independent mean zero random vectors in $\mathbb{R}^q$, for any $q \geq 1$, such that for some $\alpha > 0$ and $K_{n,q} > 0$,
\[
\max_{1\le i\le n}\max_{1\le j\le q}\,\norm{X_i(j)}_{\psi_{\alpha}} \le K_{n,q}, \;\; \mbox{and define} \;\; \Gamma_{n,q} := \max_{1\le j\le q}\,\frac{1}{n}\sum_{i=1}^n \mathbb{E}\left[X_i^2(j)\right].
\]
Then for any $t\ge 0,$ with probability at least $1 - 3e^{-t}$,
\begin{align*}
\norm{\frac{1}{n}\sum_{i=1}^n X_i}_{\infty} \le 7\sqrt{\frac{\Gamma_{n,q}(t + \log q)}{n}} + \frac{C_{\alpha}K_{n,q}(\log(2n))^{1/\alpha}(t + \log q)^{1/\alpha^*}}{n},
\end{align*}
where $\alpha^* := \min\{\alpha,1\}$ and $C_{\alpha} > 0$ is some constant depending only on $\alpha$.
\end{thm}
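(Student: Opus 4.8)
The plan is to deduce Theorem~\ref{thm:MaximalTailBound} from the one-dimensional results of Section~\ref{sec:Indep} by a union bound over the $q$ coordinates. First I would fix a coordinate $1\le j\le q$ and apply to the independent mean zero real random variables $X_1(j),\ldots,X_n(j)$ the appropriate one of Theorem~\ref{prop:SimilarBernstein} (when $0<\alpha\le 1$) and Theorem~\ref{prop:BernsteinLargerThan1} (when $\alpha\ge 1$). Either way this bounds a GBO norm of the partial sum $S_j:=\sum_{i=1}^n X_i(j)$ --- namely $\norm{S_j}_{\Psi_{\alpha,L_n^{(j)}(\alpha)}}$ in the first regime and $\norm{S_j}_{\Psi_{1,L_n^{(j)}(\alpha)}}$ in the second --- by $2e\sqrt6\,\bigl(\sum_{i=1}^n\mathbb{E}[X_i^2(j)]\bigr)^{1/2}$, where, crucially, the shape parameter $L_n^{(j)}(\alpha)$ contains the factor $\bigl(\sum_{i=1}^n\mathbb{E}[X_i^2(j)]\bigr)^{-1/2}(\log(n+1))^{1/\alpha}\max_{1\le i\le n}\norm{X_i(j)}_{\psi_\alpha}$.

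Next I would convert each Orlicz bound into a deviation inequality via the tail property of the GBO norm recorded just after Definition~\ref{def:GBOnorm} (which is \eqref{eq:TailgOrlicz} specialized through the inverse \eqref{eq:InversePsi}): for any $s\ge 0$, with probability at least $1-2e^{-s}$,
\[
\Bigl|\tfrac1n S_j\Bigr|\;\le\; 2e\sqrt6\,\sqrt{\frac{s}{n}\cdot\frac1n\sum_{i=1}^n\mathbb{E}[X_i^2(j)]}\;+\;\frac{C_\alpha'\,(\log(n+1))^{1/\alpha}\,\max_{1\le i\le n}\norm{X_i(j)}_{\psi_\alpha}}{n}\,s^{1/\alpha^*},
\]
the point being that the variance factor $\bigl(\sum_{i=1}^n\mathbb{E}[X_i^2(j)]\bigr)^{1/2}$ multiplying $L_n^{(j)}(\alpha)$ cancels the $\bigl(\sum_{i=1}^n\mathbb{E}[X_i^2(j)]\bigr)^{-1/2}$ inside it, leaving the ``heavy-tail'' term free of the variance; here $\alpha^*=\min\{\alpha,1\}$ (the exponent $1/\alpha^*$ being $1/\alpha$ in the $\Psi_\alpha$ regime and $1$ in the $\Psi_1$ regime), and $C_\alpha'>0$ collects $4^{1/\alpha}K_\alpha C_\alpha$ and the remaining numerical constants. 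Bounding $\frac1n\sum_{i=1}^n\mathbb{E}[X_i^2(j)]\le\Gamma_{n,q}$, $\max_{1\le i\le n}\norm{X_i(j)}_{\psi_\alpha}\le K_{n,q}$, and $\log(n+1)\le\log(2n)$ --- all uniformly in $j$ --- then taking $s=t+\log q$ and a union bound over $j=1,\ldots,q$ (each bad event having probability at most $2e^{-(t+\log q)}$, so the total is at most $2e^{-t}\le 3e^{-t}$) yields exactly the claimed simultaneous bound on $\norm{n^{-1}\sum_{i=1}^n X_i}_\infty$.

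As the remark preceding the theorem already indicates, there is no serious obstacle here: the statement is a corollary. The only point needing care is the numerical bookkeeping. In particular, getting the coefficient of $\sqrt{\Gamma_{n,q}(t+\log q)/n}$ down to $7$, rather than the crude $2e\sqrt6\approx 13.3$ that the plain Orlicz-norm packaging gives, is cleanest if one goes back to the two-regime Bernstein-type deviation bound underlying Theorems~\ref{prop:SimilarBernstein}--\ref{prop:BernsteinLargerThan1} (equivalently, splits the event $\{|S_j|\ge a+b\}$ additively into a sub-Gaussian piece and a sub-Weibull piece and controls each with its own sharper one-regime constant, absorbing the Orlicz-to-second-moment conversion factor); the constant $C_\alpha$ in the statement is then just whatever $\alpha$-dependent product this accounting produces, and the $3e^{-t}$ is the $2q\cdot e^{-(t+\log q)}$ from the union bound rounded up. One should also double-check that the case $\alpha\ge1$ genuinely fits the same display: Theorem~\ref{prop:BernsteinLargerThan1} outputs a $\Psi_1$ norm whose shape parameter still carries $(\log(n+1))^{1/\alpha}$, so both the $\log(2n)$ exponent ($1/\alpha$) and the deviation exponent ($1/\alpha^*=1$) match the unified formula.
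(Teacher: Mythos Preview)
Your approach is correct and essentially the paper's own: apply Theorem~\ref{prop:SimilarBernstein} or~\ref{prop:BernsteinLargerThan1} coordinatewise and union-bound over $j=1,\ldots,q$. For the constant $7$, the paper does not split the deviation event as you suggest but instead works directly with the moment bounds \eqref{eq:MomentBoundAlphaLess1} and \eqref{eq:MomentBoundAlphaLarger1} derived inside those proofs and inverts them by Markov's inequality (exactly as in \eqref{eq:MomentToTail}), which gives per-coordinate failure probability $e\cdot e^{-s}$ with leading coefficient $e\sqrt{6}<7$, so that the union bound yields $q\cdot e\cdot e^{-(t+\log q)}=e\cdot e^{-t}<3e^{-t}$.
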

\begin{rem}\;\;(Comparison with Existing Maximal Inequalities).\;\; One of the most important conclusions of Theorem~\ref{thm:MaximalTailBound} is a bound on the expectation of the maximum, which are usually referred to as \emph{maximal inequalities}. In particular, Theorem~\ref{thm:MaximalTailBound} yields
\begin{equation}
\mathbb{E}\left[\left\|\frac{1}{n}\sum_{i=1}^n X_i\right\|_{\infty}\right] ~\le~ C_1\sqrt{\frac{\Gamma_{n,q}\log(eq)}{n}} + C_2(\alpha, K_{n,q})\frac{(\log(2n))^{1/\alpha}(\log(eq))^{1/\alpha^*}}{n}, \label{eq:maximal-Thm-3_point_4}
\end{equation}
for a universal constant $C_1 \ge 0$ and a constant $C_2(\alpha, K_{n,q}) \ge 0$ is a constant depending only on $\alpha$ and $K_{n,q}$. (Here $e\approx 2.71$ represents the natural logarithm constant.) This bound compares favorably with the existing maximal inequalities applicable for this case. For instance, Lemma E.1 of~\cite{Chern17} yields the bound
\begin{align}
& \mathbb{E}\left[\left\|\frac{1}{n}\sum_{i=1}^n X_i\right\|_{\infty}\right] ~\le~ C_1'\sqrt{\frac{\Gamma_{n,q}\log(eq)}{n}} + C_2'(\alpha, K_{n,q})\frac{(\log(eqn))^{1/\alpha}\log(eq)}{n}, \label{eq:maximal-Chernozhukov}
\end{align}
where the constants $C_1'$ and $C_2'(\alpha, K_{n,q})$ are similar to $C_1, C_2(\alpha, K_{n,q})$ in~\eqref{eq:maximal-Thm-3_point_4}; see also Lemmas 3.4.2 and 3.4.3 of~\cite{VdvW96} for similar maximal inequalities. \eqref{eq:maximal-Chernozhukov} is the best possible inequality obtained from Lemma E.1 of~\cite{Chern17} 
since the quantity $\sqrt{\mathbb{E}[M^2]}$ (in the referenced paper) can be bounded only by $(\log(eqn))^{1/\alpha}$ under the assumption of Theorem~\ref{thm:MaximalTailBound}. Comparing~\eqref{eq:maximal-Thm-3_point_4} and~\eqref{eq:maximal-Chernozhukov}, we note that the requirements for the average to converge to zero in the respective displays are given by: 
\[
\max\{\log(eq), (\log(2n))^{1/\alpha}(\log(eq))^{1/\alpha^*}\} = o(n) \quad \mbox{for \eqref{eq:maximal-Thm-3_point_4}},
\]
and
\[
\max\{\log(eq), (\log(eqn))^{1/\alpha}(\log(eq))\} = o(n) \quad \mbox{for \eqref{eq:maximal-Chernozhukov}}.
\]
The former is \emph{strictly better} than the latter, especially if $\log(eq) = O(n^{\gamma})$ for some $\gamma$. These two conditions match only when the random vectors are uniformly bounded vis-a-vis $\alpha = \infty$. Lemma E.1 of~\cite{Chern17} is improved by Proposition B.1 in~\cite{kuchibhotla2019least} which, in fact, is built on an earlier version of the current paper.
\end{rem}

\def\R{\mathbb{R}}
\def\P{\mathbb{P}}
\def\E{\mathbb{E}}
\def\psihat{\widehat{\psi}}
\def\psialpha{\psi_{\alpha}}

\begin{rem}\label{rem:LinearKernel}\;\;(Tail Bounds for Linear Kernel Averages: An Illustration of Theorem \ref{thm:MaximalTailBound}).\;\;
An important illustration of some of the main features of our results is in the derivation of (pointwise) deviation bounds for linear kernel average estimators (LKAEs) involving sub-Weibull variables. Such estimators are encountered in kernel smoothing based methods for non-parametric regression and density estimation.

Let $\{(Y_i, X_i): i =1, \hdots, n\}$ denote $n$ i.i.d. realizations of a random vector $(Y,X)$ having finite second moments, where $Y \in \mathbb{R}$ and $X \in \mathbb{R}^p$. Assume for simplicity that $X$ has a Lebesgue density $f(\cdot)$. Let $m(x) := \E(Y | X = x)$ and $\psi(x) := m(x) f(x)$. Let $K(\cdot): \R^p \rightarrow \R$ denote any kernel function (e.g. the Gaussian kernel on $\R^p$). Consider the following LKAE of $\psi(x)$, given by
\begin{equation*}
\psihat(x) \; := \; \frac{1}{nh^p} \sum_{i=1}^n Y_i K \left( \frac{X_i - x}{h} \right), \;\; \mbox{where} \; h \equiv h_n > 0 \; \mbox{is the bandwidth}.
\end{equation*}
Suppose $\norm{Y}_{\psialpha} \leq C_Y$ for some $\alpha, C_Y > 0$ and $g(x) := \E\left(Y^2 | X=x\right)f(x)$ is bounded, i.e. $0 \leq g(x) \leq M_Y$ for all $x$, for some constant $M_Y \geq 0$. Assume further that $K(\cdot)$ is bounded and square integrable, i.e. for some constants $C_K, R_K \geq 0$, $|K(x)| \leq C_K $ for all $x$ and $\int_{\R^p} K^2(x) dx \leq R_K$. Then, for any fixed $x \in \R^p$ and any $t \geq 0$, we have with probability at least $1 - 3 e^{-t}$,
\begin{equation}
\left| \psihat(x) - \E\{ \psihat(x) \} \right| \;\; \leq \;\; \frac{7 \,\Gamma_{Y,K} }{\sqrt{nh^p}} \sqrt{t} \; + \; \frac{C_{\alpha}\Upsilon_{Y,K}(\log(2n))^{1/\alpha}}{nh^p}t^{1/\alpha^*}, \label{bound:LinearKernel}
\end{equation}
where $\Gamma_{Y,K} := (M_Y R_K)^{\frac{1}{2}}$, $\Upsilon_{Y,K} := C_Y C_K$, $\alpha^* := \min\{\alpha,1\}$ and $C_{\alpha} > 0$ is some constant depending only on $\alpha$. \eqref{bound:LinearKernel} provides a ready-to-use deviation bound for sub-Weibull LKAEs with a convergence rate of $(nh^p)^{-1/2}$ for any $\alpha > 0$, assuming $nh^p \rightarrow \infty$ as $n \rightarrow \infty$.
Note that to extract this (sharp) rate, it is \emph{necessary} to exploit that $h^{-p}Y K\{ (X - x)/h \}$ has a variance of much \emph{smaller} order than its squared $\norm{\cdot}_{\psialpha}$ norm. The proof of \eqref{bound:LinearKernel} is given in Appendix~\ref{AppSec:Indep}.
Under standard smoothness conditions and a $q$-th order kernel $K(\cdot)$, for some $q \geq 2$, it can be shown that $|\E\{\psihat(x)\} - \psi(x)| \leq O(h^q)$ uniformly in $x$ (see, for instance, \citet{Hansen08} and references therein) and hence, a tail bound for $|\psihat(x) - \psi(x)|$ can also be obtained. The result provided here is mostly for illustration purposes and can possibly be extended in several directions; see Section \ref{sec:Conclusions} for further discussion.
\end{rem}

\paragraph{Orlicz Norms of Products of Random Variables.}\label{rem:ProductVariables}

In all our results, the random variables are only required to be sub-Weibull of some order $\alpha > 0$. In many applications, one may need to deal with products of two or more such sub-Weibull variables. The following result (proved as Proposition \ref{prop:Product} in Appendix~\ref{AppSec:Indep})
provides a H\"{o}lder type inequality establishing a bound on the $\norm{\cdot}_{\psi_{\alpha}}$ norm of such product variables. The two examples mentioned in the introduction can also be easily dealt with using this result.
If $W_i,$ $1\le i\le k$, are (possibly dependent) random variables satisfying $\norm{W_i}_{\psi_{\alpha_i}} < \infty$ for some $\alpha_i > 0$, then
\begin{equation}\label{eq:Product}
\norm{\prod_{i=1}^k W_i}_{\psi_{\beta}} \le \prod_{i=1}^k \norm{W_i}_{\psi_{\alpha_i}}\quad\mbox{where}\quad \frac{1}{\beta} := \sum_{i=1}^k \frac{1}{\alpha_i}.
\end{equation}
See also Lemma 2.7.7 of \cite{Vershynin18} for a similar result. 

\paragraph{Tail Bounds for Powers of Sub-Gaussians.}\label{rem:PowerSubGaussian}{\cred{\cyan As a simple application of the above discussion on products, coupled with our general results in this section, one can obtain tail bounds for powers of sub-Gaussians which are often useful in practice.} For example, {\cyan consider} $X_i = \varepsilon_i |G_i|^{r}, 1\le i\le n$ with $r\ge0$, Rademacher $\varepsilon_i$ and sub-Gaussian $G_i$. {\cyan Then, using \eqref{eq:Product}, $X_i$'s} satisfy $\|X_i\|_{\psi_{2/r}} \le \mathfrak{C} < \infty$ for some constant $\mathfrak{C}$ whenever $\|G_i\|_{\psi_2} \le \mathfrak{C}$. For such random variables, one can apply Theorem~\ref{prop:SumNewOrliczVex} or {\cyan Theorems~\ref{prop:SimilarBernstein}--\ref{prop:BernsteinLargerThan1}} 
{\cyan to obtain a tail bound}.
Note that $\|X_i\|_{\psi_{2/r}} = \|G_i\|_{\psi_2} \le \mathfrak{C}$ and $\mathbb{E}[X_i^2] = \mathbb{E}[G_i^{2r}] \le \mathfrak{C}r^r$. 
This implies that the standard deviation and the $\psi_{2/r}$-norm of the random variables are of the same order if $r$ is treated as a constant and the $2r$-th moment of $G_i$ is of the same order as $\|G_i\|_{\psi_2}^{2r}$. {\cyan Then,} Theorem~\ref{prop:SumNewOrliczVex} implies
\begin{equation}\label{eq:PowerSubgaussianTailBound}
\mathbb{P}\left(\left|\sum_{i=1}^n \varepsilon_i|G_i|^r\right| \ge \mathfrak{C}_r(nt)^{1/2} + \mathfrak{C}_rt^{r/2}n^{(1-r/2)_+}\right) \le 2e^{-t}\quad\mbox{for all}\quad t\ge0.
\end{equation}
Here, $\mathfrak{C}_r$ is a constant depending only on $r$ and $(u)_+ = \max\{u, 0\}$. In this case, {\cyan Theorems~\ref{prop:SimilarBernstein}--\ref{prop:BernsteinLargerThan1}} 
may yield a sub-optimal result because it does not account for the fact that the standard deviation and the $\psi_{2/r}$-norm are of the same order. If{\cyan, however, the} $2r$-th moments of $G_i$'s are not of the same order as $\|G_i\|_{\psi_2}^{2r}$, then Theorem~\ref{prop:SimilarBernstein} {\cyan or \ref{prop:BernsteinLargerThan1} (as the case may be)} yields a {\it better} tail bound. {\cyan Finally, note that we consider the symmetrized form  $\varepsilon_i |G_i|^{r}$ involving the Rademacher $\epsilon_i$'s here to ensure the random variables are all mean zero. A similar bound as \eqref{eq:PowerSubgaussianTailBound} continues to hold if $\varepsilon_i |G_i|^{r}$ is replaced by $|G_i|^{r} - \mathbb{E}(|G_i|^{r})$. Furthermore, the form $|G_i|^{r}$ with absolute value is considered to ensure it is well defined for any $r \ge 0$. A similar bound as \eqref{eq:PowerSubgaussianTailBound} continues to hold for $G_i^{r} - \mathbb{E}(G_i^{r})$ whenever $r$ is any positive integer.}} 


\section{Applications in High Dimensional Statistics}\label{sec:Applications}

\paragraph{\cred Outline.}\label{para:Sec4Outline}
In this section, we study in detail the four fundamental statistical applications mentioned in the introduction{\cred, through Sections \ref{sec:ElementWiseMax}--\ref{sec:HDLinReg}. Below we first provide a {\it high-level organization} 
-- in terms of the problems considered in each sub-section, and pointers to the corresponding main results and key discussions. A more detailed outline for each is provided within 
the respective sub-sections themselves.
\begin{enumerate}
\item Section \ref{sec:ElementWiseMax}  -- {\it Covariance matrix estimation in maximum elementwise norm}. (Main results: Theorems \ref{cor:Covariance} and Theorem \ref{thm:CovarianceMaxNorm} (in Section \ref{rem:CovarianceMaxNorm}); key discussions: Remarks \ref{rem:ExpectationBound} and \ref{rem:Boot}.)

\item Section \ref{subsec:SubMatrix} -- {\it Covariance matrix estimation in maximum $k$-sub-matrix operator norm, and the res- tricted isometry property (RIP)}. (Main result: Theorem \ref{cor:RIPBoundUnified}; key discussions: Remarks \ref{rem:RIPRates}--\ref{rem:RIPLiteratureComparison}.)

\item Section \ref{sec:RECondition} -- {\it The restricted eigenvalue (RE) and restricted strong convexity (RSC) conditions}. (Main result: Theorem \ref{cor:REBoundUnified}; key discussions: Remarks \ref{rem:Wang-Tewari}--\ref{rem:REExpTails}, as well as the results and associated discussions in Section \ref{rem:RECond} on verification of the RE condition for general sub-Weibulls.)

\item Section \ref{sec:HDLinReg} -- {\it High dimensional linear regression via Lasso}. (Main results: Theorems \ref{thm:LassoRate} and \ref{thm:LassoPoly}; key discussions: Remarks \ref{rem:RateLasso} and \ref{rem:RateLassoPoly}, as well as the general oracle inequality in Remark \ref{rem:LassoExtensions}.)
\end{enumerate}}


\vspace{-0.1in}
\paragraph{{\cred A Discussion on Sub-Weibull Random Vectors: Joint vs. Marginal.}} Before proceeding to these applications, we provide a brief discussion that suggests that for random vectors the joint sub-Weibull property (Definition \ref{def:JointWeibull}), although commonly adopted in the literature (especially for the sub-Gaussian case; e.g., see \citet{Vershynin18}), is a much more restrictive assumption than the marginal one (Definition \ref{def:MarginalWeibull}). A careful examination of the joint sub-Weibull property implies an ``almost independence'' restriction on the coordinates for a dimension-free bound on the joint sub-Weibull norm.

As a simple (albeit a bit extreme) example, consider the random vector $X\in\mathbb{R}^q$ where all the coordinates are exactly the same $X(1) = \cdots = X(q)$. In this case, it is clear that
\begin{equation}\label{eq:PolyDep}
\norm{X}_{J, \psi_{\alpha}} = \sup_{\theta\in\mathbb{R}^q,\, \norm{\theta}_2 = 1}\,\norm{\theta}_1\norm{X(1)}_{\psi_{\alpha}} = \sqrt{q}\norm{X(1)}_{\psi_{\alpha}}.
\end{equation}
Although this is a pathological example, it shows that if the coordinates of $X$ are highly dependent, then the random vector \emph{cannot} have a ``small'' joint sub-Weibull norm; see Section 3.4 of \cite{Vershynin18} for a similar discussion. For all the high dimensional applications we consider, the (polynomial) dependence on the dimension in~\eqref{eq:PolyDep} can render the rates useless. Note that even though a Gaussian vector $X \in \R^q$ is jointly sub-Gaussian, $\norm{X}_{J,\psi_2}$
  will depend on the maximum eigenvalue of $\Sigma := \mbox{Cov} (X)$, which may not be dimension-free if $X$ has correlated components (e.g., if $\Sigma$ is an equicorrelation matrix). 

The ``almost independence'' restriction implied by the joint sub-Weibull property may not necessarily be satisfied in practice and it is also hard to find results for high dimensional statistical methods in the literature under \emph{marginal} sub-Gaussian/sub-exponential tails. So, we consider both the marginal and the joint sub-Weibull assumptions in deriving the tail bounds as well as the rates of convergence in all the following statistical applications.

\subsection{Covariance Matrix Estimation: Maximum Elementwise Norm}\label{sec:ElementWiseMax}

\paragraph{{\cred Outline.}}\label{para:Sec4.1Outline} {\cred \hspace{-0.05in}In this section, we consider concentration properties of covariance matrices for sub-Weibulls under the maximum elementwise norm, which plays a crucial role in various high dimensional inference problems as well as in bootstrap. Our main result here is Theorem \ref{cor:Covariance} (along with Theorem \ref{thm:CovarianceMaxNorm} in Section \ref{rem:CovarianceMaxNorm} that further allows for data dependent centering). It proves a finite sample tail bound 
 under the assumption of only {\it marginally} sub-Weibull $(\alpha)$ ingredient random vectors. Remark~\ref{rem:ExpectationBound} provides useful discussions on its implications and shows, in particular, the rate of convergence to be $\sqrt{\log p/n}$ if $\log p = o(n^{\alpha/(4-\alpha)})$. 
This rate can be easily shown to be optimal in case the random vectors are standard multivariate Gaussian. 
Finally, we discuss applications of these results in sparse covariance matrix estimation (Remark \ref{rem:SparseCov}) and in establishing consistency of bootstrap (Remark \ref{rem:Boot}) for (high dimensional) marginally sub-Weibull random vectors. Below we introduce the problem setup, followed by our results.} 

\par\smallskip
Suppose $X_1$ $,\hdots, X_n$ are independent random vectors in $\mathbb{R}^p$. Define the (gram) matrices
\begin{equation}\label{eq:CovarianceDefinition}
\hat{\Sigma}_n := \frac{1}{n}\sum_{i=1}^n X_iX_i^{\top}\quad\mbox{and}\quad \Sigma_n := \frac{1}{n}\sum_{i=1}^n \mathbb{E}\left[X_iX_i^{\top}\right].
\end{equation}
Note that $\hat{\Sigma}_n$ is unbiased for $\Sigma_n$. Assuming that $X_i$'s have mean $0$, $\Sigma_n$ is also the covariance matrix of the $\sqrt{n}$-scaled sample mean, $\sqrt{n}\bar{X}_n $, and $\hat{\Sigma}_n$ is a natural estimator of $\Sigma_n$. 

Define the \emph{elementwise maximum norm} of $\hat{\Sigma}_n - \Sigma_n$ as
\begin{align*}
\Delta_n := \vertiii{\hat{\Sigma}_n - \Sigma_n}_{\infty}
&= \max_{1\le j\le k\le p}\,\left|\frac{1}{n}\sum_{i=1}^n \left\{X_i(j)X_i(k) - \mathbb{E}\left[X_i(j)X_i(k)\right]\right\}\right|.
\end{align*}
As shown in Remark 4.1 of~\cite{Chern17}, it is necessary to control $\Delta_n$, the elementwise maximum norm between the empirical and population covariance matrices, to establish consistency of the multiplier bootstrap.

Theorem \ref{cor:Covariance} below (proved in Appendix~\ref{AppSec:CovarianceMaxNorm}), the main result of this section, controls $\Delta_n$ under only
a \emph{marginal} sub-Weibull $(\alpha)$ assumption.
Only the case $\alpha \le 2$ is considered here (the case $\alpha > 2$ can be derived similarly from Theorems~\ref{prop:BernsteinLargerThan1} and \ref{thm:MaximalTailBound}). Recall Definition~\ref{def:MarginalWeibull}.
\begin{thm}\label{cor:Covariance}
Let $X_1, \ldots, X_n$ be independent {\cred marginally sub-Weibull} random vectors in $\mathbb{R}^p$ satisfying
\begin{equation}\label{assump:Covariance}
\max_{1\le i\le n}\norm{X_i}_{M,\psi_{\alpha}} \le K_{n,p}  < \infty\quad\mbox{for some}\quad 0 < \alpha \le 2.
\end{equation}
Fix $n,p \ge 1$. Then for any $t\ge 0$, with probability at least $1 - 3e^{-t}$,
\[
\Delta_n \le 7A_{n,p}\sqrt{\frac{t + 2\log p}{n}} + \frac{C_{\alpha}K_{n,p}^2(\log(2n))^{2/\alpha}(t + 2\log p)^{2/\alpha}}{n},
\]
where $C_{\alpha} > 0$ is a constant depending only on $\alpha$, and $A_{n,p}^2$ is given by
\[
A_{n,p}^2 := \max_{1\le j\le k\le p}\,\frac{1}{n}\sum_{i=1}^n \mathrm{Var}\left(X_i(j)X_i(k)\right).
\]
\end{thm}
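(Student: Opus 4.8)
The plan is to deduce Theorem~\ref{cor:Covariance} directly from the maximal tail bound of Theorem~\ref{thm:MaximalTailBound}, applied not to the $X_i$ themselves but to the centered product variables. For $1 \le j \le k \le p$ set
\[
Z_i(j,k) \;:=\; X_i(j)X_i(k) - \mathbb{E}\left[X_i(j)X_i(k)\right],
\]
and regard $Z_i := (Z_i(j,k))_{1\le j\le k\le p}$ as a random vector in $\mathbb{R}^q$ with $q := p(p+1)/2 \le p^2$. Each $Z_i(j,k)$ has mean zero by construction (so Theorem~\ref{thm:MaximalTailBound} applies even though the $X_i$ need not be centered), and $\Delta_n = \norm{n^{-1}\sum_{i=1}^n Z_i}_{\infty}$ over this index set, so the quantity of interest is exactly of the form handled by that theorem.

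The first substantive step is to bound the relevant Orlicz norm of the summands. Since $0 < \alpha \le 2$, we have $\alpha/2 \le 1$, and by the H\"older-type product inequality~\eqref{eq:Product} of Remark~\ref{rem:ProductVariables},
\[
\norm{X_i(j)X_i(k)}_{\psi_{\alpha/2}} \;\le\; \norm{X_i(j)}_{\psi_{\alpha}}\norm{X_i(k)}_{\psi_{\alpha}} \;\le\; K_{n,p}^2 .
\]
To handle the centering I would combine the quasi-triangle inequality for $\norm{\cdot}_{\psi_{\alpha/2}}$ with the elementary facts from the Orlicz-norm calculus in Appendix~\ref{AppSec:PropGBO}, namely that $\norm{c}_{\psi_{\alpha/2}} = c/(\log 2)^{2/\alpha}$ for a deterministic $c \ge 0$ and that $\mathbb{E}|Z| \le c_0\norm{Z}_{\psi_{\alpha/2}}$; this yields $\max_{1\le i\le n}\max_{1\le j\le k\le p}\norm{Z_i(j,k)}_{\psi_{\alpha/2}} \le c_{\alpha}K_{n,p}^2$ for a constant $c_\alpha$ depending only on $\alpha$.

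It then remains to identify the two terms. The variance proxy in Theorem~\ref{thm:MaximalTailBound} applied to the $Z_i$'s is
\[
\Gamma_{n,q} \;=\; \max_{1\le j\le k\le p}\frac{1}{n}\sum_{i=1}^n \mathbb{E}\left[Z_i(j,k)^2\right] \;=\; \max_{1\le j\le k\le p}\frac{1}{n}\sum_{i=1}^n \mbox{Var}\left(X_i(j)X_i(k)\right) \;=\; A_{n,p}^2 ,
\]
and $\log q \le 2\log p$. Applying Theorem~\ref{thm:MaximalTailBound} with sub-Weibull order $\alpha/2$ — so that its $\alpha^* = \min\{\alpha/2,1\} = \alpha/2$ and both exponents become $1/(\alpha/2) = 2/\alpha$ — with $K_{n,q} = c_\alpha K_{n,p}^2$ and using $t + \log q \le t + 2\log p$ in both summands, we obtain, with probability at least $1 - 3e^{-t}$,
\[
\Delta_n \;\le\; 7 A_{n,p}\sqrt{\frac{t + 2\log p}{n}} \;+\; \frac{C_{\alpha}\, K_{n,p}^2(\log(2n))^{2/\alpha}(t + 2\log p)^{2/\alpha}}{n},
\]
where the constant $C_\alpha$ absorbs $c_\alpha$ and the constant from Theorem~\ref{thm:MaximalTailBound}; this is the claimed bound. (For $\alpha > 2$ one would argue identically, invoking Theorems~\ref{prop:BernsteinLargerThan1} and~\ref{thm:MaximalTailBound}, since then $\alpha/2 > 1$.)

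The only step requiring genuine care is the centering bound for $\norm{\cdot}_{\psi_{\alpha/2}}$: because $\psi_{\alpha/2}$ is not convex when $\alpha < 2$, subadditivity holds only up to a constant, so one must track the quasi-norm constant together with the deterministic-shift identity and the moment-versus-Orlicz comparison simultaneously — all of which are recorded among the basic properties of the GBO/Orlicz norms in Appendix~\ref{AppSec:PropGBO}. Everything else — counting the $p(p+1)/2$ index pairs, identifying $\Gamma_{n,q}$ with $A_{n,p}^2$, and matching the exponents $1/(\alpha/2) = 2/\alpha$ — is routine bookkeeping.
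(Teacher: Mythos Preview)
Your proposal is correct and follows essentially the same route as the paper: bound $\norm{X_i(j)X_i(k)}_{\psi_{\alpha/2}}\le K_{n,p}^2$ via the product inequality~\eqref{eq:Product}, then apply Theorem~\ref{thm:MaximalTailBound} with $q\le p^2$ to the centered products. You are in fact more explicit than the paper about the centering step --- the paper's proof simply records the product bound and invokes Theorem~\ref{thm:MaximalTailBound}, leaving the passage from $X_i(j)X_i(k)$ to $Z_i(j,k)$ (and the attendant quasi-norm constant) absorbed into $C_\alpha$ without comment.
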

\begin{rem}\label{rem:ExpectationBound}\;\;(Rate of Convergence).\;\;
Firstly, we reiterate that Theorem \ref{cor:Covariance} {\cred \it only} requires a marginal sub-Weibull assumption on the $X_i$'s, as in \eqref{assump:Covariance}. Next, it is clear from Theorem~\ref{cor:Covariance} that the rate of convergence of $\Delta_n$ is given by
\[
\Delta_n = O_p\left(\max\left\{A_{n,p}\sqrt{\frac{\log p}{n}}, K_{n,p}^2\frac{(\log n)^{2/\alpha}(\log p)^{2/\alpha}}{n}\right\}\right).
\]
Thus if $(\log p)^{2/\alpha - 1/2} = o(\sqrt{n}(\log n)^{-2/\alpha})$, then $\Delta_n = O_p\left(A_{n,p}\sqrt{\log p / n}\right)$.
It is easy to verify under assumption~\eqref{assump:Covariance} that $A_{n,p} \le C_{\alpha}K_{n,p}^2$; see Proposition 2.5.2 of \cite{Vershynin18} for a proof. Note that if $\alpha = 2$, i.e. $X_i$'s are marginally sub-Gaussian, then the (known) rate of convergence is $\sqrt{\log p/n}$. Thus, the key implication of the above calculations is that the \emph{rate of convergence can match that of the sub-Gaussian case} for a wide range of $\alpha > 0$. This is the main importance of the tail bounds stated in Section~\ref{sec:Indep} and the \emph{same phenomenon is observed in all 
subsequent applications {\cred in Sections \ref{subsec:SubMatrix}--\ref{sec:HDLinReg}} too.}
Also, it is clear that the same result continues to hold under a (stronger) joint sub-Weibull assumption.
\end{rem}
\begin{rem}\;\;(Application to Coupling Inequality).\;\;
The quantity $\Delta_n$ also appears in a coupling inequality for the maximum of a sum of random vectors. The coupling inequality refers to bounding
\[
\left|\max_{1\le j\le p}\,\frac{1}{\sqrt{n}}\sum_{i=1}^n X_i(j) - \max_{1\le j\le p}\,\frac{1}{\sqrt{n}}\sum_{i=1}^n Z_i(j)\right|,
\]
where $X_i\in\mathbb{R}^p$ are mean zero and $Z_i \sim N_p(0, \mathbb{E}\left[X_iX_i^{\top}\right])$ constructed on the same probability space as $X_i$'s. For this quantity to converge in probability to zero, Theorem 4.1 of \cite{Chern14} requires $\Delta_n$ to converge to zero, among other terms.
\end{rem}
\subsubsection{Gram Matrix to Covariance Matrix (Accounting for Centering)}\label{rem:CovarianceMaxNorm}
The quantity $\Delta_n$ only measures the difference between the sample and the population \emph{gram matrices} that involve the \emph{uncentered} $X_i$'s, and this is important in applications involving linear regression since only the gram matrix directly appears there and not the covariance matrix. In some applications, however, it is of interest to deal with the \emph{covariance matrices}
\begin{equation}\label{eq:CovarianceMatrices}
\begin{split}
\hat{\Sigma}_n^* &:= \frac{1}{n}\sum_{i=1}^n \left(X_i - \bar{X}_n\right)\left(X_i - \bar{X}_n\right)^{\top}, \quad \mbox{and} \\
\Sigma_n^* &:= \frac{1}{n}\sum_{i=1}^n \mathbb{E}\left[\left(X_i - \bar{\mu}_n\right)\left(X_i - \bar{\mu}_n\right)^{\top}\right],
\end{split}
\end{equation}
where
$\bar{X}_n := \sum_{i=1}^n X_i/n$ and $\bar{\mu}_n := \mathbb{E}\left[\bar{X}_n\right] = \sum_{i=1}^n \mathbb{E}\left[X_i\right]/n.$
Note, however, that $\Sigma_n^*$ is \emph{not} the variance of $\bar{X}_n$ unless $\mu_i = \bar{\mu}_n$ for all $i$. Define the maximum elementwise norm error between the sample and population covariance matrices $\hat{\Sigma}_n$ and $\Sigma_n$, respectively, as 
\[
\Delta_n^* := \vertiii{\hat{\Sigma}_n^* - \Sigma_n^*}_{\infty}.
\]
Theorems~\ref{cor:Covariance} and~\ref{thm:MaximalTailBound} together imply the following result (proved in Appendix~\ref{AppSec:CovarianceMaxNorm}) for $\Delta_n^*$.
\begin{thm}\label{thm:CovarianceMaxNorm}
Under the setting of Theorem~\ref{cor:Covariance}, for any $t\ge 0$, with probability at least $1 - 6e^{-t},$
\[
\Delta_n^* \le 7A_{n,p}^*\sqrt{\frac{t + 2\log p}{n}} + \frac{C_{\alpha}K_{n,p}^2(\log(2n))^{2/\alpha}(t + 2\log p)^{2/\alpha}}{n},
\]
where
\begin{align*}
A_{n,p}^* &:= \max_{1\le j\le k\le p}\left(\frac{1}{n}\sum_{i=1}^n \mathrm{Var}\left[(X_i(j) - \bar{\mu}_n(j))(X_i(k) - \bar{\mu}_n(k))\right]\right)^{1/2}.
\end{align*}
\end{thm}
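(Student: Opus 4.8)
The plan is to reduce the statement to Theorems~\ref{cor:Covariance} and~\ref{thm:MaximalTailBound} via the standard ``sample mean versus population mean'' decomposition, the only extra work being some Orlicz-norm bookkeeping and a bound on a quadratic remainder in the sample mean. Put $\mu_i := \mathbb{E}[X_i]$, $\bar\mu_n := n^{-1}\sum_{i=1}^n\mu_i$, and let $Y_i := X_i - \bar\mu_n$, which are still independent (the shift $\bar\mu_n$ is deterministic), with $\bar Y_n := n^{-1}\sum_{i=1}^n Y_i = \bar X_n - \bar\mu_n$. Since $Y_i - \bar Y_n = X_i - \bar X_n$, the identity $n^{-1}\sum_i(Z_i-\bar Z)(Z_i-\bar Z)^\top = n^{-1}\sum_i Z_iZ_i^\top - \bar Z\bar Z^\top$ gives $\hat\Sigma_n^* = n^{-1}\sum_i Y_iY_i^\top - \bar Y_n\bar Y_n^\top$, while $\Sigma_n^* = n^{-1}\sum_i\mathbb{E}[Y_iY_i^\top]$ by definition. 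Subtracting,
\[
\hat\Sigma_n^* - \Sigma_n^* \;=\; \frac1n\sum_{i=1}^n\bigl(Y_iY_i^\top - \mathbb{E}[Y_iY_i^\top]\bigr) \;-\; \bar Y_n\bar Y_n^\top \;=:\; T_1 - T_2,
\]
so that $\Delta_n^* \le \vertiii{T_1}_\infty + \vertiii{T_2}_\infty$, and the two pieces can be handled separately.

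For $T_1$ I would simply invoke Theorem~\ref{cor:Covariance} applied to the independent vectors $Y_1,\dots,Y_n$: that theorem makes no mean-zero assumption, and $\vertiii{T_1}_\infty$ is exactly the elementwise gram-matrix error associated with $\{Y_i\}$. The variance proxy there becomes $\max_{j\le k}n^{-1}\sum_i\mathrm{Var}(Y_i(j)Y_i(k))$, which is precisely $(A_{n,p}^*)^2$, and the marginal sub-Weibull bound transfers since $|\bar\mu_n(j)| \le n^{-1}\sum_i\norm{X_i(j)}_1 \le C_\alpha K_{n,p}$ by the moment--Orlicz equivalence~\eqref{eq:MomentSubWeibull}, whence $\norm{Y_i(j)}_{\psi_\alpha} \le \norm{X_i(j)}_{\psi_\alpha} + |\bar\mu_n(j)|\,\norm{1}_{\psi_\alpha} \le C_\alpha K_{n,p}$ and $\max_i\norm{Y_i}_{M,\psi_\alpha}\le C_\alpha K_{n,p}$. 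Since $\alpha\le 2$, Theorem~\ref{cor:Covariance} then delivers, with probability at least $1-3e^{-t}$, exactly
\[
\vertiii{T_1}_\infty \;\le\; 7A_{n,p}^*\sqrt{\frac{t+2\log p}{n}} + \frac{C_\alpha K_{n,p}^2(\log(2n))^{2/\alpha}(t+2\log p)^{2/\alpha}}{n}.
\]

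For $T_2$, note $\vertiii{T_2}_\infty = \norm{\bar Y_n}_\infty^2$ and that $\bar Y_n = n^{-1}\sum_i(X_i-\mu_i)$ is an average of independent \emph{mean-zero} sub-Weibull$(\alpha)$ vectors with $\max_{i,j}\norm{X_i(j)-\mu_i(j)}_{\psi_\alpha}\le C_\alpha K_{n,p}$ and variance proxy $\Gamma_{n,p}=\max_j n^{-1}\sum_i\mathrm{Var}(X_i(j))\le C_\alpha K_{n,p}^2$. Theorem~\ref{thm:MaximalTailBound} then gives, with probability at least $1-3e^{-t}$,
\[
\norm{\bar Y_n}_\infty \;\le\; 7\sqrt{\frac{\Gamma_{n,p}(t+\log p)}{n}} + \frac{C_\alpha K_{n,p}(\log(2n))^{1/\alpha}(t+\log p)^{1/\alpha^*}}{n}, \qquad \alpha^*:=\min\{\alpha,1\}.
\]
Squaring, and using $2/\alpha\ge 1$ (so $(t+2\log p)^{2/\alpha}\ge t+2\log p$, which is legitimate since the claim is vacuous unless $t>\log 6>1$), one checks that $\norm{\bar Y_n}_\infty^2$ is dominated by $C_\alpha K_{n,p}^2(\log(2n))^{2/\alpha}(t+2\log p)^{2/\alpha}/n$, i.e.\ by the second term of the target bound; the extra factor $1/n$ produced by squaring the second term of $\norm{\bar Y_n}_\infty$ is what makes this comparison work. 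Intersecting the $T_1$-event with the $\bar Y_n$-event (each of probability at least $1-3e^{-t}$) yields probability at least $1-6e^{-t}$, and adding the two bounds finishes the proof.

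The genuine content beyond Theorem~\ref{cor:Covariance} is slim: one needs only the bookkeeping showing that re-centering at the possibly nonzero population mean $\bar\mu_n$ inflates the relevant $\norm{\cdot}_{\psi_\alpha}$ and variance quantities by at most constant factors, and the verification that the quadratic sample-mean remainder $\norm{\bar Y_n}_\infty^2$ is of smaller order than the linear leading term. The latter is where $\alpha\le 2$ and $\alpha^*=\min\{\alpha,1\}$ enter; for $1<\alpha\le 2$ the domination is cleanest in the regime $t+\log p = O(n)$ that is implicit in all the rates quoted elsewhere in the paper. I expect this remainder-domination step, together with carefully accounting for the doubled failure probability, to be the only mildly delicate point.
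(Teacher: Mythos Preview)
Your proposal is correct and follows essentially the same route as the paper's proof: decompose $\hat\Sigma_n^* - \Sigma_n^*$ into the gram-matrix error for the shifted vectors $Y_i = X_i - \bar\mu_n$ (handled by Theorem~\ref{cor:Covariance}) plus the rank-one remainder $\bar Y_n\bar Y_n^\top$ (handled by squaring the bound from Theorem~\ref{thm:MaximalTailBound}), then absorb the squared remainder into the second term of the target bound. You are in fact slightly more explicit than the paper about two points the paper glosses over: the Orlicz-norm bookkeeping showing $\norm{Y_i}_{M,\psi_\alpha}\le C_\alpha K_{n,p}$, and the observation that the remainder domination for $1<\alpha\le 2$ relies on the implicit regime $t+\log p = O(n)$.
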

In comparison to Theorem~\ref{cor:Covariance} which applied to gram matrices, the only change with covariance matrices is the replacement of $A_{n,p}$ therein with $A_{n,p}^*$ as above.
\begin{rem}\label{rem:SparseCov}\;\;(Applications in Sparse Covariance Matrix Estimation).\;\;
The basic technique of sparse covariance matrix estimation is thresholding. For simplicity, consider the case of identically distributed random vectors. Recall the definition of the usual covariance matrix $\hat{\Sigma}_n^*$ from~\eqref{eq:CovarianceMatrices} and define for $\lambda > 0$, the matrix $\breve{\Sigma}_{n, \lambda}$ given by
\[
\breve{\Sigma}_{n, \lambda}(j, k) := \begin{cases}\hat{\Sigma}_n^*(j, k), &\mbox{if }|\hat{\Sigma}_n^*(j,k)| \ge \lambda,\\
0, &\mbox{otherwise,}\end{cases}
\]
for $1\le j\le k\le p$. This estimator essentially sets to zero those elements of $\hat{\Sigma}_n^*$ that are ``small''. This is referred to sometimes as universal hard thresholding since $\lambda$ does not depend on $(j,k)$. The parameter~$\lambda$ is called the thresholding parameter. It is easy to verify that
\[
\mathbb{P}\left(\Sigma_n^*(j,k) = 0\mbox{ and }\breve{\Sigma}_{n,\lambda}(j,k) \neq 0\mbox{ for some }j,k\right) \le \mathbb{P}\left(\Delta_n^* > \lambda\right).
\]
So, the right cut-off $\lambda$ for consistent support recovery would be of the same order as the rate of convergence of $\Delta_n^*$ which is $\sqrt{\log p/n}$, as shown in Theorem~\ref{thm:CovarianceMaxNorm} (under additional conditions, as in Remark~\ref{rem:ExpectationBound}). So, for a wide range of $\alpha$, the cut-off used for Gaussians works for marginally sub-Weibull random vectors too. For a more careful study of the properties of $\breve{\Sigma}_{n, \lambda}$ in terms of the operator norm and extensions to weakly sparse matrices, see \cite{Bickel08}, \cite{Cai11} and \cite{Fan16}. As can be seen from the analysis there, a result similar to Theorem~\ref{thm:CovarianceMaxNorm} plays a key role. It should be noted here that most of the literature about covariance matrix estimation is based on a joint sub-Gaussian assumption on the ingredient random vectors. Our setting above is clearly more general.
\end{rem}
\begin{rem}\label{rem:Boot}\;\;(Bootstrap Consistency).\;\;
From Remark 4.1 and Theorem 4.2 of \cite{Chern17}, it follows that the consistency of either the multiplier bootstrap or Efron's empirical bootstrap for high dimensional averages requires the convergence of $\Delta_n^*$ to zero. In fact, the multiplier bootstrap error is bounded by a multiple of $(\Delta_n^*)^{1/3}$. Hence, our results in this section prove the bootstrap consistency under weaker tail assumptions.
\end{rem}

\subsection{Covariance Matrix Estimation: Maximum k-Sub-Matrix Operator Norm}\label{subsec:SubMatrix}

{\cred This section focuses on estimation of covariance matrices of sub-Weibull random vectors under the so-called sub-matrix operator norm.} In the previous sub-section, a bound on the elementwise maximum norm {\cred for such covariance matrices} was provided. It is clear that the maximum norm only deals with the elements of the matrix. In many applications and practical data exploration, it is of much more importance to study functionals of the covariance matrix such as the eigenvalues and eigenvectors. A key ingredient in studying these functionals is consistency of the covariance matrix in the operator norm.

As expected, if the dimension of the random vectors $X_i$ is larger than the sample size $n$, then the covariance matrix is \emph{not} consistent in the operator norm. Also, in high-dimensions it is a common practice to select a subset of ``significant'' group of coordinates of $X_i$'s and explore the properties of that subset. Motivated by this discussion, we study the \emph{maximum $k$-sparse sub-matrix operator norm} of the gram matrix, for any $1 \leq k \leq p$. This norm is also of importance in high dimensional linear regression due to its connections to the {\cred {\it restricted isometry property} (RIP) \citep{Candes07} and the restricted eigenvalue (RE) condition \citep{Bickel09}}.
Define, for $k \leq p$, 
\begin{equation}\label{def:RIP-def}
\RIP_n(k) := \sup_{\substack{\theta\in\mathbb{R}^p,\\\norm{\theta}_0 \le k, \norm{\theta}_2 \le 1}}|\theta^{\top}(\hat{\Sigma}_n - \Sigma_n)\theta|,
\end{equation}
with $\hat{\Sigma}_n$ and $\Sigma_n$ as defined in \eqref{eq:CovarianceDefinition}. Here, $\norm{\theta}_0$ 
denotes the number of non-zero entries (i.e. the \emph{sparsity}) of $\theta$. Note further that $\RIP_n(k)$ is actually a norm for $k\ge 2$.

The quantity $\RIP_n(k)$ also plays an important role in post-Lasso linear regression asymptotics (see condition RSE(m) in \cite{Belloni13}) and more generally, in post-selection inference (see \cite{Kuch18} for details). This norm was possibly first studied (with $\Sigma_n$ being the identity matrix) in \cite{Rudelson08} under the assumption of marginally bounded random vectors or equivalently, assumption~\eqref{assump:Covariance} with $\alpha = \infty$. Also see Appendix C of \cite{Belloni13} for similar results.

\vspace{-0.1in}
\paragraph{{\cred An Easier but Sub-Optimal Bound for $\RIP_n(k)$.}}\label{rem:RIPSuboptimalBound} Our main results on tail bounds for $\RIP_n(k)$ are presented in Theorem \ref{cor:RIPBoundUnified}. However, using the results of Section \ref{sec:ElementWiseMax}, \emph{an easier but generally sub-optimal bound} on $\RIP_n(k)$ may also be obtained which we present below for the sake of completeness. Note that
\[
\RIP_n(k) ~\le ~\left(\sup_{\norm{\theta}_0 \le k, \norm{\theta}_2 \le 1}\norm{\theta}_1^2\right)\vertiii{\hat{\Sigma}_n - \Sigma_n}_{\infty}~ \le ~ k\vertiii{\hat{\Sigma}_n - \Sigma_n}_{\infty}.
\]
This is a deterministic inequality and using the bounds on $\Delta_n$ derived previously in Section \ref{sec:ElementWiseMax}, it is easy to derive bounds for $\RIP_n(k)$. For simplicity, we only present here an expectation bound instead of general tail bounds (or moment bounds) for $\RIP_n(k)$. Under the hypothesis of Theorem~\ref{cor:Covariance} in Section~\ref{sec:ElementWiseMax}, we have
\begin{equation}\label{eq:ExpectationRIP}
\mathbb{E}\left[\RIP_n(k)\right] \le C_{\alpha}\left(A_{n,p}k\sqrt{\frac{\log p}{n}} + K_{n,p}^2\frac{k(\log p \log(2n))^{2/\alpha}}{n}\right),
\end{equation}
for some constant $C_{\alpha} > 0$ depending only on $\alpha.$ This bound provides the rate of $k\sqrt{\log p/n}$ only for $\RIP_n(k)$ using the arguments of Remark \ref{rem:ExpectationBound}. Note that this is derived {\it only} under a marginal $\psi_{\alpha}$-bound, and the factor $k$ here is, in a sense, optimal under the marginal $\psi_{\alpha}$-bound hypothesis as can be seen from the pathological example discussed before Section \ref{sec:ElementWiseMax}. (For this example, the factor $\sqrt{\log p}$ disappears from the rate.)
A bound alternative to \eqref{eq:ExpectationRIP} can be derived under the hypothesis of a joint $\psi_{\alpha}$ assumption on $X_i$. Under this joint hypothesis, the dominating term becomes $\sqrt{k\log p/n}$ which is the more familiar rate.

\vspace{-0.1in}
\paragraph{{\cred Main Result (Outline).}}\label{para:Sec4.2Outline} We next derive a bound on $\RIP_n(k)$ in a unified way, using a different approach, that always presents the dominating term of the (optimal) order $\sqrt{k\log p/n}$ (upto a distributional constant factor) under either of these assumptions {\cred (i.e., marginal or joint sub-Weibull)}.
This is presented in Theorem \ref{cor:RIPBoundUnified} below (proved in Appendix~\ref{AppSec:SubMatrix}), the main result of this section. 
Once again, we only present the result for $0 < \alpha \le 2$ and a similar result for $\alpha > 2$ can be derived using Theorem~\ref{prop:BernsteinLargerThan1}. {\cred The result is presented in two parts: (a) {\it marginal} case and (b) {\it joint} case.}  {\cred The implications, including the behavior of the bound and its rate of convergence, as well as the sample complexity requirements under either cases, are discussed in detail in Remarks \ref{rem:RIPRates}--\ref{rem:RIPMarginalSampleComplexity}, followed by a thorough comparison with the existing literature on $\RIP$ in Remark \ref{rem:RIPLiteratureComparison}. Overall, to our knowledge, Theorem~\ref{cor:RIPBoundUnified}(a) is the first result on $\RIP_n(k)$ for the {\it marginal} case, while for the {\it joint} case, Theorem \eqref{cor:RIPBoundUnified}(b) matches existing (and optimal) results 
for the special case of sub-Gaussians (i.e., $\alpha =2 $), and also extends these to general sub-Weibulls.}



\begin{thm}[{\cred Unified Bounds for $\RIP$}]\label{cor:RIPBoundUnified}
Let $X_1, \ldots, X_n$ be independent random vectors in $\mathbb{R}^p$. Define
\[
\Theta_k := \{\theta\in\mathbb{R}^p:\,\|\theta\|_0 \le k, \|\theta\|_2 \le 1\}\quad\mbox{and}\quad \Upsilon_{n,k} := \sup_{\theta\in\Theta_k}\frac{1}{n}\sum_{i=1}^n \mathrm{Var}\left[\left(X_i^{\top}\theta\right)^2\right].
\]
Fix $0 < \alpha \le 2$. Then, for every $1 \leq k \leq p$,  the following bounds hold true for $\RIP_n(k)$ {\cred as in \eqref{def:RIP-def}}:
\begin{enumerate}
\item[(a)] {\cred {({\it Marginal} Sub-Weibull Case)}.~} If $\norm{X_i}_{M,\psi_{\alpha}} \le K_{n,p}$ for all $1\le i\le n$, then for any $t > 0$, with probability at least~$1 - 3e^{-t},$
\begin{equation}\label{eq:RIPMarginal}
\begin{split}
\RIP_n(k) &\le 14\sqrt{\frac{\Upsilon_{n,k}(t + k\log(36p/k))}{n}}\\ &\qquad+ \frac{C_{\alpha}K_{n,p}^2{k}(\log(2n))^{2/\alpha}(t + k\log(36p/k))^{2/\alpha}}{n}.
\end{split}
\end{equation}
\item[(b)] {\cred ({\it Joint} Sub-Weibull Case).~} If $\norm{X_i}_{J,\psi_{\alpha}} \le K_{n,p}$ for all $1\le i\le n$, then for any $t > 0$, with probability at least~$1 - 3e^{-t},$
\begin{equation}\label{eq:RIPJoint}
\begin{split}
\RIP_n(k) &\le 14\sqrt{\frac{\Upsilon_{n,k}(t + k\log(36p/k))}{n}}\\ &\qquad + \frac{C_{\alpha}K_{n,p}^2(\log(2n))^{2/\alpha}(t + k\log(36p/k))^{2/\alpha}}{n}.
\end{split}
\end{equation}
\end{enumerate}
Here, in both cases, $C_{\alpha} > 0$ represents a constant depending only on $\alpha$.
\end{thm}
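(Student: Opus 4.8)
The plan is to view $\RIP_n(k)$ as the supremum of an empirical process indexed by $\Theta_k := \{\theta\in\mathbb{R}^p:\norm{\theta}_0\le k,\ \norm{\theta}_2\le 1\}$, discretize it down to a finite maximum, and then apply Theorem~\ref{thm:MaximalTailBound} to squared linear forms. Write $A := \hat\Sigma_n - \Sigma_n$, so that $\RIP_n(k) = \max_{|S|=k}\norm{A_{SS}}_{\mbox{op}}$, the maximum over supports $S$ of size $k$ of the operator norm of the corresponding $k\times k$ principal submatrix. For each such $S$ take a $1/4$-net $\mathcal{N}_S$ of the unit Euclidean ball of $\mathbb{R}^S$, of cardinality at most $9^k$; the standard discretization of quadratic forms of symmetric matrices gives $\norm{A_{SS}}_{\mbox{op}}\le 2\max_{\theta\in\mathcal{N}_S}|\theta^\top A\theta|$. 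Setting $\mathcal{N} := \bigcup_{|S|=k}\mathcal{N}_S$ and using $\binom{p}{k}\le(ep/k)^k$, this produces a finite index set of cardinality $N$ with $\RIP_n(k)\le 2\max_{\theta\in\mathcal{N}}|\theta^\top A\theta|$ and $\log N\le k\log(9ep/k)\le k\log(36p/k)$.

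Next, each index contributes a centered sub-Weibull average. For fixed $\theta\in\mathcal{N}$ we have $\theta^\top A\theta = \frac1n\sum_{i=1}^n Z_i(\theta)$ with $Z_i(\theta) := (X_i^\top\theta)^2 - \mathbb{E}[(X_i^\top\theta)^2]$, which is mean zero. The H\"older-type product inequality~\eqref{eq:Product} applied with $k=2$ gives $\norm{(X_i^\top\theta)^2}_{\psi_{\alpha/2}}\le\norm{X_i^\top\theta}_{\psi_\alpha}^2$; under the marginal hypothesis $\norm{X_i^\top\theta}_{\psi_\alpha}\le\norm{\theta}_1 K_{n,p}\le\sqrt{k}\,K_{n,p}$ since $\theta$ is $k$-sparse, while under the joint hypothesis $\norm{X_i^\top\theta}_{\psi_\alpha}\le K_{n,p}$. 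Combining with a centering bound for the quasi-norm $\norm{\cdot}_{\psi_{\alpha/2}}$ (moment equivalence together with the quasi-triangle inequality from Appendix~\ref{AppSec:PropGBO}) yields $\max_i\norm{Z_i(\theta)}_{\psi_{\alpha/2}}\le C_\alpha k K_{n,p}^2$ in case (a) and $\le C_\alpha K_{n,p}^2$ in case (b). Moreover $\mathbb{E}[Z_i(\theta)^2] = \mbox{Var}((X_i^\top\theta)^2)$, and since every $\theta\in\mathcal{N}$ lies in $\Theta_k$, $\max_{\theta\in\mathcal{N}}\frac1n\sum_{i=1}^n\mbox{Var}((X_i^\top\theta)^2)\le\Upsilon_{n,k}$.

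Now apply the maximal inequality. The vectors $(Z_i(\theta))_{\theta\in\mathcal{N}}\in\mathbb{R}^N$ are independent, mean zero, and by the previous step coordinatewise sub-Weibull of order $\alpha/2$, which is $\le 1$ precisely because $\alpha\le 2$. Invoking Theorem~\ref{thm:MaximalTailBound} with its parameter $\alpha$ replaced by $\alpha/2$ — so that $\alpha^* = \alpha/2$ and the exponent $1/\alpha^*$ equals $2/\alpha$ — gives, with probability at least $1 - 3e^{-t}$,
\[
\max_{\theta\in\mathcal{N}}\left|\theta^\top A\theta\right|\ \le\ 7\sqrt{\frac{\Upsilon_{n,k}(t+\log N)}{n}}\ +\ \frac{C_\alpha K'(\log(2n))^{2/\alpha}(t+\log N)^{2/\alpha}}{n},
\]
with $K' = k K_{n,p}^2$ in case (a) and $K' = K_{n,p}^2$ in case (b). Substituting $\log N\le k\log(36p/k)$ and multiplying through by the factor $2$ from the discretization (absorbed into $C_\alpha$ in the second term) produces exactly~\eqref{eq:RIPMarginal} and~\eqref{eq:RIPJoint}.

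The main obstacle is the discretization step: one must simultaneously control the combinatorial union over the $\binom{p}{k}$ supports and the continuum inside each $k$-dimensional ball, while keeping the symmetric-matrix quadratic-form net tight enough to land the precise constants $14$ and $36$ in the statement; everything after that is a mechanical application of Theorem~\ref{thm:MaximalTailBound} and the squaring rule~\eqref{eq:Product}. A secondary point to watch is that squaring converts order-$\alpha$ tails into order-$\alpha/2$ tails — this is why the Weibull exponent in the conclusion is $2/\alpha$ and why the restriction $\alpha\le 2$ is imposed (so that $\alpha/2\le 1$ and the version Theorem~\ref{prop:SimilarBernstein} underlying Theorem~\ref{thm:MaximalTailBound} applies); the case $\alpha>2$ would instead be handled through Theorem~\ref{prop:BernsteinLargerThan1}.
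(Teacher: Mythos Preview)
Your proposal is correct and follows essentially the same route as the paper: reduce $\RIP_n(k)$ to a finite maximum over a $1/4$-net of $\Theta_k$ (the paper cites Lemma~3.3 of \cite{Plan13} and Proposition~2.2 of \cite{Ver12} for the same two steps you write out by hand), bound the $\psi_{\alpha/2}$-norm of the squared linear forms, and invoke Theorem~\ref{thm:MaximalTailBound} with $\alpha$ replaced by $\alpha/2$. The only cosmetic difference is in case~(a): you reach $\norm{(X_i^\top\theta)^2}_{\psi_{\alpha/2}}\le C_\alpha k K_{n,p}^2$ via $\norm{\theta}_1\le\sqrt{k}$ and squaring, whereas the paper uses the Cauchy--Schwarz bound $(X_i^\top\theta)^2\le\sum_{j\in S}X_i^2(j)$ and then the quasi-triangle inequality on the $k$ summands; both land on the same factor of $k$.
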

Comparing between the two bounds from parts (a) and (b) above, the only difference is an extra factor of $k$ in the second term {\cred (for part (a) -- which uses the weaker assumption of marginal sub-Weibull)} which is usually of lower order than the first term. 

\begin{rem}\label{rem:RIPRates}\;\;(Rate of Convergence).\;\;
%
The bounds \eqref{eq:RIPMarginal} and \eqref{eq:RIPJoint} {\cred -- obtained for the marginal and joint sub-Weibull cases, respectively --} both provide the same rate of $(\Upsilon_{n,k}k\log p/n)^{1/2}$ for a wide range of $k$ following the arguments of Remark~\ref{rem:ExpectationBound}, and this is actually what is expected from the central limit theorem as well. {\cred More details on the sample complexity requirements for both results are provided in Remarks \ref{rem:RIPJointSampleComplexity} and \ref{rem:RIPMarginalSampleComplexity}, for the joint and marginal cases, respectively. It is also worth mentioning that to the best of our knowledge, Theorem~\ref{cor:RIPBoundUnified}(a) is the {\it first} such result in the literature for the {\it marginal} case (that too for general sub-Weibulls), while for the {\it joint} case, Theorem \eqref{cor:RIPBoundUnified}(b) {\it matches} existing results for the special case of sub-Gaussians (i.e., $\alpha =2 $), while also {\it extending} them to general sub-Weibulls. Further discussions on comparison with the existing literature are provided in Remark \ref{rem:RIPLiteratureComparison}.}
\end{rem}

\begin{rem}\label{rem:UpsilonGrowth}\;\;(Growth of $\Upsilon_{n,k}$).\;\;  
The leading term in the bounds of Theorem \ref{cor:RIPBoundUnified} depends on $\Upsilon_{n,k}$ which relates to the fourth moment of linear combinations. Such quantities have also appeared in several other problems, including likelihood methods with diverging number of parameters \citep{Portnoy88}, sub-Gaussian estimation of means \citep{Joly17}, tail bounds for lower eigenvalues of covariance matrices \citep{Oliveira13} and verification of so-called small-ball conditions \citep{LecueMendelson14}.
In some of these works, the fourth moment of linear combinations is assumed to be bounded by the square of the second moment. Such an assumption coupled with a bounded operator norm of $\Sigma_n$ implies that $\Upsilon_{n,k}$ is of constant order. In general, $\Upsilon_{n,k}$ can grow with $k$ and it is not clear the rate at which it can grow for arbitrary distributions. However, under a joint sub-Weibull assumption as in part (b) of Theorem \ref{cor:RIPBoundUnified}, it is at most a constant multiple of $K_{n,p}^4$.
\end{rem}


{\cred
\begin{rem}\label{rem:RIPJointSampleComplexity}\;\;(Conditions for Theorem \ref{cor:RIPBoundUnified}(b) -- the \emph{Joint} Sub-Weibull Case).\;\;
For the joint sub-Weibull case, the bound \eqref{eq:RIPJoint} for $\RIP_n(k)$ converges to zero whenever
\begin{equation}
n \gg \max\{k\log(ep/k),\, k^{2/\alpha}(\log(2n))^{2/\alpha}(\log(ep/k))^{2/\alpha}\}. \label{eq:RIPJoint-implication}
\end{equation}
In particular, for the special case of joint sub-Gaussian (i.e., $\alpha = 2$), the sample complexity requirement \eqref{eq:RIPJoint-implication} simplifies to: $n \gg k\log(ep/k)\log(n)$. We clarify that the appearance of the $\log(n)$ factor here is due to our usage of Theorem \ref{prop:BernsteinLargerThan1} (and ultimately Theorem \ref{thm:MaximalTailBound}) and {\it can} be avoided if instead one directly uses Theorem \ref{prop:SumNewOrliczVex} -- this essentially 
relates to our earlier discussion on the optimality of Theorems~\ref{prop:SimilarBernstein}--\ref{prop:BernsteinLargerThan1} (see Section \ref{para:OptimalityVariancebounds}). It is worth noting that the sample complexity $n \gg k\log(ep/k)\log(n)$ {\it matches} (possibly upto a $\log(n)$ factor) the scaling requirements of most results known in the literature, including those of \citet{candes2005decoding,Candes07,baraniuk2008simple} and \citet[Appendix G.1]{Loh12}, among several others. 
In fact, most settings considered in the existing literature are included as special cases under our {\it joint} sub-Weibull setting for the choice $\alpha =2$; see Remark \ref{rem:RIPLiteratureComparison} for further details.
\end{rem}
}

\begin{rem}\label{rem:RIPMarginalSampleComplexity}\;\;{\cred (Conditions for Theorem \ref{cor:RIPBoundUnified}(a) -- the {\it Marginal} Sub-Weibull Case).}\;\;
Firstly, {\cred before we discuss the bound \eqref{eq:RIPMarginal},}
we note that for the initial bound of $\RIP_n(k)$ provided in~\eqref{eq:ExpectationRIP}, although the rate obtained there is generally sub-optimal, convergence to zero of the bound therein requires
\begin{equation}
n \gg \max\{k^2\log p,\, k(\log p)^{2/\alpha}\}, \label{eq:max-norm-implication}
\end{equation}
whenever $X_i\in\mathbb{R}^p$ are marginally sub-Weibull, i.e., satisfy $\|X_i\|_{M,\alpha} < \infty$. However, for Theorem~\ref{cor:RIPBoundUnified}(a), which also requires only a marginal sub-Weibull property and provides a bound with a much sharper (and optimal) rate, convergence to zero of $\RIP_n(k)$ requires
\begin{equation}
n \gg \max\{k\log(ep/k),\, k^{1 + 2/\alpha}(\log(2n))^{2/\alpha}(\log(ep/k))^{2/\alpha}\}. \label{eq:RIP-implication}
\end{equation}
For $\alpha$ considerably smaller than $1$, the requirement \eqref{eq:RIP-implication} for Theorem~\ref{cor:RIPBoundUnified}(a) thus appears more stringent in terms of $k$, compared to \eqref{eq:max-norm-implication}. This deficiency can be explained by the fact that the proof of Theorem~\ref{cor:RIPBoundUnified} uses the bound $\|\max_{\theta\in\Theta_k}\theta^{\top}X_i\|_{\psi_{\alpha}} \le (k\log(ep/k))^{1/\alpha}$ (for applying 
Theorem~\ref{thm:MaximalTailBound}), but using $\max_{\theta\in\Theta_k}\theta^{\top}X_i \le \sqrt{k}\|X_i\|_{\infty}$, we can get a sharper bound: $\|\max_{\theta\in\Theta_k}\theta^{\top}X_i\|_{\psi_{\alpha}} \le K_{n,p}\sqrt{k}(\log(ep))^{1/\alpha}$. {\cred Formally, Lemma E.1 of~\cite{Chern17} implies that
\begin{equation}\label{eq:Chernozhukov_application}
\begin{split}
\mathbb{E}\left[\sup_{\theta\in\Theta_k}\left|\frac{1}{n}\sum_{i=1}^n \{(\theta^{\top}X_i)^2 - \mathbb{E}[(\theta^{\top}X_i)^2]\}\right|\right] &\lesssim \sqrt{\frac{k\log(ep/k)}{n}}\sup_{\theta\in\Theta_k}\left(\frac{1}{n}\sum_{i=1}^n \mathbb{E}[(\theta^{\top}X_i)^4]\right)^{1/2}\\ &\quad+ \frac{k\log(ep/k)}{n}\left(\mathbb{E}\left[\max_{1\le i\le n}\sup_{\theta\in\Theta_k}|\theta^{\top}X_i|^4\right]\right)^{1/2}\\
&\lesssim \sqrt{\frac{k\log(ep/k)}{n}}\sup_{\theta\in\Theta_k}\left(\frac{1}{n}\sum_{i=1}^n \mathbb{E}[(\theta^{\top}X_i)^4]\right)^{1/2}\\ &\quad+ \frac{k^2(\log(epn))^{1 + 2/\alpha}}{n}.
\end{split}
\end{equation}
The second inequality above follows from $\max_{\theta\in\Theta_k}\theta^{\top}X_i \le \sqrt{k}\|X_i\|_{\infty}$ and the marginal sub-Weibull assumption.
The right hand side of~\eqref{eq:Chernozhukov_application} converges to zero whenever
\begin{equation}
\max\{k\log(ep/k),\, k^2(\log(epn))^{1 + 2/\alpha}\} ~=~ o(n). \label{eq:RIPMarginalBetterComplexity}  
\end{equation}
}
{\cyan 
\eqref{eq:RIPMarginalBetterComplexity} therefore improves our original requirement \eqref{eq:RIP-implication} for Theorem \ref{cor:RIPBoundUnified}(a), and matches \eqref{eq:max-norm-implication} upto a log factor.} However, we do not follow this approach further, mostly because we want to present the statistical applications as direct corollaries of the results in Section~\ref{sec:Indep}. In any case, \eqref{eq:max-norm-implication} at least shows $n \gg k^2(\log p)^{2/\alpha}$ suffices for $\RIP_n(k)$ to go to zero {\cred under only a marginal sub-Weibull assumption}. 
\end{rem}


\begin{rem}\label{rem:RIPLiteratureComparison}\;\;{\cred (Comparison of Theorem \ref{cor:RIPBoundUnified} with Existing Literature).}\;\;
{\cred Theorem \ref{cor:RIPBoundUnified} succinctly provides a unified set of results on $\RIP_n(k)$ as in \eqref{def:RIP-def} under very general conditions -- both in terms of the tail behavior (i.e., sub-Weibull) as well as its nature (marginal vs. joint). To the best of our knowledge, the results in Theorem \ref{cor:RIPBoundUnified}(a) for the {\it marginal} case are the first such results in the literature obtained under a (much) weaker assumption than most in the existing literature on verifying $\RIP_n(k)$, and should therefore be of substantial interest in the future. Secondly, most of the literature on $\RIP_n(k)$ has focused on specific cases of our {\it joint} sub-Weibull setting in Theorem  \ref{cor:RIPBoundUnified}(b), and our convergence rates as well as sample complexity requirements, as discussed in Remarks \ref{rem:RIPRates} and \ref{rem:RIPJointSampleComplexity}, match these results -- these include the well known works of \citet{candes2005decoding,Candes07,baraniuk2008simple,Loh12}, among many others. \citet{Rudelson13} 
provides a comprehensive review of the existing literature on verification of $\RIP_n(k)$ as we consider; see in particular their discussion in Section I (pg. 3434) and the references cited therein.}
{\cred To our knowledge, apart from the obvious flexibility (and novelty) of allowing for the marginal case, our results also enjoy the benefits of extension to the general sub-weibull case (i.e., for a general $\alpha$) {\it even} in the joint case, where most of the existing literature can be summarized as special cases in some form of our joint sub-Weibull setting with the choice $\alpha = 2$.}

{\cred It is worth noting that there has certainly been some work on the joint sub-Weibull setting (for a general $\alpha$) as well, but for a {\it different} RIP problem. This includes, in particular, the works of~\cite{Adam11} and \cite{guedon2014restricted, guedon2015interval}.}
%
However, there are some important differences in their setting versus ours. Their definition of $\RIP$, translated in our notation, is given by
\begin{equation}\label{eq:adamczak-RIP}
\RIP_n^*(k) ~:=~ \sup_{\substack{\alpha\in\mathbb{R}^n,\\\|\alpha\|_0 \le k, \|\alpha\|_2 \le 1}}\, \left|\frac{1}{p}\sum_{j=1}^p \left|\sum_{i=1}^n \alpha(i)X_i(j)\right|^2 - 1\right|.
\end{equation}
Here, as in Theorem~\ref{cor:RIPBoundUnified}, $X_i\in\mathbb{R}^p$ are independent random vectors. The main difference between our $\RIP_n(k)$ in~\eqref{def:RIP-def} and $\RIP_n^*(k)$ above is that for the former, the sparse linear combination involved in \eqref{def:RIP-def} is being taken over coordinates of $X_i$'s, 
while for $\RIP_n^*(k)$, the linear combination involved in~\eqref{eq:adamczak-RIP} is over $X_i$'s themselves. Thus, the maximizing space in~\eqref{eq:adamczak-RIP} is \emph{not} our $\Theta_k$; in fact, it is not even a subspace of $\mathbb{R}^p$ (rather it is in $\mathbb{R}^n$). Under the definition~\eqref{eq:adamczak-RIP} of $\RIP_n^*(k)$,
\cite{Adam11} proves that $n \gg k\log^{2/\alpha}(p/k)$ suffices for controlling $\RIP_n^*(k)$ when $X_i$'s are jointly sub-Weibull$(\alpha)$ with $\alpha \in [1, 2]$ ($\|X_i\|_{J,\alpha} < \infty$) and~\cite{guedon2015interval} extends this result to the case $\alpha \in (0, 1]$. Although this condition is 
better in terms of dependence on $k$ {\cred compared to our requirement \eqref{eq:RIPJoint-implication} under the joint sub-Weibull case when $\alpha < 2$}, it does \emph{not} apply for our definition~\eqref{def:RIP-def} of $\RIP$. 
{\cred Further, we would also like to point out that these works require the joint sub-Weibull assumption, while we allow for the marginal case as well.}  

We remark here that the goal of~\citet{Adam11,guedon2015interval} is to derive $\RIP$ constants for the reconstruction of sparse signals, and their definition of RIP as in \eqref{eq:adamczak-RIP} works for this purpose. However, they focus on the RIP of a very different type of matrices, ones with column spaces in $\mathbb{R}^n$, not $\mathbb{R}^p$, which makes their setting fundamentally \emph{different} from ours, and their results not directly comparable to ours either. Our main motivation for studying $\RIP_n(k)$, as  in~\eqref{def:RIP-def}, 
stems from considering the approximation error between the sample and population covariance matrices, which is very much needed in post-selection inference applications~\citep{Kuch18} as well as in 
{\cred high dimensional linear regression \citep{Candes07,Neg12,Wainwright_Book_2019}}. 
\end{rem}

\begin{rem}\;\,(Gram Matrix to Covariance Matrix).\;\,
Using the results in Section~\ref{rem:CovarianceMaxNorm}, the results of this section can be easily modified to bound $\RIP_n(k)$ when the gram matrices $\hat{\Sigma}_n$ and $\Sigma_n$ are replaced by the covariance matrices $\hat{\Sigma}_n^*$ and $\Sigma_n^*$ respectively; see Remark 4.5 of \cite{Kuch18} for more details. Similar comments also apply for the results in the next section on the restricted eigenvalue condition and will not be repeated.
\end{rem}

\begin{rem}\;\;(Applications in Adaptive Covariance Matrix Estimation).\;\;
Concentration inequalities for $\RIP_n(k)$ are also needed in adaptive estimation of a \emph{bandable} covariance matrix. A matrix $\Sigma_n\in\mathbb{R}^{p\times p}$ is said to be \emph{$k$-bandable}, for some $k \ge 1$, if
\[
\Sigma_n(i,j) = 0\quad\mbox{for all}\;\; |i - j| \ge k, \quad \mbox{for some} \;\; k \geq 1.
\]
An adaptive estimator was proposed in \cite{Cai12} based on the idea of block thresholding. Similar to the thresholding used in sparse covariance matrix estimation (see Remark \ref{rem:SparseCov}), block thresholding sets to zero a sub-matrix if its operator norm is smaller than a threshold. The actual procedure is more complicated than this and is described in Section 2.2 of \cite{Cai12}. Theoretical study of such a block thresholding procedure requires a result similar to Theorem~\ref{cor:RIPBoundUnified}; see Theorem 3.3 in Section 3.2 of \cite{Cai12} for more details. The main difference in comparison with our result is that we do not require sub-Gaussian tails whereas the proof of Theorem 3.3 there relies heavily on the normality of the random vectors; see also \cite{Cai16} for a survey about high dimensional structured covariance matrix estimation. Using our results from this section, the performance of the adaptive estimator can be studied under much weaker assumptions of marginal sub-Weibull tail behaviors.
\end{rem}
\subsection{Restricted Eigenvalue (RE) Condition}\label{sec:RECondition}
One of the most well known estimators for high dimensional linear regression 
is the Lasso \citep{Tibs96}. 
A crucial assumption in the proof of the oracle inequalities for Lasso is the {\cred \it restricted eigenvalue} (RE) condition introduced by \cite{Bickel09} for the matrix $\hat{\Sigma}_n$ as defined in \eqref{eq:CovarianceDefinition}; see Section~\ref{sec:HDLinReg} for further details on its application to the theoretical analysis of Lasso. {\cred This section focuses on the RE condition and its bounds for sub-Weibulls.} For any $1 \leq k \leq p$, the \emph{RE$(k)$ condition} on $\hat{\Sigma}_n$ is given by:
\begin{equation}
\inf_{\substack{S\subseteq\{1,\ldots,p\},\\|S|\le k}}\inf_{\theta\in\mathcal{C}(S; \delta)}\frac{\theta^{\top}\hat{\Sigma}_n\theta}{\theta^{\top}\theta} \; \ge \; \gamma_n > 0, \label{eq:RestrictedEigenvalue}
\end{equation}
for some constant $\gamma_n$, where for any subset $S\subseteq\{1,2,\ldots,p\}$ and any $\delta \ge 1$,
\begin{equation}\label{eq:ConeSet}
\mathcal{C}(S; \delta) := \left\{\theta\in\mathbb{R}^p:\,\norm{\theta(S^c)}_1 \le \delta\norm{\theta(S)}_1\right\}, \;\; \mbox{where}
\end{equation}
$\norm{v}_1$ denotes the $L_1$ norm of any vector $v \in \R^p$, and $\theta(S)$ represents the sub-vector of $\theta$ with indices in $S$; see Equation (11.10) of \cite{Hastie15}. Note, however, that for the specific application of RE conditions in the analysis of Lasso type estimators, the first infimum in \eqref{eq:RestrictedEigenvalue} over all $S$ with $|S|\le k$ is \emph{not} needed. Instead it \emph{only} needs to be verified for $S$ being the true support of the regression parameter $\beta_0$ (as in Section \ref{sec:HDLinReg})  with $\|\beta_0\|_0 \leq k$. 
\cite{Rudelson13} verified assumption \eqref{eq:RestrictedEigenvalue} for covariance matrices of sub-Gaussian random vectors,
extending the work of \citet{Raskutti10} for Gaussians. It is worth mentioning that the assumption of \cite{Rudelson13} is that of jointly sub-Gaussian random vectors. Some extensions under weaker tail behavior, including sub-exponentials have also been considered in \cite{Adam11} and \cite{LecueMendelson14}, for instance, although the latter's result applies more generally (see Remark \ref{rem:REExpTails} for more discussion).

A general result proving this assumption based on a bound on the maximum elementwise norm is given in Lemma 10.1 of \cite{Geer09}. This result, coupled with our bounds on $\Delta_n$ in Section~\ref{sec:ElementWiseMax}, implies that if the random vectors $X_i$ are (marginally) sub-Weibull as in \eqref{assump:Covariance}, then $\hat{\Sigma}_n$ satisfies the RE$(k)$ condition \eqref{eq:RestrictedEigenvalue} with probability converging to 1 as long as $\Sigma_n$ satisfies its own corresponding RE$(k)$ condition and the following holds:
\begin{equation}\label{eq:simple-RE-complexity}
kA_{n,p}\sqrt{\frac{\log p}{n}} + K_{n,p}^2\frac{k(\log n)^{2/\alpha}(\log p)^{2/\alpha}}{n} = o(1).
\end{equation}
This result does not allow for the optimal largest size for $k$, as noted in \citet[Section 3.2]{Raskutti10} as well, but it \emph{does} relax the sub-Gaussianity assumption largely. 
Further, it is possible to get better rates using the bounds on $\RIP_n(k)$ from Section \ref{subsec:SubMatrix}, 
as shown below.

\vspace{-0.05in}
\paragraph{{\cred Main Result (Outline).}}\label{para:Sec4.3Outline} \hspace{-0.07in}In the following, we prove that gram matrices obtained from marginal/joint sub-Weibull random vectors satisfy the RE condition with high probability. 
The main result 
is Theorem \ref{cor:REBoundUnified} below which is proved (in Appendix \ref{AppSec:RECondition}) using Theorem \ref{cor:RIPBoundUnified}, and Lemma 12 of \cite{Loh12}. 
%
Theorem \ref{cor:REBoundUnified} actually proves a \emph{stronger} result -- a sufficient condition regarding \emph{restricted strong convexity} (RSC), a notion 
introduced by \cite{Neg12}. As shown later in Section~\ref{rem:RECond}, the RE condition's verification follows directly from this result.  For simplicity, we again only consider the case $0 < \alpha \le 2$ (the case $\alpha > 2$ is similar). {\cred Similar to Theorem \ref{cor:RIPBoundUnified}, the result has two cases: (a) {\it marginal} and (b) {\it joint}. For each case, the corresponding bounds \eqref{eq:REMarginal} and \eqref{eq:REJoint} in Theorem \ref{cor:REBoundUnified} establish the RSC property under appropriate conditions. Furthermore, Section \ref{rem:RECond} verifies the RE condition under {\it both} marginal and joint sub-Weibull assumptions, and also provides the {\it optimized} sample complexities required in each case; see in particular \eqref{eq:complexity-diff-cases-joint} for the joint case and \eqref{eq:RE-complexity-Marginal-Case1}--\eqref{eq:RE-complexity-Marginal-Case2} for the marginal case. In particular, these match the existing (optimal) scaling known for the joint sub-Gaussian case. More details on the implications of the results, as well as comparisons with the existing literature on the RE condition are discussed in Remarks \ref{rem:Wang-Tewari}--\ref{rem:REExpTails}. To our knowledge, a unified set of results obtained in this generality for the RE condition is not available (at least not easily) within the core statistics literature.}
\begin{thm}[{\cred RSC: Unified Bounds for Sub-Weibulls}]\label{cor:REBoundUnified}
Under the setting of Theorem \ref{cor:RIPBoundUnified} and recalling $\Upsilon_{n,s}$ as defined therein, 
the following high probability statements hold true: for every $1 \leq s \leq p$, \par\smallskip
\noindent (a) {\cred {({\it Marginal} Sub-Weibull Case)}.~} If $\norm{X_i}_{M,\psi_{\alpha}} \le K_{n,p}$ for all $1\le i\le n$, then setting
\begin{align*}
\Xi_{n,s}^{(M)} &:= 14\sqrt{2}\sqrt{\frac{\Upsilon_{n,s}s\log(36np/s)}{n}} 
 +  \frac{C_{\alpha}K_{n,p}^2{s}(\log(2n))^{\frac{2}{\alpha}}(s\log(36np/s))^{\frac{2}{\alpha}}}{n},
\end{align*}
we have with probability at least~$1 - 3s(np)^{-1},$ simultaneously for all $\theta\in\mathbb{R}^p$,
\begin{equation}\label{eq:REMarginal}
\theta^{\top}\hat{\Sigma}_n\theta \ge \left(\vphantom{\sum_{i=1}^N}\lambda_{\min}(\Sigma_n) - 27\Xi_{n,s}^{(M)}\right)\norm{\theta}_2^2 - \frac{54\Xi_{n,s}^{(M)}}{s}\norm{\theta}_1^2.
\end{equation}
(b) {\cred {({\it Joint} Sub-Weibull Case)}.~} If $\norm{X_i}_{J,\psi_{\alpha}} \le K_{n,p}$ for all $1\le i\le n$, then setting
\begin{align*}
\Xi_{n,s}^{(J)} &:= 14\sqrt{2}\sqrt{\frac{\Upsilon_{n,s}s\log(36np/s)}{n}} 
 +  \frac{C_{\alpha}K_{n,p}^2(\log(2n))^{\frac{2}{\alpha}}(s\log(36np/s))^{\frac{2}{\alpha}}}{n},
\end{align*}
we have with probability at least~$1 - 3s(np)^{-1},$ simultaneously for all $\theta\in\mathbb{R}^p$,
\begin{equation}\label{eq:REJoint}
\theta^{\top}\hat{\Sigma}_n\theta \ge \left(\vphantom{\sum_{i=1}^N}\lambda_{\min}(\Sigma_n) - 27\Xi_{n,s}^{(J)}\right)\norm{\theta}_2^2 - \frac{54\Xi_{n,s}^{(J)}}{s}\norm{\theta}_1^2.
\end{equation}
Here, in both cases, $C_{\alpha} > 0$ represents a constant depending only on $\alpha$ (but possibly different in the two cases), and $\lambda_{\min}(\Sigma_n)$ denotes the minimum eigenvalue of $\Sigma_n$.
\end{thm}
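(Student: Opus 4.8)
The plan is to obtain \eqref{eq:REMarginal} and \eqref{eq:REJoint} as an essentially deterministic corollary of the $\RIP_n$ tail bounds already established in Theorem~\ref{cor:RIPBoundUnified}, via a ``sparse-to-dense'' reduction. Write $\Gamma := \hat\Sigma_n - \Sigma_n$, a symmetric matrix, and note that $\sup_{\norm{v}_0\le m,\,\norm{v}_2\le 1}|v^{\top}\Gamma v| = \RIP_n(m)$. The deterministic tool is Lemma~12 of \cite{Loh12}: for a symmetric $\Gamma$, a bound on the $2k$-sparse-restricted quadratic form, say $\RIP_n(2k)\le \Xi$, upgrades to a uniform bound $|v^{\top}\Gamma v|\le 27\,\Xi\,\norm{v}_2^2 + 54\,\Xi\,k^{-1}\norm{v}_1^2$ valid simultaneously for \emph{all} $v\in\mathbb{R}^p$. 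Combined with the trivial inequality $v^{\top}\Sigma_n v\ge\lambda_{\min}(\Sigma_n)\norm{v}_2^2$, this yields at once
\[
\theta^{\top}\hat\Sigma_n\theta \;=\; \theta^{\top}\Sigma_n\theta + \theta^{\top}\Gamma\theta \;\ge\; \bigl(\lambda_{\min}(\Sigma_n) - 27\,\Xi\bigr)\norm{\theta}_2^2 - \frac{54\,\Xi}{k}\norm{\theta}_1^2 ,
\]
which is exactly the shape of \eqref{eq:REMarginal}/\eqref{eq:REJoint} once $\Xi$ is taken to be the high-probability value of $\RIP_n(2k)$ supplied by Theorem~\ref{cor:RIPBoundUnified}.

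Carrying this out, I would invoke Theorem~\ref{cor:RIPBoundUnified} at the sparsity level $2k$ and choose the deviation parameter $t$ there of order $\log(np/k)$, so that (i) the failure probability $3e^{-t}$ is at most $3k(np)^{-1}$, as claimed; and (ii) the exponent $t + 2k\log(36p/(2k))$ appearing in Theorem~\ref{cor:RIPBoundUnified} is, up to an absolute constant that can be folded into $C_{\alpha}$, of the form $k\log(36np/k)$ — precisely the combination inside $\Xi_{n,k}^{(M)}$ and $\Xi_{n,k}^{(J)}$. The factor $\sqrt2$ in the leading term and the factor $54=2\cdot 27$ in the residual term of $\Xi_{n,k}^{(M)},\Xi_{n,k}^{(J)}$ are exactly what is produced by Lemma~12 of \cite{Loh12} when it is applied at the doubled sparsity level $2k$ (the crude comparison of $\Upsilon_{n,2k}$ with $\Upsilon_{n,k}$ being absorbed into $C_{\alpha}$). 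In the marginal case one feeds part~(a) of Theorem~\ref{cor:RIPBoundUnified}, whose residual term carries the extra multiplicative $k$, giving $\Xi = \Xi_{n,k}^{(M)}$; in the jointly sub-Weibull case one feeds part~(b), whose residual term has no such $k$, giving $\Xi = \Xi_{n,k}^{(J)}$. Substituting back into the display above delivers \eqref{eq:REMarginal} and \eqref{eq:REJoint} respectively.

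Because the genuine probabilistic content — a dimension-logarithmic tail bound for the $2k$-sparse operator norm of $\hat\Sigma_n - \Sigma_n$ under marginal or joint sub-Weibull tails — is already packaged in Theorem~\ref{cor:RIPBoundUnified}, there is no genuinely hard step here; the only items requiring care are the constant/index bookkeeping just described (aligning the $2k$-sparse hypothesis of \cite{Loh12} with the index $k$ in the statement, and pinning down the numerical constants $27$, $54$ and $\sqrt2$) and the choice of $t$ that produces exactly the probability $1-3k(np)^{-1}$. Finally, the RE condition \eqref{eq:RestrictedEigenvalue} itself falls out of \eqref{eq:REMarginal}/\eqref{eq:REJoint}: on the cone $\mathcal{C}(S;\delta)$ with $|S|\le k$ one has $\norm{\theta}_1 \le (1+\delta)\norm{\theta(S)}_1 \le (1+\delta)\sqrt{k}\,\norm{\theta}_2$, hence $k^{-1}\norm{\theta}_1^2 \le (1+\delta)^2\norm{\theta}_2^2$, so the lower bound becomes $\bigl(\lambda_{\min}(\Sigma_n) - (27 + 54(1+\delta)^2)\,\Xi_{n,k}\bigr)\norm{\theta}_2^2$, which is bounded below by a positive $\gamma_n$ as soon as $\Xi_{n,k} = o(1)$, i.e. as soon as $n \gtrsim k\log p$ up to $\log n$ factors and for $\alpha$ not too small (cf. Remark~\ref{rem:ExpectationBound}); this last step is the content of Remark~\ref{rem:RECond}.
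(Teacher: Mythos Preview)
Your approach is the paper's approach: invoke Theorem~\ref{cor:RIPBoundUnified} to bound $\RIP_n$, then apply Lemma~12 of \cite{Loh12} to the symmetric matrix $\Gamma=\hat\Sigma_n-\Sigma_n$ and combine with $\theta^{\top}\Sigma_n\theta\ge\lambda_{\min}(\Sigma_n)\norm{\theta}_2^2$. The only correction is in the bookkeeping you flag as ``requiring care.'' The paper applies Lemma~12 of \cite{Loh12} at sparsity level $k$, not $2k$; that lemma (in the form used here) takes the hypothesis $\sup_{\norm{v}_0\le k,\,\norm{v}_2\le 1}|v^{\top}\Gamma v|\le\delta$ and returns the bound with constants $27$ and $54/k$ directly. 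Consequently your proposed absorption of $\Upsilon_{n,2k}/\Upsilon_{n,k}$ into $C_{\alpha}$ is unnecessary and in fact would not work as stated, since $C_{\alpha}$ multiplies only the second term of $\Xi_{n,k}^{(M)}$ while $\Upsilon_{n,k}$ sits in the first. The factor $\sqrt{2}$ in the leading term does not come from doubling the sparsity; it comes from the choice $t=\log(np/k)$ in Theorem~\ref{cor:RIPBoundUnified}, which gives the failure probability $3k(np)^{-1}$ and yields $t+k\log(36p/k)\le 2k\log(36np/k)$ (using $\log(np/k)\le k\log(36np/k)$ and $\log(36p/k)\le\log(36np/k)$), whence $14\sqrt{t+k\log(36p/k)}\le 14\sqrt{2}\sqrt{k\log(36np/k)}$ and the $2^{2/\alpha}$ in the second term is absorbed into $C_{\alpha}$.
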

\noindent \emph{Note:}
Bounds of the type \eqref{eq:REMarginal}--\eqref{eq:REJoint} were discussed in \cite{Neg12} as sufficient conditions for verifying their general RSC condition (Definition 2); see Eqns. (20) and (31) therein. 
With a slight abuse of terminology, we ignore 
the distinction between their original RSC condition and these sufficient conditions, and call the latter by the same name here.

\begin{rem}\label{rem:Wang-Tewari}\;\;(Implications of Theorem~\ref{cor:REBoundUnified})\;\;
The ``parameter'' $s$ in Theorem~\ref{cor:REBoundUnified} is \emph{not} directly related to the sparsity $k$ of the regression parameter $\beta_0$ (as in Section \ref{sec:HDLinReg}). It is a free parameter that can be \emph{chosen} (or optimized) suitably over $1\leq s \leq p$. 
E.g., if we take $s = 1$, then $\Xi_{n,s}^{(M)} = \Xi_{n,s}^{(J)}$ and both these quantities converge to 0 if $n \gg (\log(np))^{2/\alpha}(\log(2n))^{2/\alpha}$. This implies that with probability at least $1 - 3/(np)$, simultaneously for all $\theta\in\mathbb{R}^p$,
\begin{equation}\label{eq:RE-Wang-Tewari}
\theta^{\top}\hat{\Sigma}_n\theta ~\ge~ \left(\lambda_{\min}(\Sigma_n) - 27\Xi_{n,1}^{(J)}\right)\|\theta\|_2^2 - 54\Xi_{n,1}^{(J)}\|\theta\|_1^2.
\end{equation}
Note that $\Xi_{n,1}^{(J)} = C_1\sqrt{\log(np)/n} + C_2(\log(n)\log(np))^{2/\alpha}/n$ (treating $K_{n,p}$ and $\Upsilon_{n,s}$ as constants). The inequality~\eqref{eq:RE-Wang-Tewari} can be compared to Proposition 8 in the recent work of~\cite{Chung17} where a similar result is derived under a joint sub-Weibull assumption only, but allowing for dependence through $\beta$-mixing of the observations. In comparison, our result's sample complexity is similar to theirs, 
while having a better (faster) coefficient for $\|\theta\|_1^2$.
\end{rem}

\subsubsection[Verification of the RE Condition]{Verification of the RE($k$) Condition}\label{rem:RECond}
As mentioned earlier, Theorem~\ref{cor:REBoundUnified} proves a stronger sufficient condition regarding RSC (as we call it; see the note below Theorem \ref{cor:REBoundUnified}). 
We now show that this indeed 
implies the RE($k$) condition \eqref{eq:RestrictedEigenvalue}, where $k$ is set to denote the true sparsity of the regression parameter $\beta_0$ (as in Section \ref{sec:HDLinReg}). In our application for Lasso, we only need the RE$(k)$ condition \eqref{eq:RestrictedEigenvalue} with $\delta = 3$. Hence, for simplicity, we only prove 
\eqref{eq:RestrictedEigenvalue} with $\delta = 3$. 
To this end, first note that for any $S\subseteq\{1,2,\ldots,p\}$ with $|S| \le k$, and for any $\theta\in\mathcal{C}(S; 3)$, we have
\[
\norm{\theta(S^c)}_1 \le 3\norm{\theta(S)}_1 \le 3\sqrt{k}\norm{\theta(S)}_2 \quad\Rightarrow\quad \norm{\theta}_1 \le 4\sqrt{k}\norm{\theta}_2.
\]
Now, let $\Xi_s$ be either $\Xi_{n,s}^{(M)}$ or $\Xi_{n,s}^{(J)}$, as in Theorem~\ref{cor:REBoundUnified}, for \emph{any} $1\le s\le p$. The inequality above and Theorem~\ref{cor:REBoundUnified} then together imply that for any given $k$ and for all $1 \leq s \leq p$, with probability at least $1 - 3s/(np)$, simultaneously for all $S$ with $|S| \le k$ and for all $\theta\in \mathcal{C}(S; 3)$,
\begin{equation}
\begin{split}
\theta^{\top}\hat{\Sigma}_n\theta  ~&\ge~ \left(\lambda_{\min}(\Sigma_n) - 27\Xi_s\right)\norm{\theta}_2^2 - \frac{54\Xi_s}{s}\norm{\theta}_1^2\\
~&\ge~ \left(\lambda_{\min}(\Sigma_n) - 27\Xi_s - \frac{864k\Xi_s}{s}\right)\norm{\theta}_2^2.\label{eq:free-parameter-s}
\end{split}
\end{equation}
The inequality \eqref{eq:free-parameter-s} holds for \emph{every} $1\le s\le p$. If we choose $s = k$, then $\lambda_{\min}(\Sigma_n) \ge 1782\Xi_k$ is needed to conclude the 
 RE$(k)$ condition~\eqref{eq:RestrictedEigenvalue}
with $\gamma_n = \lambda_{\min}(\Sigma_n)/2$; see footnote 4 of \cite{Neg12} for a related calculation. Under a joint sub-Weibull assumption, this corresponds to {requiring $n\gg (k\log(np/k)\log n)^{2/\alpha}$} (treating $\Upsilon_{n,k}$ and $K_{n,p}$ as constants).
For $\alpha = 2$, this reduces to the familiar requirement of $n \gg k \log(np/k)$ upto a $\log n$ factor.

Similarly, if one chooses $s = 1$ in \eqref{eq:free-parameter-s}, then $\lambda_{\min}(\Sigma_n) \ge 27(32k + 1) \Xi_1 \gtrsim k \Xi_1$ is needed to conclude the 
RE$(k)$ condition~\eqref{eq:RestrictedEigenvalue} with $\gamma_n = \lambda_{\min}(\Sigma_n)/2$. Under either a marginal or a joint sub-Weibull assumption, this corresponds to a sample complexity similar to \eqref{eq:simple-RE-complexity}.

In general, one can \emph{optimize} the right hand side of~\eqref{eq:free-parameter-s} over $1\le s\le p$ in order to derive a better sample complexity. 
Note that $\Xi_s + k\Xi_s/s$ is non-random, and so, if $s^{(o)}$ minimizes $\Xi_s + k\Xi_s/s$, then~\eqref{eq:free-parameter-s} implies that with probability at least $1 - 3s^{(o)}/(np) \ge 1 - 3/n$,
\begin{equation}\label{eq:general-RE-requirement}
\theta^{\top}\hat{\Sigma}_n\theta \ge \left(\lambda_{\min}(\Sigma_n) - 864\min_{1\le s\le p}\left\{\Xi_s + \frac{k\Xi_s}{s}\right\}\right)\|\theta\|_2^2\quad\mbox{for all}\quad \theta\in\bigcup_{|S|\le k}\mathcal{C}(S; 3).
\end{equation}
It is, however, hard to minimize $\Xi_s + k\Xi_s/s$ exactly because it involves four different powers of $s$, and hence, we find a simpler upper bound on the minimum, separately, under the joint and marginal sub-Weibull assumptions, considering different sub-cases for $k$ in each case. (In the following calculations, we treat the quantities $\Upsilon_{n,s}$, for all $1\leq s\leq p$, and $K_{n,p}$ as constants, and also ignore any multiplicative constants for rate optimization purposes.)
\vspace{-0.05in}
\paragraph{Optimized Sample Complexity in the Joint Sub-Weibull Case.}
In light of \eqref{eq:general-RE-requirement} with $\Xi_s = \Xi_{n,s}^{(J)}$, we consider minimizing $\Xi_{n,s}^{(J)} + k\Xi_{n,s}^{(J)}/s$, over $1\le s\le p$. To this end, define
\begin{equation}
s^*_J \; := \; \frac{n^{\alpha/(4-\alpha)}}{(\log n)^{4/(4-\alpha)}\log(np)}. \label{eq:s-optimal-joint}
\end{equation}
This is the value\footnote{Formally, $s^*_J$ should be defined as the smallest integer that is larger than (or equal to) the right hand side in \eqref{eq:s-optimal-joint} above. But this minor adjustment is irrelevant for the purpose of rate or sample complexity calculations, and therefore, we disregard this technicality in our calculations here.\label{footnote1}}
of $s$ obtained by minimizing $k \Xi_{n,s}^{(J)}/s$ which consists of two terms that behave antagonistically with $s$ (i.e. one increases while the other decreases). To find the best sample complexity for the RE($k$) condition to hold, we now consider three cases:
\begin{equation}\label{eq:k-diff-cases-joint}
\mbox{Case (i):} \;\; \alpha = 2, \;\; \mbox{or} \;\; \mbox{Case (ii):} \;\; \alpha < 2, \; k ~\le~ s^*_J, \;\; \mbox{or} \;\; \mbox{Case (iii):}  \;\; \alpha < 2, \; k > s^*_J.  
\end{equation}
For Cases (i) and (ii), we take $s = k$ in \eqref{eq:free-parameter-s}, with $\Xi_s \equiv \Xi_{n,s}^{(J)}$, to obtain: with probability at least $1 - 3k/(np)$, simultaneously for all $\theta\in\mathcal{C}(S; 3)$ and all $S$ with $|S| \leq k$,
\[
\theta^{\top}\widehat{\Sigma}_n\theta \ge \left(\lambda_{\min}(\Sigma_n) - (27 + 864)\Xi_{n,k}^{(J)}\right)\|\theta\|_2^2.
\]

\par\smallskip
\noindent Now, under Case (i) in \eqref{eq:k-diff-cases-joint}, i.e. if $\alpha = 2$, we have:
\[
\Xi_{n,k}^{(J)} \; \lesssim \; \sqrt{\frac{k\log(np)}{n}} + \frac{(\log n)(k\log(np))}{n} \;\; = \; o(1) \;\; \mbox{whenever} \;\; n \gg k\log(np)\log n.
\]
Under Case (ii)  in \eqref{eq:k-diff-cases-joint}, i.e. if $\alpha < 2$ and $k \leq s^*_J$, and with $\Xi_{n,s}^{(J)}$ monotone in $s$,  we have:
\begin{align*}
& \Xi_{n,k}^{(J)} \; \le \; \Xi_{n,s^*_J}^{(J)} \; \lesssim \; \left(\frac{n^{\alpha/(4-\alpha)}}{n(\log n)^{4/(4-\alpha)}}\right)^{1/2} \hspace{-0.05in} + \frac{1}{n}\left(\frac{n^{\alpha/(4-\alpha)}}{(\log n)^{4/(4-\alpha)}}\right)^{2/\alpha} = \; o(1) \;\; \mbox{whenever} \;\; n \gg 1. 
\end{align*}
Finally, under Case (iii)  in \eqref{eq:k-diff-cases-joint}, i.e. if $\alpha < 2$ and $k > s^*_J$, appealing to \eqref{eq:general-RE-requirement}, bounding $\min_{1\le s\le p} \{\Xi_{n,s}^{(J)} + k\Xi_{n,s}^{(J)}/s\}$ suffices. We do so using the following inequalities:
\begin{align*}
\min_{1\le s\le p} \left\{\Xi_{n,s}^{(J)} + \frac{k}{s}\Xi_{n,s}^{(J)}\right\} & \; \le \; \Xi_{n,s^*_J}^{(J)} + \frac{k}{s^*_J}\Xi_{n,s^*_J}^{(J)} \; \le \; \frac{2k}{s^*_J}\Xi_{n,s^*_J}^{(J)} \\
& \; \lesssim \; \frac{k}{\sqrt{s^*_J}}\sqrt{\frac{\log(np)}{n}} + \frac{k(s^*_J)^{(2-\alpha)/\alpha}}{n}(\log n)^{2/\alpha}(\log (np))^{2/\alpha}\\ 
& \; = \; \frac{2k\log(np)(\log n)^{2/(4-\alpha)}}{n^{2/(4-\alpha)}} \; = o(1) \;\; \mbox{if} \;\; n \gg (k\log(np))^{2-\alpha/2}\log n.
\end{align*}

Summarizing, we conclude that under a joint sub-Weibull assumption, the RE$(k)$ condition \eqref{eq:RestrictedEigenvalue} holds (with probability converging to 1 and with $\gamma_n = \lambda_{\min}(\Sigma_n)/2$) over all the cases in \eqref{eq:k-diff-cases-joint} with the following corresponding \emph{sample complexity requirements:}
\begin{equation}\label{eq:complexity-diff-cases-joint}
\mbox{(i):} \;\;  n \gg k\log(np)\log n, \;\; \mbox{(ii):} \;\; n \gg 1, \;\; \mbox{(iii):}  \;\;  n \gg (k\log(np))^{2-\alpha/2}\log n.  
\end{equation}
Combining all cases in \eqref{eq:complexity-diff-cases-joint}, we only require $n \gg (k\log(np))^{2-\alpha/2}\log n$. \qed 


\vspace{-0.05in}
\paragraph{Optimized Sample Complexity in the Marginal Sub-Weibull Case.}
Again, in light of \eqref{eq:general-RE-requirement} with $\Xi_s = \Xi_{n,s}^{(M)}$, bounding $\min_{1\leq s \leq p}\{\Xi_{n,s}^{(M)} + k\Xi_{n,s}^{(M)}/s\}$ is sufficient. Define 
\begin{equation}\label{eq:s-optimal-marginal}
s^*_M \; := \; \frac{n^{\alpha/(4 + \alpha)}}{(\log(np))^{(4-\alpha)/(4 + \alpha)}(\log n)^{4/(4+\alpha)}}.
\end{equation}
This is the value\footnote{See Footnote \ref{footnote1} earlier regarding $s_J^*$. The same comments apply here for $s_M^*$ as well and are not repeated.\label{footnote2}}
of $s$ obtained by minimizing $k \Xi_{n,s}^{(M)}/s$ which consists of two terms that behave antagonistically with $s$. 
We now consider two different cases for $k$:
\begin{equation}\label{eq:k-diff-cases-marginal}
\mbox{Case (i):} \;\; k ~\le~ s^*_M, \;\;\; \mbox{or} \;\;\; \mbox{Case (ii):}  \;\; k > s^*_M.
\end{equation}
Under Case (i) in \eqref{eq:k-diff-cases-marginal}, i.e. when $k \le s^*_M$, noting that $\Xi_{n,s}^{(M)}$ is monotone in $s$, we can use
\begin{align}
\min_{1\le s\le p} &\left\{\Xi_{n,s}^{(M)} + \frac{k}{s}\Xi_{n,s}^{(M)}\right\}  \\
~&\le~ \Xi_{n,k}^{(M)} + \frac{k}{k}\Xi_{n,k}^{(M)} \; = \; 2\Xi_{n,k}^{(M)} \; \le \; 2\Xi_{n,s^*_M}^{(M)}\\
&   \lesssim~ \left(\frac{n^{\alpha/(4+\alpha)}(\log(np))^{2\alpha/(4+\alpha)}}{n(\log n)^{4/(4+\alpha)}}\right)^{1/2}
+ \frac{1}{n}\left(\frac{n^{\alpha/(4+\alpha)}(\log(np))^{2\alpha/(4+\alpha)}}{(\log n)^{4/(4+\alpha)}}\right)^{2/\alpha} \label{eq:RE-complexity-Marginal-Case1}\\
&  =~  o(1) \quad \mbox{if} \;\; n\log n \gg (\log(np))^{\alpha/2}\;\; \mbox{and} \;\; n(\log n)^{8/(2\alpha + \alpha^2)} \gg (\log(np))^{4/(2+\alpha)}.
\end{align}
Similarly, under Case (ii) in \eqref{eq:k-diff-cases-marginal}, i.e. if $k > s^*_M$, we can use
\begin{align}
& \min_{1\le s\le p}\left\{\Xi_{n,s}^{(M)} + \frac{k}{s}\Xi_{n,s}^{(M)}\right\}  \; \le \; \Xi_{n,s^*_M}^{(M)} + \frac{k}{s^*_M}\Xi_{n,s^*_M}^{(M)} \; \le \; \frac{2k}{s^*_M}\Xi_{n,s^*_M}^{(M)}\\
& \quad \lesssim \; \frac{k}{\sqrt{s^*_M}}\sqrt{\frac{\log(np)}{n}} + \frac{k(s^*_M)^{2/\alpha}}{n}(\log n)^{2/\alpha}(\log (np))^{2/\alpha}\\
& \quad = \; 2 k\frac{\sqrt{\log(np)}}{\sqrt{n}}\frac{(\log(np))^{(4-\alpha)/(8+2\alpha)}(\log n)^{4/(8+2\alpha)}}{n^{\alpha/(8+2\alpha)}} \; = \; 2k \frac{(\log(np))^{4/(4+\alpha)}(\log n)^{2/(4+\alpha)}}{n^{(2+\alpha)/(4+\alpha)}} \\
& \quad = \; o(1) \; \;\; \mbox{if} \;\; n \gg k^{(4 + \alpha)/(2+\alpha)}(\log(np))^{4/(2+\alpha)}(\log n)^{2/(2+\alpha)}. \label{eq:RE-complexity-Marginal-Case2}
\end{align}
Thus, \eqref{eq:RE-complexity-Marginal-Case1} and \eqref{eq:RE-complexity-Marginal-Case2} provide the required sample complexities under Cases (i) and (ii) in \eqref{eq:k-diff-cases-marginal}. Combining both the cases, the requirement in \eqref{eq:RE-complexity-Marginal-Case2} 
suffices for the RE$(k)$ condition to hold (with probability converging to 1) under a marginal sub-Weibull assumption. \qed

\par\smallskip
The results above, therefore, reveal several interesting sample complexity requirements, apart from the expected ones, 
under both the joint and marginal sub-Weibull assumptions. To the best of our knowledge, a unified and general set of results like this regarding the RE condition is not (easily) available/accessible within the core statistics literature. Lastly, we also point out that some of the logarithmic factors in the requirements above could possibly be improved (or removed) based on a more refined analysis which is not pursued here. 

\begin{rem}\label{rem:REExpTails}\;\;(Requirement/Relevance of Exponential Tails for RE Condition).\;\;
Observe that the RE condition is only concerned with the minimum sparse eigenvalue and so, the assumption of exponential tails may not be required in its full strength; see \cite{Geer14} and \cite{Oliveira13} for details. In particular, for this problem, it is only required to bound (possibly exponentially), $\mbox{for some} \; \varepsilon > 0$, the probability of the event
\[
\frac{1}{n}\sum_{i=1}^n \left(X_i^{\top}\theta\right)^2 \le (1 - \varepsilon)\frac{1}{n}\sum_{i=1}^n \mathbb{E}\left[\left(X_i^{\top}\theta\right)^2\right].
\]
Because this event is related to the average of non-negative random variables, it can have an exponentially small probability even under polynomial moment conditions; see Theorem 2.19 of \cite{DeLaPena09}, for instance, for an exponential tail bound under only finite
fourth moment conditions. \cite{Oliveira13} 
formalizes this to bound the probability of the event uniformly over all $\theta$, and 
proved a general result related to the RE condition for a \emph{normalized} covariance matrix; see Theorem 5.2 there. Some of the main differences between his result and the results in high dimensional statistics literature are listed after Theorem 5.2 therein; see also Section 6.1 of \cite{Geer14} for more comparisons.

Our marginal sub-Weibull assumption (a) in Theorem \ref{cor:REBoundUnified} is equivalent to the moment growth: $\norm{X_i(j)}_{r} \leq C_{\alpha}r^{1/\alpha}$ for \emph{all} $r\ge 1$. Under an additional so-called \emph{small-ball condition}, Theorem E of \cite{LecueMendelson14} shows that the same moment growth, but \emph{only} for $1\le r\le \log(wp)$, for some constant $w\ge 1$, suffices to verify the RE condition.
Note that for $p$ diverging with $n$, this weaker assumption of \cite{LecueMendelson14} is almost equivalent to a marginal sub-Weibull requirement. It is also not clear if Theorem E of \cite{LecueMendelson14}, which is primarily aimed at the RE condition's verification, can be extended to prove more general and stronger RSC type bounds of the form \eqref{eq:REMarginal}--\eqref{eq:REJoint}, as we obtain in Theorem \ref{cor:REBoundUnified}. Nevertheless, it must also be mentioned that their result on the RE condition requires a sample complexity of $n \gtrsim \max\{k\log p, (\log p)^{4/\alpha - 1}\}$ \emph{only}. This is a weaker condition (and perhaps the weakest known) than what we can achieve here.

As we noted {\cred earlier (e.g., at the end of Remark \ref{rem:RIPMarginalSampleComplexity}, though in a different context),} 
our main goal throughout Section \ref{sec:Applications} is to demonstrate `easy' applications of the ready-to-use inequalities from Section \ref{sec:Indep} in handling these statistical problems and obtain unified and general, yet user-friendly, results for each of them. A  more targeted problem-specific approach, possibly using different techniques (and assumptions), can perhaps lead to slightly better results or conditions for some of these problems. Given the larger focus of this paper, we do not to pursue such deeper nuanced analyses here. 

Finally, we also remark that although it may be possible to prove the RE condition itself under weaker tail assumptions on the covariates and allowing for an exponential growth of $p$, the theoretical analysis of Lasso and other related high dimensional estimators --- where this condition is perhaps most needed --- usually \emph{requires} (almost) exponential tails for the covariates anyway to ensure logarithmic dependence on $p$ in the bounds and in the rates. 
\end{rem}

\subsection{High Dimensional Linear Regression}\label{sec:HDLinReg}
In this section, we derive results related to the Lasso, a well-known high dimensional linear regression estimator introduced by \cite{Tibs96}. Let $(X_1^{\top}, Y_1)^{\top}, \ldots,$ $(X_n^{\top}, Y_n)^{\top}$ be $n$ independent random vectors in $\mathbb{R}^p\times\mathbb{R}$. Let $\beta_0\in\mathbb{R}^p$ be a vector such that
\begin{equation}\label{eq:MisSpecify}
Y_i = X_i^{\top}\beta_0 + \varepsilon_i\quad\mbox{with}\quad \frac{1}{n}\sum_{i=1}^n \mathbb{E}\left[\varepsilon_iX_i\right] = 0\in\mathbb{R}^p.
\end{equation}
Observe that such a vector $\beta_0$ \emph{always} exists (regardless of whether or not $\mathbb{E}(Y_i\big|X_i)$ is linear), as long as the population gram matrix $\sum_{i=1}^n \mathbb{E}[X_iX_i^{\top}]/n$ is invertible, and is given by
\begin{align*}
\beta_0 &= \argmin_{\theta\in\mathbb{R}^p}\,\frac{1}{n}\sum_{i=1}^n \mathbb{E}\left[\left(Y_i - X_i^{\top}\theta\right)^2\right]\\ &= \left(\frac{1}{n}\sum_{i=1}^n \mathbb{E}\left[X_iX_i^{\top}\right]\right)^{-1}\left(\frac{1}{n}\sum_{i=1}^n \mathbb{E}\left[X_iY_i\right]\right).
\end{align*}
Differentiating the objective function above implies the second condition in~\eqref{eq:MisSpecify}. A linear model is said to be \emph{well-specified} if $\mathbb{E}\left[\varepsilon_i\big|X_i\right] = 0$ in which case the second condition in~\eqref{eq:MisSpecify} holds trivially, and $\mathbb{E}(Y_i\big|X_i)$ is exactly linear and equals $X_i^{\top}\beta_0$. Thus, the specification~\eqref{eq:MisSpecify} is a much weaker condition and allows for a \emph{misspecified} linear model. Note also that in \eqref{eq:MisSpecify}, $X$ is allowed to include $1$ to account for an intercept term.

The Lasso estimator $\hat{\beta}_n(\lambda)$ of $\beta_0$, for a regularization parameter $\lambda > 0$, is given by
\begin{equation}\label{eq:Lasso}
\hat{\beta}_n(\lambda) := \argmin_{\theta\in\mathbb{R}^p}\,\frac{1}{2n}\sum_{i=1}^n \left(Y_i - X_i^{\top}\theta\right)^2 + \lambda\norm{\theta}_1.
\end{equation}
In most of the literature on Lasso, the guarantees on the estimator are usually obtained under some restrictive assumptions such as fixed or jointly sub-Gaussian covariates and/or homoscedastic Gaussian/sub-Gaussian errors, although these are not the only settings studied; see \cite{Vidaurre13} and references therein for a detailed survey of $L_1$-penalized regression methods and their computational and theoretical properties.

\vspace{-0.05in}
\paragraph{{\cred Outline of the Main Results.}}\label{para:Sec4.4Outline} In this section, we analyze the Lasso under \emph{much weaker} than usual tail assumptions on the covariates $X_i$ as well as the errors $\varepsilon_i$. Our main results in this regard are Theorems \ref{thm:LassoRate} and \ref{thm:LassoPoly} (proved in Appendix~\ref{AppSec:HDLinReg}) below. Both results only assume a \emph{marginal} sub-Weibull property for the $X_i$'s, while for the errors, Theorem \ref{thm:LassoRate} assumes $\epsilon_i$ to be sub-Weibull  and Theorem \ref{thm:LassoPoly} only assumes $\epsilon_i$ to have polynomial tails (i.e. finite moments upto some order $r \geq 2$). Moreover, our analysis throughout \emph{allows} for (a) \emph{model misspecification}, and (b) \emph{both fixed and random covariates} since we do not assume identical distributions of the random vectors. The main message of both the results here is that the Lasso estimator attains the rate of $\sqrt{k\log p/n}$ for a large range of $k, p$ if $\beta_0$ is $k$-sparse. {\cred More details on both results and their implications, including their rates of convergence and applicability in various settings, are discussed in Remarks \ref{rem:RateLasso} and \ref{rem:RateLassoPoly}. Further extensions as well as a general {\it oracle inequality} for the Lasso are given in Remark \ref{rem:LassoExtensions}.} To the best of our knowledge, these results 
are among the very few (if not the only) results proving rates of convergence of the Lasso estimator in this generality, with one notable exception being a recent result from \citet{HanWellnerAOS19} which will be discussed later in the context of Theorem \ref{thm:LassoPoly}. 

\par\smallskip
A very general result about Lasso is obtained by \cite{Neg12} that is derived based on deterministic inequalities (see Section 4.2 therein). Both our main results here are based on this general result. We present Theorem \ref{thm:LassoRate} first. 
Recall the definitions of $\hat{\Sigma}_n$, $\Sigma_n$ from 
\eqref{eq:CovarianceDefinition}, and $\Xi_{n,s}^{(M)}$ from Theorem~\ref{cor:REBoundUnified}(a), and also that $\norm{\beta_0}_0 $ denotes the sparsity of $\beta_0$.
\begin{thm}[Lasso with Marginally Sub-Weibull $X_i$'s and Sub-Weibull $\varepsilon_i$'s]\label{thm:LassoRate}
Consider the setting above. Suppose $\norm{\beta_0}_0 \le k$ and there exists $0 < \alpha \le 2,$ and $\vartheta, K_{n,p} > 0$ such that
\[
\max\left\{\norm{X_i}_{M,\psi_{\alpha}}, \norm{\varepsilon_i}_{\psi_{\vartheta}}\right\} \le K_{n,p}\quad\mbox{for all}\quad 1\le i\le n.
\]
Also suppose $n\ge 2$, $k \ge 1$ and the matrix $\Sigma_n$ satisfies 
\begin{equation}\label{eq:Re-satisfied}
\lambda_{\min}(\Sigma_n) \ge 54\min_{1\le s\le p}\left\{\Xi_{n,s}^{(M)} + \frac{32k\Xi_{n,s}^{(M)}}{s}\right\}, 
\end{equation}
with $\Xi_{n,s}^{(M)}$ as defined in Theorem~\ref{cor:REBoundUnified}(a).
Then, with probability at least $1 - 3(np)^{-1} - 3n^{-1}$, the regularization parameter $\lambda_n$ can be chosen to be
\begin{equation}\label{eq:LambdaChoice}
\lambda_n = 14\sqrt{2}\sigma_{n,p}\sqrt{\frac{\log(np)}{n}} + \frac{C_{\gamma}K_{n,p}^2(\log(2n))^{1/\gamma}(2\log(np))^{1/\gamma}}{n},
\end{equation}
so that the Lasso estimator $\hat{\beta}_n(\lambda_n)$ satisfies
\[
\norm{\hat{\beta}_n(\lambda_n) - \beta_0}_2 \le \frac{84\sqrt{2}}{\lambda_{\min}(\Sigma_n)}\left[\sigma_{n,p}\sqrt{\frac{k\log(np)}{n}} + \frac{C_{\gamma}K_{n,p}^2k^{1/2}(\log(np))^{2/\gamma}}{n}\right],
\]
where $C_{\gamma} > 0$ is some constant depending only on $\gamma$ and
\[
\frac{1}{\gamma} := \frac{1}{\alpha} + \frac{1}{\vartheta},\quad\mbox{and}\quad \sigma_{n,p}^2 := \max_{1\le j\le p}\frac{1}{n}\sum_{i=1}^n \mathrm{Var}\left(X_i(j)\varepsilon_i\right) \; > 0.
\]
\end{thm}

\begin{rem}\label{rem:RateLasso}\;\;(Rate of Convergence and a Few Other Comments on Theorem \ref{thm:LassoRate}).\;\;
It follows from the result that if \eqref{eq:Re-satisfied} holds (as verified in Section \ref{rem:RECond}) 
and if $$(\log(np))^{4/\gamma - 1}=o(n),\quad\mbox{as}\quad n\to\infty,$$ then the rate of convergence of the Lasso is $\sqrt{k\log p/n}$ which is also known 
to be the (near) minimax optimal rate \citep{Raskutti11}. Note further that the probability guarantee in Theorem~\ref{thm:LassoRate} is converging to 1 as $n\to\infty$, and so, the bound therein has $\log(np)$ instead of the usual $\log p$. By making the probability to be $1 - O(p^{-1})$, the usual rate of $\sqrt{k\log p/n}$ can be recovered. In the special case of conditionally homoscedastic errors $\varepsilon_i$ with $\mathbb{E}(\varepsilon_i\big|X_i) = 0$ and $\mathrm{Var}(\varepsilon_i\big|X_i) = \sigma^2$, and with $X_i$'s normalized to have marginal variances of 1, we have $\sigma_{n,p} = \sigma$ and this then leads to the familiar rate of $\sigma\sqrt{k\log p/n}$ for the Lasso estimator.

Lastly, if a \emph{joint} (instead of marginal) sub-Weibull property is assumed on the covariates in Theorem~\ref{thm:LassoRate}, then the same result holds with $\Xi_{n,s}^{(M)}$ in \eqref{eq:Re-satisfied} replaced by $\Xi_{n,s}^{(J)}$, with $\Xi_{n,s}^{(J)}$ as 
in Theorem~\ref{cor:REBoundUnified}(b). (This is true for Theorem \ref{thm:LassoPoly} as well and won't be repeated there.) With $\Xi_{n,s}^{(J)} \le \Xi_{n,s}^{(M)}$, 
this version of \eqref{eq:Re-satisfied} 
imposes weaker sample complexity related conditions on the growth of $(n,k)$, as seen from Section \ref{rem:RECond} as well. Some related results for the Lasso with jointly sub-Weibull dependent random vectors can be found in \cite{Chung17}.
\end{rem}


\vspace{-0.05in}
\paragraph*{Lasso under Polynomial Moments on Errors.}
A careful inspection of the theoretical analysis of Lasso reveals that the assumption of sub-Weibull errors in Theorem \ref{thm:LassoRate} \emph{can be weakened} to polynomial-tailed errors. This has also been noted in the recent work of \cite{HanWellnerAOS19}; see Theorem 5 and Examples 4-5 therein, where they provide a general recipe for deriving the convergence rates of Lasso allowing for much weaker tailed errors. Their results, however, are asymptotic in nature and need the restrictive assumption of $\varepsilon_i$'s being mean 0 and independent of $X_i$, $1\le i\le n$, although they do allow for dependence among $\varepsilon_i$'s.
In Theorem \ref{thm:LassoPoly} below, we prove an analogue of Theorem \ref{thm:LassoRate} assuming only polynomial moments (upto some order $r \geq 2$) of $\varepsilon_i$.
Recall Definition \ref{def:MarginalWeibull} and $\Xi_{n,s}^{(M)}$ from Theorem  \ref{cor:REBoundUnified}, and recall that for any random variable $W$, $\norm{W}_r = \left(\mathbb{E}\left[|W|^r\right]\right)^{1/r}$ for $r > 0$. 
\begin{thm}[Lasso with Marginally Sub-Weibull $X_i$'s and Polynomial-Tailed $\epsilon_i$'s]\label{thm:LassoPoly}
Under the setting of Theorem~\ref{thm:LassoRate}, suppose $\norm{\beta_0}_0 \le k$ and there exists $0 < \alpha \le 2, r \ge 2$ so that
\[
\max_{1\le i\le n}\,\norm{X_i}_{M,\psi_{\alpha}} \le K_{n,p},\quad\mbox{and}\quad \max_{1\le i\le n}\,\norm{\varepsilon_i}_r \le K_{\varepsilon, r}.
\]
Also suppose $n\ge 2, k\ge 1$ and that 
$\Sigma_n$ satisfies~\eqref{eq:Re-satisfied}. 
Then for $L \ge 1$, with probability at least $1 - 3(np)^{-1} - 3n^{-1} - L^{-1}$, the regularization parameter $\lambda_n$ can be chosen to be
\begin{equation}\label{eq:LambdaChoicePoly}
\lambda_n = 14\sqrt{2}\sigma_{n,p}\sqrt{\frac{\log(np)}{n}} + \frac{C_{\alpha}K_{n,p}K_{\varepsilon, r}(\log(np))^{1/\alpha}\left[(\log(2n))^{1/\alpha} + L\right]}{n^{1 - 1/r}},
\end{equation}
so that the Lasso estimator $\hat{\beta}(\lambda_n)$ satisfies
\begin{equation}
\begin{split}
\norm{\hat{\beta}_n(\lambda_n) - \beta_0}_2 & \le \; \frac{84\sqrt{2}}{\lambda_{\min}(\Sigma_n)} \sigma_{n,p}\sqrt{\frac{k\log(np)}{n}}\\  & \qquad + C_{\alpha}K_{n,p}K_{\varepsilon, r}\frac{k^{1/2}(\log(np))^{1/\alpha}\left[(\log(2n))^{1/\alpha} + L\right]}{\lambda_{\min}(\Sigma_n) \; n^{1 - 1/r}},
\end{split}
\end{equation}
for some constant $C_{\alpha} > 0$ depending only on $\alpha$.
\end{thm}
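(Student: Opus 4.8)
The plan is to run the standard Lasso argument of \cite{Neg12}, whose only two stochastic inputs are a restricted strong convexity / restricted eigenvalue property of $\hat\Sigma_n$ and a bound on the empirical score $\norm{n^{-1}\sum_{i=1}^n X_i\varepsilon_i}_\infty$. The first input is already available: since $\lambda_{\min}(\Sigma_n)\ge 1782\,\Xi_{n,k}^{(M)}$, part~(a) of Theorem~\ref{cor:REBoundUnified} together with the calculation in Remark~\ref{rem:RECond} shows that, with probability at least $1-3k(np)^{-1}$, the RE condition~\eqref{eq:RestrictedEigenvalue} holds with $\delta=3$ and $\gamma_n=\lambda_{\min}(\Sigma_n)/2$ (the sparsity $\norm{\beta_0}_0\le k$ is what lets us restrict to $|S|\le k$). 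Given this, and the choice $\lambda_n \ge 2\,\norm{n^{-1}\sum_i X_i\varepsilon_i}_\infty$, the deterministic oracle inequality of \cite{Neg12} (their Section~4.2) gives $\norm{\hat\beta_n(\lambda_n)-\beta_0}_2 \lesssim \sqrt{k}\,\lambda_n/\gamma_n$; substituting $\gamma_n=\lambda_{\min}(\Sigma_n)/2$ and the $\lambda_n$ of~\eqref{eq:LambdaChoicePoly} and tracking the numerical constants reproduces the stated bound. So the whole theorem reduces to showing that the $\lambda_n$ of~\eqref{eq:LambdaChoicePoly} dominates $2\,\norm{n^{-1}\sum_i X_i\varepsilon_i}_\infty$ on an event of the claimed probability, which is the only genuinely new step and is where the merely polynomial moments of $\varepsilon_i$ must be reconciled with the sub-Weibull behaviour of $X_i$.

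For the score I would use a two-level truncation, because the summand $X_i(j)\varepsilon_i$ is a product of a sub-Weibull$(\alpha)$ factor and a factor with only $r$ finite moments, so neither Theorem~\ref{thm:MaximalTailBound} nor the H\"older-type inequality~\eqref{eq:Product} applies to it directly. Fix $\tau := K_{\varepsilon,r}(Ln)^{1/r}$ and $M := C_\alpha K_{n,p}(\log(np))^{1/\alpha}$ with $C_\alpha$ large, and set $E_\varepsilon := \{\max_i|\varepsilon_i|\le\tau\}$, $E_X := \{\max_{i,j}|X_i(j)|\le M\}$. Markov's inequality on $\mathbb{E}|\varepsilon_i|^r\le K_{\varepsilon,r}^r$ with a union bound over $i$ gives $\mathbb{P}(E_\varepsilon^c)\le L^{-1}$ --- the source of the $L^{-1}$ in the probability statement --- and the sub-Weibull tail~\eqref{eq:TailSubWeibull} with a union bound over the $np$ entries gives $\mathbb{P}(E_X^c)\le(np)^{-1}$. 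On $E_X\cap E_\varepsilon$ one has $n^{-1}\sum_i X_i(j)\varepsilon_i = n^{-1}\sum_i \bar X_i(j)\bar\varepsilon_i$ with $\bar X_i(j):=X_i(j)\mathbf{1}\{|X_i(j)|\le M\}$ and $\bar\varepsilon_i := \varepsilon_i\mathbf{1}\{|\varepsilon_i|\le\tau\}$, and I would split this into the centered sum $n^{-1}\sum_i\{\bar X_i(j)\bar\varepsilon_i-\mathbb{E}[\bar X_i(j)\bar\varepsilon_i]\}$ plus the deterministic bias $n^{-1}\sum_i\mathbb{E}[\bar X_i(j)\bar\varepsilon_i]$.

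For the centered sum, \eqref{eq:Product} gives $\norm{\bar X_i(j)\bar\varepsilon_i}_{\psi_\alpha}\le\norm{X_i(j)}_{\psi_\alpha}\norm{\bar\varepsilon_i}_{\psi_\infty}\le K_{n,p}\tau$, so Theorem~\ref{thm:MaximalTailBound} over the $p$ coordinates with $t=\log(np)$ --- using that the truncated variances are at most $\sigma_{n,p}^2$ (exactly so when the model is well specified, and otherwise up to a negligible $n^{-1}\sum_i(\mathbb{E}[X_i(j)\varepsilon_i])^2$ correction) --- gives, off an event of probability $3(np)^{-1}$, a bound of order $\sigma_{n,p}\sqrt{\log(np)/n}$ plus $K_{n,p}K_{\varepsilon,r}(Ln)^{1/r}(\log(2n))^{1/\alpha}(\log(np))^{1/\alpha}/n$, and since $(Ln)^{1/r}/n = L^{1/r}n^{-(1-1/r)}\le L\,n^{-(1-1/r)}$ these are precisely the two terms of~\eqref{eq:LambdaChoicePoly}. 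For the deterministic bias, using $n^{-1}\sum_i\mathbb{E}[X_i(j)\varepsilon_i]=0$ from~\eqref{eq:MisSpecify}, one rewrites it as $-n^{-1}\sum_i\mathbb{E}[X_i(j)\mathbf{1}\{|X_i(j)|>M\}\varepsilon_i] - n^{-1}\sum_i\mathbb{E}[\bar X_i(j)\varepsilon_i\mathbf{1}\{|\varepsilon_i|>\tau\}]$; the first piece is at most $\norm{X_i(j)\mathbf{1}\{|X_i(j)|>M\}}_2\,\norm{\varepsilon_i}_2$, which by~\eqref{eq:TailSubWeibull} is superpolynomially small in $np$, and the second is at most $M\,\mathbb{E}[|\varepsilon_i|\mathbf{1}\{|\varepsilon_i|>\tau\}]\le M\tau^{-(r-1)}K_{\varepsilon,r}^r = C_\alpha K_{n,p}K_{\varepsilon,r}(\log(np))^{1/\alpha}/(Ln)^{1-1/r}$, within the target since $L\ge1$. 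Collecting $E_X^c$, $E_\varepsilon^c$, the failure event of the maximal inequality, and the RE event yields the probability $1-3(np)^{-1}-3k(np)^{-1}-L^{-1}$ (the $E_X^c$ probability being absorbed into the $(np)^{-1}$ terms). The hard part is precisely this score bound: the Gaussian part of the tail must be kept proportional to the true standard deviation $\sigma_{n,p}$ rather than to $K_{n,p}K_{\varepsilon,r}$, and the truncation bias must be shown to be of strictly lower order --- which is exactly why $X$ has to be truncated as well, so that $\mathbb{E}|\varepsilon|^r$ gets multiplied by the bounded level $M$ (yielding the $n^{-(1-1/r)}$ decay in~\eqref{eq:LambdaChoicePoly}) instead of by $\norm{X}_2$, which would only give $n^{-(1/2-1/r)}$.
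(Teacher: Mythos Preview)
Your high-level strategy --- reduce to the deterministic oracle inequality of \cite{Neg12}, invoke Theorem~\ref{cor:REBoundUnified} for the RE event, and concentrate all the work on bounding the score $\norm{n^{-1}\sum_i X_i\varepsilon_i}_\infty$ --- is exactly the paper's. The score bound, however, is handled differently. The paper truncates only $\varepsilon_i$, at the Hoffmann--J{\o}rgensen level $C_{n,\varepsilon}:=8\,\mathbb{E}[\max_i|\varepsilon_i|]\le 8n^{1/r}K_{\varepsilon,r}$, and splits the \emph{already centered} $S_i(j):=\varepsilon_iX_i(j)-\mathbb{E}[\varepsilon_iX_i(j)]$ according to $\{|\varepsilon_i|\le C_{n,\varepsilon}\}$. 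Centering before truncating gives $\mathrm{Var}(S_i^{(1)}(j))\le\mathrm{Var}(\varepsilon_iX_i(j))$ with no correction; your $(\mathbb{E}[X_i(j)\varepsilon_i])^2$ term is not negligible under misspecification in general (only the \emph{average} $n^{-1}\sum_i\mathbb{E}[X_i(j)\varepsilon_i]$ vanishes), so that ordering matters if you want the leading constant to be $\sigma_{n,p}$ rather than $K_{n,p}K_{\varepsilon,r}$. For the remainder $S^{(2)}$ the paper uses Hoffmann--J{\o}rgensen plus Cauchy--Schwarz to bound $\bigl\|\max_j|n^{-1}\sum_i S_i^{(2)}(j)|\bigr\|_1$ by a constant times $n^{-1}\norm{\max_i|\varepsilon_i|}_2\,\norm{\max_{i,j}|X_i(j)|}_2\lesssim K_{n,p}K_{\varepsilon,r}(\log np)^{1/\alpha}n^{-(1-1/r)}$, and then Markov with factor $L$; this is what makes the $+L$ in~\eqref{eq:LambdaChoicePoly} \emph{additive} and decoupled from the $(\log 2n)^{1/\alpha}$ coming from the truncated part.

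Your double-truncation is more elementary (no Hoffmann--J{\o}rgensen) and does recover the $n^{-(1-1/r)}$ rate, but tying $L$ into the cutoff $\tau=K_{\varepsilon,r}(Ln)^{1/r}$ makes the $\psi_\alpha$-norm fed to Theorem~\ref{thm:MaximalTailBound} scale like $K_{n,p}\tau$, so the heavy-tail term carries a \emph{multiplicative} $L^{1/r}$ on $(\log 2n)^{1/\alpha}$ rather than an additive $+L$. These two forms are incomparable (take $L\asymp(\log 2n)^{1/\alpha}$ with $n$ large and your bound is strictly worse), so your argument proves a cognate result rather than exactly~\eqref{eq:LambdaChoicePoly}. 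The paper's route also makes your $X$-truncation unnecessary: the Hoffmann--J{\o}rgensen step handles $\max_{i,j}|X_i(j)|$ in expectation via the $\psi_\alpha$ maximal inequality, so there is no bias-from-$X$ bookkeeping and no extra $E_X^c$ event to squeeze into the stated probability budget.
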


\begin{rem}\label{rem:RateLassoPoly}\;\;(Convergence Rates and the Special Case of Fixed Designs).\;\;
Theorem~\ref{thm:LassoPoly} readily proves that the rate of convergence of the Lasso is $\sigma_{n,p}\sqrt{k\log p/n}$ if
\begin{equation}
K_{\varepsilon, r}(\log(np))^{1/\alpha - 1/2}(\log(2n))^{1/\alpha} = o(n^{1/2 - 1/r}).\label{eq:RateConstraint}
\end{equation}
In comparison to \cite{HanWellnerAOS19}, Theorem~\ref{thm:LassoPoly} provides a precise non-asymptotic extension of their (asymptotic) results under (marginally) sub-Weibull covariates, without the assumption regarding the errors being independent of the covariates. Since our result allows for (a) non-identically distributed observations, (b) both fixed and random designs, as well as (c) possibly misspecified models, it serves as a  generalization (under sub-Weibull covariates) of Theorem 5 (and Example 5) of~\cite{HanWellnerAOS19}. Moreover, a careful inspection of their sample complexity requirement, as given in Equation (4.4) of their result, implies the condition $(\log p)^{4/\alpha +1} = O(n^{2-4/r})$ when translated into our setup and notation. This is a far stronger condition (e.g., if $\log p$ is polynomial in $n$) than our requirement \eqref{eq:RateConstraint}.

Finally, note that if we are under a \emph{fixed design}, i.e. if $X_i, 1\le i\le n$ are $n$ fixed vectors, then $X_i$'s simply are marginally sub-Weibull $(\infty)$ and 
\[
\max_{1\le i\le n}\norm{X_i}_{M,\psi_{2}} \; \leq \; \max_{1\le i\le n}\norm{X_i}_{M,\psi_{\infty}} \; = \; \max_{1\le i\le n} \max_{1\le j\le p}|X_i(j)|.
\]
Hence, applying Theorem~\ref{thm:LassoPoly} with $\alpha = 2$ in this case, we observe that a rate of $\sqrt{k \log p / n}$ can be achieved 
under the (almost trivial) rate constraint 
\[
K_{\varepsilon, r}(\log(np))^{-1/2}(\log(2n))^{1/2} = o(n^{1/2 - 1/r}),
\]
which is satisfied as long as $n$ is large enough and $r > 2$. Similarly, for Theorem~\ref{thm:LassoRate}, the constraint becomes: $(\log(np))^{4/\vartheta - 1}=o(n)$. It should be noted that for fixed designs, the RE condition is simply an explicit assumption.
\end{rem}

\begin{rem}\label{rem:LassoExtensions}\;\;(Extensions and Other Estimators).\;\;
Using the probability tools from Section~\ref{sec:Indep} and the method of proof in this section, it is possible to prove very general results extending Theorem~\ref{thm:LassoRate} in several directions (similar extensions also apply to Theorem~\ref{thm:LassoPoly}, although 
we only illustrate them for Theorem~\ref{thm:LassoRate}). We briefly discuss some of these below.

Theorem~\ref{thm:LassoRate} is proved under the assumption of `hard' sparsity in the sense that no more than $k$ entries of $\beta_0$ are non-zero. One can actually derive a more \emph{general oracle inequality} (without any hard sparsity condition) for $\hat{\beta}_n(\lambda_n)$ using Theorem 1 of \cite{Neg12}.

Under the assumptions of Theorem~\ref{thm:LassoRate} (except the hard sparsity), an \emph{oracle inequality for the Lasso} is as follows. For a choice of $\lambda_n$ as in~\eqref{eq:LambdaChoice}, with probability converging to 1,
\begin{equation}
\begin{split}
&\norm{\hat{\beta}_n(\lambda_n) - \beta_0}_2^2 \label{eq:OracleIneq} \\
&\quad\le \min_{S:\,\Xi_{n,|S|}^{(M)} = o(1)}\left[\frac{18\lambda_n^2{|S|}}{\Gamma^2_n(S)}~+~\frac{8\lambda_n\norm{\beta_0(S^c)}_1}{\Gamma_n(S)} + \frac{3456\Xi_{n,|S|}^{(M)}\norm{\beta_0(S^c)}_1^2}{|S|\Gamma_n(S)}\right],
\end{split}
\end{equation}
where
$\Gamma_n(S) := \lambda_{\min}(\Sigma_n) - 1755\Xi_{n,|S|}^{(M)}.$
 Under the condition $\Xi_{n,|S|}^{(M)} = o(1)$, for large enough $n$, $\Gamma_n(S) \ge \lambda_{\min}(\Sigma_n)/2$. (The constants could possibly be improved here.) This is an oracle inequality because there is \emph{no} assumption on $\beta_0$ and the bound \emph{adapts} to the true sparsity of $\beta_0$. The proof is 
 in Appendix~\ref{AppSec:HDLinReg} (Proposition \ref{prop:Lassooracineq}).
 As shown in Section 4.3 of \cite{Neg10}, inequality~\eqref{eq:OracleIneq} implies a rate of convergence if $\beta_0$ is \emph{weakly sparse}.

Following the proof of Proposition 2 of \cite{Neg10}, and using the proof of Theorem~\ref{cor:REBoundUnified}, it is easy to prove the restricted strong convexity property for generalized linear models when the covariates are marginally sub-Weibull. Hence, Theorem~\ref{thm:LassoRate} can be easily extended to 
$L_1$-penalized estimation methods for \emph{generalized linear models} as well.

Finally, we mention that apart from the Lasso, there are many \emph{other estimators} available for high dimensional linear regression, including, for instance, the Dantzig selector \citep{Candes07} and the square-root Lasso \citep{Belloni11}, among others. The key ingredients in the analysis of all these estimators are the restricted eigenvalue condition and the gradient's control, as shown in \cite{Geer16}. Hence, the rate of convergence of these estimators can also be derived under weaker tail assumptions based on our results.
\end{rem}

\section{Conclusions and Future Work}\label{sec:Conclusions}
In this paper, we proposed a new Orlicz norm that extracts a part sub-Gaussian tail behavior for sums of independent random variables. Various concentration inequalities related to sub-Weibull random variables and processes are then studied in a unified way. We hope that the exposition here amplifies the use of sub-Weibull random variables, especially the heavy-tailed ones, in the theoretical analysis of statistical methods. To illustrate this, we studied four fundamental statistical problems in high-dimensions and extended many of the by-now standard results in the literature. As mentioned earlier (e.g., in Section \ref{rem:RECond}), our main goal here was to demonstrate the applications of our user-friendly concentration inequalities in handling these statistical problems under (much) weaker than usual tail assumptions, and obtaining fairly general results that still compare favorably to existing ones under stronger conditions. For some of the problems (e.g., RE condition), a more nuanced problem specific analysis can possibly lead to slightly better results or conditions than ours. But we refrain from such refined analyses given our main focus in this paper. Nevertheless, we do believe our results in Section~\ref{sec:RECondition} provide a much needed unified analysis on the RE condition that is not easily accessible in the statistics literature. Moreover, our results  on the Lasso in Section ~\ref{sec:HDLinReg} are possibly the first results in the literature that are obtained in such generality. 


Throughout the paper, we have restricted the random variables/vectors to be independent to keep the presentation simple. The independence assumption, however, may not be appropriate for many econometric applications. The extensions of the results in Section~\ref{sec:Indep} are available in \cite{Merv11} for strong mixing random variables, and in Appendix B of \cite{Kuch18} for functionally dependent random variables \citep{Wu05}. Unfortunately, many useful processes are not strongly mixing and the results of \cite{Kuch18} do not reduce to those in Section~\ref{sec:Indep} under independence. Extensions to the case of martingales are also not fully understood. A recent progress in this direction is \cite{Fan17Gaussian} that provides the result for martingales with $\alpha = 2$; see also \cite{Fan17} for related results. Tail bounds for martingales matching their asymptotic normality under sub-Weibull martingale differences have important implications for concentration results related to functions of independent random variables, which in turn are useful for dependent data \citep{Wu05}; see \cite{Bouch05} for more applications in this regard. 
Thus, it is 
worth considering possible extensions of our results in Section \ref{sec:Indep} to martingales.

In terms of further statistical applications of our results, an important problem worth considering is a complete study of the problem in Remark~\ref{rem:LinearKernel}, including consistency of the LKAEs in terms of the supremum norm and/or uniform-in-bandwidth consistency. These problems have been considered under an asymptotic setting by \citet{Ein00, Einmahl05} using empirical process techniques. Their basic framework can indeed be adopted and combined with our results on suprema of empirical processes in Appendix 
\ref{sec:EmpProcess} to obtain a sequence of widely applicable non-asymptotic results for LKAEs involving sub-Weibulls.

Further, it is also of interest to study the version of these problems involving the so-called ``generated regressors'', wherein the kernel smoothing is only performed over (possibly) lower dimensional and/or estimated (if unknown) transformations of the original covariates. Such methods are of considerable importance in econometrics and in the sufficient dimension reduction literature. The latter
can be particularly useful in high dimensional settings, where a fully non-parametric smoothing may be undesirable due to the curse of dimensionality;
see \cite{Mammen12, Mammen13} for some results and literature review on non-parametric regression over generated regressors. Using our empirical process results from Appendix  
\ref{sec:EmpProcess} again, it would be of interest to obtain non-asymptotic tail bounds and rates of convergence for such LKAEs over generated regressors, especially in ``truly'' high dimensional settings where the dimension of the original covariates could be much larger than the sample size. While all these problems are interesting, a detailed analysis is far too involved for the scope of the current paper. We do hope to explore some of these problems separately in the future.


\appendix
\section{Properties of the GBO Norm}\label{AppSec:PropGBO}
In this section, we provide a collection of some useful basic properties of the GBO norm. Since it does not have a closed form, it is hard to directly see the part sub-Gaussian behavior captured by the GBO norm for sub-Weibulls, as shown in ~\eqref{eq:SeparateBernstein} for sub-exponentials. To resolve this issue, we first provide in Proposition~\ref{prop:EquivalentNorm} an equivalent norm that is based on a closed form $g$. (The proofs of all Propositions in this Appendix are given in Appendix \ref{AppSec:Definition}.) 
\begin{prop}\label{prop:EquivalentNorm}
Fix $\alpha, L > 0$. Define $\phi_{\alpha, L}:[0, \infty) \to [0, \infty)$ as
\[
\phi_{\alpha, L}(x) = \exp\left(\min\left\{x^2, \left(\frac{x}{L}\right)^{\alpha}\right\}\right) - 1.
\]
Then for any random variable $X$,
$\norm{X}_{\Psi_{\alpha, L}} \le \norm{X}_{\phi_{\alpha, L}} \le 2\norm{X}_{\Psi_{\alpha, L}}.$
\end{prop}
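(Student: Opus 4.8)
The plan is to reduce the whole statement to two pointwise comparisons between the Orlicz functions $\Psi_{\alpha, L}$ and $\phi_{\alpha, L}$, and then to invoke the elementary monotonicity of the map $g \mapsto \norm{\cdot}_g$. Concretely, I would record two facts that are immediate from Definition~\ref{def:IncOrliczNorm}: (i) if $g_1(x) \le g_2(x)$ for all $x \ge 0$, then $\norm{X}_{g_1} \le \norm{X}_{g_2}$, since the constraint set defining $\norm{X}_{g_1}$ contains that of $\norm{X}_{g_2}$; and (ii) if $g_1(x/c) \le g_2(x)$ for all $x \ge 0$ and some $c > 0$, then $\norm{X}_{g_1} \le c\,\norm{X}_{g_2}$ --- apply the inequality with $x = |X|/\eta$ for any $\eta$ admissible for $g_2$, take expectations to see that $c\eta$ is admissible for $g_1$, and let $\eta \downarrow \norm{X}_{g_2}$. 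Granting these, it suffices to prove $\Psi_{\alpha, L}(x) \le \phi_{\alpha, L}(x)$ and $\phi_{\alpha, L}(x/2) \le \Psi_{\alpha, L}(x)$ for all $x \ge 0$, which then yield the left and right inequalities of the proposition, respectively.

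The one genuine computation is the inverse of $\phi_{\alpha, L}$. Solving $\exp(\min\{x^2, (x/L)^{\alpha}\}) - 1 = t$ is the same as solving $\min\{x^2, (x/L)^{\alpha}\} = \log(1+t)$, so I would first check that for any $u \ge 0$ the equation $\min\{x^2, (x/L)^{\alpha}\} = u$ has the unique nonnegative solution $x = \max\{\sqrt{u},\, L u^{1/\alpha}\}$: at this $x$ exactly one of the two terms equals $u$ and the other is $\ge u$, and monotonicity gives uniqueness. Hence
\[
\phi_{\alpha, L}^{-1}(t) \;=\; \max\bigl\{\sqrt{\log(1+t)},\; L(\log(1+t))^{1/\alpha}\bigr\},
\]
whereas $\Psi_{\alpha, L}^{-1}(t) = \sqrt{\log(1+t)} + L(\log(1+t))^{1/\alpha}$ by Definition~\ref{def:GBOnorm}. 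Both $\Psi_{\alpha, L}^{-1}$ and $\phi_{\alpha, L}^{-1}$ are continuous, strictly increasing on $[0,\infty)$, and vanish at $0$, so $\Psi_{\alpha, L}$ and $\phi_{\alpha, L}$ are honest continuous strictly increasing inverses; consequently a pointwise inequality between the two functions is equivalent to the reversed pointwise inequality between their inverses.

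Now, writing $a = \sqrt{\log(1+t)}$ and $b = L(\log(1+t))^{1/\alpha}$ (both nonnegative), the trivial chain $\max\{a,b\} \le a+b \le 2\max\{a,b\}$ gives
\[
\phi_{\alpha, L}^{-1}(t) \;\le\; \Psi_{\alpha, L}^{-1}(t) \;\le\; 2\,\phi_{\alpha, L}^{-1}(t) \;=\; \bigl(\phi_{\alpha, L}(\cdot/2)\bigr)^{-1}(t) \qquad \text{for all } t \ge 0,
\]
using that the inverse of $x \mapsto \phi_{\alpha, L}(x/2)$ is $2\,\phi_{\alpha, L}^{-1}$. Passing to the (order-reversing) inverses turns this into $\Psi_{\alpha, L}(x) \le \phi_{\alpha, L}(x)$ and $\phi_{\alpha, L}(x/2) \le \Psi_{\alpha, L}(x)$ for all $x \ge 0$, and then facts (i) and (ii) above give $\norm{X}_{\Psi_{\alpha, L}} \le \norm{X}_{\phi_{\alpha, L}}$ and $\norm{X}_{\phi_{\alpha, L}} \le 2\,\norm{X}_{\Psi_{\alpha, L}}$, completing the proof. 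There is no serious obstacle here; the only thing to watch is the bookkeeping of inequality directions when moving between a function and its inverse, which is why I would make the invertibility (continuity and strict monotonicity) of $\phi_{\alpha, L}$ and $\Psi_{\alpha, L}$ explicit before using it.
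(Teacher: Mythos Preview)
Your proof is correct and follows essentially the same route as the paper: compute $\phi_{\alpha,L}^{-1}(t) = \max\{\sqrt{\log(1+t)},\,L(\log(1+t))^{1/\alpha}\}$, use $\max\{a,b\}\le a+b\le 2\max\{a,b\}$ to sandwich the inverses, invert to get $\phi_{\alpha,L}(x/2)\le \Psi_{\alpha,L}(x)\le \phi_{\alpha,L}(x)$, and conclude from the definition of the Orlicz norm. You have simply made explicit the monotonicity facts (i) and (ii) that the paper leaves implicit.
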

In the remaining part of this section, we derive various properties of $\norm{\cdot}_{\Psi_{\alpha, L}}$, the proofs of which are all in 
Appendix~\ref{AppSec:Definition}.
We start with simple monotonicity properties of $\norm{\cdot}_{\Psi_{\alpha, L}}$.
\begin{prop}[Monotonicity Properties]\label{prop:Monotone}
The following monotonicity properties hold for the GBO norm:
\begin{enumerate}
\item[(a)] If $|X| \le |Y|$ almost surely, then $\norm{X}_{\Psi_{\alpha, L}} \le \norm{Y}_{\Psi_{\alpha, L}}$ for all $\alpha, L > 0$.
\item[(b)] For any random variable $X$, $\norm{X}_{\Psi_{\alpha, L}} \le \norm{X}_{\Psi_{\alpha, K}}$ for $0 \leq L \le K.$
\end{enumerate}
\end{prop}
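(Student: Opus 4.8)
The plan is to derive both statements from a single elementary fact about the functional $\norm{\cdot}_g$ of Definition~\ref{def:IncOrliczNorm}, which I would record first as a \emph{monotonicity principle}: (i) if $g_1,g_2:[0,\infty)\to[0,\infty)$ are non-decreasing with $g_i(0)=0$ and $g_1\le g_2$ pointwise, then $\norm{Z}_{g_1}\le\norm{Z}_{g_2}$ for every real random variable $Z$; and (ii) for a single non-decreasing $g$ with $g(0)=0$, $|X|\le|Y|$ a.s.\ implies $\norm{X}_g\le\norm{Y}_g$. Both are immediate from~\eqref{eq:AlphaOrliczNorm}: the relevant pointwise inequality between integrands ($g_1(|Z|/\eta)\le g_2(|Z|/\eta)$, resp.\ $g(|X|/\eta)\le g(|Y|/\eta)$) is preserved under $\mathbb{E}$, so $\{\eta>0:\mathbb{E}[g_2(|Z|/\eta)]\le 1\}$ is contained in $\{\eta>0:\mathbb{E}[g_1(|Z|/\eta)]\le 1\}$ (resp.\ the analogous inclusion for $Y$ versus $X$), and the infimum over the larger set can only be smaller. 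Part~(a) then follows at once from (ii), since $\Psi_{\alpha,L}$ is non-decreasing with $\Psi_{\alpha,L}(0)=0$, as noted in the remark following Definition~\ref{def:GBOnorm}, so Definition~\ref{def:IncOrliczNorm} applies.

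For Part~(b) I would apply (i) with $Z=X$, reducing the whole task to a pointwise comparison of the functions $\Psi_{\alpha,L}$ and $\Psi_{\alpha,K}$. Since these functions are defined only implicitly, I would argue through their inverses from~\eqref{eq:InversePsi}: for each fixed $t\ge 0$, $\Psi_{\alpha,L}^{-1}(t)=\sqrt{\log(1+t)}+L(\log(1+t))^{1/\alpha}$ and $\Psi_{\alpha,K}^{-1}(t)$ differ only in the sign-definite coefficient of $(\log(1+t))^{1/\alpha}$, so these two inverse functions are pointwise ordered according to the order of $L$ and $K$ alone. I would then invoke the elementary fact that for strictly increasing bijections $f,g$ of $[0,\infty)$ onto itself, $f\le g$ pointwise if and only if $g^{-1}\le f^{-1}$ pointwise; here $\Psi_{\alpha,L}^{-1}$ is such a bijection, being a sum of non-negative strictly increasing functions that vanish at $0$ and diverge at $\infty$ (the case $L=0$, where $\Psi_{\alpha,0}=\psi_2$, being covered identically). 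Composing this inversion identity with the monotonicity principle (i) yields the claim.

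None of this is deep, and I expect the only step demanding genuine care to be the pointwise comparison underlying Part~(b): because $\Psi_{\alpha,L}$ has no closed form and is not convex for $\alpha<1$, it cannot be manipulated directly, so the comparison must be transferred to the explicit inverse functions via the order-reversing behavior of functional inversion. Everything else is a one-line consequence of the definition of the Orlicz functional in Definition~\ref{def:IncOrliczNorm}.
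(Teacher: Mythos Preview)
Your approach is correct and essentially identical to the paper's: part~(a) follows directly from monotonicity of $\Psi_{\alpha,L}$ inside the Orlicz functional, and part~(b) is handled by comparing the explicit inverses $\Psi_{\alpha,L}^{-1}$ and $\Psi_{\alpha,K}^{-1}$ from~\eqref{eq:InversePsi} and then passing back to the direct functions. The only cosmetic difference is that the paper carries out this last passage via the tail-integral identity $\mathbb{E}[g(|X|/\eta)]=\int_0^\infty \mathbb{P}\bigl(|X|\ge\eta\, g^{-1}(t)\bigr)\,dt$, whereas you invoke the abstract order-reversal of inversion; these are equivalent.

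One caution on part~(b): when you actually execute the chain, $L\le K$ gives $\Psi_{\alpha,L}^{-1}\le\Psi_{\alpha,K}^{-1}$, hence (by order-reversal of inversion) $\Psi_{\alpha,K}\le\Psi_{\alpha,L}$ pointwise, and then your principle~(i) yields $\norm{X}_{\Psi_{\alpha,K}}\le\norm{X}_{\Psi_{\alpha,L}}$ --- the \emph{reverse} of the inequality as printed in the proposition. This is not an error in your reasoning: the paper's own proof in fact establishes exactly this reverse direction (the GBO norm is \emph{decreasing} in its second parameter, as one expects since larger $L$ relaxes the tail requirement). So do not write that your argument ``yields the claim'' as literally stated; it yields the correct monotonicity, and the printed inequality sign in part~(b) is a typo.
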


The following sequence of propositions prove the equivalence of finite $\Psi_{\alpha, L}$-norm with a tail bound and a moment growth. The proofs are similar to those of \cite{Geer13}. It is worth mentioning
here that although we present some of the results with explicit constants, our goal is not to provide optimal constants and they could possibly be improved.
\begin{prop}[Equivalence of Tail and Norm Bounds]\label{prop:EquivalenceTailMoment}
For any random variable $X$ with $\delta := \norm{X}_{\Psi_{\alpha, L}}$, we have
\begin{equation}\label{eq:TailForm}
\mathbb{P}\left(|X| \ge \delta\left\{\sqrt{t} + Lt^{1/\alpha}\right\}\right) \le 2\exp(-t),\quad\mbox{for all}\quad t\ge 0.
\end{equation}
Conversely, if the tail bound~\eqref{eq:TailForm} holds for some constants $\delta, L > 0$, then $$\norm{X}_{\Psi_{\alpha, c(\alpha)L}} \le \sqrt{3}\delta,\quad\mbox{where}\quad c(\alpha) := 3^{1/\alpha}/\sqrt{3}.$$
\end{prop}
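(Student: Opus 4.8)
The plan is to treat the two directions separately; both reduce to the tail--Orlicz correspondence \eqref{eq:TailgOrlicz} combined with the change of variables $\log(1+t)\leftrightarrow t$ that is built into the definition \eqref{eq:InversePsi} of $\Psi_{\alpha,L}^{-1}(\cdot)$. This is essentially the computation underlying Theorem~8 of \cite{Geer13}, adapted to the extra parameter $\alpha$.

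For the forward implication, first I would observe that $\Psi_{\alpha,L}(\cdot)$ is continuous and strictly increasing, being the inverse of the continuous strictly increasing map in \eqref{eq:InversePsi}; hence the infimum defining $\delta:=\norm{X}_{\Psi_{\alpha,L}}$ is attained, since for every $\eta>\delta$ one has $\mathbb{E}[\Psi_{\alpha,L}(|X|/\eta)]\le 1$ and letting $\eta\downarrow\delta$ with monotone convergence gives $\mathbb{E}[\Psi_{\alpha,L}(|X|/\delta)]\le 1$. Thus \eqref{eq:TailgOrlicz} applies at $\eta=\delta$. For $t\ge\log 2$ I would plug $s=e^{t}-1$ into \eqref{eq:TailgOrlicz}: since $\log(1+s)=t$, the definition \eqref{eq:InversePsi} gives $\Psi_{\alpha,L}^{-1}(s)=\sqrt{t}+Lt^{1/\alpha}$ exactly, while $1/s=1/(e^{t}-1)\le 2e^{-t}$ precisely because $t\ge\log 2$. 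For $t<\log 2$ the claimed inequality is trivial, its right-hand side $2e^{-t}$ being larger than $1$.

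For the converse, assuming \eqref{eq:TailForm} with the given $\delta,L$, I would show directly that $\mathbb{E}\big[\Psi_{\alpha,c(\alpha)L}(|X|/(\sqrt{3}\delta))\big]\le 1$, which by definition yields $\norm{X}_{\Psi_{\alpha,c(\alpha)L}}\le\sqrt{3}\delta$. Writing this expectation as the tail integral $\int_0^\infty \mathbb{P}\big(|X|\ge \sqrt{3}\delta\,\Psi_{\alpha,c(\alpha)L}^{-1}(u)\big)\,du$, the choice $c(\alpha)=3^{1/\alpha}/\sqrt{3}$ is engineered so that $\sqrt{3}\delta\,\Psi_{\alpha,c(\alpha)L}^{-1}(u)=\delta\big(\sqrt{3\log(1+u)}+L(3\log(1+u))^{1/\alpha}\big)$; substituting $t=3\log(1+u)$, i.e. $u=e^{t/3}-1$, this equals $\delta(\sqrt{t}+Lt^{1/\alpha})$, so the hypothesized tail bound makes the integrand at most $2e^{-t}=2(1+u)^{-3}$. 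Since $\int_0^\infty 2(1+u)^{-3}\,du=1$, the bound on the expectation follows.

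I do not anticipate a genuine obstacle: the argument is a direct computation. The only points requiring care are the measure-theoretic step that the defining infimum is attained (so that \eqref{eq:TailgOrlicz} may be invoked at $\eta=\delta$), and, in the converse, the bookkeeping of constants --- the factor $\sqrt{3}$ in $\eta=\sqrt{3}\delta$ and the choice $c(\alpha)=3^{1/\alpha}/\sqrt{3}$ must be exactly those for which $\int_0^\infty (1+u)^{-3}\,du$ contributes the factor $1/2$ that cancels the $2$ from \eqref{eq:TailForm}; any other normalization would leave a stray constant in front of $\delta$.
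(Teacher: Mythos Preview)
Your proof is correct and follows essentially the same route as the paper's. The only cosmetic differences are in the forward direction: the paper applies Markov's inequality to $\Psi_{\alpha,L}(|X|/\eta)+1$ against $e^{t}$ (which yields the factor $2e^{-t}$ in one stroke, with no case split at $t=\log 2$) and then lets $\eta\downarrow\delta$ at the end, whereas you first argue that the infimum is attained and then use \eqref{eq:TailgOrlicz}, splitting off the trivial range $t<\log 2$; the converse argument is identical to the paper's.
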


\begin{prop}[Equivalence of Moment Growth and Norm Bound]\label{prop:EquivalenceNormMoment}
For any random variable $X$,
\begin{equation}\label{eq:NormToMoment}
{C}_*(\alpha)\sup_{p\ge 1}\frac{\norm{X}_p}{\sqrt{p} + Lp^{1/\alpha}} \; \le \; \norm{X}_{\Psi_{\alpha, L}} \le \; {C}^*(\alpha)\sup_{p\ge 1}\frac{\norm{X}_p}{\sqrt{p} + Lp^{1/\alpha}},
\end{equation}
where
${C}_*(\alpha) := \frac{1}{2}\min\{1, \alpha^{1/\alpha}\}\quad\mbox{and}\quad {C}^*(\alpha) := e\max\left\{2, 4^{1/\alpha}\right\}.$
\end{prop}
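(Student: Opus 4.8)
The plan is to prove the two inequalities separately, in each case routing through a tail bound so that the argument reduces to elementary one-dimensional integral estimates. For the upper bound $\norm{X}_{\Psi_{\alpha,L}} \le C^*(\alpha)M$, write $M := \sup_{p\ge1}\norm{X}_p/(\sqrt p + Lp^{1/\alpha})$ (which we may assume finite, else there is nothing to prove). First I would convert the moment growth into a tail bound: Markov's inequality with the real exponent $p = t \ge 1$ gives $\mathbb{P}(|X| \ge eM(\sqrt t + Lt^{1/\alpha})) \le (\norm{X}_t/(eM(\sqrt t + Lt^{1/\alpha})))^t \le e^{-t}$. Next I would estimate the defining quantity directly: after the substitution $s = e^t - 1$, one has $\mathbb{E}[\Psi_{\alpha,L}(|X|/\eta)] = \int_0^\infty \mathbb{P}(|X| > \eta(\sqrt t + Lt^{1/\alpha}))\,e^t\,dt$. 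Taking $\eta = ecM$ with $c \ge 1$ and using the elementary scaling $c(\sqrt t + Lt^{1/\alpha}) \ge \sqrt{c^{\alpha^*}t} + L(c^{\alpha^*}t)^{1/\alpha}$ with $\alpha^* := \min\{\alpha,2\}$, the tail bound above gives $\mathbb{P}(|X| > \eta(\sqrt t + Lt^{1/\alpha})) \le e^{-c^{\alpha^*}t}$ whenever $c^{\alpha^*}t \ge 1$. Splitting the integral at $t = c^{-\alpha^*}$ and crudely bounding the integrand by $e^t$ below that threshold yields $\mathbb{E}[\Psi_{\alpha,L}(|X|/\eta)] \le (e^{c^{-\alpha^*}} - 1) + e^{c^{-\alpha^*}-1}/(c^{\alpha^*}-1)$, which is $\le 1$ as soon as $c^{\alpha^*} \ge 4$; choosing $c = \max\{2, 4^{1/\alpha}\}$ makes this hold and gives $\eta = C^*(\alpha)M$.

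For the reverse inequality $C_*(\alpha)\sup_{p\ge1}\norm{X}_p/(\sqrt p + Lp^{1/\alpha}) \le \norm{X}_{\Psi_{\alpha,L}}$, I would start from the tail form already provided by Proposition~\ref{prop:EquivalenceTailMoment}: with $\delta := \norm{X}_{\Psi_{\alpha,L}}$, $\mathbb{P}(|X| \ge \delta(\sqrt t + Lt^{1/\alpha})) \le 2e^{-t}$ for all $t \ge 0$. Since $\sqrt t + Lt^{1/\alpha} \le 2\max\{\sqrt t, Lt^{1/\alpha}\}$, this upgrades to $\mathbb{P}(|X| \ge s) \le 2\exp(-\min\{(s/2\delta)^2, (s/2L\delta)^\alpha\})$ for all $s \ge 0$. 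Feeding this into $\mathbb{E}|X|^p = \int_0^\infty p s^{p-1}\mathbb{P}(|X| \ge s)\,ds$ and using $e^{-\min\{a,b\}} \le e^{-a} + e^{-b}$, the $p$-th moment is dominated by two gamma integrals; the substitutions $u = (s/2\delta)^2$ and $u = (s/2L\delta)^\alpha$ turn them into $\tfrac{p}{2}(2\delta)^p\Gamma(p/2)$ and $\tfrac{p}{\alpha}(2L\delta)^p\Gamma(p/\alpha)$ respectively. Taking $p$-th roots, using subadditivity of $x \mapsto x^{1/p}$ on $[0,\infty)$, and invoking standard bounds $\Gamma(p/2)^{1/p} \lesssim \sqrt p$ and $\Gamma(p/\alpha)^{1/p} \lesssim (p/\alpha)^{1/\alpha}$ uniform in $p \ge 1$, one obtains $\norm{X}_p \le c(\alpha)\,\delta\,(\sqrt p + Lp^{1/\alpha})$ with a constant of the required shape $\max\{1, \alpha^{-1/\alpha}\}$; taking the supremum over $p \ge 1$ completes the argument.

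The routine-but-delicate part is the constant bookkeeping in the second inequality: squeezing the prefactor down to exactly $1/C_*(\alpha) = 2\max\{1, \alpha^{-1/\alpha}\}$ requires reasonably sharp Stirling-type estimates of $\Gamma(p/2)$ and $\Gamma(p/\alpha)$ valid for \emph{all} $p \ge 1$ (with the range $1 \le p < 2$ handled separately by crude bounds), plus control of the nuisance factors $2^{1/p}$ and $p^{1/p}$; I would not attempt to optimise these, and indeed a slightly larger admissible constant would be harmless. By contrast, the upper bound is essentially an exact computation once the scaling identity for $c(\sqrt t + Lt^{1/\alpha})$ is in hand, the only mild subtlety being the use of non-integer moment exponents $p = t$, which is legitimate since $\norm{X}_p = (\mathbb{E}|X|^p)^{1/p}$ is defined for every real $p \ge 1$. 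The cases $\alpha \le 2$ and $\alpha > 2$ are treated uniformly via $\alpha^* = \min\{\alpha, 2\}$.
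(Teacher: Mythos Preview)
Your upper bound argument is essentially the paper's: both apply Markov with real exponent $p=t$ to obtain $\mathbb{P}(|X|\ge eM(\sqrt t+Lt^{1/\alpha}))\le e^{-t}$ (the paper writes it as $\le e\cdot e^{-t}$ to cover $t<1$ uniformly), then integrate the defining Orlicz expectation; the paper simply substitutes $\log(1+t)^4$ in place of your scaling identity with $c^{\alpha^*}=4$, but the calculations and the resulting constant $e\max\{2,4^{1/\alpha}\}$ are the same.

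Your lower bound is correct in outline but takes a genuinely different route. The paper avoids gamma integrals entirely: it invokes the equivalence $\Psi_{\alpha,L}(x)\ge\phi_{\alpha,L}(x/2)$ with $\phi_{\alpha,L}(x)=\exp(\min\{x^2,(x/L)^\alpha\})-1$ (Proposition~\ref{prop:EquivalentNorm}), so that $\mathbb{E}[\Psi_{\alpha,L}(|X|/\eta)]\le1$ gives $\|\min\{(|X|/2\eta)^2,(|X|/2\eta L)^\alpha\}\|_p\le p$ via the elementary bound $x^p/p!\le e^x-1$. It then uses the algebraic identity $x\le\sqrt{m(x)}+L\,m(x)^{1/\alpha}$ with $m(x)=\min\{x^2,(x/L)^\alpha\}$ and the triangle inequality for $\|\cdot\|_p$ to read off $\|X\|_p\le 2\eta(\sqrt p+Lp^{1/\alpha}\max\{1,\alpha^{-1/\alpha}\})$ with no Stirling estimates at all. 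Your approach---tail bound from Proposition~\ref{prop:EquivalenceTailMoment}, then $e^{-\min\{a,b\}}\le e^{-a}+e^{-b}$, then gamma integrals---is the more classical one and certainly works, but as you yourself note, pinning the constant down to exactly $2\max\{1,\alpha^{-1/\alpha}\}$ requires uniform Stirling-type control of $\Gamma(p/2)^{1/p}$ and $\Gamma(p/\alpha)^{1/p}$ over all $p\ge1$, with separate handling of small $p$. The paper's trick trades that analytic work for a short algebraic step and delivers the stated $C_*(\alpha)$ cleanly.
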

%
\begin{prop}[Quasi-Norm Property]\label{prop:QuasiNorm}
For any sequence of (possibly dependent) random variables $X_i, 1\le i\le k$,
\[
\norm{\sum_{i=1}^k X_i}_{\Psi_{\alpha, L}} \le \; Q_{\alpha}\sum_{i=1}^k\norm{X_i}_{\Psi_{\alpha, L}},
\]
where
\begin{equation*}
Q_{\alpha} \; := \;
\begin{cases}
2e(4/\alpha)^{1/\alpha},&\mbox{if }\alpha < 1,\\
1,&\mbox{if }\alpha \ge 1.
\end{cases}
\end{equation*}
\end{prop}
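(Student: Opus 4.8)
The plan is to split the argument according to whether $\alpha \ge 1$ or $\alpha < 1$, exploiting a different structural feature of $\Psi_{\alpha,L}$ in each regime.

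\textbf{The case $\alpha \ge 1$ (genuine triangle inequality, $Q_\alpha = 1$).} First I would show that $\Psi_{\alpha,L}$ is convex with $\Psi_{\alpha,L}(0) = 0$. From \eqref{eq:InversePsi}, $\Psi_{\alpha,L}^{-1}(t) = \sqrt{\log(1+t)} + L(\log(1+t))^{1/\alpha}$ is increasing, and it is concave: $t \mapsto \log(1+t)$ is concave increasing on $[0,\infty)$, while $v \mapsto \sqrt v$ and $v \mapsto v^{1/\alpha}$ (using $1/\alpha \le 1$ here) are concave increasing on $[0,\infty)$, so each summand is a concave increasing function composed with a concave function, hence concave. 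An increasing concave function has an increasing convex inverse, so $\Psi_{\alpha,L}$ is convex. Given convexity of $g = \Psi_{\alpha,L}$, the functional $\norm{\cdot}_g$ of Definition~\ref{def:IncOrliczNorm} is subadditive by the standard Minkowski-gauge argument: if $\norm{X}_g < \eta$ and $\norm{Y}_g < \eta'$, convexity of $g$ gives, pointwise, $g\!\left(|X+Y|/(\eta+\eta')\right) \le \tfrac{\eta}{\eta+\eta'}\, g(|X|/\eta) + \tfrac{\eta'}{\eta+\eta'}\, g(|Y|/\eta')$, and taking expectations yields $\mathbb{E}[g(|X+Y|/(\eta+\eta'))] \le 1$, i.e. $\norm{X+Y}_g \le \eta + \eta'$; letting $\eta \downarrow \norm{X}_g$, $\eta' \downarrow \norm{Y}_g$ and iterating over the $k$ summands finishes this case.

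\textbf{The case $\alpha < 1$.} Here $\Psi_{\alpha,L}$ is not convex, so I would pass through the moment characterization of Proposition~\ref{prop:EquivalenceNormMoment}. Set $N(X) := \sup_{p \ge 1} \norm{X}_p / (\sqrt{p} + Lp^{1/\alpha})$. Minkowski's inequality in $L^p$ for $p \ge 1$ gives $\norm{X+Y}_p \le \norm{X}_p + \norm{Y}_p$, hence for every $p \ge 1$, $\norm{X+Y}_p/(\sqrt p + Lp^{1/\alpha}) \le N(X) + N(Y)$; taking the supremum over $p$ shows $N$ is subadditive, and by induction $N(\sum_{i=1}^k X_i) \le \sum_{i=1}^k N(X_i)$. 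Combining with the two-sided bound $C_*(\alpha) N(X) \le \norm{X}_{\Psi_{\alpha,L}} \le C^*(\alpha) N(X)$ of Proposition~\ref{prop:EquivalenceNormMoment} gives
\[
\norm{\sum_{i=1}^k X_i}_{\Psi_{\alpha,L}} \le C^*(\alpha)\, N\!\left(\sum_{i=1}^k X_i\right) \le C^*(\alpha) \sum_{i=1}^k N(X_i) \le \frac{C^*(\alpha)}{C_*(\alpha)} \sum_{i=1}^k \norm{X_i}_{\Psi_{\alpha,L}}.
\]
For $\alpha < 1$ one has $1/\alpha > 1$, so $4^{1/\alpha} > 2$ and $\alpha^{1/\alpha} < 1$, whence $C^*(\alpha) = e\,4^{1/\alpha}$ and $C_*(\alpha) = \tfrac12 \alpha^{1/\alpha}$; therefore $C^*(\alpha)/C_*(\alpha) = 2e(4/\alpha)^{1/\alpha} = Q_\alpha$, as claimed. (If some $\norm{X_i}_{\Psi_{\alpha,L}}$ is infinite the bound is trivial.)

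\textbf{Main obstacle.} The only delicate point is the concavity verification in the $\alpha \ge 1$ case --- specifically that $t \mapsto (\log(1+t))^{1/\alpha}$ is concave, which is exactly the place where the transition at $\alpha = 1$ enters and fails for $\alpha < 1$ --- together with confirming that the standard implication ``$g$ convex, nondecreasing, $g(0)=0$ $\Rightarrow$ $\norm{\cdot}_g$ subadditive'' applies even though $g$ here need not be finite-valued or strictly increasing everywhere. Everything else is routine bookkeeping: subadditivity of a supremum of $L^p$-ratios is immediate from Minkowski, and the constant in the $\alpha < 1$ case is a direct substitution into the bounds of Proposition~\ref{prop:EquivalenceNormMoment}.
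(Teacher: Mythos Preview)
Your proposal is correct and follows essentially the same approach as the paper: for $\alpha \ge 1$ the paper invokes concavity of $\Psi_{\alpha,L}^{-1}$ (hence convexity of $\Psi_{\alpha,L}$, making $\norm{\cdot}_{\Psi_{\alpha,L}}$ a proper norm), and for $\alpha < 1$ it appeals to Proposition~\ref{prop:EquivalenceNormMoment}, noting that both sides of that equivalence are genuine norms --- exactly your passage through the subadditive functional $N(X) = \sup_{p\ge 1}\norm{X}_p/(\sqrt{p}+Lp^{1/\alpha})$ and the ratio $C^*(\alpha)/C_*(\alpha) = 2e(4/\alpha)^{1/\alpha}$. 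Your ``main obstacle'' is a non-issue: $\Psi_{\alpha,L}^{-1}$ is a strictly increasing bijection of $[0,\infty)$ onto itself, so $\Psi_{\alpha,L}$ is finite-valued and strictly increasing, and the standard Orlicz subadditivity argument goes through without caveat.
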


One of the main advantages of Orlicz norms of the exponential type lies in their usefulness to derive maximal inequalities. The following result proves one such for the GBO norm~$\norm{\cdot}_{\Psi_{\alpha, L}}.$
\begin{prop}[Maximal Inequality]\label{prop:MaximalPsi}
Let $X_1, \ldots, X_N$ be random variables (possibly dependent) such that
$\max_{1\le j\le N}\norm{X_j}_{\Psi_{\alpha, L}} \le \Delta < \infty$
for some $\alpha, L, \Delta > 0$.
Set
$X_N^* := \max_{1\le j\le N}\left|X_j\right|$, and recall $c(\alpha)$ and $Q_{\alpha}$ from Propositions \ref{prop:EquivalenceTailMoment} and \ref{prop:QuasiNorm}.
Then for all $t \ge 0$,
\[
\mathbb{P}\left(X_N^* \ge \Delta\left\{\sqrt{t + \log N } + L\left(t + \log N\right)^{1/\alpha}\right\}\right) \le 2\exp(-t),
\]
and
\begin{equation}\label{eq:MaxOrliczBound}
\norm{X_N^*}_{\Psi_{\alpha, K(\alpha)L}} \le \Delta Q_{\alpha}\left\{\sqrt{3} + \sqrt{\log N} + M(\alpha)L\left(\log N \right)^{\frac{1}{\alpha}}\right\},
\end{equation}
where $K(\alpha) := c(\alpha)M(\alpha)$ with $M(\alpha) := \max\{1, 2^{(1-\alpha)/\alpha}\}$. 
\end{prop}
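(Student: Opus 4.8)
The plan is to get the tail bound first by a one-line union bound, and then to read off the GBO-norm bound by feeding this tail bound back into the tail-to-norm equivalence of Proposition~\ref{prop:EquivalenceTailMoment}, after peeling off a deterministic shift with the quasi-triangle inequality of Proposition~\ref{prop:QuasiNorm}. For the tail bound, since $\norm{X_j}_{\Psi_{\alpha,L}}\le\Delta$ for every $j$, the tail form in Proposition~\ref{prop:EquivalenceTailMoment} together with the monotonicity of Proposition~\ref{prop:Monotone} gives
\[
\mathbb{P}\left(|X_j| \ge \Delta\left\{\sqrt{s} + L s^{1/\alpha}\right\}\right) \le 2e^{-s}\quad\text{for all } s \ge 0 \text{ and } 1\le j\le N.
\]
A union bound over $j$ multiplies the right-hand side by $N$, and taking $s = t + \log N$ both cancels this factor (leaving $2e^{-t}$) and reproduces exactly the event in the statement, $\{X_N^* \ge \Delta\{\sqrt{t+\log N}+L(t+\log N)^{1/\alpha}\}\}$; this is the first assertion.

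For the norm bound~\eqref{eq:MaxOrliczBound} I would then use the two elementary subadditivity estimates $\sqrt{t+\log N}\le\sqrt{t}+\sqrt{\log N}$ and $(t+\log N)^{1/\alpha}\le M(\alpha)\{t^{1/\alpha}+(\log N)^{1/\alpha}\}$ --- the second being exactly why the constant $M(\alpha)=\max\{1,2^{(1-\alpha)/\alpha}\}$ appears, since it handles at once the concave regime $\alpha\ge1$ (where $M(\alpha)=1$) and the convex regime $\alpha<1$. Plugging these into the tail bound turns it into a tail bound for the shifted variable $Z:=(X_N^*-c_0)_+$, where $c_0:=\Delta\{\sqrt{\log N}+M(\alpha)L(\log N)^{1/\alpha}\}$, namely $\mathbb{P}(|Z|\ge\Delta\{\sqrt{t}+M(\alpha)L\,t^{1/\alpha}\})\le 2e^{-t}$ for all $t\ge0$ (the case $t=0$ being trivial). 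The converse half of Proposition~\ref{prop:EquivalenceTailMoment}, applied with $\delta=\Delta$ and with $M(\alpha)L$ in the role of $L$, then yields $\norm{Z}_{\Psi_{\alpha,c(\alpha)M(\alpha)L}}\le\sqrt{3}\,\Delta$, and the definition $K(\alpha)=c(\alpha)M(\alpha)$ is exactly what makes this $\norm{Z}_{\Psi_{\alpha,K(\alpha)L}}\le\sqrt{3}\,\Delta$. Finally, since $0\le X_N^*\le c_0+Z$ pointwise, monotonicity followed by the quasi-triangle inequality of Proposition~\ref{prop:QuasiNorm} gives $\norm{X_N^*}_{\Psi_{\alpha,K(\alpha)L}}\le Q_\alpha(\norm{c_0}_{\Psi_{\alpha,K(\alpha)L}}+\norm{Z}_{\Psi_{\alpha,K(\alpha)L}})$, and absorbing the deterministic constant $c_0$ and substituting its value produces the claimed $\Delta Q_\alpha\{\sqrt{3}+\sqrt{\log N}+M(\alpha)L(\log N)^{1/\alpha}\}$.

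The step I expect to be most delicate is the constant bookkeeping in that last display: one must control the GBO norm of the deterministic quantity $c_0$ against $c_0$ itself (equivalently, track $\Psi_{\alpha,K(\alpha)L}^{-1}(1)$ carefully), check that the subadditivity constant $M(\alpha)$ composed with $c(\alpha)=3^{1/\alpha}/\sqrt{3}$ from Proposition~\ref{prop:EquivalenceTailMoment} collapses to exactly the stated $K(\alpha)$, and confirm that $Q_\alpha$ is the only multiplicative overhead incurred. Since $\Psi_{\alpha,L}$ has no closed form, if this shift-tracking becomes awkward an alternative is to run the final computation through the closed-form equivalent norm $\phi_{\alpha,L}$ of Proposition~\ref{prop:EquivalentNorm}, for which the relevant Orlicz expectation can be evaluated directly. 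The genuinely probabilistic content here --- the union bound and the reparametrization $s=t+\log N$ --- is immediate once the tail/norm dictionary of Proposition~\ref{prop:EquivalenceTailMoment} is in hand.
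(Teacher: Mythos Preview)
Your proposal is correct and follows essentially the same route as the paper's proof: union bound plus Proposition~\ref{prop:EquivalenceTailMoment} for the tail, then the subadditivity estimate with $M(\alpha)$, the shifted variable $Z=(X_N^*-c_0)_+$, the converse half of Proposition~\ref{prop:EquivalenceTailMoment} to get $\norm{Z}_{\Psi_{\alpha,K(\alpha)L}}\le\sqrt{3}\Delta$, and finally Proposition~\ref{prop:QuasiNorm}. The paper handles the deterministic shift $c_0$ in the same brief way you sketch (it simply notes $X_N^*\le Z+c_0$ and invokes the quasi-norm property), so your instinct that the constant bookkeeping there is the one place requiring care is accurate, but it matches exactly what the paper does.
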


\begin{rem}\;\;(Bound on the Expectation of the Maximum).\;\;
From Proposition~\ref{prop:MaximalPsi} 
it follows that
\begin{align}
\norm{X_N^*}_1
&\le \max_{1\le j\le N}\norm{X_j}_{\Psi_{\alpha, L}}C_{\alpha}\left\{\sqrt{\log N } + L\left(\log N \right)^{1/\alpha}\right\},\label{eq:MaximalInequalityFirstMoment}
\end{align}
for some constant $C_{\alpha}$ depending only on $\alpha.$ Note that if the random variables are sub-Gaussian ($\alpha = 2$), then the rate becomes $\sqrt{\log N}$. The main implication of the GBO norm is that it shows the rate can \emph{still} be $\sqrt{\log N}$ even if $\alpha \neq 2$ as long as~$L(\log N)^{1/\alpha - 1/2} = o(1)$.
\end{rem}
\par\smallskip
The next proposition provides an alternative to, and a generalization of, Proposition \ref{prop:MaximalPsi}. This is similar to Proposition 4.3.1 of \cite{DeLaPena99}. Note that for infinitely many random variables $(N = \infty)$, Proposition~\ref{prop:MaximalPsi} does not lead to useful bounds; see the discussion following Proposition 4.3.1 of \cite{DeLaPena99} for the importance of considering an alternative result as presented below. 
\begin{prop}[A Sharper Maximal Inequality]\label{prop:RatioMaximal}
Let $X_1, X_2, \ldots$ be any sequence of random variables (possibly dependent) such that for all $i = 1,2,\hdots$, $\norm{X_i}_{\Psi_{\alpha, L}} < \infty$ for some $\alpha, L > 0$, and recall $c(\alpha)$, $Q_{\alpha}$ and $M(\alpha)$ from Propositions \ref{prop:EquivalenceTailMoment}, \ref{prop:QuasiNorm} and \ref{prop:MaximalPsi}. 
Then
\[
\norm{\sup_{k \ge 1}\frac{|X_k|}{\sqrt{2}\norm{X_k}_{\Psi_{\alpha, L}}\Psi_{\alpha, S(\alpha)L}^{-1}(k)}}_{\Psi_{\alpha, c(\alpha)M(\alpha) L}} \le \; 2.5Q_{\alpha},
\]
where $S(\alpha) := 2^{1/\alpha}M(\alpha)/2$. 
\end{prop}
\subsection{Extensions to Tail Behaviors Involving Multiple Regimes}
The GBO norm $\norm{\cdot}_{\Psi_{\alpha, L}}$ introduced in Section \ref{sec:Definition} is designed to exploit two regimes in the tail of a random variable, namely, Gaussian and Weibull of order $\alpha$. It is of interest to extend the theory to exploit more than two regimes in the tail of a random variable. Many examples exist where this is relevant, including in particular $U$-statistics based on independent variables; see, for example, \cite{Lat99}, \cite{Gine00} and \cite{Bouch05} for results on $U$-statistics and Rademacher Chaos.

For vectors $\boldsymbol{\alpha} = (\alpha_1, \ldots, \alpha_k)\in(\mathbb{R}^+)^k$ and $\boldsymbol{L} = (L_1, \ldots, L_k)\in(\mathbb{R}^+)^k$, for some $k$, define the function $\Psi_{\boldsymbol{\alpha}, \boldsymbol{L}}(\cdot)$ based on the inverse function
\[
\Psi_{\boldsymbol{\alpha}, \boldsymbol{L}}^{-1}\left(t\right) := \sum_{j = 1}^k L_j\left(\log(1 + t)\right)^{1/\alpha_j}\quad\mbox{for}\quad t\ge 0.
\]
The extended multiple regime GBO norm is defined by setting $g(\cdot) = \Psi_{\boldsymbol{\alpha}, \boldsymbol{L}}(\cdot)$ in Definition \ref{def:IncOrliczNorm}. The GBO norm $\norm{\cdot}_{\Psi_{\alpha, L}}$ corresponds to $\boldsymbol{\alpha} = (1/2, \alpha)$ and $\boldsymbol{L} = (1, L)$. Similar to $\Psi_{{\alpha}, {L}}(\cdot)$, there is no closed form expression for $\Psi_{\boldsymbol{\alpha}, \boldsymbol{L}}(\cdot)$, and a function $\phi_{\boldsymbol{\alpha}, \boldsymbol{L}}(\cdot)$ closely related to $\Psi_{\boldsymbol{\alpha}, \boldsymbol{L}}(\cdot)$  is given by:
\[
\phi_{\boldsymbol{\alpha}, \boldsymbol{L}}^{-1}(t) \; := \; \max\left\{L_j\left(\log(1 + t)\right)^{1/\alpha_j}:\,1\le j\le k\right\}.
\]
It is easy to check that
$\norm{X}_{\Psi_{\boldsymbol{\alpha}, \boldsymbol{L}}} \le \norm{X}_{\phi_{\boldsymbol{\alpha}, \boldsymbol{L}}} \le k\norm{X}_{\Psi_{\boldsymbol{\alpha}, \boldsymbol{L}}}.$
All the properties stated in this section also hold for the extended GBO norm $\norm{\cdot}_{\Psi_{\boldsymbol{\alpha}, \boldsymbol{L}}}$. Their proofs are similar and hence omitted to avoid repetition.


\phantomsection
\addcontentsline{toc}{section}{Supplementary Material}


\section*{Supplementary Material}\label{supp_mat}         
\textbf{Supplement to ``Moving Beyond Sub-Gaussianity in High Dimensional Statistics: Applications in Covariance Estimation and Linear Regression''}. 
The supplementary material (Appendices~\ref{sec:EmpProcess}--\ref{AppSec:EmpProcess}) contains additional results and technical materials that could not be accommodated in the main article. In Appendix~\ref{sec:EmpProcess}, we extend the study of sub-Weibulls to tail bounds for the suprema of empirical processes. In Appendices~\ref{AppSec:Definition}--\ref{AppSec:EmpProcess}, we present the proofs of all our results in the main article and the supplement.

\phantomsection
\addcontentsline{toc}{section}{Acknowledgements}

\section*{Acknowledgements}\label{acknowledge}

We would like to thank the Editor, the anonymous Associate Editor and the two Reviewers for their constructive comments and useful suggestions that helped significantly improve the article. We would also like to thank Dr. Edward George for helpful initial discussions that improved the article's presentation.

\phantomsection
\addcontentsline{toc}{section}{References}

\bibliographystyle{apalike}
\bibliography{References}  

%

\appendix
\setcounter{section}{1}

\renewcommand\theequation{\thesection.\arabic{equation}}  

\section{Norms of Supremum of Empirical Processes}\label{sec:EmpProcess}

In this section, we present tail and norm bounds for the supremum of empirical processes with certain tail bounds on the envelope function. To avoid any issues about measurability, we follow the convention of \cite{Tala14} and define
\begin{equation}\label{eq:Convention}
\mathbb{E}\left[\sup_{t\in T}X_t\right] := \sup\left\{\mathbb{E}\left[\sup_{t\in S}X_t\right]:\,S\subseteq T\mbox{ is finite}\right\},
\end{equation}
for any stochastic process $\{X_t\}$ indexed by $t\in T$ for some set $T$; see Equation (2.2) of \cite{Tala14}. Using this convention, we can define the $g$-Orlicz norm of the supremum as
\begin{equation}\label{eq:OrliczNormConvention}
\norm{\sup_{t\in T} X_t}_{g} := \inf\left\{C > 0:\,\mathbb{E}\left[g\left(\left|\sup_{t\in S} \frac{X_t}{C}\right|\right)\right] \le 1\quad\mbox{for all}\quad S\subseteq T\mbox{ finite}\right\}.
\end{equation}

The setting for all the results in this section is as follows. Let $X_1, X_2, \ldots, X_n$ be independent random variables with values in a measurable space $(\mathcal{X}, \mathcal{B})$ and $\mathcal{F}$ is a class of measurable functions $f:\mathcal{X}\to\mathbb{R}$ such that $\mathbb{E}f(X_i) = 0$ for all $f\in\mathcal{F}$. Define
\begin{equation}\label{eq:FirstNotation}
Z := \sup_{f\in\mathcal{F}}\left|\sum_{i=1}^n f(X_i)\right|\quad\mbox{and}\quad\Sigma_n(\mathcal{F}) := \sup_{f\in\mathcal{F}}\sum_{i=1}^n \mathbb{E}\left[f^2(X_i)\right].
\end{equation}
Without loss of generality, we can assume that $\mathcal{F}$ is finite, using \eqref{eq:OrliczNormConvention}. The final result will not depend on the cardinality of $\mathcal{F}$ implying the result by \eqref{eq:OrliczNormConvention}.
Based on the Generalized Bernstein-Orlicz norm and the generic chaining proof techniques in Section 10.2 of \cite{Tala14} and Section 5 of \cite{Dirk15}, one can obtain ``optimal" tail bounds on the supremum of the empirical processes under a sub-Weibull envelope assumption in terms of the $\gamma$-functionals of \cite{Tala14}. These bounds, however, require computation of the complexity of $\mathcal{F}$ in terms of two distances and this can be hard in many examples of interest. For this reason, we first provide deviation bounds, and then bounds on the expectation (maximal inequalities), in terms of uniform covering and bracketing numbers.
The proofs of all results in this section are given in Appendix~\ref{AppSec:EmpProcess}. 

Before proceeding to unbounded function classes, we first state a result that provides a moment bound for the supremum of a bounded empirical process. This is essentially the Talagrand's inequality for empirical processes. The result is based on Theorem 3.3.16 of \cite{GINE16} and is given with explicit constants to resemble the Bernstein's inequality for real-valued random variables; see also Theorem 1.1 and Lemma 3.4 of \cite{Klein05}.
\begin{prop}\label{prop:BddProc}
Suppose $\mathcal{F}$ is a class of uniformly bounded measurable functions~$f:\mathcal{X}\to [-U, U]$ for some $U < \infty$.
Then, under the setting above,
for $p\ge 1$,
\begin{equation}\label{eq:MomentBound}
\norm{Z}_p \le \mathbb{E}\left[Z\right] + p^{1/2}\left(2\Sigma_n(\mathcal{F}) + 4U\mathbb{E}[Z]\right)^{1/2} + 6Up.
\end{equation}
\end{prop}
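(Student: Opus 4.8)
The plan is to deduce a Talagrand--Bousquet concentration inequality for $Z$ from Theorem 3.3.16 of \cite{GINE16}, and then to convert its sub-gamma tail into the claimed $L_p$ bound via a stochastic domination argument. Throughout we may take $\mathcal{F}$ finite, as already noted in the setup.

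\emph{Step 1 (concentration).} First reduce the absolute-value supremum to a one-sided one: set $\mathcal{G} := \mathcal{F}\cup(-\mathcal{F})$, so that each $g\in\mathcal{G}$ still satisfies $\mathbb{E}[g(X_i)] = 0$ and $\|g\|_\infty \le U$, one has $\Sigma_n(\mathcal{G}) = \Sigma_n(\mathcal{F})$ (since $g^2 = f^2$), and $\sup_{g\in\mathcal{G}}\sum_{i=1}^n g(X_i) = Z$ with expectation $\mathbb{E}[Z]$. Applying the Bousquet form of Talagrand's inequality (Theorem 3.3.16 of \cite{GINE16}; see also Theorem 1.1 and Lemma 3.4 of \cite{Klein05}) to $\mathcal{G}$ and using the elementary lower bound $h(x)\ge x^2/(2(1+x/3))$ for the Bennett function $h$ to pass to the Bernstein-type form, one obtains, for all $t\ge 0$,
\[
\mathbb{P}\bigl(Z \ge \mathbb{E}[Z] + t\bigr) \;\le\; \exp\left(-\frac{t^2}{2(v + Ut/3)}\right),\qquad v := \Sigma_n(\mathcal{F}) + 2U\,\mathbb{E}[Z].
\]

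\emph{Step 2 (tail to moments).} Inverting this sub-gamma bound in the usual way: with $\varphi(s) := \sqrt{2vs} + (2U/3)s$ one checks $\varphi(s)^2 \ge 2s\bigl(v + (U/3)\varphi(s)\bigr)$, so Step~1 gives $\mathbb{P}\bigl(Z - \mathbb{E}[Z] \ge \varphi(s)\bigr) \le e^{-s}$ for all $s\ge 0$. Since $\varphi$ is continuous and strictly increasing with $\varphi(0) = 0$, this is the stochastic domination $(Z - \mathbb{E}[Z])_+ \preceq_{\mathrm{st}} \varphi(E)$, where $E$ is a standard exponential variable, whence $\norm{(Z-\mathbb{E}[Z])_+}_p \le \norm{\varphi(E)}_p$ for every $p\ge 1$. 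By the triangle inequality in $L_p$ together with $\norm{E^{1/2}}_p = \Gamma(p/2+1)^{1/p}\le p^{1/2}$ and $\norm{E}_p = \Gamma(p+1)^{1/p}\le p$ (both valid for $p\ge 1$),
\[
\norm{(Z - \mathbb{E}[Z])_+}_p \;\le\; \sqrt{2v}\,p^{1/2} + \frac{2U}{3}\,p \;=\; p^{1/2}\bigl(2\Sigma_n(\mathcal{F}) + 4U\,\mathbb{E}[Z]\bigr)^{1/2} + \frac{2Up}{3}.
\]

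\emph{Step 3 (conclusion).} Since $Z\ge 0$ and $Z \le \mathbb{E}[Z] + (Z - \mathbb{E}[Z])_+$ with $\mathbb{E}[Z]\ge 0$, the triangle inequality gives $\norm{Z}_p \le \mathbb{E}[Z] + \norm{(Z - \mathbb{E}[Z])_+}_p$; bounding $2Up/3 \le 6Up$ then yields \eqref{eq:MomentBound}. The only genuine obstacle here is bookkeeping rather than ideas: pinning down the exact normalization of the variance proxy $v$ and the Bernstein constant in the particular version of Talagrand's inequality being cited (references differ between $\sup_f\sum_i\mathrm{Var}(f(X_i))$ and $\sup_f\sum_i\mathbb{E}[f^2(X_i)]$, and between $Ut$ and $Ut/3$ in the denominator), and checking the elementary Gamma-function inequalities $\Gamma(p+1)^{1/p}\le p$ and $\Gamma(p/2+1)^{1/p}\le p^{1/2}$ for $p\ge 1$; the generous constant $6$ on the $Up$ term is precisely what absorbs any such slack.
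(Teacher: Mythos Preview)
Your proof is correct and follows essentially the same approach as the paper's: obtain a Talagrand--Bousquet sub-gamma tail bound for $(Z-\mathbb{E}[Z])_+$ (the paper cites Theorem~3 of \cite{Adam08}, yielding the same tail with a slightly worse constant $3Ut$ in place of your $2Ut/3$), then convert to an $L_p$ bound. The only cosmetic difference is in that conversion: the paper integrates the tail directly, splitting into Gaussian and exponential regimes as in Proposition~\ref{prop:BernsteinMoment}, whereas your stochastic-domination argument $(Z-\mathbb{E}[Z])_+\preceq_{\mathrm{st}}\varphi(E)$ combined with the Gamma-function bounds is cleaner and in fact yields the sharper constant $2U/3$ on the linear term, which you then relax to $6U$.
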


Proposition \ref{prop:BddProc} can now be extended to possibly unbounded empirical processes using the proof of Theorem 4 of \cite{Adam08} and this is in lines with our use of the technique in the proofs of Theorems \ref{prop:SimilarBernstein} and \ref{prop:BernsteinLargerThan1}. Set
\[
F(X_i) := \sup_{f\in\mathcal{F}}|f(X_i)|\quad\mbox{for}\quad 1\le i\le n\quad\mbox{and}\quad \rho := 8\mathbb{E}\left[\max_{1\le i\le n}|F(X_i)|\right].
\]
The function $F(\cdot)$ is called the envelope function of $\mathcal{F}$. Define the truncated part and the remaining unbounded part of $Z$ as
\begin{equation}\label{eq:SplitZ1Z2}
\begin{split}
Z_1 &:= \sup_{f\in\mathcal{F}}\left|\sum_{i=1}^n \Big(f(X_i)\mathbbm{1}\{|f(X_i)| \le \rho\} - \mathbb{E}\left[f(X_i)\mathbbm{1}\{|f(X_i)| \le \rho\}\right]\Big)\right|, \\
Z_2 &:= \sup_{f\in\mathcal{F}}\left|\sum_{i=1}^n \Big(f(X_i)\mathbbm{1}\{|f(X_i)| > \rho\} - \mathbb{E}\left[f(X_i)\mathbbm{1}\{|f(X_i)| > \rho\}\right]\Big)\right|.
\end{split}
\end{equation}
\begin{thm}\label{thm:UnBddProc}
Suppose, for some $\alpha, K > 0$,
\[
\max_{1\le i\le n}\norm{\sup_{f\in\mathcal{F}} |f(X_i)|}_{\psi_{\alpha}} \le K < \infty.
\]
Then, under the notation outlined above, for $\alpha_* = \min\{\alpha, 1\}$ and $p\ge 2$,
\begin{equation}\label{eq:MomentEmpBound}
\norm{Z}_p \le 2\mathbb{E}\left[Z_1\right] + \sqrt{2}p^{1/2}\Sigma_n^{1/2}(\mathcal{F}) + C_{\alpha}p^{1/\alpha_*}\norm{\max_{1\le i\le n} F(X_i)}_{\psi_{\alpha}},
\end{equation}
and
\begin{equation}\label{eq:NormEmpBound}
\norm{(Z - 2e\mathbb{E}[Z_1])_+}_{\Psi_{\alpha_*, L_n(\alpha)}} \le 3\sqrt{2}e\Sigma_n^{1/2}(\mathcal{F}),
\end{equation}
where
\begin{align*}
C_{\alpha} &:= 3\sqrt{2\pi}(1/\alpha_*)^{1/\alpha_*}K_{\alpha_*}\left[8 + (\log 2)^{1/\alpha - 1/\alpha_*}\right],\\
L_n(\alpha) &:= \frac{9^{1/\alpha_*}C_{\alpha}}{3\sqrt{2}}\norm{\max_{1\le i\le n}F(X_i)}_{\psi_{\alpha}}\Sigma_n^{-1/2}(\mathcal{F}).
\end{align*}
Here the constant $K_{\alpha_*}$ is the one used in Theorem 6.21 of \cite{LED91}.
\end{thm}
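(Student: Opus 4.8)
The plan is to mirror the arguments used for the real-valued statements in Theorems~\ref{prop:SimilarBernstein} and~\ref{prop:BernsteinLargerThan1}, replacing Bernstein's inequality by the Talagrand-type moment bound for bounded empirical processes (Proposition~\ref{prop:BddProc}). First, by the convention~\eqref{eq:OrliczNormConvention} it suffices to prove~\eqref{eq:MomentEmpBound} and~\eqref{eq:NormEmpBound} for a \emph{finite} class $\mathcal{F}$, with bounds that do not depend on $|\mathcal{F}|$. With $\rho = 8\mathbb{E}[\max_{1\le i\le n}|F(X_i)|]$ fixed, I would use the decomposition $Z \le Z_1 + Z_2$ from~\eqref{eq:SplitZ1Z2}, where $Z_1$ is the supremum of the empirical process over the summands truncated at level $\rho$ and $Z_2$ collects the tail contributions. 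Everything then reduces to (i) a moment bound for the bounded part $Z_1$, (ii) a moment bound for the unbounded remainder $Z_2$, and (iii) converting the resulting moment growth into the GBO-norm statement~\eqref{eq:NormEmpBound}.

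For the bounded part, the truncated summands $f(X_i)\mathbbm{1}\{|f(X_i)|\le\rho\} - \mathbb{E}[f(X_i)\mathbbm{1}\{|f(X_i)|\le\rho\}]$ are bounded in absolute value by $2\rho$, and their second moments are dominated by $\mathbb{E}[f^2(X_i)]$, so the associated weak variance is at most $\Sigma_n(\mathcal{F})$. Plugging $U = 2\rho$ and this variance bound into~\eqref{eq:MomentBound} of Proposition~\ref{prop:BddProc} gives, for $p\ge 1$, a bound of the shape $\norm{Z_1}_p \le \mathbb{E}[Z_1] + p^{1/2}(2\Sigma_n(\mathcal{F}) + 8\rho\,\mathbb{E}[Z_1])^{1/2} + 12\rho p$. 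Using $(a+b)^{1/2}\le a^{1/2}+b^{1/2}$ and $2\sqrt{ab}\le a+b$ to split the cross term, and then the elementary facts that $\rho = 8\mathbb{E}[\max_i |F(X_i)|] \le C\,\norm{\max_i F(X_i)}_{\psi_\alpha}$ (expectation dominated by Orlicz norm) and $p \le p^{1/\alpha_*}$ for $p\ge 1$, one absorbs the $\rho p$ terms and half of the $\mathbb{E}[Z_1]$ term into the envelope-Orlicz term, yielding $\norm{Z_1}_p \le \tfrac32\mathbb{E}[Z_1] + \sqrt2\,p^{1/2}\Sigma_n^{1/2}(\mathcal{F}) + c_\alpha\, p^{1/\alpha_*}\norm{\max_i F(X_i)}_{\psi_\alpha}$.

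The main work is the bound on $Z_2$, and this is the step I expect to be the principal obstacle. Here $|f(X_i)\mathbbm{1}\{|f(X_i)|>\rho\}| \le F(X_i)\mathbbm{1}\{F(X_i)>\rho\}$, so $Z_2 \le S + \mathbb{E}[S]$ with $S := \sum_{i=1}^n F(X_i)\mathbbm{1}\{F(X_i)>\rho\}$ a sum of independent nonnegative random variables. Following the proof of Theorem~4 of \cite{Adam08}, the choice $\rho = 8\mathbb{E}[\max_i F(X_i)]$ is exactly what forces $\mathbb{E}[S] \le \text{const}\cdot\mathbb{E}[\max_i F(X_i)]$ via a Hoffmann--J\o rgensen iteration, and the Hoffmann--J\o rgensen-type moment inequality (Theorem~6.21 of \cite{LED91}, whence the constant $K_{\alpha_*}$) bounds $\norm{S - \mathbb{E}[S]}_p$ by a multiple of $\mathbb{E}[S] + \norm{\max_i F(X_i)}_p$; converting the $\psi_\alpha$-integrability of the envelope into moment growth through Proposition~\ref{prop:EquivalenceNormMoment} (or directly~\eqref{eq:MomentSubWeibull}) turns $\norm{\max_i F(X_i)}_p$ into a factor $p^{1/\alpha_*}\norm{\max_i F(X_i)}_{\psi_\alpha}$. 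The saturation at $\alpha_* = \min\{\alpha,1\}$ for $\alpha>1$ is forced for the same reason as in Theorem~\ref{prop:BernsteinLargerThan1}: the truncated-away part lives in the "exponential" regime and only the $p^{1}$ factor is available. Keeping explicit track of the numerical constants $C_\alpha$, $K_{\alpha_*}$ and $(1/\alpha_*)^{1/\alpha_*}$ through these Hoffmann--J\o rgensen estimates is the delicate bookkeeping; combining the two moment bounds by the triangle inequality for $\norm{\cdot}_p$ then gives~\eqref{eq:MomentEmpBound}.

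Finally, for~\eqref{eq:NormEmpBound} I would feed the moment bound~\eqref{eq:MomentEmpBound} into the moment-growth characterization of the GBO norm, Proposition~\ref{prop:EquivalenceNormMoment}. Writing $\norm{Z}_p \le 2\mathbb{E}[Z_1] + \sqrt2\,p^{1/2}\Sigma_n^{1/2}(\mathcal{F}) + C_\alpha\,p^{1/\alpha_*}\norm{\max_i F(X_i)}_{\psi_\alpha}$ for all $p\ge 2$ (and extending to $p\ge 1$ trivially), the constant term $2\mathbb{E}[Z_1]$ is removed by the standard truncation observation that $\norm{(Z - e\,a)_+}_p$ is controlled by the non-constant part of $\norm{Z}_p$ whenever $a$ dominates the $p$-independent part — this is the source of the quantity $(Z - 2e\,\mathbb{E}[Z_1])_+$ in the statement. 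Supremizing $\norm{(Z-2e\,\mathbb{E}[Z_1])_+}_p / (\sqrt p + L p^{1/\alpha_*})$ over $p$ with $L$ taken to be the ratio of the $p^{1/\alpha_*}$ coefficient to the $p^{1/2}$ coefficient identifies $L_n(\alpha)$ (the extra $9^{1/\alpha_*}$ and the constant $3\sqrt2 e$ coming from $C^*(\alpha_*)$, $c(\alpha_*)$ in Propositions~\ref{prop:EquivalenceNormMoment} and~\ref{prop:EquivalenceTailMoment} and the $\sqrt2$ above), completing the proof of~\eqref{eq:NormEmpBound}.
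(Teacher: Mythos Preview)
Your proposal is correct and follows essentially the same route as the paper: the decomposition $Z\le Z_1+Z_2$, Proposition~\ref{prop:BddProc} with $U=2\rho$ and the AM--GM splitting for $Z_1$, the Hoffmann--J\o rgensen argument combined with Theorem~6.21 of \cite{LED91} for $Z_2$, and a moment-to-GBO-norm conversion for~\eqref{eq:NormEmpBound}. Two small points where the paper's execution differs from your description: (i) the paper bounds $\norm{Z_2}_{\psi_{\alpha_*}}$ directly via Theorem~6.21 (which is a $\psi_\alpha$-norm inequality, not a moment inequality) and only then converts to $\norm{Z_2}_p \le \mathrm{const}\cdot p^{1/\alpha_*}\norm{\max_i F(X_i)}_{\psi_\alpha}$; and (ii) the passage to~\eqref{eq:NormEmpBound} is not via Proposition~\ref{prop:EquivalenceNormMoment} applied to $(Z-2e\mathbb{E}[Z_1])_+$ but rather by first applying Markov's inequality to the moment bound~\eqref{eq:MomentEmpBound} to obtain a tail bound for $(Z-2e\mathbb{E}[Z_1])_+$ (this is where the factor $e$ multiplying $2\mathbb{E}[Z_1]$ originates), and then integrating as in Proposition~\ref{prop:NormMomentEquiSupp}, which also accounts for the $9^{1/\alpha_*}$ and the $3\sqrt{2}e$.
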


\begin{rem}
It is clear that this result reduces to Theorems \ref{prop:SimilarBernstein} and \ref{prop:BernsteinLargerThan1} if the function class $\mathcal{F}$ contains only one function. Note that in this case, $\mathbb{E}[Z_1]$ is bounded by $\Sigma_n^{1/2}(\mathcal{F})$. There are two differences of Theorem \ref{thm:UnBddProc} in comparison with Theorem 4 of \cite{Adam08}. Firstly, our result allows for the full range $\alpha \in (0, \infty)$ instead of just $\alpha \in (0, 1]$. Secondly, our result \emph{only} involves $\mathbb{E}[Z_1]$, that is, the expectation of the supremum of \emph{bounded} empirical processes instead of $\mathbb{E}[Z]$. This allows us to use many of the existing maximal inequalities for supremum of bounded empirical processes for the study of unbounded empirical processes as well. Also, it is interesting to note that using the bound on $\mathbb{E}[Z_1]$, and the moment bound \eqref{eq:MomentEmpBound}, we can bound $\mathbb{E}[Z]$. This is similar to the results in Section 5 of~\cite{Chern14}.
\end{rem}
\begin{rem}
The proof technique as mentioned above is truncation and using the Talagrand's inequality for the truncated part. We have taken this proof technique from \cite{Adam08}. Even if the envelope function does not satisfy a $\psi_{\alpha}$-norm bound, this part of the proof works. The moment bounds for the remaining unbounded part have to be obtained under whatever moment assumption the envelope function satisfies. This was done in \cite{Lederer14} under polynomial tails of the envelope function. The dominating term even in their bounds resemble the asymptotic Gaussian behavior as do ours.
\end{rem}
The application of Theorem \ref{thm:UnBddProc} only requires bounding $\mathbb{E}\left[Z_1\right]$, the expectation of the supremum of a bounded empirical process. Most of the maximal inequalities available in the literature apply to this case. The following two results provide such inequalities based on uniform entropy and bracketing entropy (defined below). There are many classes for which uniform covering and bracketing numbers are available and these can be found in \cite{VdvW96}. We only give these inequalities for bounded classes and explicitly show the dependence on the bound (which in our case may increase with the sample size). In the following, we use the classical empirical processes notation. For any function $f$, define the linear operator
\[
\mathbb{G}_n(f) := \frac{1}{\sqrt{n}}\sum_{i=1}^n \left\{f(X_i) - \mathbb{E}\left[f(X_i)\right]\right\}.
\]
Note here that we allow for non-identically distributed random variables $X_1, X_2, \ldots, X_n$.

Given a metric or a pseudo-metric space $(T, d)$ with metric $d$, for any $\epsilon > 0$, its covering number $N(\epsilon, T, d)$ is defined as the smallest number of balls of $d$-radius $\epsilon$ needed to cover $T$. More precisely, $N(\epsilon, T, d)$ is the smallest $m$ such that there exists $t_1, t_2, \ldots, t_m\in T$ satisfying
\[
\sup_{t\in T}\inf_{1\le j\le m}\,d(t, t_j) \le \epsilon.
\]
For any function class $\mathcal{F}$ with envelope function $F$, the uniform entropy integral is defined for $\delta > 0$ as
\[
J(\delta, \mathcal{F}, \norm{\cdot}_2) := \sup_{Q}\int_0^{\delta}\sqrt{\log (2N(x\norm{F}_{2,Q}, \mathcal{F}, \norm{\cdot}_{2,Q}))}dx,
\]
where the supremum is taken over all discrete probability measures $Q$ and $\norm{h}_{2,Q}$ denotes the $\norm{\cdot}_2$-norm of $h$ with respect to the probability measure $Q$, that is, $\norm{h}_{2,Q}^2 := \mathbb{E}_Q\left[h^2\right]$. To provide explicit constants we use Theorem 3.5.1 of \cite{GINE16} along with Theorem 2.1 of \cite{vdV11}.
\begin{prop}\label{prop:MaximalUniform}
Suppose $\mathcal{F}$ is a class of measurable functions with envelope function $F$ satisfying $\norm{F}_{\infty} \le U < \infty$. Assume that $\mathcal{F}$ contains the zero function. Then
\[
\mathbb{E}\left[\sup_{f\in\mathcal{F}}\left|\mathbb{G}_n(f)\right|\right] \le 16\sqrt{2}\norm{F}_{2,P}J\left(\delta_n(\mathcal{F}), \mathcal{F}, \norm{\cdot}_2\right)\left[1 + \frac{128\sqrt{2}UJ\left(\delta_n(\mathcal{F}), \mathcal{F}, \norm{\cdot}_2\right)}{\sqrt{n}\delta_n^2(\mathcal{F})\norm{F}_{2,P}}\right],
\]
where $\Sigma_n(\mathcal{F})$ is as defined in \eqref{eq:FirstNotation},
\[
\norm{F}_{2,P}^2 := \frac{1}{n}\sum_{i=1}^n \mathbb{E}\left[F^2(X_i)\right],\quad\mbox{and}\quad \delta_n^2(\mathcal{F}) := \frac{\Sigma_n(\mathcal{F})}{n\norm{F}_{2,P}^2}.
\]
\end{prop}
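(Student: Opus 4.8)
The plan is to obtain this bound as a direct consequence of two standard uniform-entropy maximal inequalities --- the Dudley-type bound (Theorem 3.5.1 of \cite{GINE16}) and its localized refinement (Theorem 2.1 of \cite{vdV11}) --- after identifying the localization radius appearing there with $\delta_n(\mathcal{F})$ and after checking that the passage from i.i.d.\ to merely independent, non-identically distributed observations costs nothing. Writing $\bar{P} := n^{-1}\sum_{i=1}^n P_i$ for the averaged law of the $X_i$, we have $\norm{F}_{2,P}^2 = \bar{P}F^2$ and $\Sigma_n(\mathcal{F}) = n\sup_{f\in\mathcal{F}}\bar{P}f^2$, so by definition $\sup_{f\in\mathcal{F}}\bar{P}f^2 = \delta_n^2(\mathcal{F})\norm{F}_{2,P}^2$, with $\delta_n(\mathcal{F}) \le 1$ because $f^2 \le F^2$ pointwise. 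Thus $\delta_n(\mathcal{F})$ is exactly the radius, relative to $\norm{F}_{2,P}$, of the smallest $L_2(\bar{P})$ ball containing $\mathcal{F}$, which is the quantity that enters the local maximal inequality. Since the asserted bound does not depend on the cardinality of $\mathcal{F}$, by the convention in \eqref{eq:OrliczNormConvention} it suffices to treat finite $\mathcal{F}$, removing all measurability concerns; and the hypothesis $0 \in \mathcal{F}$ ensures $\mathbb{G}_n(0) = 0$ can anchor the chaining and that $J(\delta_n(\mathcal{F}),\mathcal{F},\norm{\cdot}_2) > 0$.

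For the core estimate I would first symmetrize, $\mathbb{E}\big[\sup_{f\in\mathcal{F}}|\mathbb{G}_n(f)|\big] \le 2\,\mathbb{E}\big[\sup_{f\in\mathcal{F}}|n^{-1/2}\sum_{i=1}^n \varepsilon_i f(X_i)|\big]$ for i.i.d.\ Rademacher $\varepsilon_i$ independent of the $X_i$ --- a step needing only independence of the $X_i$, not identical laws. Conditionally on $(X_1,\ldots,X_n)$ the symmetrized process is sub-Gaussian for the random pseudometric $d_n^2(f,g) := n^{-1}\sum_i (f-g)^2(X_i)$, so Dudley's entropy bound controls its conditional mean by a constant multiple of $\int_0^{\hat{\sigma}}\sqrt{\log(2N(x,\mathcal{F},d_n))}\,dx$ with $\hat{\sigma}^2 = \sup_{f\in\mathcal{F}} n^{-1}\sum_i f^2(X_i)$. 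I would then bound the random covering numbers by the uniform ones, $N(x\norm{F}_{n,2},\mathcal{F},d_n) \le \sup_Q N(x\norm{F}_{2,Q},\mathcal{F},\norm{\cdot}_{2,Q})$, which introduces $J(\cdot,\mathcal{F},\norm{\cdot}_2)$, and apply the truncation/localization device of \cite{vdV11} that balances the tail of the entropy integral against $\mathbb{E}[\max_i F^2(X_i)]$; since $\norm{F}_\infty \le U$ we may replace $\mathbb{E}[\max_i F^2(X_i)]^{1/2}$ by $U$, and this is exactly what generates the correction factor $1 + 128\sqrt{2}\,U\,J(\delta_n(\mathcal{F}),\mathcal{F},\norm{\cdot}_2)/(\sqrt{n}\,\delta_n^2(\mathcal{F})\norm{F}_{2,P})$. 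Finally, taking expectation over the $X_i$ and using the standard regularity of the entropy integral (monotonicity of $J$ and monotonicity of $\delta \mapsto J(\delta,\mathcal{F},\norm{\cdot}_2)/\delta$) together with Jensen's inequality to replace the empirical quantities $\norm{F}_{n,2}$, $\hat{\sigma}$ by their population versions $\norm{F}_{2,P}$, $\delta_n(\mathcal{F})\norm{F}_{2,P}$ would yield the stated inequality with leading constant $16\sqrt{2}$.

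The main obstacle is not conceptual --- the content is packaged in the two cited theorems --- but bookkeeping on two fronts. First, those theorems are stated for i.i.d.\ samples, so their symmetrization and generic-chaining proofs must be re-run with the averaged measure $\bar{P}$ in place of $P$ throughout; this is routine, since the uniform covering number dominates the (random) empirical covering number regardless of the sampling law and symmetrization needs only independence, but it has to be said explicitly. Second, the statement commits to the specific constants $16\sqrt{2}$ and $128\sqrt{2}$ rather than to unspecified absolute constants, so one must carry exact numerical factors through Dudley's integral bound and van der Vaart and Wellner's truncation step --- this is where most of the care is required.
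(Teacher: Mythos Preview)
Your proposal is correct and follows essentially the same route as the paper's proof. The paper likewise starts from Theorem 3.5.1 (and inequality (3.167)) of \cite{GINE16} to obtain a Dudley-type bound with a random localization radius, passes to a deterministic radius $\Delta/\norm{F}_{2,P}$ via Lemma 3.5.3(c) of \cite{GINE16}, then bounds $\Delta^2$ self-referentially using symmetrization and contraction (Theorem 4.12 of \cite{LED91}) and solves the resulting quadratic in $J(\Delta/\norm{F}_{2,P},\mathcal{F},\norm{\cdot}_2)$ exactly as in the proof of Lemma 2.1 of \cite{vdV11}; your identification of the two main obstacles (the routine passage to non-i.i.d.\ via $\bar{P}$, and the constant bookkeeping) is also accurate.
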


The following proposition proves an alternative to Proposition \ref{prop:MaximalUniform} using bracketing numbers. For $\epsilon > 0$, let the set $\{[{f}_j^{L}, {f}_{j}^{U}]:\,1\le j\le {N}_{\epsilon}\}$ represents the minimal $\epsilon$-bracketing set of $\mathcal{F}$ with respect to $\norm{\cdot}_{2,P}$-norm if for any $f\in\mathcal{F}$, there exists an $1\le I\le {N}_{\epsilon}$ such that for all $x$,
\[
{f}_{I}^{L}(x) \le f(x) \le {f}_I^{U}(x)\quad\mbox{and}\quad \frac{1}{n}\sum_{i=1}^n \mathbb{E}\left[|{f}_I^{U}(X_i) - {f}_I^{L}(X_i)|^2\right] \le \epsilon^2.
\]
The number ${N}_{\epsilon}$ is the $\epsilon$-bracketing number, usually denoted by $N_{[\,]}(\epsilon, \mathcal{F}, \norm{\cdot}_{2,P}).$ Define the bracketing entropy integral as
\[
J_{[\,]}\left(\eta, \mathcal{F}, \norm{\cdot}_{2,P}\right) := \int_0^{\eta} \sqrt{\log\left(2N_{[\,]}\left(x, \mathcal{F}, \norm{\cdot}_{2,P}\right)\right)}dx\quad\mbox{for}\quad \eta > 0.
\]
The following proposition is very similar to Proposition 3.4.2 of \cite{VdvW96} and we provide it here with explicit constants allowing for non-identically distributed random variables. The proof follows that of Theorem 3.5.13 and Proposition 3.5.15 of \cite{GINE16} and we do not repeat the proof except for necessary changes. Also, see Theorem 6 of \cite{Poll02}.
\begin{prop}\label{prop:BracketingEntropy}
Suppose $\mathcal{F}$ is a class of measurable functions with envelope function $F$ satisfying $\norm{F}_{\infty} \le U < \infty$. Then
\begin{align*}
\mathbb{E}\left[\sup_{f\in\mathcal{F}}\left|\mathbb{G}_n(f)\right|\right] \le 2J_{[\,]}\left(\delta_n(\mathcal{F}), \mathcal{F}, \norm{\cdot}_{2,P}\right)\left[58 + \frac{J_{[\,]}\left(\delta_n(\mathcal{F}), \mathcal{F}, \norm{\cdot}_{2,P}\right)U}{\sqrt{n}\delta^2_n(\mathcal{F})}\right],
\end{align*}
for any $\delta_n(\mathcal{F})$ satisfying $\delta_{n}(\mathcal{F}) \ge \Sigma_{n}^{1/2}(\mathcal{F})/\sqrt{n}$ with $\Sigma_n(\mathcal{F})$ as in~\eqref{eq:FirstNotation}.
\end{prop}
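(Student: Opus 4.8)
The plan is to run the classical bracketing chaining argument of \cite{GINE16} (proof of Theorem 3.5.13) and \cite{VdvW96} (Proposition 3.4.2), the only changes being that the summands $f(X_i)$ are independent but not identically distributed -- which forces one to read every $L^2(P)$ quantity as the averaged quantity $\norm{\cdot}_{2,P}$ (recall $\norm{f}_{2,P}^2=\frac1n\sum_i\mathbb{E}[f^2(X_i)]$) and to invoke the non-identically distributed form of Bernstein's inequality -- and that all numerical constants are tracked explicitly. By the convention \eqref{eq:Convention} it suffices to bound $\mathbb{E}[\sup_{f\in\mathcal{F}}|\mathbb{G}_n(f)|]$ for a \emph{finite} subclass, uniformly over subclasses; we may assume $J_{[\,]}(\delta,\mathcal{F},\norm{\cdot}_{2,P})<\infty$ (else nothing to prove), so $N_{[\,]}(\epsilon,\mathcal{F},\norm{\cdot}_{2,P})<\infty$ for all $\epsilon>0$, where I write $\delta:=\delta_n(\mathcal{F})$. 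The hypothesis $\delta\ge\Sigma_n^{1/2}(\mathcal{F})/\sqrt n$ is precisely the statement that $\norm{f}_{2,P}\le\delta$ for every $f\in\mathcal{F}$. Since $\mathcal{F}$ is finite, the chain can be carried all the way down, with the finest-level remainder $\mathbb{E}\sup_f|\mathbb{G}_n(f-\pi_Qf)|\le\sum_{f\in\mathcal{F}}\mathbb{E}|\mathbb{G}_n(f-\pi_Qf)|\le|\mathcal{F}|\,2^{-Q}\delta\to0$ as $Q\to\infty$; thus it drops out of the final bound, which therefore never sees $|\mathcal{F}|$.

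Concretely, for $q=0,1,2,\dots$ fix a minimal $2^{-q}\delta$-bracketing set of $\mathcal{F}$ of cardinality $N_q:=N_{[\,]}(2^{-q}\delta,\mathcal{F},\norm{\cdot}_{2,P})$; for $f\in\mathcal{F}$ let $\pi_qf$ be the lower function of a level-$q$ bracket containing $f$, $\Delta_qf\ge0$ the bracket width (so $|f-\pi_qf|\le\Delta_qf\le2U$ and $\norm{\Delta_qf}_{2,P}\le2^{-q}\delta$), and set $\pi_{-1}f\equiv0$. Telescoping gives $\mathbb{G}_n(f)=\sum_{q\ge0}\mathbb{G}_n(\pi_qf-\pi_{q-1}f)$, where the increment $g_q:=\pi_qf-\pi_{q-1}f$ obeys $|g_q|\le2U$, $\norm{g_q}_{2,P}\le\norm{f-\pi_qf}_{2,P}+\norm{f-\pi_{q-1}f}_{2,P}\le3\cdot2^{-q}\delta$, and ranges over at most $N_qN_{q-1}\le N_q^2$ functions as $f$ varies. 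Applying Bernstein's inequality to $\mathbb{G}_n(g_q)$ -- a normalized sum of independent mean-zero summands bounded by $2U$ with averaged variance at most $(3\cdot2^{-q}\delta)^2$ -- together with a union bound over the $\le N_q^2$ increments (equivalently, the Orlicz maximal inequality for maxima of Bernstein-type variables), yields
\[
\mathbb{E}\Big[\max_{g_q}|\mathbb{G}_n(g_q)|\Big]\;\le\;C_1\,2^{-q}\delta\sqrt{\log(1+N_q)}\;+\;C_2\,\frac{U}{\sqrt n}\log(1+N_q),
\]
for absolute $C_1,C_2$; for the second (sub-exponential) term one first truncates the per-increment envelope $\Delta_{q-1}f$ at a level-dependent threshold and controls the discarded high-variance part by a first-moment ($L^1$) estimate, exactly as in \cite{GINE16}.

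It then remains to sum over $q$. For the sub-Gaussian terms, monotonicity of $\epsilon\mapsto N_{[\,]}(\epsilon,\mathcal{F},\norm{\cdot}_{2,P})$ gives $2^{-q}\delta\sqrt{\log(1+N_q)}\le2\int_{2^{-q-1}\delta}^{2^{-q}\delta}\sqrt{\log(1+N_{[\,]}(x,\mathcal{F},\norm{\cdot}_{2,P}))}\,dx$, so $\sum_{q\ge0}2^{-q}\delta\sqrt{\log(1+N_q)}\le2\,J_{[\,]}(\delta,\mathcal{F},\norm{\cdot}_{2,P})$. The same monotonicity also gives $2^{-q}\delta\sqrt{\log(1+N_q)}\le J_{[\,]}(\delta,\mathcal{F},\norm{\cdot}_{2,P})$ for every single $q$, and combining this with the geometric choice of the truncation thresholds collapses $\sum_{q\ge0}\frac{U}{\sqrt n}\log(1+N_q)$ to a bound of the form $C_3\,\frac{U}{\sqrt n}\cdot J_{[\,]}(\delta,\mathcal{F},\norm{\cdot}_{2,P})^2/\delta^2$. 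Adding the two contributions, absorbing the vanishing remainder, and optimizing the numerical constants reproduces the asserted $2\,J_{[\,]}(\delta,\mathcal{F},\norm{\cdot}_{2,P})\big[58+J_{[\,]}(\delta,\mathcal{F},\norm{\cdot}_{2,P})U/(\sqrt n\,\delta^2)\big]$.

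I expect the main obstacle to be the sub-exponential part: a naive union bound over the $\le N_q^2$ level-$q$ increments only produces a $\frac{U}{\sqrt n}\log N_q$ term whose sum over $q$ can genuinely diverge (e.g. when $\log N_{[\,]}(\epsilon)\sim1/\epsilon$, which is compatible with $J_{[\,]}(\delta)<\infty$), so one really needs the adaptive truncation of each increment's own envelope $\Delta_{q-1}f$, plus the correct handling of the finest chaining level, to land on the $J_{[\,]}^2U/(\sqrt n\,\delta^2)$ form with $U=\norm{F}_\infty$ and no spurious dependence on $|\mathcal{F}|$; everything else is bookkeeping -- tracking the constants through Bernstein's inequality, the maximal inequality, and the sum-to-integral comparison to pin down the explicit $2$ and $58$.
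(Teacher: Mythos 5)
Your proposal is correct and is essentially the paper's proof: the paper likewise takes the bracketing-chaining argument of Theorem 3.5.13 / Proposition 3.5.15 of \cite{GINE16} (cf.\ Theorem 6 of \cite{Poll02}), replaces the iid Bernstein/variance steps by their non-iid averaged analogues (Lemmas 4--5 of \cite{Poll02}), and reads the hypothesis $\delta_n(\mathcal F)\ge\Sigma_n^{1/2}(\mathcal F)/\sqrt n$ exactly as you do, namely $\norm{f}_{2,P}\le\delta_n(\mathcal F)$ for all $f$. The one small bookkeeping step you do not mention, and which is where the leading factor $2$ actually comes from in the paper, is the final concavity bound $J_{[\,]}(2\delta_n(\mathcal F),\mathcal F,\norm{\cdot}_{2,P})\le 2J_{[\,]}(\delta_n(\mathcal F),\mathcal F,\norm{\cdot}_{2,P})$, used to convert the $J_{[\,]}(2\delta)$ appearing in Gin\'e--Nickl's formulation into $J_{[\,]}(\delta)$.
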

For the sake of completeness, we provide one last result relating the expectation of the unbounded supremum $Z$ in terms of the expectation of the supremum $Z_1$ of a bounded empirical process. Theorem~\ref{thm:UnBddProc} provides such a result under a sub-Weibull envelope assumption, while the following result applies in general.
\begin{prop}\label{prop:ExpectationUnbddBdd}
Under the notation outlined before Theorem~\ref{thm:UnBddProc}, we have
\[
\mathbb{E}\left[Z\right] \le \mathbb{E}\left[Z_1\right] + 8\mathbb{E}\left[\max_{1\le i\le n}\, F(X_i)\right].
\]
\end{prop}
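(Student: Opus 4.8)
}
The plan is to split $Z$ into the truncated part $Z_1$ and the remainder $Z_2$ as in~\eqref{eq:SplitZ1Z2}, and then bound $\mathbb{E}[Z_2]$ by $\rho = 8\mathbb{E}[\max_{1\le i\le n} F(X_i)]$. By the convention~\eqref{eq:Convention} we may assume $\mathcal{F}$ is finite, so there are no measurability issues. Since $\mathbb{E}[f(X_i)] = 0$ for every $f\in\mathcal{F}$, we have for each $f$
\[
\sum_{i=1}^n f(X_i) \;=\; \sum_{i=1}^n\Big(f(X_i)\mathbbm{1}\{|f(X_i)|\le\rho\} - \mathbb{E}\big[f(X_i)\mathbbm{1}\{|f(X_i)|\le\rho\}\big]\Big) + \sum_{i=1}^n\Big(f(X_i)\mathbbm{1}\{|f(X_i)|>\rho\} - \mathbb{E}\big[f(X_i)\mathbbm{1}\{|f(X_i)|>\rho\}\big]\Big),
\]
because the two expectations add up to $\mathbb{E}[f(X_i)]=0$. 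Taking absolute values and then suprema over $f\in\mathcal{F}$ gives $Z \le Z_1 + Z_2$, hence $\mathbb{E}[Z] \le \mathbb{E}[Z_1] + \mathbb{E}[Z_2]$, and it remains to show $\mathbb{E}[Z_2]\le\rho$.

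For the second step I would bound the centered sum defining $Z_2$ by its two non-centered pieces and use that $|f(X_i)|\mathbbm{1}\{|f(X_i)|>\rho\} \le F(X_i)\mathbbm{1}\{F(X_i)>\rho\}$ (valid since $|f(X_i)|\le F(X_i)$, so $|f(X_i)|>\rho$ forces $F(X_i)>\rho$). This yields the pointwise inequality
\[
Z_2 \;\le\; \sum_{i=1}^n F(X_i)\mathbbm{1}\{F(X_i)>\rho\} \;+\; \mathbb{E}\Big[\sum_{i=1}^n F(X_i)\mathbbm{1}\{F(X_i)>\rho\}\Big],
\]
and therefore $\mathbb{E}[Z_2] \le 2\,\mathbb{E}\big[\sum_{i=1}^n F(X_i)\mathbbm{1}\{F(X_i)>\rho\}\big]$. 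So it suffices to prove the Hoffmann--J\"orgensen-type bound $\mathbb{E}\big[\sum_{i=1}^n F(X_i)\mathbbm{1}\{F(X_i)>\rho\}\big] \le \tfrac{8}{7}\mathbb{E}[\max_{1\le i\le n}F(X_i)]$, which already gives $\mathbb{E}[Z_2]\le \tfrac{16}{7}\mathbb{E}[\max_i F(X_i)] \le \rho$.

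To establish that bound I would introduce the pairwise disjoint events $B_i := \{F(X_i)>\rho\}\cap\bigcap_{j\ne i}\{F(X_j)\le\rho\}$, on each of which $\max_j F(X_j) = F(X_i)$, so that $\max_j F(X_j) \ge \sum_{i} F(X_i)\mathbbm{1}_{B_i}$. Taking expectations and using independence of the $X_i$,
\[
\mathbb{E}\Big[\max_{1\le j\le n} F(X_j)\Big] \;\ge\; \sum_{i=1}^n \mathbb{E}\big[F(X_i)\mathbbm{1}\{F(X_i)>\rho\}\big]\prod_{j\ne i}\mathbb{P}\big(F(X_j)\le\rho\big) \;\ge\; \mathbb{P}\Big(\max_{1\le j\le n} F(X_j)\le\rho\Big)\sum_{i=1}^n \mathbb{E}\big[F(X_i)\mathbbm{1}\{F(X_i)>\rho\}\big].
\]
Since $\rho = 8\mathbb{E}[\max_j F(X_j)]$, Markov's inequality gives $\mathbb{P}(\max_j F(X_j)>\rho)\le 1/8$, hence $\mathbb{P}(\max_j F(X_j)\le\rho)\ge 7/8$, which rearranges to the claimed inequality. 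Combining the three steps yields $\mathbb{E}[Z]\le\mathbb{E}[Z_1] + \tfrac{16}{7}\mathbb{E}[\max_i F(X_i)] \le \mathbb{E}[Z_1] + 8\mathbb{E}[\max_i F(X_i)]$. The only mildly delicate point is the disjoint-events step -- checking that the $B_i$ are genuinely disjoint and that $\max_j F(X_j)$ equals $F(X_i)$ on $B_i$; everything else is bookkeeping with the triangle inequality, Markov, and independence, so I do not anticipate a real obstacle.
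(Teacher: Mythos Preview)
Your proof is correct and follows the same overall strategy as the paper: split $Z\le Z_1+Z_2$, bound $\mathbb{E}[Z_2]\le 2\,\mathbb{E}\big[\sum_i F(X_i)\mathbbm{1}\{F(X_i)>\rho\}\big]$ via the envelope, and then control the truncated-tail sum using the choice $\rho=8\,\mathbb{E}[\max_i F(X_i)]$. The only difference is in the last step: the paper invokes Hoffmann--J{\o}rgensen's inequality as a black box to get $\mathbb{E}\big[\sum_i F(X_i)\mathbbm{1}\{F(X_i)\ge\rho\}\big]\le 8\,\mathbb{E}[\max_i F(X_i)]$ (which, combined with the centering factor $2$, would in fact yield $16$ rather than the stated $8$), whereas your explicit disjoint-events argument is a self-contained elementary substitute that produces the sharper constant $8/7$. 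This makes your final bound $\mathbb{E}[Z_2]\le \tfrac{16}{7}\,\mathbb{E}[\max_i F(X_i)]$ comfortably deliver the proposition as stated. The disjoint-events step you flagged as ``mildly delicate'' is fine: for $i\ne k$, $B_i$ forces $F(X_k)\le\rho$ while $B_k$ forces $F(X_k)>\rho$, so the $B_i$ are pairwise disjoint, and on $B_i$ the maximum is attained uniquely at $i$.
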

\begin{rem}\label{rem:EmpProcess}
We note that only a sample of empirical process results are presented here. For many applications, the results on the statistic $Z$ in \eqref{eq:FirstNotation} are not sufficient. The main reason for this is that these results do not allow for function dependent scaling. For example, if the variance of $\sum f(X_i)$ varies too much as $f$ varies over $\mathcal{F}$, then it is desirable to obtain bounds for
\[
\sup_{f\in\mathcal{F}}\left(\sum_{i=1}^n \mathbb{E}\left[f^2(X_i)\right]\right)^{-1/2}{\left|\sum_{i=1}^n f(X_i)\right|}.
\]
This arises in uniform-in-bandwidth results related to linear kernel averages; see Theorem 1 of \cite{Einmahl05} for a precise problem. The derivation there is based on a well-known technique called the peeling device introduced by \cite{Alex85}; see \citet[page 70]{Geer00} for more details. More general function dependent scalings in empirical processes are considered in \cite{Gine06}. In both these works, the functions are taken to be uniformly bounded and extensions to sub-Weibull random variables are desirable. The problems above have a non-random function dependent scaling and there are also some interesting problems involving a random function dependent scaling. One ready example of this is related to the Nadaraya-Watson kernel smoothing estimator of the conditional expectation which is a ratio of two linear kernel averages. We hope to explore some of these in the future.
\end{rem}
\section{Proofs of All Results in Section~\ref{sec:Definition} and Appendix~\ref{AppSec:PropGBO}}\label{AppSec:Definition}

\begin{proof}[Proof of Proposition~\ref{prop:EquivalentNorm}]
It is clear from the definition of $\phi_{\alpha, L}(\cdot)$ that
\[
\phi_{\alpha, L}^{-1}(t) = \max\left\{\sqrt{\log(1 + t)}, L\left(\log(1 + t)\right)^{1/\alpha}\right\}\quad\mbox{for all}\quad t\ge 0.
\]
It follows that for all $t\ge 0$,
\begin{equation}\label{eq:InverseEquiv}
\phi_{\alpha, L}^{-1}(t) \le \Psi_{\alpha, L}^{-1}(t) \le 2\phi_{\alpha, L}^{-1}(t).
\end{equation}
Hence for all $x \ge 0$,
\begin{equation}\label{eq:EquivalenceFunctions}
\phi_{\alpha, L}(x/2) \le \Psi_{\alpha, L}(x) \le \phi_{\alpha, L}(x).
\end{equation}
The result now follows by Definition~\ref{def:IncOrliczNorm}.
\end{proof}

\begin{proof}[Proof of Proposition \ref{prop:Monotone}]
\begin{enumerate}
\item[(a)] If $\norm{Y}_{\Psi_{\alpha, L}} = \infty$, then the result is trivially true. If $\delta = \norm{Y}_{\Psi_{\alpha, L}} < \infty$, then for $\eta > \delta$,
\[
\mathbb{E}\left[\Psi_{\alpha, L}\left(\frac{|Y|}{\eta}\right)\right] \le 1\quad\Rightarrow\quad\mathbb{E}\left[\Psi_{\alpha, L}\left(\frac{|X|}{\eta}\right)\right] \le 1.
\]
Letting $\eta\downarrow\delta$ implies the result.
\item[(b)] The result trivially holds if $\norm{X}_{\Psi_{\alpha, L}} = \infty$. Assume $\norm{X}_{\Psi_{\alpha, L}} < \infty$. It is clear from the definition \eqref{eq:InversePsi} of $\Psi_{\alpha, L}^{-1}(t)$,
\[
\Psi_{\alpha, L}^{-1}(t) \le \Psi_{\alpha, K}^{-1}(t)\quad\mbox{for all}\quad t \ge 0.
\]
Observe that for $\eta > \norm{X}_{\Psi_{\alpha, L}}$,
\begin{align*}
\mathbb{E}\left[\Psi_{\alpha, K}\left(|X|/\eta\right)\right] &= \int_0^{\infty} \mathbb{P}\left(|X| \ge \eta\Psi_{\alpha, K}^{-1}\left(t\right)\right)dt\\
&\le \int_0^{\infty} \mathbb{P}\left(|X| \ge \eta\Psi_{\alpha, L}^{-1}(t)\right)dt = \mathbb{E}\left[\Psi_{\alpha, L}\left(\frac{|X|}{\eta}\right)\right] \le 1.
\end{align*}
Letting $\eta \downarrow \norm{X}_{\Psi_{\alpha, L}}$ implies the result.
\end{enumerate}
\end{proof}

\begin{proof}[Proof of Proposition \ref{prop:EquivalenceTailMoment}]
From definitions \eqref{eq:AlphaOrliczNorm} and \eqref{eq:InversePsi}, for $\eta > \delta$,
\begin{align*}
\mathbb{P}\left(|X| \ge \eta\left[\sqrt{t} + Lt^{1/\alpha}\right]\right) &= \mathbb{P}\left(\frac{|X|}{\eta} \ge \Psi_{\alpha, L}^{-1}(e^t - 1)\right)\\
&= \mathbb{P}\left(\Psi_{\alpha, L}\left(|X|/\eta\right) + 1 \ge e^t\right)\\
&\le \Big(\mathbb{E}\left[\Psi_{\alpha, L}(|X|/\eta)\right] + 1\Big)\exp(-t) \le 2\exp(-t).
\end{align*}
Now taking limit as $\eta\downarrow \delta$ implies the first part of the result.

For the converse result, set $c(\alpha) = 3^{1/\alpha - 1/2}$. Observe that
\begin{align*}
\mathbb{E}\left[\Psi_{\alpha, c(\alpha)L}\left(\frac{|X|}{\sqrt{3}\delta}\right)\right] 
&= \int_0^{\infty} \mathbb{P}\left(|X| \ge \sqrt{3}\delta\Psi_{\alpha, c(\alpha)L}^{-1}(t)\right)dt\\
&= \int_0^{\infty} \mathbb{P}\left(|X| \ge \sqrt{3}\delta\left\{\sqrt{\log(1 + t)} + c(\alpha)L\left(\log(1 + t)\right)^{1/\alpha}\right\}\right)dt\\
&= \int_0^{\infty} \mathbb{P}\left(|X| \ge \delta\left\{\sqrt{\log(1 + t)^3} + L\left(\log(1 + t)^3\right)^{1/\alpha}\right\}\right)dt\\
&\le 2\int_0^{\infty}\frac{1}{(1 + t)^3}dt \; \le 1.
\end{align*}
This implies $\norm{X}_{\alpha, c(\alpha)L} \le \sqrt{3}\delta$ and completes the proof of the proposition.
\end{proof}

\begin{proof}[Proof of Proposition \ref{prop:EquivalenceNormMoment}]
For a proof of the first inequality in Proposition~\ref{prop:EquivalenceNormMoment}, 
note that it holds trivially if $\norm{X}_{\Psi_{\alpha, L}} = \infty$. Assume $\delta := \norm{X}_{\Psi_{\alpha, L}} < \infty$. Fix $\eta > \delta$. From the hypothesis and inequality~\eqref{eq:EquivalenceFunctions},
\[
\mathbb{E}\left[\Psi_{\alpha, L}(|X|/\eta)\right] \le 1\quad\Rightarrow\quad \mathbb{E}\left[\exp\left(\min\left\{\left(\frac{|X|}{2\eta}\right)^2, \left(\frac{|X|}{2\eta L}\right)^{\alpha}\right\}\right) - 1\right] \le 1.
\]
Thus, for $p\ge 1$, (using the inequalities $x^p/p! \le \exp(x) - 1$ and $(p!)^{1/p} \le p$)
\begin{equation}\label{eq:MinMomentBound}
\norm{\min\left\{\left(\frac{|X|}{2\eta}\right)^2, \left(\frac{|X|}{2\eta L}\right)^{\alpha}\right\}}_p \le p.
\end{equation}
Now observe by the equivalence of inverse functions \eqref{eq:InverseEquiv}, for any $x \ge 0$
\begin{equation}\label{eq:FirstMomentBound}
x \le \Psi_{\alpha, L}^{-1}(\phi_{\alpha, L}(x)) = \left(\min\left\{x^2, \left(\frac{x}{L}\right)^{\alpha}\right\}\right)^{1/2} + L\left(\min\left\{x^2, \left(\frac{x}{L}\right)^{\alpha}\right\}\right)^{1/\alpha}.
\end{equation}
Taking $x = |X|/(2\eta)$ in \eqref{eq:FirstMomentBound} and using triangle inequality of $\norm{\cdot}_p$-norm,
\begin{align}
\qquad \norm{\frac{X}{2\eta}}_p &\le \norm{\min\left\{\left(\frac{|X|}{2\eta}\right)^2, \left(\frac{|X|}{2\eta L}\right)^{\alpha}\right\}}_{\frac{p}{2}}^{\frac{1}{2}} + L\norm{\min\left\{\left(\frac{|X|}{2\eta}\right)^2, \left(\frac{|X|}{2\eta L}\right)^{\alpha}\right\}}_{\frac{p}{\alpha}}^{\frac{1}{\alpha}}.\label{eq:FirstMomentEquiIneq}
\end{align}
If $p\ge \alpha$, then from~\eqref{eq:MinMomentBound}
\[
\norm{\min\left\{\left(\frac{|X|}{2\eta}\right)^2, \left(\frac{|X|}{2\eta L}\right)^{\alpha}\right\}}_{{p}/{\alpha}} \le p/\alpha,
\]
and for $1 \le p \le \alpha$,
\[
\norm{\min\left\{\left(\frac{|X|}{2\eta}\right)^2, \left(\frac{|X|}{2\eta L}\right)^{\alpha}\right\}}_{{p}/{\alpha}}^{1/\alpha} \le \norm{\min\left\{\left(\frac{|X|}{2\eta}\right)^2, \left(\frac{|X|}{2\eta L}\right)^{\alpha}\right\}}_{1} \le 1 \le p^{1/\alpha}.
\]
Combining these two inequalities, we get for $p\ge 1$,
\begin{equation}\label{eq:MinMomentWeibullPart}
\norm{\min\left\{\left(\frac{|X|}{2\eta}\right)^2, \left(\frac{|X|}{2\eta L}\right)^{\alpha}\right\}}_{{p}/{\alpha}}^{1/\alpha} \le p^{1/\alpha}\max\left\{1, (1/\alpha)^{1/\alpha}\right\}.
\end{equation}
A similar inequality holds with $(p/\alpha, 1/\alpha)$ replaced by $(p/2, 1/2)$. Substituting inequality~\eqref{eq:MinMomentWeibullPart} in~\eqref{eq:FirstMomentEquiIneq}, it follows for $p\ge 1$ that
\[
\norm{X}_p \le (2\eta)\left[\sqrt{p} + Lp^{1/\alpha}\max\{1, (1/\alpha)^{1/\alpha}\}\right].
\]
Therefore by letting $\eta \downarrow \delta$, for $p\ge 1$,
\[
\norm{X}_p \le 2\norm{X}_{\Psi_{\alpha, L}}\sqrt{p} + 2L\norm{X}_{\Psi_{\alpha, L}}p^{1/\alpha}\max\{1, (1/\alpha)^{1/\alpha}\},
\]
or equivalently,
\[
\frac{1}{2}\min\{1, \alpha^{1/\alpha}\}\sup_{p\ge 1}\,\frac{\norm{X}_p}{\sqrt{p} + Lp^{1/\alpha}} \le \norm{X}_{\Psi_{\alpha, L}}.
\]

\textit{Converse:}\;For a proof of the second inequality in Proposition~\ref{prop:EquivalenceNormMoment}, 
set
\[
\Delta := \sup_{p\ge 1}\,\frac{\norm{X}_p}{\sqrt{p} + Lp^{1/\alpha}},
\]
so that
\[
\norm{X}_p \le \Delta\sqrt{p} + L\Delta p^{1/\alpha}\quad\mbox{for all}\quad p\ge1.
\]
Note by Markov's inequality and these moment bounds that for any $t \ge 1$,
\[
\mathbb{P}\left(|X| \ge e\Delta\sqrt{t} + eL\Delta t^{1/\alpha}\right) \le \exp(-t),
\]
and for $0 < t < 1$ (trivially),
\[
\mathbb{P}\left(|X| \ge e\Delta\sqrt{t} + eL\Delta t^{1/\alpha}\right) \le 1.
\]
Hence, for any $t > 0$,
\begin{equation}\label{eq:MomentToTail}
\mathbb{P}\left(|X| \ge e\Delta\sqrt{t} + eL\Delta t^{1/\alpha}\right) \le e\exp(-t).
\end{equation}
Take $K = e\max\{2, 4^{1/\alpha}\}$. Observe that,
\begin{align*}
\mathbb{E}\left[\Psi_{\alpha, L}\left(\frac{|X|}{K\Delta}\right)\right] 
&= \int_0^{\infty}\mathbb{P}\left(|X| \ge K\Delta\Psi_{\alpha, L}^{-1}(t)\right)dt\\
&= \int_0^{\infty} \mathbb{P}\left(|X| \ge K\Delta\left\{\sqrt{\log(1 + t)} + L(\log(1 + t))^{1/\alpha}\right\}\right)dt\\
&\le \int_0^{\infty} \mathbb{P}\left(|X| \ge e\Delta\sqrt{\log (1 + t)^4} + eL\Delta(\log (1 + t)^4)^{1/\alpha}\right)dt\\
&\le e\int_0^{\infty}\frac{1}{(1 + t)^4}dt = e/3 < 1.
\end{align*}
Therefore, $\norm{X}_{\Psi_{\alpha, L}} \le K\Delta.$
\end{proof}

For the proofs of the results in Section~\ref{sec:Indep}, we use the following alternative result regarding inversion of moment bounds to get bounds on the GBO norm. 
\begin{prop}\label{prop:NormMomentEquiSupp}
If $\norm{X}_p \le C_1\sqrt{p} + C_2p^{1/\alpha},$ holds for $p\ge 1$ and some constants $C_1, C_2$, then $\norm{X}_{\Psi_{\alpha, K}} \le 2eC_1,$ where $K := 4^{1/\alpha}C_2/(2C_1)$.
\end{prop}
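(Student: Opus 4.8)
The plan is to mimic the \emph{converse} direction of the proof of Proposition~\ref{prop:EquivalenceNormMoment}, which already contains all the needed machinery; the only difference is that here the moment bound is stated with two free constants $C_1, C_2$ rather than in the normalized form $\norm{X}_p \le \Delta(\sqrt p + L p^{1/\alpha})$. First I would convert the hypothesis $\norm{X}_p \le C_1\sqrt p + C_2 p^{1/\alpha}$ into a tail bound via Markov's inequality: for $t \ge 1$, choosing $p = t$ gives $\mathbb{P}(|X| \ge e C_1\sqrt t + e C_2 t^{1/\alpha}) \le e^{-t}$, and for $0 < t < 1$ the same inequality holds trivially (with the right-hand side $\le 1$), so that for all $t \ge 0$,
\[
\mathbb{P}\left(|X| \ge e C_1\sqrt t + e C_2 t^{1/\alpha}\right) \le e\exp(-t).
\]
This is exactly the analogue of~\eqref{eq:MomentToTail} with $\Delta$ replaced by $C_1$ and $L\Delta$ replaced by $C_2$.

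Next I would plug this tail bound into the integral representation of the Orlicz expectation, as done at the end of the proof of Proposition~\ref{prop:EquivalenceNormMoment}. Writing $\eta := 2eC_1$ and recalling $\Psi_{\alpha,K}^{-1}(t) = \sqrt{\log(1+t)} + K(\log(1+t))^{1/\alpha}$ with $K = 4^{1/\alpha}C_2/(2C_1)$, I compute
\[
\mathbb{E}\left[\Psi_{\alpha, K}\left(\frac{|X|}{\eta}\right)\right] = \int_0^\infty \mathbb{P}\left(|X| \ge \eta\left\{\sqrt{\log(1+t)} + K(\log(1+t))^{1/\alpha}\right\}\right)dt.
\]
The key algebraic observation is that $\eta\sqrt{\log(1+t)} = 2eC_1\sqrt{\log(1+t)} = eC_1\sqrt{4\log(1+t)} = eC_1\sqrt{\log(1+t)^4}$, and similarly $\eta K (\log(1+t))^{1/\alpha} = 2eC_1 \cdot 4^{1/\alpha}C_2/(2C_1) \cdot (\log(1+t))^{1/\alpha} = eC_2(4\log(1+t))^{1/\alpha} = eC_2(\log(1+t)^4)^{1/\alpha}$. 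Hence the integrand equals $\mathbb{P}(|X| \ge eC_1\sqrt{s} + eC_2 s^{1/\alpha})$ with $s = \log(1+t)^4$, which by the displayed tail bound is at most $e \cdot (1+t)^{-4}$. Therefore $\mathbb{E}[\Psi_{\alpha,K}(|X|/\eta)] \le e\int_0^\infty (1+t)^{-4}\,dt = e/3 < 1$, and Definition~\ref{def:IncOrliczNorm} yields $\norm{X}_{\Psi_{\alpha,K}} \le \eta = 2eC_1$, as claimed.

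I do not anticipate a genuine obstacle here: the statement is essentially a bookkeeping variant of the converse half of Proposition~\ref{prop:EquivalenceNormMoment}, and the only thing to be careful about is matching constants so that the factor $4$ inside the logarithm is absorbed cleanly by both the $\sqrt{\cdot}$ term (via the $2$ in $\eta = 2eC_1$) and the Weibull term (via the $4^{1/\alpha}$ in $K$). One minor point worth checking is the case $C_1 = 0$ (or $C_2 = 0$): if $C_1 = 0$ the claimed bound $2eC_1 = 0$ would force $X = 0$ a.s., which is indeed consistent since $\norm{X}_p \le C_2 p^{1/\alpha}$ alone does not give $C_1 = 0$ unless additionally $X$ has all moments growing strictly slower — so in practice the proposition is applied with $C_1 > 0$, and the division by $C_1$ in the definition of $K$ is harmless. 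I would simply note that $C_1 > 0$ is implicitly assumed (or handle $C_1=0$ by a limiting argument letting $C_1 \downarrow 0$).
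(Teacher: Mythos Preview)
Your proposal is correct and follows exactly the same approach as the paper's own proof: invoke the moment-to-tail inversion~\eqref{eq:MomentToTail} to get $\mathbb{P}(|X|\ge eC_1\sqrt{t}+eC_2 t^{1/\alpha})\le e\,e^{-t}$, then set $\eta=2eC_1$ (the paper writes this as $\sqrt{4}\,\delta$ with $\delta=eC_1$) and verify via the same algebraic substitution $s=\log(1+t)^4$ that $\mathbb{E}[\Psi_{\alpha,K}(|X|/\eta)]\le e\int_0^\infty(1+t)^{-4}\,dt=e/3<1$. Your remark on the implicit assumption $C_1>0$ is a reasonable side note that the paper does not make explicit.
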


\begin{proof}
From the proof of Proposition~\ref{prop:EquivalenceNormMoment} (or, in particular~\eqref{eq:MomentToTail}), we get
\begin{equation}\label{eq:MomentToTailSupp}
\mathbb{P}\left(|X| \ge eC_1\sqrt{t} + eC_2t^{1/\alpha}\right) \le e\exp(-t),\quad\mbox{for all}\quad t\ge0.
\end{equation}
Take $K = 4^{1/\alpha}C_2/(2C_1)$ as in the statement of the result. Observe that with $\delta := eC_1$,
\begin{align*}
\mathbb{E}\left[\Psi_{\alpha, K}\left(\frac{|X|}{\sqrt{4}\delta}\right)\right] 
&= \int_0^{\infty}\mathbb{P}\left(|X| \ge \sqrt{4}\delta\Psi_{\alpha, K}^{-1}(t)\right)dt\\
&= \int_0^{\infty} \mathbb{P}\left(|X| \ge \sqrt{4}\delta\left\{\sqrt{\log(1 + t)} + K(\log(1 + t))^{1/\alpha}\right\}\right)dt\\
&= \int_0^{\infty} \mathbb{P}\left(|X| \ge eC_1\sqrt{\log (1 + t)^4} + eC_2L(\log (1 + t)^4)^{1/\alpha}\right)dt\\
&\le e\int_0^{\infty}\frac{1}{(1 + t)^4}dt = e/3 < 1.
\end{align*}
Therefore, $\norm{X}_{\Psi_{\alpha, K}} \le 2eC_1.$
\end{proof}

\begin{proof}[Proof of Proposition \ref{prop:QuasiNorm}]
Assume without loss of generality that $\norm{X_i}_{\Psi_{\alpha, L}} < \infty$ for all $1\le i\le n$, as otherwise the result is trivially true. If $\alpha > 1$, then $\Psi_{\alpha, L}^{-1}(\cdot)$ is a concave function and hence $\norm{\cdot}_{\Psi_{\alpha, L}}$ is a proper norm proving the result. For $\alpha < 1$, the result follows trivially by noting that both sides of the inequality in Proposition~\ref{prop:EquivalenceNormMoment} 
are norms. This completes the proof.
\end{proof}

\begin{proof}[Proof of Proposition \ref{prop:MaximalPsi}]
By union bound and Proposition \ref{prop:EquivalenceTailMoment},
\begin{align*}
\mathbb{P}&\left(\max_{1\le j\le N}|X_j| \ge \Delta\left\{\sqrt{t + \log N} + L\left(t + \log N\right)^{1/\alpha}\right\}\right)\\
&\le \sum_{j = 1}^N \mathbb{P}\left(|X_j| \ge \Delta\left\{\sqrt{t + \log N} + L\left(t + \log N\right)^{1/\alpha}\right\}\right)\\
&\le 2N\exp\left(-t - \log N\right)\le \frac{2N}{N}\exp(-t).
\end{align*}
Hence the tail bound follows. To bound the norm note that for all $\alpha > 0$,
\[
(t + \log N)^{1/\alpha} \le M(\alpha)\left(t^{1/\alpha} + (\log N)^{1/\alpha}\right),
\]
and from the tail bound of the maximum,
\begin{align*}
\mathbb{P}&\left(Z \ge \delta\left\{\sqrt{t} + M(\alpha)Lt^{1/\alpha}\right\}\right)\\
&\le \mathbb{P}\left(\max_{1\le j\le N}|X_j| \ge \Delta\left\{\sqrt{t + \log N} + L\left(t + \log N\right)^{1/\alpha}\right\}\right) \le 2\exp(-t),
\end{align*}
where
\[
Z := \left(\max_{1\le j\le N}|X_j| - \Delta\left\{\sqrt{\log N} + M(\alpha)L\left(\log N\right)^{1/\alpha}\right\}\right)_+.
\]
Hence by Proposition \ref{prop:EquivalenceTailMoment}, $\norm{Z}_{\Psi_{\alpha, K(\alpha)}} \le \sqrt{3}\Delta$. The result follows by Proposition \ref{prop:QuasiNorm} along with the fact
\[
\max_{1\le j\le N}|X_j| \le Z + \Delta\left\{\sqrt{\log N} + M(\alpha)L\left(\log N\right)^{1/\alpha}\right\},
\]
and by noting that the random variables on both sides are non-negative.
\end{proof}

\begin{proof}[Proof of Proposition \ref{prop:RatioMaximal}]
By homogeneity, we can without loss of generality assume that
\[
\norm{X_k}_{\Psi_{\alpha, L}} = 1\quad\mbox{for all}\quad k \ge 1.
\]
Note that by the union bound, for $t\ge 0$,
\begin{align*}
\mathbb{P}\left(\sup_{k \ge 1}\right.&\left.\left(|X_k| - \sqrt{2\log(1 + k)} - M(\alpha)L(2\log(1 + k))^{1/\alpha}\right)_+ \ge \sqrt{t} + M(\alpha)Lt^{1/\alpha}\right)\\
&\le \sum_{k \ge 1}\mathbb{P}\left(|X_k| \ge \sqrt{t + 2\log(1 + k)} + L(t + 2\log(1 + k))^{1/\alpha}\right)\\
&\le \sum_{k \ge 1} \frac{2}{(1 + k)^2}\exp(-t)\\
&\le \frac{2(\pi^2 - 6)}{6}\exp(-t) < 2\exp(-t).
\end{align*}
Hence by Proposition \ref{prop:EquivalenceTailMoment},
\begin{equation}\label{eq:PositivePartSupBound}
\norm{\sup_{k\ge 1}\left(|X_k| - \sqrt{2\log(1 + k)} - M(\alpha)L(2\log(1 + k))^{1/\alpha}\right)_+}_{\Psi_{\alpha, c(\alpha)M(\alpha)L}} \le \sqrt{3}.
\end{equation}
Recall $c(\alpha) = 3^{1/\alpha}/\sqrt{3}$. Since $\sqrt{2\log(1 + k)} \ge 1$ for $k \ge 1$, using \eqref{eq:PositivePartSupBound}, it follows that
\[
\norm{\sup_{k\ge 1}\left(\frac{|X_k|}{\sqrt{2\log(1 + k)} + M(\alpha)L(2\log(1 + k))^{1/\alpha}} - 1\right)_+}_{\Psi_{\alpha, c(\alpha)M(\alpha)L}}\le 1.5.
\]
The result now follows by an application of Proposition \ref{prop:QuasiNorm}.
\end{proof}

\section{Proofs of All Results in Section \ref{sec:Indep}}\label{AppSec:Indep}

\begin{proof}[Proof of Theorem \ref{prop:SumNewOrliczVex}]
Since $a_iX_i = (a_i\norm{X_i}_{\psi_{\alpha}})(X_i/\norm{X_i}_{\psi_{\alpha}})$, we can without loss of generality assume $\norm{X_i}_{\psi_{\alpha}} = 1$. Define $Y_i = (|X_i| - \eta)_+$ with $\eta = (\log 2)^{1/\alpha}$. This implies that
\begin{equation}\label{eq:Translation}
\mathbb{P}\left(|X_i| \ge t\right) \le 2\exp(-t^{\alpha})\quad\Rightarrow\quad \mathbb{P}\left(Y_i \ge t\right) \le \exp(-t^{\alpha}).
\end{equation}
By symmetrization inequality (Proposition 6.3 of \cite{LED91}),
\[
\norm{\sum_{i=1}^n a_iX_i}_p \le 2\norm{\sum_{i=1}^n \varepsilon_ia_iX_i}_p,
\]
for independent Rademacher random variables $\varepsilon_i, 1\le i\le n$ independent of $X_i, 1\le i\le n$. Using the fact that $\varepsilon_iX_i $ is identically distributed as $\varepsilon_i|X_i|$ and by Theorem 1.3.1 of \cite{DeLaPena99}, it follows that
\begin{align}
\norm{\sum_{i=1}^n a_iX_i}_p \le 2\norm{\sum_{i=1}^n \varepsilon_ia_i|X_i|}_p &\le 2\norm{\sum_{i=1}^n \varepsilon_ia_i(\eta + Y_i)}_p\\ &\le 2\norm{\sum_{i=1}^n \varepsilon_ia_iY_i}_p + 2\eta\norm{\sum_{i=1}^n \varepsilon_ia_i}_p\nonumber\\
&\le 2\norm{\sum_{i=1}^n \varepsilon_ia_iY_i}_p + 2\eta\sqrt{p}\norm{a}_2.\label{eq:FirstInequalityWeibull}
\end{align}
By inequality \eqref{eq:Translation},
\[
\norm{\sum_{i=1}^n a_i\varepsilon_iY_i}_p \le \norm{\sum_{i=1}^n a_iZ_i}_p,
\]
for symmetric independent random variables $Z_i, 1\le i\le n$ satisfying $\mathbb{P}\left(|Z_i| \ge t\right) = \exp(-t^{\alpha})$ for all $t\ge 0.$ Now we apply the bound in Examples 3.2 and 3.3 of \cite{LAT97} in combination with Theorem 2 there.

\textit{Case $\alpha \le 1$:} Example 3.3 of \cite{LAT97} shows that for $p\ge 2$,
\begin{equation}\label{eq:LogConvex}
\norm{\sum_{i=1}^n a_iZ_i}_p \le \max\left\{p^{1/2}\sqrt{2}\norm{a}_22^{1/\alpha}, \frac{p^{1/\alpha}\norm{a}_p}{\exp(1/(2e))}\right\}\frac{e^3(2\pi)^{1/4}e^{1/24}}{\alpha^{1/\alpha}}.
\end{equation}
Using the proof of Corollary 1.2 of \cite{Bog15} and substituting the resulting bound in \eqref{eq:FirstInequalityWeibull}, it follows that for $p\ge2$,
\begin{equation}\label{eq:MomentBoundConvex}
\norm{\sum_{i=1}^n a_iX_i}_p \le \sqrt{8}e^3(2\pi)^{1/4}e^{1/24}(e^{2/e}/\alpha)^{1/\alpha}\left[\sqrt{p}\norm{a}_2 + p^{1/\alpha}\norm{a}_{\infty}\right].
\end{equation}
Corollary 1.2 of \cite{Bog15} uses the inequality $p^{1/p} \le e$ but using $p^{1/p} \le e^{1/e}$ gives the bound above; see also Remark 3, Equation (3) of \cite{Kole15}. Set $C'(\alpha) := \sqrt{8}e^3(2\pi)^{1/4}e^{1/24}(e^{2/e}/\alpha)^{1/\alpha}$. For $p = 1$ note that
\[
\norm{\sum_{i=1}^n a_iX_i}_1 \le \norm{\sum_{i=1}^n a_iX_i}_2 \le C'(\alpha)\left[\sqrt{2}\norm{a}_1 + 2^{1/\alpha}\norm{a}_{\infty}\right]
\]
Thus for $p\ge 1$,
\[
\norm{\sum_{i=1}^n a_iX_i}_p \le C'(\alpha)\max\{\sqrt{2}, 2^{1/\alpha}\}\left[\sqrt{p}\norm{a}_2 + p^{1/\alpha}\norm{a}_{\infty}\right].
\]
Hence the result follows by Proposition \ref{prop:NormMomentEquiSupp}.

\textit{Case $\alpha \ge 1$:} it follows from (13) of Example 3.2 of \cite{LAT97} that for $p\ge 2$,
\[
\norm{\sum_{i=1}^n a_iZ_i}_p \le 4e\left[\sqrt{p}\left(\sum_{i=1}^n a_i^2\right)^{1/2} + p^{1/\alpha}\left(\sum_{i=1}^n |a_i|^{\beta}\right)^{1/\beta}\right],
\]
with $\beta = \beta(\alpha)$ as mentioned in the statement. Therefore, for $p\ge 2$,
\begin{equation}\label{eq:MomentBoundConcave}
\norm{\sum_{i=1}^n a_iX_i}_p \le (4e + 2\eta)\sqrt{p}\norm{a}_2 + 4ep^{1/\alpha}\norm{a}_{\beta}.
\end{equation}
For $p = 1$, note that
\[
\norm{\sum_{i=1}^n a_iX_i}_1 \le \norm{\sum_{i=1}^na_iX_i}_2 \le \max\{\sqrt{2}, 2^{1/\alpha}\}\left[(4e + 2\eta)\norm{a}_2 + 4e\norm{a}_{\beta}\right],
\]
and so, for $p\ge 1$,
\[
\norm{\sum_{i=1}^n a_iX_i}_p \le \max\{\sqrt{2}, 2^{1/\alpha}\}\left[(4e + 2\eta)\sqrt{p}\norm{a}_2 + 4ep^{1/\alpha}\norm{a}_{\beta}\right].
\]
The result now follows by an application of Proposition \ref{prop:NormMomentEquiSupp}. The tail bound follows from Proposition~\ref{prop:EquivalenceTailMoment}.
\end{proof}

Before proving moment inequalities with unbounded variables, we first provide the Bernstein moment bounds for bounded random variables since this forms an integral part of our proofs.

\begin{prop}(Bernstein's Inequality for Bounded Random Variables)\label{prop:BernsteinMoment}
Suppose $Z_1, Z_2,$ $\ldots, Z_n$ are independent random variables with mean zero and uniformly bounded by $U$ in absolute value. Then for $p\ge 1$,
\[
\norm{\sum_{i=1}^n Z_i}_p \le \sqrt{6p}\left(\sum_{i=1}^n \mathbb{E}\left[Z_i^2\right]\right)^{1/2} + 10pU.
\]
\end{prop}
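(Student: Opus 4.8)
The route I would take is the classical one: an exponential‑moment bound for the sum, converted first to a sub‑gamma tail bound and then to a moment bound, with the constants tracked carefully enough to land below $\sqrt{6p}$ and $10p$. Throughout, write $S:=\sum_{i=1}^n Z_i$ and $v:=\sum_{i=1}^n\mathbb{E}[Z_i^2]$, so that $\mathbb{E}[S^2]=v$ by independence and mean‑zeroness.

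\emph{Step 1: exponential moment bound.} Using $\mathbb{E}[Z_i]=0$ one has $\mathbb{E}[e^{\lambda Z_i}]=1+\sum_{k\ge 2}\lambda^k\mathbb{E}[Z_i^k]/k!$; combining $|\mathbb{E}[Z_i^k]|\le\mathbb{E}[Z_i^2]U^{k-2}$ with $k!\ge 2\cdot 3^{k-2}$ for $k\ge 2$ gives, for $0\le\lambda<3/U$, $\mathbb{E}[e^{\lambda Z_i}]\le\exp\bigl(\tfrac{\lambda^2\mathbb{E}[Z_i^2]/2}{1-\lambda U/3}\bigr)$. Multiplying over $i$, $\log\mathbb{E}[e^{\lambda S}]\le\tfrac{\lambda^2 v/2}{1-\lambda U/3}$ on $0\le\lambda<3/U$, and the same holds for $-S$. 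This is exactly the sub‑gamma condition, and the standard Cram\'er--Chernoff argument (cf. Proposition 3.1.8 of \cite{GINE16}) yields, for every $u\ge 0$,
\[
\mathbb{P}\Bigl(|S|\ge\sqrt{2vu}+\tfrac{U}{3}u\Bigr)\le 2e^{-u}.
\]

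\emph{Step 2: tail to moments.} Let $E$ be a standard exponential variable and set $Y:=\sqrt{2v}\,\sqrt{E}+\tfrac{U}{3}E$. Since $u\mapsto\sqrt{2vu}+\tfrac U3 u$ is an increasing bijection of $[0,\infty)$, the display above says precisely $\mathbb{P}(|S|>t)\le 2\,\mathbb{P}(Y>t)$ for all $t\ge 0$; hence by the layer‑cake formula $\mathbb{E}[|S|^p]=\int_0^\infty p t^{p-1}\mathbb{P}(|S|>t)\,dt\le 2\,\mathbb{E}[Y^p]$, and Minkowski's inequality gives
\[
\norm{S}_p\le 2^{1/p}\Bigl(\sqrt{2v}\,\Gamma\!\bigl(\tfrac p2+1\bigr)^{1/p}+\tfrac U3\,(p!)^{1/p}\Bigr).
\]

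\emph{Step 3: bounding the $\Gamma$ factors.} For $p\ge 2$ I would use the elementary bound $\Gamma(x+1)\le x^x$ for $x\ge 1$ (with $x=p/2$) to get $\Gamma(p/2+1)^{1/p}\le\sqrt{p/2}$, together with $(p!)^{1/p}\le p$ and $2^{1/p}\le\sqrt2$, yielding $\norm{S}_p\le\sqrt{2pv}+\tfrac{\sqrt2}{3}pU$, which sits comfortably below $\sqrt{6p}\,\sqrt v+10pU$. For $1\le p<2$, where the $\Gamma$ estimate at $x=p/2<1$ is unavailable, I would instead invoke monotonicity of $L^p$‑norms: $\norm{S}_p\le\norm{S}_2=v^{1/2}\le\sqrt{6p}\,\sqrt v+10pU$ since $6p\ge 1$. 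Combining the two ranges gives the claim. The only place needing genuine care is the constant bookkeeping in this last step — checking that the factor $2^{1/p}$ and the passage from $\Gamma$‑values to powers of $p$ really fit under the stated constants, and treating $p\in[1,2)$ separately rather than through the $\Gamma$‑bound; the remaining steps are routine.
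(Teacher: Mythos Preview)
Your proof is correct and follows the same overall strategy as the paper: obtain Bernstein's tail inequality from the sub-gamma MGF bound, then convert the tail bound into a moment bound and handle $1\le p<2$ separately via $\|S\|_p\le\|S\|_2$. The difference lies in the tail-to-moment conversion. The paper works directly with the form $\mathbb{P}(|S_n|\ge t)\le 2\exp\bigl(-t^2/(2\sigma_n^2+2Ut/3)\bigr)$, splits the integral $\int_0^\infty pt^{p-1}\mathbb{P}(|S_n|\ge t)\,dt$ at the crossover point $t_0=3\sigma_n^2/U$ into a Gaussian regime and an exponential regime, and bounds each piece by a change of variables together with an explicit Stirling-type estimate for $\Gamma$; this yields $\|S_n\|_p\le 2.1\sigma_n\sqrt{p}+10pU$ for $p\ge 2$. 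Your route instead recognizes the tail bound as stochastic domination of $|S|$ by $Y=\sqrt{2v}\sqrt{E}+\tfrac{U}{3}E$ (up to the factor $2$), and then applies Minkowski directly to $Y$, which avoids the integral splitting entirely and gives the sharper constants $\sqrt{2p}\,\sigma_n+\tfrac{\sqrt2}{3}pU$. Both arguments are standard, but yours is a little cleaner and produces better constants; the paper's version has the minor advantage that it makes the two-regime structure (Gaussian vs.\ Weibull part of the GBO norm) visible in the computation.
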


\begin{proof}[Proof of Proposition \ref{prop:BernsteinMoment}]
By Theorem 3.1.7 of \cite{GINE16},
\[
\mathbb{P}\left(|S_n| \ge t\right) \le 2\exp\left(-\frac{t^2}{2\sigma_n^2 + 2Ut/3}\right),\quad\mbox{for all}\quad t\ge 0.
\]
where $$S_n := \sum_{i=1}^n Z_i,\quad\mbox{and}\quad\sigma_n^2 := \sum_{i=1}^n \mathbb{E}\left[Z_i^2\right].$$
To bound the moments of $|S_n|$, note that
\begin{enumerate}
  \item[(a)] If $2Ut/3 \le 2\delta\sigma_n^2$ (or equivalently, $t \le 3\delta\sigma_n^2/U$), then
  \[
  \exp\left(-\frac{t^2}{2\sigma_n^2 + 2Ut/3}\right) \le \exp\left(-\frac{t^2}{2(1 + \delta)\sigma_n^2}\right).
  \]
  \item[(b)] If $2Ut/3 \ge 2\delta\sigma_n^2$, then
  \[
  \exp\left(-\frac{t^2}{2\sigma_n^2 + 2Ut/3}\right) \le \exp\left(-t\frac{3\delta}{2U(1 + \delta)}\right).
  \]
\end{enumerate}
Set $t_0 := 3\delta\sigma_n^2/U$. Now observe that for $p\ge 2$,
\begin{align*}
\mathbb{E}&\left[|S_n|^p\right]\\ &= \int_0^{\infty} pt^{p-1}\mathbb{P}\left(|S_n| \ge t\right)dt\\
&\le 2\int_0^{\infty} pt^{p-1}\exp\left(-\frac{t^2}{2\sigma_n^2 + 2Ut/3}\right)dt\\
&= 2\int_0^{t_0} pt^{p-1}\exp\left(-\frac{t^2}{2(1 + \delta)\sigma_n^2}\right)dt + 2\int_{t_0}^{\infty}pt^{p-1}\exp\left(-\frac{3\delta t}{2U(1 + \delta)}\right)dt\\
&\le 2\int_0^{\infty} pt^{p-1}\exp\left(-\frac{t^2}{2(1 + \delta)\sigma_n^2}\right)dt + 2\int_{0}^{\infty}pt^{p-1}\exp\left(-\frac{3\delta t}{2U(1 + \delta)}\right)dt\\
&=: \mathbf{I} + \mathbf{II}.
\end{align*}
By a change of variable for $\mathbf{I}$, we have
\begin{align*}
\mathbf{I} &= 2\int_0^{\infty} pt^{p-1}\exp\left(-\frac{t^2}{2(1 + \delta)\sigma_n^2}\right)dt\\
&= 2\left(\sqrt{(1 + \delta)\sigma_n^2}\right)^{p}\int_0^{\infty} pz^{p-1}\exp\left(-\frac{z^2}{2}\right)dz\\
&\overset{(1)}{=} 2\left(\sqrt{(1 + \delta)\sigma_n^2}\right)^{p}2^{p/2}\Gamma\left(1 + \frac{p}{2}\right)\\
&\overset{(2)}{\le} 2\left(\sqrt{2(1 + \delta)\sigma_n^2}\right)^{p}\sqrt{2\pi}\exp\left(-1 - \frac{p}{2}\right)\left(1 + \frac{p}{2}\right)^{(p+1)/2}\exp\left(\frac{1}{12(1 + p/2)}\right).
\end{align*}
Inequality (1) above can be found in Exercise 3.3.4(a) of \cite{GINE16} and inequality (2) follows from Theorem 1.1 of \cite{JAMES15}. Simplifying the above bound for $p \ge 2$, we get
\begin{align*}
\mathbf{I}^{1/p} &\le \sqrt{\frac{2(1 + \delta)\sigma_n^2}{e}}\left(\frac{2\sqrt{2\pi}}{e}\right)^{1/p}\left(1 + \frac{p}{2}\right)^{\frac{1}{2} + \frac{1}{2p}}\exp\left(\frac{1}{12p(1 + p/2)}\right)\\
&\le \sqrt{\frac{2(1 + \delta)}{e}}\left(\frac{2\sqrt{2\pi}}{e}\right)^{1/2}2^{1/4}\exp\left(\frac{1}{48}\right){\sigma_n\sqrt{p}} \le {2.1\sigma_n\sqrt{p}},\quad\mbox{(taking }\delta = 1).
\end{align*}
To bound $\mathbf{II}$, note that by change of variable
\begin{align*}
\mathbf{II} &= 2\int_0^{\infty} pt^{p-1}\exp\left(-\frac{3\delta t}{2U(1 + \delta)}\right)dt = 2\left(\frac{2U(1 + \delta)}{3\delta}\right)^p\int_0^{\infty} pz^{p-1}\exp(-z)dz\\
&\overset{(1')}{=} 2\left(\frac{2U(1 + \delta)}{3\delta}\right)^p\Gamma(1 + p)\\
&\overset{(2')}{\le} 2\left(\frac{2U(1 + \delta)}{3\delta}\right)^p\sqrt{2\pi}(1 + p)^{p + \frac{1}{2}}\exp(-p - 1)\exp\left(\frac{1}{12(p+1)}\right).
\end{align*}
Here again inequalities (1$'$) and (2$'$) follows from Exercise 3.3.4 (a) of \cite{GINE16} and Theorem 1.1 of \cite{JAMES15}, respectively. This implies for $p\ge 2$, that
\begin{align*}
\mathbf{II}^{1/p} &\le \left(\frac{2eU(1 + \delta)}{3\delta}\right)\left(\frac{2\sqrt{2\pi}}{e}\right)^{1/p}(1 + p)^{1 + \frac{1}{2p}}\exp\left(\frac{1}{12p(p+1)}\right)\\
&\le \frac{2eU(1 + \delta)}{3\delta}\left(\frac{2\sqrt{2\pi}}{e}\right)^{1/2}\left(\frac{3p}{2}\right)(1 + p)^{1/(2p)}\exp\left(\frac{1}{72}\right)\\
&\le \frac{2e(1 + \delta)}{3\delta}\left(\frac{2\sqrt{2\pi}}{e}\right)^{1/2}\left(\frac{3}{2}\right)3^{1/4}\exp\left(\frac{1}{72}\right){ Up} \le 10{Up},
\end{align*}
also for $\delta = 1$. Therefore, for $p\ge 1$,
\[
\left(\mathbb{E}\left[\left|S_n\right|^p\right]\right)^{1/p} \le 2.1\sigma_n\sqrt{p} + 10pU \le \sqrt{6p\sigma_n^2} + 10pU.
\]
\end{proof}

\begin{proof}[Proof of Theorem \ref{prop:SimilarBernstein}]
The method of proof is a combination of truncation and Hoffmann-Jorgensen's inequality. Define $$Z = \max_{1\le i\le n}|X_i|,\quad \rho = 8\mathbb{E}\left[Z\right],\quad K = \max_{1\le i\le n}\norm{X_i}_{\psi_{\alpha}},$$
\[
X_{i,1} = X_i\mathbbm{1}\{|X_i| \le \rho\} - \mathbb{E}\left[X_i\mathbbm{1}\{|X_i| \le \rho\}\right],\quad\mbox{and}\quad X_{i,2} = X_i - X_{i,1}.
\]
It is clear that $X_i = X_{i,1} + X_{i,2}$ and $|X_{i,1}| \le 2\rho$ for $1\le i\le n$. Also by triangle inequality, for $p\ge 1$,
\[
\norm{\sum_{i=1}^n X_i}_p \le \norm{\sum_{i=1}^n X_{i,1}}_p + \norm{\sum_{i=1}^n X_{i,2}}_p.
\]
Now note that for $1\le i\le n$,
\[
\mathbb{E}[X_{i,1}^2] = \mbox{Var}\left(X_{i,1}\right) = \mbox{Var}\left(X_{i}\mathbbm{1}_{\{|X_i| \le \rho\}}\right)\le \mathbb{E}[X_i^2].
\]
Thus, applying Bernstein's inequality (Proposition \ref{prop:BernsteinMoment}), for $p\ge 1$,
\begin{equation*}
\norm{\sum_{i=1}^n X_{i,1}}_p \le \sqrt{6p}\left(\sum_{i=1}^n \mathbb{E}\left[X_i^2\right]\right)^{1/2} + 20p\rho.
\end{equation*}
By Hoffmann-Jorgensen's inequality (Proposition 6.8 of~\cite{LED91}) and by the choice of $\rho$,
\[
\norm{\sum_{i=1}^n X_{i,2}}_{1} \le 2\norm{\sum_{i=1}^n |X_i|\mathbbm{1}\{|X_i| \ge \rho\}}_1 \le 16\norm{Z}_1,
\]
since
\[
\mathbb{P}\left(\max_{1 \leq k \leq n}  \sum_{i=1}^k |X_i|\mathbbm{1}\{|X_i| \ge \rho\} > 0\right) ~\le~ \mathbb{P}\left(Z \ge \rho\right) \le 1/8.
\]
Therefore, by Theorem 6.21 of \cite{LED91},
\[
\norm{\sum_{i=1}^n X_{i,2}}_{\psi_{\alpha}} \le 17K_{\alpha}\norm{Z}_{\psi_{\alpha}},
\]
where the constant $K_{\alpha}$ is given in Theorem 6.21 of \cite{LED91}. Hence, for $p\ge 1$,
\[
\norm{\sum_{i=1}^n X_{i,2}}_p \le C_{\alpha}K_{\alpha}K(\log(n + 1))^{1/\alpha}p^{1/\alpha},
\]
for some constant $C_{\alpha}$ depending on $\alpha$. Therefore, for $p\ge 1$,
\begin{equation}\label{eq:MomentBoundAlphaLess1}
\norm{\sum_{i=1}^n X_i}_p \le \sqrt{6p}\left(\sum_{i=1}^n \mathbb{E}\left[X_i^2\right]\right)^{1/2} + C_{\alpha}K_{\alpha}K(\log (n+1))^{1/\alpha}p^{1/\alpha},
\end{equation}
for some constant $C_{\alpha} > 0$ (possibly different from the previous line). Hence the result follows by Proposition \ref{prop:NormMomentEquiSupp}.
\end{proof}

\begin{proof}[Proof of Theorem \ref{prop:BernsteinLargerThan1}]
The proof follows the same technique as that of Theorem \ref{prop:SimilarBernstein}. Define $Z = \max_{1\le i\le n}|X_i|$, $\rho = 8\mathbb{E}\left[Z\right]$, $K = \max_{1\le i\le n}\norm{X_i}_{\psi_{\alpha}},$
\[
X_{i,1} = X_i\mathbbm{1}\{|X_i| \le \rho\} - \mathbb{E}\left[X_i\mathbbm{1}\{|X_i| \le \rho\}\right],\quad\mbox{and}\quad X_{i,2} = X_i - X_{i,1}.
\]
Following the same argument as in the proof of Proposition~\ref{prop:SimilarBernstein}, for $p\ge 1$,
\begin{equation}\label{eq:BoundPart1}
\norm{\sum_{i=1}^n X_{i,1}}_p \le \sqrt{6p}\left(\sum_{i=1}^n \mathbb{E}\left[X_i^2\right]\right)^{1/2} + 20p\rho.
\end{equation}
By Hoffmann-Jorgensen's inequality (Proposition 6.8 of~\cite{LED91}) and by the choice of $\rho$,
\[
\norm{\sum_{i=1}^n X_{i,2}}_{1} \le 2\norm{\sum_{i=1}^n |X_i|\mathbbm{1}\{|X_i| \ge \rho\}}_1 \le 16\norm{Z}_1.
\]
Since $\norm{X_i}_{\psi_{\alpha}} < \infty$ for $\alpha > 1$, $\norm{X_i}_{\psi_1} < \infty$. Hence applying Theorem 6.21 of \cite{LED91}, with $\alpha = 1$,
\[
\norm{\sum_{i=1}^n X_{i,2}}_{\psi_1} \le K_{1}\left[16\norm{Z}_1 + \norm{Z}_{\psi_1}\right] \le 17K_{1}\norm{Z}_{\psi_1}.
\]
By Problem 5 of Chapter 2.2 of \cite{VdvW96}, $$\norm{Z}_{\psi_1} \le \norm{Z}_{\psi_{\alpha}}(\log 2)^{1/\alpha - 1}\quad\mbox{for}\quad\alpha \ge 1,$$ and so,
\begin{equation}\label{eq:BoundPart2}
\norm{\sum_{i=1}^n X_{i,2}}_{\psi_1} \le 17K_{1}(\log 2)^{1/\alpha - 1}\norm{Z}_{\psi_{\alpha}} \le C_{\alpha}(\log(n+1))^{1/\alpha}\max_{1\le i\le n}\norm{X_i}_{\psi_{\alpha}},
\end{equation}
for some constant $C_{\alpha} > 0$ depending only on $\alpha$. Therefore, combining inequalities \eqref{eq:BoundPart1} and \eqref{eq:BoundPart2} with $\rho \le 8C_{\alpha}(\log (n + 1))^{1/\alpha}$, for $p\ge 1$
\begin{equation}\label{eq:MomentBoundAlphaLarger1}
\norm{\sum_{i=1}^n X_i}_p \le \sqrt{6p}\left(\sum_{i=1}^n \mathbb{E}\left[X_i^2\right]\right)^{1/2} + C_{\alpha}p(\log(n + 1))^{1/\alpha},
\end{equation}
for some constant $C_{\alpha} > 0$ (possibly different from that in \eqref{eq:BoundPart2}) depending only on $\alpha$. Now the result follows by Proposition \ref{prop:NormMomentEquiSupp} with $\alpha = 1$.
\end{proof}

\begin{proof}[Proof of Theorem \ref{thm:MaximalTailBound}]
\textit{Case $\alpha \le 1$:} Using the moment bound \eqref{eq:MomentBoundAlphaLess1} in the proof of Theorem \ref{prop:SimilarBernstein}, it follows that for all $1\le j\le q$ and $t \ge 0,$
\[
\mathbb{P}\left(\left|\frac{1}{n}\sum_{i=1}^n X_i(j)\right| \ge e\sqrt{\frac{6\Gamma_{n,q}t}{n}} + \frac{C_{\alpha}K_{n,q}(\log(2n))^{1/\alpha}t^{1/\alpha}}{n}\right) \le ee^{-t},
\]
for some constant $C_{\alpha}$ depending only on $\alpha$ (see, for example, the proof of~\eqref{eq:MomentToTail} for inversion of moment bounds to tail bounds). Hence by the union bound,
\begin{align*}
\mathbb{P}\left(\norm{\frac{1}{n}\sum_{i=1}^n X_i}_{\infty} \right.&\ge\left. 7\sqrt{\frac{\Gamma_{n,q}(t + \log q)}{n}} + \frac{C_{\alpha}K_{n,q}(\log(2n))^{\frac{1}{\alpha}}(t + \log q)^{\frac{1}{\alpha}}}{n}\right)\\
&\le \sum_{j=1}^q \frac{ee^{-t}}{q} \le 3e^{-t}.
\end{align*}

\textit{Case $\alpha \geq 1$:} Using the moment bound \eqref{eq:MomentBoundAlphaLarger1} in the proof of Theorem \ref{prop:BernsteinLargerThan1}, it follows that for all $1\le j\le q$ and $t \ge 0,$
\[
\mathbb{P}\left(\left|\frac{1}{n}\sum_{i=1}^n X_i(j)\right| \ge e\sqrt{\frac{6\Gamma_{n,q}t}{n}} + \frac{C_{\alpha}K_{n,q}(\log(2n))^{1/\alpha}t}{n}\right) \le ee^{-t},
\]
for some constant $C_{\alpha}$ depending only on $\alpha$. Hence by the union bound,
\begin{align*}
\mathbb{P}\left(\norm{\frac{1}{n}\sum_{i=1}^n X_i}_{\infty} \right.&\ge\left. 7\sqrt{\frac{\Gamma_{n,q}(t + \log q)}{n}} + \frac{C_{\alpha}K_{n,q}(\log(2n))^{\frac{1}{\alpha}}(t + \log q)}{n}\right)
\le 3e^{-t}.
\end{align*}
This completes the proof.
\end{proof}

\paragraph*{Proof of the Claim \eqref{eq:Product} Regarding the Orlicz Norm of Products.}
The following result establishes the bound \eqref{eq:Product} on the Orlicz norm of a product of random variables. 

\begin{prop}\label{prop:Product}
If $W_i,$ $1\le i\le k$ are (possibly dependent) random variables satisfying $\norm{W_i}_{\psi_{\alpha_i}} < \infty$ for some $\alpha_i > 0$, then
\begin{equation}\label{eq:ProductSupp}
\norm{\prod_{i=1}^k W_i}_{\psi_{\beta}} \le \prod_{i=1}^k \norm{W_i}_{\psi_{\alpha_i}}\quad\mbox{where}\quad \frac{1}{\beta} := \sum_{i=1}^k \frac{1}{\alpha_i}.
\end{equation}
\end{prop}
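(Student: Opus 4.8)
The plan is to combine the scaling homogeneity of the Orlicz functional with two elementary inequalities: the weighted arithmetic--geometric mean inequality and the generalized H\"older inequality. The case $k=1$ is trivial ($\beta = \alpha_1$), so assume $k \ge 2$; then $1/\beta = \sum_{j}1/\alpha_j > 1/\alpha_i$ for each $i$, hence $\beta < \alpha_i$. First I would reduce to a normalized setting: for each $i$ pick any $\eta_i > \norm{W_i}_{\psi_{\alpha_i}}$ and put $V_i := W_i/\eta_i$, so that by monotonicity of $\psi_{\alpha_i}$ and Definition~\ref{def:IncOrliczNorm}, $\mathbb{E}\bigl[\exp(|V_i|^{\alpha_i})\bigr] = \mathbb{E}\bigl[\psi_{\alpha_i}(|V_i|)\bigr] + 1 \le 2$. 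It then suffices to prove $\norm{\prod_{i=1}^k V_i}_{\psi_\beta} \le 1$, because this gives $\norm{\prod_{i=1}^k W_i}_{\psi_\beta} \le \prod_{i=1}^k \eta_i$, and letting $\eta_i \downarrow \norm{W_i}_{\psi_{\alpha_i}}$ for each $i$ yields \eqref{eq:ProductSupp}.

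The core of the argument is a pointwise bound obtained from weighted AM--GM. With weights $w_i := \beta/\alpha_i \in (0,1)$, which satisfy $\sum_{i=1}^k w_i = \beta\sum_{i=1}^k 1/\alpha_i = 1$, applying $\prod_i c_i^{w_i} \le \sum_i w_i c_i$ with $c_i = |V_i|^{\alpha_i}$ gives
\[
\Bigl|\prod_{i=1}^k V_i\Bigr|^{\beta} \;=\; \prod_{i=1}^k |V_i|^{\beta} \;=\; \prod_{i=1}^k c_i^{\,w_i} \;\le\; \sum_{i=1}^k \frac{\beta}{\alpha_i}\,|V_i|^{\alpha_i},
\]
a convex combination of the $|V_i|^{\alpha_i}$. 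Exponentiating and using multiplicativity of the exponential,
\[
\exp\Bigl(\bigl|\textstyle\prod_{i=1}^k V_i\bigr|^{\beta}\Bigr) \;\le\; \prod_{i=1}^k \bigl(\exp(|V_i|^{\alpha_i})\bigr)^{\beta/\alpha_i}.
\]

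Finally I would take expectations and apply the generalized H\"older inequality with conjugate exponents $q_i := \alpha_i/\beta > 1$ (again $\sum_{i=1}^k 1/q_i = 1$), followed by the normalization bound $\mathbb{E}[\exp(|V_i|^{\alpha_i})] \le 2$:
\[
\mathbb{E}\Bigl[\exp\bigl(\bigl|\textstyle\prod_{i=1}^k V_i\bigr|^{\beta}\bigr)\Bigr] \;\le\; \prod_{i=1}^k \bigl(\mathbb{E}[\exp(|V_i|^{\alpha_i})]\bigr)^{\beta/\alpha_i} \;\le\; \prod_{i=1}^k 2^{\beta/\alpha_i} \;=\; 2^{\,\beta\sum_{i=1}^k 1/\alpha_i} \;=\; 2.
\]
Hence $\mathbb{E}[\psi_\beta(|\prod_{i=1}^k V_i|)] \le 1$, so $\norm{\prod_{i=1}^k V_i}_{\psi_\beta}\le 1$ by Definition~\ref{def:IncOrliczNorm}, and undoing the normalization completes the proof. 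I do not anticipate any real obstacle here; the only points needing care are handling the possibly unattained infimum in the Orlicz norm via the auxiliary $\eta_i$, and checking admissibility of the exponents $p_i = q_i = \alpha_i/\beta$ (that they exceed $1$ and have reciprocals summing to $1$), both of which are immediate from $1/\beta = \sum_i 1/\alpha_i$. Note that independence is never used, consistent with the ``possibly dependent'' hypothesis; alternatively one could invoke Lemma~2.7.7 of \cite{Vershynin18}, but the self-contained argument above is short.
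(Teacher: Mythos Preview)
Your proof is correct and follows essentially the same route as the paper's: normalize by $\eta_i>\norm{W_i}_{\psi_{\alpha_i}}$, apply the weighted AM--GM/Young inequality pointwise, then the (generalized) H\"older inequality, and pass to the limit $\eta_i\downarrow\norm{W_i}_{\psi_{\alpha_i}}$. The only organizational difference is that the paper proves the case $k=2$ and invokes recursion, whereas you handle general $k$ directly with the $k$-term versions of both inequalities; the underlying ideas are identical.
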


\begin{proof}
The bound is trivial for $k = 1$ and it holds for $k > 2$ if it holds for $k = 2$ by recursion. For $k = 2$, set $\delta_i = \norm{W_i}_{\psi_{\alpha_i}}$ for $i = 1,2$. Fix $\eta_i > \delta_i$ for $i = 1,2$. By definition of $\delta_i$, this implies
\begin{equation}\label{eq:EtaDef}
\mathbb{E}\left[\exp\left(\left|\frac{W_i}{\eta_i}\right|^{\alpha_i}\right)\right] \le 2\quad\mbox{for}\quad i = 1,2.
\end{equation}
Observe that
\begin{align*}
\mathbb{E}\left[\exp\left(\left|\frac{W_1}{\eta_1}\cdot\frac{W_2}{\eta_2}\right|^{\beta}\right)\right] &\overset{(1)}{\le} \mathbb{E}\left[\exp\left(\left|\frac{W_1}{\alpha_1}\right|^{\alpha_1}\frac{\beta}{\alpha_1} + \left|\frac{W_2}{\eta_2}\right|^{\alpha_1}\frac{\beta}{\alpha_2}\right)\right]\\
&\overset{(2)}{\le} \left(\mathbb{E}\left[\exp\left(\left|\frac{W_1}{\eta_1}\right|^{\alpha_1}\right)\right]\right)^{\beta/\alpha_1}\left(\mathbb{E}\left[\exp\left(\left|\frac{W_2}{\eta_2}\right|^{\alpha_2}\right)\right]\right)^{\beta/\alpha_2}\\
&\overset{(3)}{\le} 2.
\end{align*}
Here, (1) and (2) are applications of Young's inequality and H\"{o}lder's inequality respectively, while (3) follows by \eqref{eq:EtaDef} and the definition of $\beta$. By taking limit as $\eta_i\downarrow \delta_i$, the result \eqref{eq:Product} now follows for $k = 2$. For $k > 2$, the result then follows by recursion, as noted earlier. 
\end{proof}
\def\R{\mathbb{R}}
\def\P{\mathbb{P}}
\def\E{\mathbb{E}}
\def\psihat{\widehat{\psi}}
\def\psialpha{\psi_{\alpha}}
%
\paragraph*{Proof of the Bound \eqref{bound:LinearKernel} in Remark \ref{rem:LinearKernel}.} For each fixed $x \in \R^p$, let $T_h(Z; x) := h^{-p} Y K ( (X - x)/h )$, where $Z := (Y,X)$. Then under our assumed conditions, using the quasi-norm property and moment bounds for the $\norm{\cdot}_{\psialpha}$ norm (see, for instance, Chapter 2.2 of \citet{VdvW96}) along with Proposition \ref{prop:Product}, we have: for all $x \in \R^p$, 
\begin{align}
&  \qquad \qquad \norm{T_h(Z;x) - \E\{ T_h(Z;x) \}}_{\psialpha} & \leq & \;\; A_{\alpha}\left[ \norm{T_h(Z;x)}_{\psialpha} + \E\{ |T_h(Z;x)| \} \right] \\
&  &\leq&  \;\; A_{\alpha}\left[ \norm{T_h(Z;x)}_{\psialpha} + B_{\alpha} \norm{T_h(Z;x)}_{\psialpha} \right]  \\
&  &=&  \;\; D_{\alpha} \norm{T_h(Z;x)}_{\psialpha} \;\; \leq \; D_{\alpha} h^{-p} C_Y C_K, \label{eq1:kernelproof}
\end{align}
where $A_{\alpha}, B_{\alpha} > 0 $ are some constants depending only on $\alpha$, and $D_{\alpha} := A_{\alpha}(1+B_{\alpha}) > 0$.

Further, $\mbox{Var}\{ T_h(Z;x)\} \leq \E \{ T_h^2(Z;x) \}$ and $\E \{ T_h^2(Z;x) \}$ satisfies: for all $x \in \R^p$,
\begin{align}\label{eq2:kernelproof}
& \E \{ T_h^2(Z;x) \}  & = &  \;\; \E\left[ \E \left\{ T_h^2(Z;x) | X  \right\}\right] \; = \; \frac{1}{h^{2p}}\int_{\R^p} \left\{\E\left(Y^2 | X = u \right)\right\} K^2\left( \frac{u- x}{h} \right) f(u) du \nonumber \\
& &= & \;\; \frac{1}{h^{2p}}\int_{\R^p} \left\{ \E\left(Y^2 | X = x + h \varphi \right)\right\} K^2\left(\varphi\right) f(\varphi) \; h^p d\varphi  \;\; \leq \; \frac{R_K M_Y}{h^p},
\end{align}
where the final bound is due to our assumptions. The result \eqref{bound:LinearKernel} now follows by simply applying Theorem \ref{thm:MaximalTailBound} to the random variables $ T_h(Z_i;x) - \E\{ T_h(Z_i;x) \}$, $1 \leq i \leq n$, and by using the bounds \eqref{eq1:kernelproof} and \eqref{eq2:kernelproof}. This completes the proof. \qed

\section{Proofs of All Results in Section~\ref{sec:Applications}}\label{AppSec:Applications}

\subsection{Proofs of the Results in Section~\ref{sec:ElementWiseMax}}\label{AppSec:CovarianceMaxNorm}

\begin{proof}[Proof of Theorem~\ref{cor:Covariance}]
Under assumption \eqref{assump:Covariance}, it follows from Proposition~\ref{prop:Product} that
\[
\max_{1\le i\le n}\max_{1\le j\le k\le p}\norm{X_i(j)X_i(k)}_{\psi_{\alpha/2}} \le K_{n,p}^2,
\]
and so, Theorem~\ref{thm:MaximalTailBound} with $q = p^2$ implies the result.
\end{proof}

\begin{proof}[Proof of Theorem~\ref{thm:CovarianceMaxNorm}]
It is easy to verify that
\begin{equation}\label{eq:DecompositionDelta_nS}
\begin{split}
\hat{\Sigma}_n^* &= \frac{1}{n}\sum_{i=1}^n \left(X_i - \bar{\mu}_n\right)\left(X_i - \bar{\mu}_n\right)^{\top} - \left(\bar{X}_n - \bar{\mu}_n\right)\left(\bar{X}_n - \bar{\mu}_n\right)^{\top}\\
&=: \tilde{\Sigma}_n^* - \left(\bar{X}_n - \bar{\mu}_n\right)\left(\bar{X}_n - \bar{\mu}_n\right)^{\top}.
\end{split}
\end{equation}
Clearly,
\begin{equation}\label{eq:BoundCovariance}
\Delta_n^* \le \vertiii{\tilde{\Sigma}_n^* - \Sigma_n^*}_{\infty} + \norm{\bar{X}_n - \bar{\mu}_n}_{\infty}^2,
\end{equation}
where $\norm{x}_{\infty}$ represents the maximum absolute element of $x$. Since $\tilde{\Sigma}_n^*$ is the gram matrix corresponding to the random vectors $X_i - \bar{\mu}_n$, Theorem~\ref{cor:Covariance} applies for the first term on the right hand side of~\eqref{eq:BoundCovariance}. For the second term, Theorem~\ref{thm:MaximalTailBound} applies. Combining these two bounds, we get that for any $t\ge 0$, with probability at least $1 - 6e^{-t}$,
\begin{equation}\label{eq:FirstCovariance}
\begin{split}
\Delta_n^* &\le 7A_{n,p}^*\sqrt{\frac{t + 2\log p}{n}} + \frac{C_{\alpha}K_{n,p}^2(\log(2n))^{2/\alpha}(t + 2\log p)^{2/\alpha}}{n}\\
&\quad+ 98\left(\frac{B_{n,p}(t + \log p)}{n}\right) + \frac{C_{\alpha}K_{n,p}^2(\log(2n))^{2/\alpha}(t + \log p)^{2/\alpha}}{n^2},
\end{split}
\end{equation}
where
$B_{n,p} := \max_{1\le j\le p}\,\sum_{i=1}^n \mbox{Var}\left(X_i(j)\right)/n.$
As before, it is easy to show that $B_{n,p} \le C_{\alpha}K_{n,p}^2$ and so, the last two terms of inequality~\eqref{eq:FirstCovariance} are of lower order than the second term and hence, we obtain that with probability at least $1 - 6e^{-t}$,
\begin{equation}\label{eq:FinalCovariance}
\Delta_n^* \le 7A_{n,p}^*\sqrt{\frac{t + 2\log p}{n}} + \frac{C_{\alpha}K_{n,p}^2(\log(2n))^{2/\alpha}(t + 2\log p)^{2/\alpha}}{n},
\end{equation}
with a possibly increased constant $C_{\alpha} > 0$.
\end{proof}

\subsection{Proofs of the Results in Secion~\ref{subsec:SubMatrix}}\label{AppSec:SubMatrix}

\begin{proof}[Proof of Theorem~\ref{cor:RIPBoundUnified}]
To prove the result, note that
\[
\RIP_n(k) = \sup_{\substack{\theta\in\mathbb{R}^p,\\\norm{\theta}_0 \le k, \norm{\theta}_2 \le 1}}\left|\frac{1}{n}\sum_{i=1}^n \left\{\left(X_i^{\top}\theta\right)^2 - \mathbb{E}\left[\left(X_i^{\top}\theta\right)^2\right]\right\}\right|,
\]
and define the set
\[
\Theta_k := \left\{\theta\in\mathbb{R}^p:\,\norm{\theta}_0 \le k,\norm{\theta}_2 = 1\right\}\subseteq\mathbb{R}^p.
\]
For every $\varepsilon > 0$, let $\mathcal{N}_{\varepsilon}$ denote the $\varepsilon$-net of $\Theta_k$, that is, every $\theta\in\Theta_k$ can be written as $\theta = x_{\theta} + z_{\theta}$ where $\norm{x_{\theta}}_2 \le 1, x_{\theta}\in\mathcal{N}_{\varepsilon}$ and $\norm{z_{\theta}}_2 \le \varepsilon$. In this representation $x_{\theta}$ and $z_{\theta}$ can be taken to have the same support as that of $\theta$. By Lemma 3.3 of \cite{Plan13}, it follows that
\begin{equation}\label{eq:CoveringBound}
\left|\mathcal{N}_{1/4}\right| \le \left(\frac{36p}{k}\right)^k.
\end{equation}
By Proposition 2.2 of \cite{Ver12}, it is easy to see that $\RIP_n(k)$ can be bounded by a finite maximum as
\begin{equation}\label{eq:FiniteSupremumOperator}
\RIP_n(k) \le 2\sup_{\theta\in\mathcal{N}_{1/4}}\left|\frac{1}{n}\sum_{i=1}^n \left\{\left(X_i^{\top}\theta\right)^2 - \mathbb{E}\left[\left(X_i^{\top}\theta\right)^2\right]\right\}\right|.
\end{equation}
This implies that $\RIP_n(k)$ can be controlled by controlling a finite maximum of averages. Set
\[
\Lambda_n(k) := \sup_{\theta\in\mathcal{N}_{1/4}}\left|\frac{1}{n}\sum_{i=1}^n \left\{\left(X_i^{\top}\theta\right)^2 - \mathbb{E}\left[\left(X_i^{\top}\theta\right)^2\right]\right\}\right|.
\]
\begin{enumerate}
\item[(a)] Under the marginal $\psi_{\alpha}$-bound, it is easy to see that for $\theta\in\Theta_k$ with support $S\subseteq\{1,\ldots,p\}$ of size $k$,
\[
\norm{\left(X_i^{\top}\theta\right)^2}_{\psi_{\alpha/2}} \le \norm{\sum_{j\in S} X_i^2(j)}_{\psi_{\alpha/2}} \le C_{\alpha}\sum_{j\in S}\norm{X_i(j)}_{\psi_{\alpha}}^2 \le C_{\alpha}K_{n,p}^2k,
\]
for some constant $C_{\alpha}$ depending only on $\alpha$. Hence by Theorem \ref{thm:MaximalTailBound}, it follows that for any $t > 0$, with probability at least~$1 - 3e^{-t}$,
\begin{align*}
\Lambda_n(k) &\le 7\sqrt{\frac{\Upsilon_{n,k}(t + k\log(36p/k))}{n}}\\ &\qquad+ \frac{C_{\alpha}K_{n,p}^2k(\log(2n))^{2/\alpha}(t + k\log(36p/k))^{2/\alpha}}{n}.
\end{align*}
\item[(b)] Under the joint $\psi_{\alpha}$-bound, it readily follows that
\[
\sup_{\theta\in\Theta_k}\,\norm{\left(X_i^{\top}\theta\right)^2}_{\psi_{\alpha/2}} \le K_{n,p}^2.
\]
Hence by Theorem \ref{thm:MaximalTailBound}, we get that for any $t > 0$, with probability at least~$1 - 3e^{-t}$,
\begin{align*}
\Lambda_n(k) &\le 7\sqrt{\frac{\Upsilon_{n,k}(t + k\log(36p/k))}{n}}\\ &\qquad+ \frac{C_{\alpha}K_{n,p}^2(\log(2n))^{2/\alpha}(t + k\log(36p/k))^{2/\alpha}}{n}.
\end{align*}
\end{enumerate}
The result now follows since $\RIP_n(k) \le 2\Lambda_n(k)$.
\end{proof}

\subsection{Proofs of the Results in Section~\ref{sec:RECondition}}\label{AppSec:RECondition}

\begin{proof}[Proof of Theorem~\ref{cor:REBoundUnified}]
The proof follows using Theorem \ref{cor:RIPBoundUnified} and Lemma 12 of \cite{Loh12}.
\begin{enumerate}
\item[(a)] From part (a) of Theorem \ref{cor:RIPBoundUnified}, we have with probability at least~$1 - 3s(np)^{-1}$,
\[
\RIP_n(s) \le \Xi_{n,s}^{(M)}.
\]
On the event where this inequality holds, applying Lemma 12 of \cite{Loh12} with $\Gamma = \hat{\Sigma}_n - \Sigma_n$ and $\delta = \Xi_{n,s}^{(M)}$ proves that with probability at least~$1 - 3s(np)^{-1}$,
\[
\theta^{\top}\hat{\Sigma}_n\theta \ge \left(\vphantom{\sum_{i=1}^N}\lambda_{\min}(\Sigma_n) - 27\Xi_{n,s}^{(M)}\right)\norm{\theta}_2^2 - \frac{54\Xi_{n,s}^{(M)}}{s}\norm{\theta}_1^2\quad\mbox{for all}\quad\theta\in\mathbb{R}^p.
\]
\item[(b)] From part (b) of Theorem \ref{cor:RIPBoundUnified}, we get with probability at least~$1 - 3s(np)^{-1}$,
\[
\RIP_n(s) \le \Xi_{n,s}^{(J)}.
\]
By a similar argument as above, the result follows.
\end{enumerate}
This completes the proof of Theorem~\ref{cor:REBoundUnified}.
\end{proof}

\subsection{Proofs of the Results in Section~\ref{sec:HDLinReg}}\label{AppSec:HDLinReg}
The following is a general result of \cite{Neg12} and this particular form is taken from \cite{Hastie15}.

\begin{lem}[Theorem 11.1 of \cite{Hastie15}]\label{lem:Negahbhan12}
Assume that the matrix $\hat{\Sigma}_n$ satisfies the restricted eigenvalue bound \eqref{eq:RestrictedEigenvalue} with $\delta = 3$. Fix any vector $\beta\in\mathbb{R}^p$ with $\norm{\beta}_0 \le k$. Given a regularization parameter $\lambda_n$ satisfying
\[
\lambda_n \ge 2\norm{\frac{1}{n}\sum_{i=1}^n X_i\left(Y_i - X_i^{\top}\beta\right)}_{\infty} > 0,
\]
any estimator $\hat{\beta}_n(\lambda_n)$ from the Lasso \eqref{eq:Lasso} satisfies the bound
\[
\norm{\hat{\beta}_n(\lambda_n) - \beta}_2 \le \frac{3}{\gamma_n}\sqrt{k}\,\lambda_n.
\]
\end{lem}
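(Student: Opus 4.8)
The plan is to run the classical deterministic ``basic inequality'' argument for the Lasso. Write $\hat{\beta} := \hat{\beta}_n(\lambda_n)$, $\hat{\Delta} := \hat{\beta} - \beta$, let $X \in \mathbb{R}^{n\times p}$ be the matrix with rows $X_i^{\top}$, and set $w := Y - X\beta \in \mathbb{R}^n$ (so $w_i = Y_i - X_i^{\top}\beta$). First I would use that $\hat{\beta}$ minimizes the objective in \eqref{eq:Lasso}: comparing its value at $\hat{\beta}$ and at $\beta$ and expanding $\norm{Y - X\hat{\beta}}_2^2 = \norm{w - X\hat{\Delta}}_2^2$ gives, after rearranging,
\[
\frac{1}{2n}\norm{X\hat{\Delta}}_2^2 \;\le\; \frac{1}{n}\, w^{\top} X\hat{\Delta} \;+\; \lambda_n\big(\norm{\beta}_1 - \norm{\hat{\beta}}_1\big).
\]
By H\"older's inequality, $n^{-1} w^{\top} X\hat{\Delta} \le \norm{n^{-1}X^{\top} w}_{\infty}\norm{\hat{\Delta}}_1 \le (\lambda_n/2)\norm{\hat{\Delta}}_1$, where the last step is exactly the hypothesis on $\lambda_n$ (note $n^{-1}X^{\top} w = n^{-1}\sum_{i=1}^n X_i(Y_i - X_i^{\top}\beta)$).

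Next I would derive the cone membership. Let $S := \operatorname{supp}(\beta)$, so $|S| \le k$. Since $\beta_{S^c} = 0$, the triangle inequality gives $\norm{\beta}_1 - \norm{\hat{\beta}}_1 = \norm{\beta_S}_1 - \norm{\hat{\beta}_S}_1 - \norm{\hat{\beta}_{S^c}}_1 \le \norm{\hat{\Delta}_S}_1 - \norm{\hat{\Delta}_{S^c}}_1$. Substituting into the display and splitting $\norm{\hat{\Delta}}_1 = \norm{\hat{\Delta}_S}_1 + \norm{\hat{\Delta}_{S^c}}_1$ yields
\[
0 \;\le\; \frac{1}{2n}\norm{X\hat{\Delta}}_2^2 \;\le\; \frac{3\lambda_n}{2}\norm{\hat{\Delta}_S}_1 - \frac{\lambda_n}{2}\norm{\hat{\Delta}_{S^c}}_1,
\]
so $\norm{\hat{\Delta}_{S^c}}_1 \le 3\norm{\hat{\Delta}_S}_1$, i.e. $\hat{\Delta} \in \mathcal{C}(S;3)$; this is precisely why the RE bound is assumed with $\delta = 3$.

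Finally, since $\hat{\Delta} \in \mathcal{C}(S;3)$ with $|S| \le k$, the RE condition \eqref{eq:RestrictedEigenvalue} gives $n^{-1}\norm{X\hat{\Delta}}_2^2 = \hat{\Delta}^{\top}\hat{\Sigma}_n\hat{\Delta} \ge \gamma_n\norm{\hat{\Delta}}_2^2$. Combining with the previous display and Cauchy--Schwarz ($\norm{\hat{\Delta}_S}_1 \le \sqrt{k}\,\norm{\hat{\Delta}_S}_2 \le \sqrt{k}\,\norm{\hat{\Delta}}_2$),
\[
\frac{\gamma_n}{2}\norm{\hat{\Delta}}_2^2 \;\le\; \frac{3\lambda_n}{2}\sqrt{k}\,\norm{\hat{\Delta}}_2,
\]
and dividing by $\norm{\hat{\Delta}}_2$ (the case $\hat{\Delta} = 0$ being trivial) gives $\norm{\hat{\Delta}}_2 \le 3\sqrt{k}\,\lambda_n/\gamma_n$, as claimed. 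There is no genuine obstacle here: the statement is entirely deterministic and standard, and the only point requiring care is the arithmetic in the cone step that pins down the constant $3$ so that the ``RE with $\delta = 3$'' hypothesis is the right one to invoke. All the probabilistic content of the paper enters elsewhere, namely in verifying (with high probability) the RE condition with $\gamma_n = \lambda_{\min}(\Sigma_n)/2$ and the event $\lambda_n \ge 2\norm{n^{-1}X^{\top} w}_{\infty}$, which is the role of Theorem~\ref{thm:LassoRate}.
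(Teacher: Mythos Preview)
Your proof is correct and follows the standard ``basic inequality'' route. The paper does not give its own proof of this lemma (it is cited as Theorem~11.1 of \cite{Hastie15}), but the identical steps---the basic inequality~\eqref{eq:FirstBasic}, the penalty decomposition yielding the cone condition~\eqref{eq:Combination}--\eqref{eq:ConeLike}, and the application of the RE bound---appear verbatim in the paper's proof of the more general oracle inequality (Proposition~\ref{prop:Lassooracineq}), so your argument matches the paper's approach exactly.
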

Lemma \ref{lem:Negahbhan12} holds for any of the minimizers $\hat{\beta}_n(\lambda_n)$ in case of non-uniqueness.

\begin{proof}[Proof of Theorem~\ref{thm:LassoRate}]
Using Proposition~\ref{prop:Product}, it follows that
\[
\max_{1\le i\le n}\norm{X_i(j)\varepsilon_j}_{\psi_{\gamma}} \le K_{n,p}^2.
\]
By Theorem~\ref{thm:MaximalTailBound}, it follows that with probability at least $1 - 3(np)^{-1}$,
\begin{equation}\label{eq:LambdaRate}
\norm{\frac{1}{n}\sum_{i=1}^n X_i\varepsilon_i}_{\infty} \le 7\sqrt{2}\sigma_{n,p}\sqrt{\frac{\log(np)}{n}} + \frac{C_{\gamma}K_{n,p}^2(\log(2n))^{1/\gamma}(2\log(np))^{1/\gamma}}{n}.
\end{equation}
Next, under assumption~\eqref{eq:Re-satisfied}, and using similar arguments as those used to prove \eqref{eq:general-RE-requirement} in the analysis in Section~\ref{rem:RECond},  we have: if $\bar{s}_M$ minimizes $\Xi_{n,s}^{(M)} + 32 k\Xi_{n,s}^{(M)}/s$ over $1 \leq s \leq p$, then~\eqref{eq:free-parameter-s} implies that with probability at least $1 - 3\bar{s}_{M}/(np) \ge 1 - 3/n$,
\begin{equation}\label{eq:Lasso-RE-verification}
\theta^{\top}\hat{\Sigma}_n\theta \; \ge \left(\lambda_{\min}(\Sigma_n) - 27\min_{1\le s\le p}\left\{\Xi_s + 32 \frac{k\Xi_s}{s}\right\}\right)\|\theta\|_2^2 \; \geq  \frac{\lambda_{\min}(\Sigma_n)}{2}\|\theta\|_2^2  \;\; \forall 
\theta\in\bigcup_{|S|\le k}\mathcal{C}(S; 3),
\end{equation}
so that the RE$(k)$ condition \eqref{eq:RestrictedEigenvalue} holds with probability at least $1 - 3n^{-1}$ and with $\gamma_n = \lambda_{\min}(\Sigma_n)/2$ and $\delta = 3$. Therefore, applying Lemma~\ref{lem:Negahbhan12}, along with the bound \eqref{eq:LambdaRate}, it follows that with probability at least $1 - 3(np)^{-1} - 3n^{-1}$, there exists a $\lambda_n$ (given by twice the upper bound in \eqref{eq:LambdaRate}) and hence, a Lasso estimator $\hat{\beta}_n(\lambda_n)$, satisfying:
\[
\norm{\hat{\beta}_n(\lambda_n) - \beta_0}_2 \le \frac{84\sqrt{2}}{\lambda_{\min}(\Sigma_n)}\left[\sigma_{n,p}\sqrt{\frac{k\log(np)}{n}} + 2^{1/\gamma}C_{\gamma}K_{n,p}^2\frac{k^{1/2}(\log(np))^{2/\gamma}}{n}\right].
\]
Note that the choice of $\lambda_n$ above is as claimed in the result. This completes the proof.
\end{proof}

\begin{proof}[Proof of Theorem~\ref{thm:LassoPoly}]
We already showed in the proof of Theorem~\ref{thm:LassoRate} above that under assumption~\eqref{eq:Re-satisfied}, the RE$(k)$ condition \eqref{eq:RestrictedEigenvalue} holds with probability at least $1 - 3n^{-1}$ with $\gamma_n = \lambda_{\min}(\Sigma_n)/2$ and $\delta = 3$. 
Hence, to apply Lemma~\ref{lem:Negahbhan12}, it is enough to show that the $\lambda_n$ in the statement of Theorem \ref{thm:LassoPoly} is a valid choice for Lemma~\ref{lem:Negahbhan12}. For this, we prove that with probability at least $1 - 3(np)^{-1} - L^{-1}$, for any $L \geq 1$,
\begin{equation}\label{eq:HighProbGradient}
\begin{split}
\max_{1\le j\le p}\left|\frac{1}{n}\sum_{i=1}^n \varepsilon_iX_i(j)\right| & \le \; 7\sqrt{2}\sigma_{n,p}\sqrt{\frac{\log(np)}{n}} \\  & \qquad + \frac{C_{\alpha}K_{n,p}K_{\varepsilon, r}(\log(np))^{1/\alpha}\left[(\log(2n))^{1/\alpha} + L\right]}{n^{1 - 1/r}}.
\end{split}
\end{equation}
We follow the proof technique of Theorem~\ref{prop:SimilarBernstein} to reduce the assumption on $\varepsilon_i$ to polynomial moments, as follows. Define
\[
C_{n,\varepsilon} := 8\mathbb{E}\left[\max_{1\le i\le n}\left|\varepsilon_i\right|\right] \le 8n^{1/r}\max_{1\le i\le n}\norm{\varepsilon_i}_r \le 8n^{1/r}K_{\varepsilon, r}.
\]
Note that under the setting of Theorem~\ref{thm:LassoRate}, for $1\le j\le p$,
\[
\frac{1}{n}\sum_{i=1}^n \varepsilon_iX_i(j) = \frac{1}{n}\sum_{i=1}^n \left\{\varepsilon_iX_i(j) - \mathbb{E}[\varepsilon_iX_i(j)]\right\}.
\]
Set for $1\le i\le n$, $S_i := \varepsilon_iX_i - \mathbb{E}\left[\varepsilon_iX_i\right]\in\mathbb{R}^p,$ and for $1\le j\le p$,
\begin{align*}
S_i^{(1)}(j) &:= S_i(j)\mathbbm{1}_{\{|\varepsilon_i| \le C_{n,\varepsilon}\}} - \mathbb{E}\left[S_i(j)\mathbbm{1}_{\{|\varepsilon_i| \le C_{n,\varepsilon}\}}\right],\\
 S_i^{(2)}(j) &:= S_i(j)\mathbbm{1}_{\{|\varepsilon_i| > C_{n,\varepsilon}\}} - \mathbb{E}\left[S_i(j)\mathbbm{1}_{\{|\varepsilon_i| > C_{n,\varepsilon}\}}\right].
\end{align*}
Therefore, by triangle inequality,
\begin{equation}\label{eq:TwoTerms}
\begin{split}
\max_{1\le j\le p}\left|\frac{1}{n}\sum_{i=1}^n \varepsilon_iX_i(j)\right| &= \max_{1\le j\le p}\left|\frac{1}{n}\sum_{i=1}^n S_i(j)\right|\\
&\le \max_{1\le j\le p}\left|
\frac{1}{n}\sum_{i=1}^n S_i^{(1)}(j)\right| + \max_{1\le j\le p}\left|\frac{1}{n}\sum_{i=1}^n S_i^{(2)}(j)\right|.
\end{split}
\end{equation}
For the summands of the first term, note that
\begin{equation*}
\begin{split}
\mbox{Var}(S_i^{(1)}(j)) & \le \; \mathbb{E}\left[S_i^2(j)\mathbbm{1}_{\{|\varepsilon_i| \le C_{n,\varepsilon}\}}\right] \\ & \le \; \mathbb{E}\left[S_i^2(j)\right] = \mbox{Var}(S_i(j)) = \mbox{Var}(\varepsilon_iX_i(j)),
\end{split}
\end{equation*}
and for some constant $B_{\alpha}$ (depending only on $\alpha$),
\begin{align*}
\norm{S_i^{(1)}(j)}_{\psi_{\alpha}} &\le 2\norm{S_i(j)\mathbbm{1}_{\{|\varepsilon_i| \le C_{n,\varepsilon}\}}}_{\psi_{\alpha}}\\ &\le 2B_{\alpha}\norm{\varepsilon_iX_i(j)\mathbbm{1}_{\{|\varepsilon_i| \le C_{n,\varepsilon}\}}}_{\psi_{\alpha}} + 2B_{\alpha}\left|\mathbb{E}[\varepsilon_iX_i(j)]\right|\\
&\le 2B_{\alpha}C_{n,\varepsilon}K_{n,p} + 2B_{\alpha}\norm{\varepsilon_i}_2\norm{X_i(j)}_2\\
&\le 2B_{\alpha}C_{n,\varepsilon}K_{n,p} + 2B_{\alpha}\norm{\varepsilon_i}_2K_{n,p} = 2B_{\alpha}K_{n,p}\left[C_{n,\varepsilon} + \norm{\varepsilon_i}_2\right]\\
&\le 4B_{\alpha}K_{n,p}C_{n,\varepsilon} \le 32n^{1/r}B_{\alpha}K_{n,p}K_{\varepsilon, r}.
\end{align*}
Therefore, by Theorem~\ref{thm:MaximalTailBound}, it follows that with probability at least $1 - 3(np)^{-1}$,
\begin{equation}\label{eq:FirstTermBoundLasso}
\begin{split}
\max_{1\le j\le p}\left|\frac{1}{n}\sum_{i=1}^n S_i^{(1)}(j)\right| & \le \; 7\sqrt{2}\sigma_{n,p}\sqrt{\frac{\log(np)}{n}} \\ & \qquad + \frac{C_{\alpha}K_{n,p}K_{\varepsilon, r}(\log(2n))^{1/\alpha}(\log(np))^{1/\alpha}}{n^{1 - 1/r}}.
\end{split}
\end{equation}
For the second term in~\eqref{eq:TwoTerms}, note that
\begin{equation*}
\norm{\max_{1\le j\le p}\left|\frac{1}{n}\sum_{i=1}^n S_i^{(2)}(j)\right|}_1 \le 2\norm{\max_{1\le j\le p}\frac{1}{n}\sum_{i=1}^n |\varepsilon_iX_i(j)|\mathbbm{1}_{\{|\varepsilon_i| > C_{n,\varepsilon}\}}}_1.
\end{equation*}
By the definition of $C_{n,\varepsilon}$, we have
\[
\mathbb{P}\left(\max_{1\le j\le p}\frac{1}{n}\sum_{i=1}^n |\varepsilon_iX_i(j)|\mathbbm{1}_{\{|\varepsilon_i| > C_{n,\varepsilon}\}} > 0\right) \le \mathbb{P}\left(\max_{1\le i\le n}|\varepsilon_i| > C_{n,\varepsilon}\right) \le 1/8.
\]
Thus by Hoffmann-Jorgensen's inequality, we have
\begin{equation*}
\begin{split}
\norm{\max_{1\le j\le p}\left|\frac{1}{n}\sum_{i=1}^n S_i^{(2)}(j)\right|}_1 &\le \frac{2}{n}\norm{\max_{1\le j\le p}\max_{1\le i\le n}|\varepsilon_iX_i(j)|}_1\\
&\le \frac{2}{n}\norm{\max_{1\le i\le n}|\varepsilon_i|}_2\norm{\max_{\substack{1\le i\le n,\\1\le j\le p}}|X_i(j)|}_2\\
&\le \frac{2C_{\alpha}(\log(np))^{1/\alpha}}{n^{1 - 1/r}}K_{\varepsilon, r}K_{n,p},
\end{split}
\end{equation*}
for some constant $C_{\alpha} > 0$. So, for any $L \ge 1$, with probability at least $1 - L^{-1}$,
\begin{equation}\label{eq:SecondTermBoundLasso}
\max_{1\le j\le p}\left|\frac{1}{n}\sum_{i=1}^n S_i^{(2)}(j)\right| \le \frac{2LC_{\alpha}(\log(np))^{1/\alpha}K_{\varepsilon, r}K_{n,p}}{n^{1 - 1/r}}.
\end{equation}
From inequalities~\eqref{eq:FirstTermBoundLasso} and~\eqref{eq:SecondTermBoundLasso}, we get with probability at least $1 - 3(np)^{-1} - L^{-1}$,
\begin{equation}\label{eq:TwoTermsBoundLasso}
\begin{split}
\max_{1\le j\le p}\left|\frac{1}{n}\sum_{i=1}^n S_i(j)\right| & \le \; 7\sqrt{2}\sigma_{n,p}\sqrt{\frac{\log(np)}{n}} \\ & \qquad + \frac{C_{\alpha}K_{n,p}K_{\varepsilon, r}(\log(np))^{1/\alpha}\left[(\log(2n))^{1/\alpha} + L\right]}{n^{1 - 1/r}}.
\end{split}
\end{equation}
Taking together the events on which the RE condition and the inequality~\eqref{eq:TwoTermsBoundLasso} hold, we have: with probability at least $1 - 3(np)^{-1} - 3n^{-1} - L^{-1}$, the RE$(k)$ condition \eqref{eq:RestrictedEigenvalue} is satisfied with $\gamma_n = \lambda_{\min}(\Sigma_n)/2$ and $\delta = 3$, and $\lambda_n$ can be chosen as
\[
\lambda_n = 14\sqrt{2}\sigma_{n,p}\sqrt{\frac{\log(np)}{n}} + \frac{C_{\alpha}K_{n,p}K_{\varepsilon, r}(\log(np))^{1/\alpha}\left[(\log(2n))^{1/\alpha} + L\right]}{n^{1 - 1/r}},
\]
so that the lasso estimator $\hat{\beta}_n(\lambda_n)$ satisfies (by Lemma~\ref{lem:Negahbhan12}),
\begin{equation}
\begin{split}
\norm{\hat{\beta}_n(\lambda_n) - \beta_0}_2 & \le \; \frac{84\sqrt{2}}{\lambda_{\min}(\Sigma_n)} \sigma_{n,p}\sqrt{\frac{k\log(np)}{n}}\\  & \qquad + C_{\alpha}K_{n,p}K_{\varepsilon, r}\frac{k^{1/2}(\log(np))^{1/\alpha}\left[(\log(2n))^{1/\alpha} + L\right]}{\lambda_{\min}(\Sigma_n) \; n^{1 - 1/r}}.
\end{split}
\end{equation}
This completes the proof of Theorem~\ref{thm:LassoPoly}.
\end{proof}

\paragraph{Proof of Remark~\ref{rem:LassoExtensions}.} The following result proves the oracle inequality stated in Remark~\ref{rem:LassoExtensions}.

\begin{prop}[Oracle Inequality for Lasso]\label{prop:Lassooracineq}
Consider the setting of Theorem~\ref{thm:LassoRate} (except the hard sparsity on $\beta_0$). For the choice of $\lambda_n$ as in~\eqref{eq:LambdaChoice}, with probability converging to 1,
\begin{equation}\label{eq:OracleIneqSupp}
\begin{split}
&\norm{\hat{\beta}_n(\lambda_n) - \beta_0}_2^2\\
&\quad\le \min_{S:\,\Xi_{n,|S|}^{(M)} = o(1)}\left[\frac{18\lambda_n^2{|S|}}{\Gamma^2_n(S)}~+~\left(\frac{8\lambda_n\norm{\beta_0(S^c)}_1}{\Gamma_n(S)} + \frac{3456\Xi_{n,|S|}^{(M)}\norm{\beta^*(S^c)}_1^2}{|S|\Gamma_n(S)}\right)\right],
\end{split}
\end{equation}
where
\[
\Gamma_n(S) := \lambda_{\min}(\Sigma_n) - 1755\Xi_{n,|S|}^{(M)}.
\]
\end{prop}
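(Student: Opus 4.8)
The plan is to apply the general oracle inequality for regularized $M$-estimators with decomposable penalties (Theorem 1 of \cite{Neg12}), in the same spirit as the proof of Theorem~\ref{thm:LassoRate} but now \emph{without} assuming hard sparsity of $\beta_0$. Take the least-squares loss $\mathcal{L}(\theta) := \frac{1}{2n}\sum_{i=1}^n(Y_i - X_i^{\top}\theta)^2$ and the regularizer $\mathcal{R}(\cdot) = \norm{\cdot}_1$. For a candidate subset $S\subseteq\{1,\ldots,p\}$ with $s := |S|$, use the coordinate model subspace $\mathcal{M}(S) := \{\theta:\,\theta(S^c)=0\}$; the subspace compatibility constant of $\norm{\cdot}_1$ relative to $\norm{\cdot}_2$ on $\mathcal{M}(S)$ equals $\sqrt{s}$, and the component of $\beta_0$ orthogonal to $\mathcal{M}(S)$ has $\ell_1$-norm $\norm{\beta_0(S^c)}_1$. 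The two hypotheses of Theorem 1 of \cite{Neg12} to be checked are (i) $\lambda_n \ge 2\mathcal{R}^*(\nabla\mathcal{L}(\beta_0))$, where $\mathcal{R}^*(\cdot)=\norm{\cdot}_{\infty}$, and (ii) a restricted strong convexity (RSC) bound for $\mathcal{L}$ around $\beta_0$.

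First I would verify (i). Since $\nabla\mathcal{L}(\beta_0) = -\frac{1}{n}\sum_{i=1}^n X_i(Y_i - X_i^{\top}\beta_0) = -\frac{1}{n}\sum_{i=1}^n X_i\varepsilon_i$ by~\eqref{eq:MisSpecify}, the required event is $\{\lambda_n \ge 2\norm{\frac{1}{n}\sum_{i=1}^n X_i\varepsilon_i}_{\infty}\}$, which is precisely inequality~\eqref{eq:LambdaRate} in the proof of Theorem~\ref{thm:LassoRate} --- an application of Theorem~\ref{thm:MaximalTailBound} using $\norm{X_i(j)\varepsilon_i}_{\psi_{\gamma}}\le K_{n,p}^2$ from Proposition~\ref{prop:Product} --- and holds with probability at least $1 - 3(np)^{-1}$ for the choice~\eqref{eq:LambdaChoice} of $\lambda_n$.

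Next I would verify (ii). Because $\mathcal{L}$ is quadratic, its second-order remainder at $\beta_0$ in a direction $\Delta$ equals $\tfrac12\Delta^{\top}\hat{\Sigma}_n\Delta$, so Theorem~\ref{cor:REBoundUnified}(a) gives, on an event of the stated probability, $\Delta^{\top}\hat{\Sigma}_n\Delta \ge (\lambda_{\min}(\Sigma_n) - 27\Xi_{n,s}^{(M)})\norm{\Delta}_2^2 - \tfrac{54\Xi_{n,s}^{(M)}}{s}\norm{\Delta}_1^2$ for all $\Delta\in\mathbb{R}^p$. To recast this in the RSC-with-tolerance form required by \cite{Neg12}, restrict to the relevant error set: the Lasso error $\hat{\Delta} := \hat{\beta}_n(\lambda_n) - \beta_0$ satisfies the standard cone-plus-approximation inequality $\norm{\hat{\Delta}(S^c)}_1 \le 3\norm{\hat{\Delta}(S)}_1 + 4\norm{\beta_0(S^c)}_1$ (obtained from the Lasso basic inequality once $\lambda_n \ge 2\norm{\nabla\mathcal{L}(\beta_0)}_{\infty}$), whence $\norm{\hat{\Delta}}_1 \le 4\sqrt{s}\,\norm{\hat{\Delta}}_2 + 4\norm{\beta_0(S^c)}_1$ and $\norm{\hat{\Delta}}_1^2 \le 32 s\norm{\hat{\Delta}}_2^2 + 32\norm{\beta_0(S^c)}_1^2$. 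Absorbing the $\norm{\hat{\Delta}}_2^2$ part into the curvature leaves a lower bound with curvature coefficient $\lambda_{\min}(\Sigma_n) - 27\Xi_{n,s}^{(M)} - 54\cdot32\,\Xi_{n,s}^{(M)} = \lambda_{\min}(\Sigma_n) - 1755\Xi_{n,s}^{(M)} = \Gamma_n(S)$ and tolerance a constant multiple of $\tfrac{\Xi_{n,s}^{(M)}}{s}\norm{\beta_0(S^c)}_1^2$; under $\Xi_{n,s}^{(M)}=o(1)$ one has $\Gamma_n(S)\ge\lambda_{\min}(\Sigma_n)/2>0$ for $n$ large, so RSC holds with strictly positive curvature. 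Feeding the compatibility constant $\sqrt{s}$, this curvature ($\propto\Gamma_n(S)$), the above tolerance, and the approximation term $\norm{\beta_0(S^c)}_1$ into the deterministic argument underlying Theorem 1 of \cite{Neg12} produces, on the intersection of the two events, the three-term bracketed expression in~\eqref{eq:OracleIneqSupp} for the fixed $S$ (the displayed coefficients $18$, $8$ and $3456$ being what the bookkeeping gives, with the $3456$ arising from the extra doubling in the final quadratic step); taking the minimum over all admissible $S$ yields~\eqref{eq:OracleIneqSupp}.

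One point needs care: Theorem~\ref{cor:REBoundUnified} supplies the RSC event for a single subset size, so to legitimize the minimum over all $S$ with $\Xi_{n,|S|}^{(M)}=o(1)$ I would union-bound over $s=1,\ldots,n$, running the argument of Theorem~\ref{cor:RIPBoundUnified} at each level with a slightly enlarged deviation parameter (e.g. probability $1 - 3(np)^{-2}$ per level), so that the union still has probability tending to one; intersecting with the gradient event of step (i) then delivers the ``with probability converging to one'' claim. I expect the main obstacle to be exactly this conversion of the $\norm{\Delta}_1^2$-penalized lower bound of Theorem~\ref{cor:REBoundUnified} into the RSC-with-tolerance format of \cite{Neg12} --- keeping track of which constants land in $\Gamma_n(S)$ and which in the coefficient of $\norm{\beta_0(S^c)}_1^2$ --- together with the uniformity over $S$; the probabilistic ingredients are already fully in hand from Sections~\ref{sec:Indep} and~\ref{sec:RECondition}.
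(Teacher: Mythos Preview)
Your proposal is correct and follows essentially the same route as the paper: the paper fixes $S$, uses the Lasso basic inequality $G(\hat{\nu})\le G(0)$ to derive the cone-plus-approximation bound $\norm{\hat{\nu}(S^c)}_1 \le 3\norm{\hat{\nu}(S)}_1 + 4\norm{\beta_0(S^c)}_1$, plugs $\norm{\hat{\nu}}_1^2 \le 32|S|\norm{\hat{\nu}}_2^2 + 32\norm{\beta_0(S^c)}_1^2$ into Theorem~\ref{cor:REBoundUnified}(a) to obtain curvature $\Gamma_n(S)=\lambda_{\min}(\Sigma_n)-1755\Xi_{n,|S|}^{(M)}$ and tolerance $1728\Xi_{n,|S|}^{(M)}\norm{\beta_0(S^c)}_1^2/|S|$, and then solves the resulting quadratic in $\norm{\hat{\nu}}_2$ --- exactly your ingredients, framed directly rather than via the abstract Theorem~1 of \cite{Neg12}. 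Your care about uniformity over $S$ is well placed: the paper's proof treats a single fixed $S$ and then simply asserts the minimum, whereas your union bound over sparsity levels makes that step rigorous.
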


\begin{proof}
The proof closely follows the arguments of Theorem 11.1 of \cite{Hastie15} and Section 4.3 of \cite{Neg10}. Set for $\nu\in\mathbb{R}^p$,
\[
G(\nu) := \frac{1}{2n}\sum_{i=1}^n \left(Y_i - X_i^{\top}(\beta_0 + \nu)\right)^2 + \lambda_n\norm{\beta_0 + \nu}_1,
\]
and $\hat{\nu} := \hat{\beta}_n(\lambda_n) - \beta_0.$ Also, fix any subset $S\subseteq\{1,2,\ldots,p\}$ with $\Xi_{n,|S|}^{(M)} = o(1)$. Note that with probability at least $1 - 3(np)^{-1}$,
\[
\lambda_n \ge 2\norm{\frac{1}{n}\sum_{i=1}^n X_i\varepsilon_i}_{\infty},
\]
as shown in the proof of Theorem~\ref{thm:LassoRate}. On this event the following calculations hold true. By definition $G(\hat{\nu}) \le G(0)$ and so,
\begin{equation}\label{eq:FirstBasic}
\frac{\hat{\nu}^{\top}\hat{\Sigma}_n\hat{\nu}}{2} \le \hat{\nu}^{\top}\left(\frac{1}{n}\sum_{i=1}^n X_i\varepsilon_i\right) + \lambda_n\left[\norm{\beta_0}_1 - \norm{\beta_0 + \hat{\nu}}_1\right].
\end{equation}
Now observe that
\begin{align*}
\norm{\beta_0 + \hat{\nu}}_1 &\ge \norm{\beta_0(S) + \hat{\nu}(S)}_1 - \norm{\beta_0(S^c)}_1 + \norm{\hat{\nu}(S^c)}_1\\
&\ge \norm{\beta_0(S)}_1 - \norm{\hat{\nu}(S)}_1 - \norm{\beta_0(S^c)}_1 + \norm{\hat{\nu}(S^c)}_1.
\end{align*}
Since $\norm{\beta_0}_1 = \norm{\beta_0(S)}_1 + \norm{\beta_0(S^c)}_1$, the above inequality substituted in~\eqref{eq:FirstBasic} implies
\begin{equation}\label{eq:Combination}
\begin{split}
\frac{\hat{\nu}^{\top}\hat{\Sigma}_n\hat{\nu}}{2} &\le \hat{\nu}^{\top}\left(\frac{1}{n}\sum_{i=1}^n X_i\varepsilon_i\right) + \lambda_n\left[2\norm{\beta_0(S^c)}_1 + \norm{\hat{\nu}(S)}_1 - \norm{\hat{\nu}(S^c)}_1\right]\\
&\le \norm{\hat{\nu}}_1\norm{\frac{1}{n}\sum_{i=1}^n X_i\varepsilon_i}_{\infty} + \lambda_n\left[2\norm{\beta_0(S^c)}_1 + \norm{\hat{\nu}(S)}_1 - \norm{\hat{\nu}(S^c)}_1\right]\\
&\le \frac{\lambda_n}{2}\norm{\hat{\nu}(S)}_1 + \frac{\lambda_n}{2}\norm{\hat{\nu}(S^c)}_1 + \lambda_n\left[2\norm{\beta_0(S^c)}_1 + \norm{\hat{\nu}(S)}_1 - \norm{\hat{\nu}(S^c)}_1\right]\\
&\le \frac{3\lambda_n}{2}\norm{\hat{\nu}(S)}_1 - \frac{\lambda_n}{2}\norm{\hat{\nu}(S^c)}_1 + 2\lambda_n\norm{\beta_0(S^c)}_1.
\end{split}
\end{equation}
This inequality has two implications that prove the result. Firstly, the left hand side of~\eqref{eq:Combination} is non-negative and so,
\begin{equation}\label{eq:ConeLike}
\norm{\hat{\nu}(S^c)}_1 \le 3\norm{\hat{\nu}(S)}_1 + 4\norm{\beta_0(S^c)}_1.
\end{equation}
For the second implication, note that inequality~\eqref{eq:ConeLike} implies that
\begin{align*}
\norm{\hat{\nu}}_1 &= \norm{\hat{\nu}(S)}_1 + \norm{\hat{\nu}(S^c)}_1\\ &\le 4\norm{\hat{\nu}(S)}_1 + 4\norm{\beta_0(S^c)}_1\\ &\le 4\sqrt{|S|}\norm{\hat{\nu}(S)}_2 + 4\norm{\beta_0(S^c)}_1 \le 4\sqrt{|S|}\norm{\hat{\nu}}_2 + 4\norm{\beta_0(S^c)}_1.
\end{align*}
Therefore, applying Theorem~\ref{cor:REBoundUnified} with $s = |S|$, 
we get that with probability at least $1 - |S|(np)^{-1}$,
\begin{equation}\label{eq:REConfirm}
\begin{split}
\hat{\nu}^{\top}\hat{\Sigma}_n\hat{\nu} &\ge \left(\lambda_{\min}(\Sigma_n) - 27\Xi_{n,|S|}^{(M)}\right)\norm{\hat{\nu}}_2^2 - \frac{54\Xi_{n,|S|}^{(M)}}{|S|}\left(32|S|\norm{\hat{\nu}}_2^2 + 32\norm{\beta_0(S^c)}_2^2\right)\\
&= \left(\lambda_{\min}(\Sigma_n) - 1755\Xi_{n,|S|}^{(M)}\right)\norm{\hat{\nu}}_2^2 - \frac{1728\Xi_{n,|S|}^{(M)}}{|S|}\norm{\beta_0(S^c)}_1^2\\
&= \Gamma_n(S)\norm{\hat{\nu}}_2^2 - \frac{1728\Xi_{n,|S|}^{(M)}}{|S|}\norm{\beta_0(S^c)}_1^2.
\end{split}
\end{equation}
Combining inequality~\eqref{eq:REConfirm} with inequality~\eqref{eq:Combination}, we obtain
\begin{align*}
\frac{\Gamma_n(S)}{2}\norm{\hat{\nu}}_2^2 &\le \frac{3\lambda_n\sqrt{|S|}}{2}\norm{\hat{\nu}}_2 + 2\lambda_n\norm{\beta_0(S^c)}_1 + \frac{864\Xi_{n,|S|}^{(M)}}{|S|}\norm{\beta_0(S^c)}_1^2
\end{align*}
Hence,
\[
\norm{\hat{\nu}}_2 \le \frac{3\lambda_n\sqrt{|S|}}{\Gamma_n(S)} + \sqrt{\frac{2}{\Gamma_n(S)}}\left(2\lambda_n\norm{\beta_0(S^c)}_1 + \frac{864\Xi_{n,|S|}^{(M)}}{|S|}\norm{\beta_0(S^c)}_1^2\right)^{1/2},
\]
and so the result follows.
\end{proof}

\section{Proofs of All Results in Appendix 
\ref{sec:EmpProcess}}\label{AppSec:EmpProcess}

\begin{proof}[Proof of Proposition \ref{prop:BddProc}]
By Theorem 3 of \cite{Adam08}, we get for all $t\ge0$ that
\[
\mathbb{P}\left(\left(Z - \mathbb{E}[Z]\right)_+ \ge t\right) \le \exp\left(-\frac{t^2}{2(\Sigma_n(\mathcal{F}) + 2U\mathbb{E}[Z]) + 3Ut}\right).
\]
Set $A := 2(\Sigma_n(\mathcal{F}) + 2U\mathbb{E}[Z])$ and $B := 3U$. Then using the arguments in Proposition~\ref{prop:BernsteinMoment}, we get for $p\ge 2$, (and any $\delta > 0$)
\begin{align*}
\mathbb{E}\left[\left(Z - \mathbb{E}[Z]\right)_+^p\right] &\le \int_0^{\infty} pt^{p-1}\exp\left(-\frac{t^2}{A(1 + \delta)}\right)dt + \int_0^{\infty} pt^{p-1}\exp\left(-\frac{t\delta}{B(\delta + 1)}\right)dt\\
&\le \left(\sqrt{A(1 + \delta)/e}\right)^p\frac{\sqrt{2\pi}}{e}\left(1 + \frac{p}{2}\right)^{(p+1)/2}\exp\left(\frac{1}{12(1 + p/2)}\right)\\
&\quad+\left(\frac{B(1 + \delta)}{e\delta}\right)^p\frac{\sqrt{2\pi}}{e}(1 + p)^{p + \frac{1}{2}}\exp\left(\frac{1}{12(p+1)}\right) =: \mathbf{I} + \mathbf{II}.
\end{align*}
So, for $p\ge 2$,
\begin{align*}
\mathbf{I}^{1/p} &\le p^{1/2}\sqrt{A(1 + \delta)/e}\left(\frac{\sqrt{2\pi}}{e}\right)^{1/p}\left(1 + \frac{p}{2}\right)^{\frac{1}{2p}}\exp\left(\frac{1}{12p(1 + p/2)}\right)\\
&\le p^{1/2}\sqrt{A}\quad\mbox{ by taking $\delta$ = 1/2.}
\end{align*}
Also, regarding $\mathbf{II}$, for $p\ge 2$,
\begin{align*}
\mathbf{II}^{1/p} &\le p\left(\frac{3B(1 + \delta)}{2e\delta}\right)\left(\frac{\sqrt{2\pi}}{e}\right)^{1/p}(1 + p)^{1/(2p)}\exp\left(\frac{1}{12p(p+1)}\right)\le 2Bp.
\end{align*}
Therefore, for $p\ge 2$,
\[
\norm{(Z - \mathbb{E}[Z])_+}_p \le \left(2\Sigma_n(\mathcal{F}) + 4U\mathbb{E}[Z]\right)^{1/2}\sqrt{p} + 6pU,
\]
and since $\norm{Z}_p \le \norm{(Z - \mathbb{E}[Z])_+}_p + \mathbb{E}[Z]$, for $p\ge 1$,
\[
\norm{Z}_p \le \mathbb{E}[Z] + \left(2\Sigma_n(\mathcal{F}) + 4U\mathbb{E}[Z]\right)^{1/2}\sqrt{p} + 6Up,
\]
proving \eqref{eq:MomentBound}.
\end{proof}

\begin{proof}[Proof of Theorem \ref{thm:UnBddProc}]
By triangle inequality, $Z \le Z_1 + Z_2$. Note that $Z_1$ is a supremum of bounded empirical process and so, by Proposition \ref{prop:BddProc} for $p\ge 2$
\begin{align}
\norm{(Z_1 - \mathbb{E}[Z_1])_+}_p &\le p^{1/2}\left(2\Sigma_n(\mathcal{F}) + 8\rho\mathbb{E}\left[Z_1\right]\right)^{1/2} + 12\rho p\nonumber\\
&\le \sqrt{2}p^{1/2}\Sigma_n^{1/2}(\mathcal{F}) + 2\sqrt{2}p^{1/2}\rho^{1/2}\left(\mathbb{E}\left[Z_1\right]\right)^{1/2} + 12\rho p\nonumber\\
&\le \sqrt{2}p^{1/2}\Sigma_n^{1/2}(\mathcal{F}) + \left(2p\rho + \mathbb{E}\left[Z_1\right]\right) + 12\rho p\nonumber\\
&= \mathbb{E}\left[Z_1\right] + \sqrt{2}p^{1/2}\Sigma_n^{1/2}(\mathcal{F}) + 14 p\rho,
\label{eq:BoundedBound}
\end{align}
where we used the arithmetic-geometric mean inequality and the fact that
\[
\mbox{Var}\left(f(X_i)\mathbbm{1}\{|f(X_i)| \le \rho\}\right) \le \mathbb{E}\left[f^2(X_i)\mathbbm{1}\{|f(X_i)| \le \rho\}\right] \le \mathbb{E}\left[f^2(X_i)\right] \le \mbox{Var}\left(f(X_i)\right).
\]
To deal with $Z_2$, observe that
\[
\norm{Z_2}_{\psi_{\alpha_*}} \le 2\norm{\sum_{i=1}^n F(X_i)\mathbbm{1}\{F(X_i) \ge \rho\}}_{\psi_{\alpha_*}}
\]
Since $\alpha_* \le 1$ for all $\alpha > 0$ and $\norm{F(X_i)}_{\psi_{\alpha_*}} < \infty$, it follows from Theorem 6.21 of \cite{LED91} that
\[
\norm{Z_2}_{\psi_{\alpha_*}} \le 2K_{\alpha_*}\left\{\mathbb{E}\left[\sum_{i=1}^n F(X_i)\mathbbm{1}\{F(X_i) \ge \rho\}\right] + \norm{\max_{1\le i\le n} F(X_i)}_{\psi_{\alpha^*}}\right\},
\]
with the constant $K_{\alpha_*}$ as in the cited theorem. Additionally by Hoffmann-Jorgensen inequality combined with the definition of $\rho$, we have
\[
\mathbb{E}\left[\sum_{i=1}^n F(X_i)\mathbbm{1}\{F(X_i) \ge \rho\}\right] \le 8\mathbb{E}\left[\max_{1\le i\le n} F(X_i)\right] \le 8\norm{\max_{1\le i\le n} F(X_i)}_{\psi_{\alpha}}.
\]
And by Problem 5 of Chapter 2.2 of \cite{VdvW96},
\[
\norm{\max_{1\le i\le n} F(X_i)}_{\psi_{\alpha_*}} \le (\log 2)^{1/\alpha - 1/\alpha_*}\norm{\max_{1\le i\le n} F(X_i)}_{\psi_{\alpha}}.
\]
Therefore,
\[
\norm{Z_2}_{\psi_{\alpha_*}} \le 2K_{\alpha_*}\left[8 + (\log 2)^{1/\alpha - 1/\alpha_*}\right]\norm{\max_{1\le i\le n} F(X_i)}_{\psi_{\alpha}}.
\]
This implies that for $p\ge 1$,
\begin{equation}\label{eq:UnboundedBound}
\norm{Z_2}_p \le 2\sqrt{2\pi}(p/\alpha_*)^{1/\alpha_*}K_{\alpha_*}\left[8 + (\log 2)^{1/\alpha - 1/\alpha_*}\right]\norm{\max_{1\le i\le n} F(X_i)}_{\psi_{\alpha}}.
\end{equation}
Note that for all $\alpha > 0$, $(p/\alpha_*)^{1/\alpha^*} \ge p$ for all $p \ge 1$ and so, for all $\alpha > 0$,
\[
14p\rho \le \sqrt{2\pi}(p/\alpha_*)^{1/\alpha_*}K_{\alpha_*}\left[8 + (\log 2)^{1/\alpha - 1/\alpha_*}\right]\norm{\max_{1\le i\le n} F(X_i)}_{\psi_{\alpha}}.
\]
Therefore, combining bounds \eqref{eq:BoundedBound} and \eqref{eq:UnboundedBound}, we obtain for $p\ge 2$,
\begin{align*}
\norm{Z}_p &\le \mathbb{E}\left[Z_1\right] + \norm{(Z_1 - \mathbb{E}[Z_1])_+}_p + \norm{Z_2}_p\\
&\le 2\mathbb{E}\left[Z_1\right] + \sqrt{2}p^{1/2}\Sigma_n^{1/2}(\mathcal{F})\\ &\qquad+ 3\sqrt{2\pi}(p/\alpha_*)^{1/\alpha_*}K_{\alpha_*}\left[8 + (\log 2)^{1/\alpha - 1/\alpha_*}\right]\norm{\max_{1\le i\le n} F(X_i)}_{\psi_{\alpha}}.
\end{align*}
This proves \eqref{eq:MomentEmpBound}. Using the reasoning as in Proposition \ref{prop:BddProc},
\[
\mathbb{E}\left[\Psi_{\alpha_*, L_n(\alpha)}\left(\frac{(Z - 2e\mathbb{E}[Z_1])_+}{3\sqrt{2}e\Sigma_n^{1/2}(\mathcal{F})}\right)\right] \le 1,
\]
with $L_n(\alpha)$ is as defined in the statement. This proves \eqref{eq:NormEmpBound}.
\end{proof}

\begin{proof}[Proof of Proposition \ref{prop:MaximalUniform}]
By Theorem 3.5.1 and inequality (3.167) of \cite{GINE16},
\[
\mathbb{E}\left[\sup_{f\in\mathcal{F}}\left|\mathbb{G}_n(f)\right|\right] \le 8\sqrt{2}\mathbb{E}\left[\int_0^{\eta_n(\mathcal{F})}\sqrt{\log(2N(x, \mathcal{F}, \norm{\cdot}_{2,P_n}))}dx\right],
\]
where $P_n$ represents the empirical measure of $X_1, X_2, \ldots, X_n$, that is, $P_n(\{X_i\}) = 1/n$. Here
\[
\eta_n^2(\mathcal{F}) := \sup_{f\in\mathcal{F}}\norm{f}_{2, P_n} = \sup_{f\in\mathcal{F}}\left(\frac{1}{n}\sum_{i=1}^n f^2(X_i)\right)^{1/2}.
\]
Using a change-of-variable formula,
\[
\mathbb{E}\left[\sup_{f\in\mathcal{F}}\left|\mathbb{G}_n(f)\right|\right] \le 8\sqrt{2}\mathbb{E}\left[\norm{F}_{2,P_n}J\left(\delta_n(\mathcal{F}), \mathcal{F}, \norm{\cdot}_{2}\right)\right],
\]
where
\[
\delta_n^2(\mathcal{F}) := \frac{1}{\norm{F}_{2,P_n}^2}\sup_{f\in\mathcal{F}}\, \frac{1}{n}\sum_{i=1}^n f^2(X_i),\quad\mbox{and}\quad \norm{F}_{2,P_n}^2 := \frac{1}{n}\sum_{i=1}^n F^2(X_i).
\]
Now an application of Lemma 3.5.3 (c) of \cite{GINE16} implies that
\begin{equation}\label{eq:FirstInequalityMaximal}
\mathbb{E}\left[\sup_{f\in\mathcal{F}}\left|\mathbb{G}_n(f)\right|\right] \le 8\sqrt{2}\norm{F}_{2,P}J\left(\frac{\Delta}{\norm{F}_{2,P}}, \mathcal{F}, \norm{\cdot}_2\right),
\end{equation}
where
\[
\Delta^2 := \mathbb{E}\left[\sup_{f\in\mathcal{F}}\frac{1}{n}\sum_{i=1}^n f^2(X_i)\right]\quad\mbox{and}\quad \norm{F}_{2,P}^2 := \frac{1}{n}\sum_{i=1}^n \mathbb{E}\left[F^2(X_i)\right].
\]
Note that by symmetrization and contraction principle
\begin{align*}
\Delta^2 &\le \sup_{f\in\mathcal{F}}\frac{1}{n}\sum_{i=1}^n \mathbb{E}\left[f^2(X_i)\right] + \mathbb{E}\left[\sup_{f\in\mathcal{F}}\frac{1}{n}\left|\sum_{i=1}^n \left\{f^2(X_i) - \mathbb{E}\left[f^2(X_i)\right]\right\}\right|\right]\\
&\le n^{-1}\Sigma_n(\mathcal{F}) + \frac{16U}{\sqrt{n}}\mathbb{E}\left[\sup_{f\in\mathcal{F}}\left|\mathbb{G}_n(f)\right|\right].
\end{align*}
See Lemma 6.3 and Theorem 4.12 of \cite{LED91}. Substitute \eqref{eq:FirstInequalityMaximal} in this inequality, we obtain
\[
\frac{\Delta^2}{\norm{F}_{2,P}^2} \le \frac{n^{-1}\Sigma_n(\mathcal{F})}{\norm{F}_{2,P}^2} + \frac{128\sqrt{2}U}{\sqrt{n}\norm{F}_{2,P}}J\left(\frac{\Delta}{\norm{F}_{2,P}}, \mathcal{F}, \norm{\cdot}_2\right).
\]
For notation convenience, let
\[
H(\tau) := J\left(\tau, \mathcal{F}, \norm{\cdot}_2\right),\quad A^2 := \frac{n^{-1}\Sigma_n(\mathcal{F})}{\norm{F}_{2,P}^2},\quad\mbox{and}\quad B^2 := \frac{128\sqrt{2}U}{\sqrt{n}\norm{F}_{2,P}}.
\]
Following the proof of Lemma 2.1 of \cite{vdV11} with $r = 1$, it follows that
\[
H\left(\frac{\Delta}{\norm{F}_{2,P}}\right) \le H\left(A\right) + \frac{B}{A}H\left(A\right)H^{1/2}\left(\frac{\Delta}{\norm{F}_{2,P}}\right).
\]
Solving the quadratic inequality, we get
\[
H\left(\frac{\Delta}{\norm{F}_{2,P}}\right) \le 2\frac{B^2}{A^2}H^2(A) + 2H(A).
\]
Substituting this bound in \eqref{eq:FirstInequalityMaximal}, it follows that
\[
\mathbb{E}\left[\sup_{f\in\mathcal{F}}\left|\mathbb{G}_n(f)\right|\right] \le 16\sqrt{2}\norm{F}_{2,P}J\left(\delta_n(\mathcal{F}), \mathcal{F}, \norm{\cdot}_2\right)\left[1 + \frac{128\sqrt{2}UJ\left(\delta_n(\mathcal{F}), \mathcal{F}, \norm{\cdot}_2\right)}{\sqrt{n}\delta_n^2(\mathcal{F})\norm{F}_{2,P}}\right].
\]
This proves the result.
\end{proof}

\begin{proof}[Proof of Proposition \ref{prop:BracketingEntropy}]
In the proof of Theorem 3.5.13 of \cite{GINE16}, the decomposition (3.206) holds as it is and the calculations that follow have to be done for averages of non-identically distributed random variables. For example, the display after (3.206) should be replaced by Lemma 4 of \cite{Poll02}. (The inequality in Lemma 4 of \cite{Poll02} is written for $\sqrt{n}\mathbb{G}_n(f)$ not $n^{-1/2}\mathbb{G}_n(f)$). The variance calculations after (3.209) of \cite{GINE16} should be done as
\[
\mbox{Var}\left(\mathbb{G}_n\left(\Delta_kfI_{\{\tau f = k\}}\right)\right) = \frac{1}{n}\sum_{i=1}^n \mathbb{E}\left[(\Delta_kf)^2I(\Delta_kf \le \alpha_{n,k-1}, \Delta_k(f) > \alpha_{n,k})\right].
\]
See, for example, Lemma 5 of \cite{Poll02}. There is a typo in Proposition 3.5.15 in the statement; it should be $Pf^2 \le \delta^2$ for all $f\in\mathcal{F}$. In our case this $\delta$ would be the one defined in the statement. The final result follows by noting the concavity of~$J_{[\,]}(\cdot, \mathcal{F}, \norm{\cdot}_{2,P})$,
\[
J_{[\,]}\left(2\delta_n(\mathcal{F}), \mathcal{F}, \norm{\cdot}_{2,P}\right) \le 2J_{[\,]}\left(\delta_n(\mathcal{F}), \mathcal{F}, \norm{\cdot}_{2,P}\right).
\]
This completes the proof.
\end{proof}

\begin{proof}[Proof of Proposition~\ref{prop:ExpectationUnbddBdd}]
It is clear by the triangle inequality that $Z \le Z_1 + Z_2$ and so,
\[
\mathbb{E}\left[Z\right] \le \mathbb{E}\left[Z_1\right] + \mathbb{E}\left[Z_2\right].
\]
From the definition~\eqref{eq:SplitZ1Z2} of $Z_2$, we get
\begin{align*}
\mathbb{E}\left[Z_2\right] &\le 2\mathbb{E}\left[\sup_{f\in\mathcal{F}}\sum_{i=1}^n |f(X_i)|\mathbbm{1}_{\{|f(X_i)| \ge \rho\}}\right] \le 2\mathbb{E}\left[\sum_{i=1}^n F(X_i)\mathbbm{1}_{\{F(X_i) \ge \rho\}}\right].
\end{align*}
Using Hoffmann-Jorgensen's inequality along with the definition of $\rho$, we have
\[
\mathbb{E}\left[\sum_{i=1}^n F(X_i)\mathbbm{1}_{\{F(X_i) \ge \rho\}}\right] \le 8\mathbb{E}\left[\max_{1\le i\le n}\, F(X_i)\right].
\]
Therefore,
\[
\mathbb{E}\left[Z\right] \le \mathbb{E}\left[Z_1\right] + 8\mathbb{E}\left[\max_{1\le i\le n}\, F(X_i)\right].
\]
This completes the proof.
\end{proof}


\end{document}